\newcommand{\set}[2]{\left\{#1\,:\,#2\right\}}
\def\DD{\mathbb{D}}
\theoremstyle{definition}
\newtheorem*{defi}{Definition}
\theoremstyle{plain}
\newtheorem{thm}{Theorem}
\newtheorem{prop}[thm]{Proposition}
\newtheorem{lem}[thm]{Lemma}
\numberwithin{equation}{section}
\numberwithin{thm}{section}
\begin{document}



\title{A new approach to velocity averaging lemmas in Besov spaces}

\author{Diogo Ars\'enio}
\author{Nader Masmoudi}

\address{Paris 7 - CNRS}
\email{arsenio@math.jussieu.fr}

\address{Courant Institute}
\email{masmoudi@cims.nyu.edu}

\date{\today}

\begin{abstract}
	We develop a new approach to velocity averaging lemmas based on the dispersive properties of the kinetic transport operator. This method yields unprecedented sharp results, which display, in some cases, a gain of one full derivative. Moreover, the study of dispersion allows to treat the case of $L_x^rL^p_v$ integrability with $r\leq p$. We also establish results on the control of concentrations in the degenerate $L_{x,v}^1$ case, which is fundamental in the study of the hydrodynamic limit of the Boltzmann equation.
\end{abstract}


\thanks{The two authors were partially supported by the NSF grant DMS-0703145.}

\maketitle

\tableofcontents


\section{Introduction}

The regularizing properties of the kinetic transport equation were first established in \cite{GPS85} for the basic Hilbertian case. Essentially, the main result therein states that if $f(x,v),g(x,v)\in L^2\left(\mathbb{R}^D_x\times\mathbb{R}^D_v\right)$, where $D\geq 1$ is the dimension, satisfy the stationary transport relation
\begin{equation}\label{transport 1}
	v\cdot\nabla_xf(x,v)=g(x,v),
\end{equation}
in the sense of distributions, then, for any given $\phi(v)\in C_0^\infty\left(\mathbb{R}^D\right)$, the velocity average verifies that
\begin{equation}
	\int_{\mathbb{R}^D}f(x,v)\phi(v)dv\in H^\frac{1}{2}\left(\mathbb{R}^D_x\right).
\end{equation}

This kind of smoothing property was then further investigated in \cite{GLPS88}, where the Hilbertian case was refined and some extensions to the non-Hilbertian case $f,g\in L^p\left(\mathbb{R}^D_x\times\mathbb{R}^D_v\right)$, with $p\neq 2$, were obtained, but these were not optimal. The first general results in the non-Hilbertian case establishing an optimal gain of regularity were obtained in \cite{diperna2} using Besov spaces and interpolation theory. The methods employed therein were not optimal in all aspects, though, but they were robust and thus allowed to further extend the averaging lemmas to settings bearing much more generality and still exhibiting an optimal gain of regularity. To be precise, the results from \cite{diperna2} were able to treat the case
\begin{equation}\label{transport 2}
	v\cdot\nabla_xf(x,v)=\left(1-\Delta_x\right)^\alpha\left(1-\Delta_v\right)^\beta g(x,v),
\end{equation}
where $0\leq \alpha<1$, $\beta\geq 0$, $f\in L^p\left(\mathbb{R}^D_x\times\mathbb{R}^D_v\right)$ and $g\in L^q\left(\mathbb{R}^D_x\times\mathbb{R}^D_v\right)$, with $1< p,q<\infty$. Another interesting generalization from \cite{diperna2} concerned the case where $f,g\in L^q\left(\mathbb{R}^D_v;L^p\left(\mathbb{R}^D_x\right)\right)$ satisfy the transport equation \eqref{transport 1} with $1<q\leq p<\infty$.

The corresponding results in standard Sobolev spaces were then obtained in \cite{bezard}, but the methods of proof were complicated and relied on harmonic analysis on product spaces. Unfortunately, as pointed out in \cite{westdickenberg}, the proofs in \cite{bezard} were flawed in the non-homogeneous cases, i.e. in the case $f\in L^p\left(\mathbb{R}^D_x\times\mathbb{R}^D_v\right)$ and $g\in L^q\left(\mathbb{R}^D_x\times\mathbb{R}^D_v\right)$, with $1< p,q<\infty$, $p\neq q$, and in the case $f,g\in L^q\left(\mathbb{R}^D_v;L^p\left(\mathbb{R}^D_x\right)\right)$, with $1<q< p<\infty$.

Note that another general approach, which yielded similar results in abstract interpolation spaces resulting from the real interpolation of Besov spaces, was developed in \cite{jabin}.

The interesting case of velocity averaging where $f(x,v)$ and $g(x,v)$ have more local integrability in $v$ than in $x$, i.e. $f,g\in L^p\left(\mathbb{R}^D_x;L^q\left(\mathbb{R}^D_v\right)\right)$ with $1<p< q<\infty$, was not addressed until \cite{westdickenberg}. To be precise, the simple question raised therein was: is it possible to improve, at least locally, the properties of the velocity averages in the case $f,g\in L^p\left(\mathbb{R}^D_x;L^q\left(\mathbb{R}^D_v\right)\right)$, with $p\leq q$, with respect to the case $f,g\in L^p\left(\mathbb{R}^D_x\times\mathbb{R}^D_v\right)$? The answer from \cite{westdickenberg} was definitely affirmative, even though it did not provide a general approach to this setting. Some other similar but very specific cases were treated in \cite{jabin}.

In the present work, we provide a very general and robust method to treat some of the cases where the local integrability in velocity is improved. Our approach has its own limitations, though, and doesn't apply to the whole range of parameters $1\leq p\leq q\leq\infty$. However, as discussed later on, it exhibits some optimality in the range of parameters where it is valid.

The main idea in our analysis consists in utilizing the so-called dispersive properties of the kinetic transport equation, together with standard averaging methods. This type of dispersive estimates was first used in \cite{BD85} and further developed in \cite{CP96, ovcharov}. It is at first uncertain whether these dispersive properties are even related to velocity averaging lemmas. However, our work clearly establishes a rather strong link between these two properties.

Most of the above-mentioned developments came to include more cases that were imposed by the underlying applications. In particular, one of the last improvements of averaging lemmas from \cite{golse3} was motivated by the hydrodynamic limit of the Boltzmann equation \cite{golse}, where it was necessary to have a refined averaging lemma in $L^1\left(\mathbb{R}^D_x\times\mathbb{R}^D_v\right)$. This was maybe the first time that some dispersive estimates were used in the proof of an averaging lemma. This paper is also somewhat motivated by the hydrodynamic limit of the Boltzmann equation but in the case of long-range interactions \cite{arsenio, arsenio3}. Thus, our work allows to obtain an extension of the averaging lemma in $L^1\left(\mathbb{R}^D_x\times\mathbb{R}^D_v\right)$ from \cite{golse3}, which is crucially employed in \cite{arsenio, arsenio3}. The main novelty in the $L^1$ averaging lemmas compared to \cite{golse3} is that we are able to include derivatives in the right-hand side of the transport equation, i.e. we consider the relation \eqref{transport 2} rather than \eqref{transport 1}. This is actually crucial when dealing with the Boltzmann equation in the presence of long-range interactions, since the Boltzmann collision operator is only defined (after renormalization) in the sense of distributions as a singular integral operator. We present this application of our main results to the theory of hydrodynamic limits in Section \ref{L1}. See also \cite{arsenio2} for a different approach to the same problem based on hypoellipticity.


\section{Technical tools}

The results in this article rely on the theory of function spaces and interpolation. Thus, we present in this preliminary section the main tools and notations that will be utilized throughout this work and we refer the reader to \cite{bergh, triebel} for more details on the subjects.

\subsection{Besov spaces}\label{LP decomposition}

We will denote the Fourier transform
\begin{equation}
	\hat f(\xi)=\mathcal{F}f\left(\xi\right)=\int_{\mathbb{R}^D} e^{- i \xi \cdot v} f(v) dv
\end{equation}
and its inverse
\begin{equation}
	\tilde g(v)=\mathcal{F}^{-1} g\left(v\right)=\frac{1}{\left(2\pi\right)^D}\int_{\mathbb{R}^D} e^{i v \cdot \xi} g(\xi) d\xi.
\end{equation}

We introduce now a standard Littlewood-Paley decomposition of the frequency space into dyadic blocks. To this end, let $\psi(\xi),\varphi(\xi)\in C_0^\infty\left(\mathbb{R}^D\right)$ be such that
\begin{equation}
	\begin{gathered}
		\psi,\varphi\geq 0\text{ are radial},
		\quad \mathrm{supp}\psi\subset\left\{|\xi|\leq 1\right\},
		\quad\mathrm{supp}\varphi\subset\left\{\frac{1}{2}\leq |\xi|\leq 2\right\}\\
		\text{and}\quad1= \psi(\xi)+\sum_{k=0}^\infty \varphi\left(2^{-k}\xi\right),\quad\text{for all }\xi\in\mathbb{R}^D.
	\end{gathered}
\end{equation}
Defining the scaled functions $\psi_{\delta}(\xi)=\psi\left(\frac{\xi}{\delta}\right)$ and $\varphi_{\delta}(\xi)=\varphi\left(\frac{\xi}{\delta}\right)$, for any $\delta>0$, one has then that
\begin{equation}
	\begin{gathered}
		\mathrm{supp}\psi_{\delta}\subset\left\{ |\xi|\leq \delta\right\},
		\quad \mathrm{supp}\varphi_{\delta}\subset\left\{\frac{\delta}{2}\leq |\xi|\leq 2\delta\right\}\\
		\text{and}\quad 1\equiv \psi_\delta+\sum_{k=0}^\infty \varphi_{\delta 2^k}.
	\end{gathered}
\end{equation}

Furthermore, we will use the Fourier multiplier operators
\begin{equation}
	S_\delta ,\Delta_\delta:\mathcal{S}'\left(\mathbb{R}^D\right)\rightarrow\mathcal{S}'\left(\mathbb{R}^D\right)
\end{equation}
(here $\mathcal{S}'$ denotes the space of tempered distributions) defined by
\begin{equation}\label{dyadic block}
	S_\delta f= \left(\mathcal{F}^{-1}\psi_\delta\right)*f
	\quad\text{and}\quad
	\Delta_\delta f= \left(\mathcal{F}^{-1}\varphi_\delta\right)*f,
\end{equation}
so that
\begin{equation}
	S_\delta f+\sum_{k=0}^\infty\Delta_{\delta 2^k}f=f,
\end{equation}
where the series is convergent in $\mathcal{S}'$.

For notational convenience, we also introduce, for every $0<\delta_1<\delta_2$, the operators
\begin{equation}\label{dyadic block 2}
	\Delta_0=S_1
	\qquad\text{and}\qquad
	\Delta_{\left[\delta_1,\delta_2\right]}=S_{2\delta_2}-S_{\delta_1},
\end{equation}
so that $\mathcal{F} \Delta_{\left[\delta_1,\delta_2\right]} f$ coincides with $\mathcal{F}f$ on $\left\{\delta_1\leq \xi\leq \delta_2\right\}$ and is supported on the domain $\left\{\frac{\delta_1}{2}\leq \xi\leq 2\delta_2\right\}$.


Now, we may define the standard Besov spaces $B^{s}_{p,q}\left(\mathbb{R}^D\right)$, for any $s \in \mathbb{R}$ and $1\leq p,q\leq \infty$, as the subspaces of tempered distributions endowed with the norm
\begin{equation}
	\left\|f\right\|_{B^{s}_{p,q}\left(\mathbb{R}^D\right)}=
	\left(\left\|\Delta_0 f\right\|_{L^p\left(\mathbb{R}^D\right)}^q
	+\sum_{k=0}^\infty 2^{ksq}
	\left\|\Delta_{2^k}f\right\|_{L^p\left(\mathbb{R}^D\right)}^q\right)^\frac{1}{q},
\end{equation}
if $q<\infty$, and with the obvious modifications in case $q=\infty$.


We also introduce the homogeneous Besov spaces $\dot B^{s}_{p,q}\left(\mathbb{R}^D\right)$, for any $s \in \mathbb{R}$ and $1\leq p,q\leq \infty$, as the subspaces of tempered distributions endowed with the semi-norm
\begin{equation}
	\left\|f\right\|_{\dot B^{s}_{p,q}\left(\mathbb{R}^D\right)}=
	\left(
	\sum_{k=-\infty}^\infty 2^{ksq}
	\left\|\Delta_{2^k}f\right\|_{L^p\left(\mathbb{R}^D\right)}^q\right)^\frac{1}{q},
\end{equation}
if $q<\infty$, and with the obvious modifications in case $q=\infty$.

For functions depending on two variables $x\in\mathbb{R}^D$ and $v\in\mathbb{R}^D$, we will use the Littlewood-Paley decomposition on each variable. That is, denoting $\DD = \{0\} \cup \set{2^k\in\mathbb{N}}{k\in\mathbb{N}} $, we can write
\begin{equation}
	f(x,v) = \sum_{i,j \in \DD} \Delta_i^x \Delta_j^v f(x,v),
\end{equation}
for any $f(x,v)\in\mathcal{S}'\left(\mathbb{R}^D_x\times\mathbb{R}^D_v\right)$, where we employ the superscripts to emphasize that the multipliers $\Delta_i^x$ and $\Delta_j^v$ solely act on the respective variables $x$ and $v$. Thus, we define now the mixed Besov spaces $B^{t,s}_{r,p,q} \left(\mathbb{R}^D_x\times\mathbb{R}^D_v\right)$, for any $s,t \in \mathbb{R}$ and $1\leq p,q,r\leq \infty$, as the subspaces of tempered distributions endowed with the norm
\begin{equation}
	\left\|f\right\|_{B^{t,s}_{r,p,q}\left(\mathbb{R}^D_x\times\mathbb{R}^D_v\right)}=
	\left(
	\sum_{i,j\in\DD} \left(1\vee i\right)^{tq}\left(1\vee j\right)^{sq}
	\left\|\Delta_i^x \Delta_j^v f\right\|_{L^r\left(\mathbb{R}^D_x;L^p\left(\mathbb{R}^D_v\right)\right)}^q\right)^\frac{1}{q},
\end{equation}
if $q<\infty$, and with the obvious modifications in case $q=\infty$, where the symbol $a\vee b$, for any $a,b\in\mathbb{R}$, stands for the maximum between $a$ and $b$.

Next, in addition to the spaces $ L^r\left( \mathbb{R}^D_x ; B^{s}_{p,q}\left(\mathbb{R}^D_v\right)  \right)  $, for any $s \in \mathbb{R}$ and $1\leq p,q,r\leq \infty$, which are defined as $L^r$ spaces with values in the Banach spaces $B^{s}_{p,q}$, we further define the spaces $ \widetilde L^r\left( \mathbb{R}^D_x ; B^{s}_{p,q}\left(\mathbb{R}^D_v\right)  \right)  $ as the subspaces of tempered distributions endowed with the norm
\begin{equation}
	\left\|f\right\|_{ \widetilde L^r \left( \mathbb{R}^D_x ;   B^{s}_{p,q}\left(\mathbb{R}^D_v\right) \right)}=
	\left(\left\|\Delta_0^v  f\right\|_{ L^r\left(\mathbb{R}^D_x;L^p\left(\mathbb{R}^D_v\right)\right) }^q
	+\sum_{k=0}^\infty 2^{ksq}
	\left\|\Delta_{2^k}^vf\right\|_{L^r\left(\mathbb{R}^D_x;L^p\left(\mathbb{R}^D_v\right)\right) }^q\right)^\frac{1}{q},
\end{equation}
if $q<\infty$, and with the obvious modifications in case $q=\infty$. This kind of spaces were first introduced by Chemin and Lerner in \cite{CL95} with $(x,v)$ replaced by $(t,x)$ (time and space) and were used in many problems related to the Navier-Stokes equations (cf. \cite{CM01} for instance).

One can check easily that, if $q \geq r $, then
\begin{equation}
	 L^r\left( \mathbb{R}^D_x ; B^{s}_{p,q}\left(\mathbb{R}^D_v\right)  \right)
	\subset  \widetilde L^r\left( \mathbb{R}^D_x ;   B^{s}_{p,q}\left(\mathbb{R}^D_v\right) \right),
\end{equation}
and that, if $q \leq r $, then
\begin{equation}
	\widetilde L^r\left( \mathbb{R}^D_x ; B^{s}_{p,q}\left(\mathbb{R}^D_v\right)  \right)
	\subset  L^r\left( \mathbb{R}^D_x; B^{s}_{p,q}\left(\mathbb{R}^D_v\right) \right).
\end{equation}
Furthermore, it holds that
\begin{equation}
	B^{0,s}_{r,p,1} \left(\mathbb{R}^D_x\times\mathbb{R}^D_v\right)
	\subset \widetilde L^r\left( \mathbb{R}^D_x ;   B^{s}_{p,q}\left(\mathbb{R}^D_v\right) \right)
	\subset B^{0,s}_{r,p,\infty} \left(\mathbb{R}^D_x\times\mathbb{R}^D_v\right),
\end{equation}
for all $s\in \mathbb{R}$ and $1\leq p,q,r\leq \infty$.


\subsection{Real interpolation theory}\label{interpolation}

We present now some useful elements from real interpolation theory. We will merely discuss properties that will be useful in the sequel and we refer to \cite{bergh} for more details on the subject.

First of all, we briefly recall the $K$-method of real interpolation. To this end, we consider any couple of normed spaces $A_0$ and $A_1$ compatible in the sense that they are embedded in a common topological vector space. The sum $A_1+A_0$ is the normed space defined by the norm
\begin{equation}
		\left\|a\right\|_{A_1+A_0} = \inf_{\substack{ a_0\in A_0, a_1\in A_1 \\ a=a_0+a_1 }}
		\left\|a_0\right\|_{A_0} + \left\|a_1\right\|_{A_1}.
\end{equation}
For any $0<\theta<1$, $1\leq q\leq \infty$, we define the normed space $\left[A_0,A_1\right]_{\theta,q}\subset A_0+A_1$ by the norm
\begin{equation}
	\left\|a\right\|_{\left[A_0,A_1\right]_{\theta,q}}=
	\left(
	\int_0^\infty \left(t^{-\theta}K(t,a)\right)^q \frac{dt}{t}
	\right)^\frac{1}{q},
\end{equation}
if $q<\infty$, and
\begin{equation}
	\left\|a\right\|_{\left[A_0,A_1\right]_{\theta,\infty}}=
	\sup_{t>0}t^{-\theta}K(t,a),
\end{equation}
if $q=\infty$, where the $K$-functional is defined, for any $a\in A_0+A_1$, $t>0$, by
\begin{equation}
	K(t,a) = \inf_{\substack{ a_0\in A_0, a_1\in A_1 \\ a=a_0+a_1 }}
	\left\|a_0\right\|_{A_0} + t \left\|a_1\right\|_{A_1}.
\end{equation}

Then, the normed space $\left[A_0,A_1\right]_{\theta,q}$ is an exact interpolation space of exponent $\theta$. In other words, this means that, considering another couple of compatible normed spaces $B_0$ and $B_1$, for any operator $T$ bounded from $A_0$ into $B_0$ and from $A_1$ into $B_1$, the operator $T$ is bounded from $\left[A_0,A_1\right]_{\theta,q}$ into $\left[B_0,B_1\right]_{\theta,q}$ as well, with an operator norm satisfying
\begin{equation}
	\left\|T\right\|_{\left[A_0,A_1\right]_{\theta,q}\to\left[B_0,B_1\right]_{\theta,q}}
	\leq
	\left\|T\right\|_{A_0\to B_0}^{1-\theta}
	\left\|T\right\|_{A_1\to B_1}^\theta.
\end{equation}

It turns out that there are other equivalent methods yielding the same interpolation spaces as the $K$-method. In particular, we are now briefly presenting the construction of interpolation spaces known as \emph{espaces de moyennes} (as coined by Lions and Peetre \cite{lions6}), which is equivalent to the $K$-method of interpolation but has the advantage of being slightly more general. We refer to \cite{bergh} for more details on these methods.

Thus, for any $0<\theta<1$ and $1\leq q, q_0,q_1<\infty$ such that $\frac 1q=\frac{1-\theta}{q_0}+\frac{\theta}{q_1}$, and any compatible couple of normed spaces $A_0$ and $A_1$, we define the following norms
\begin{equation}
	\inf_{\substack{ a_0(s)\in A_0, a_1(s)\in A_1 \\ a=a_0(s)+a_1(s) }} \left( \left\|s^{-\theta} a_0(s)\right\|_{L^{q_0}\left((0,\infty),\frac{ds}{s};A_0\right)} + \left\|s^{1-\theta} a_1(s)\right\|_{L^{q_1}\left((0,\infty),\frac{ds}{s};A_1\right)} \right)
\end{equation}
and
\begin{equation}\label{espace moyenne}
	\inf_{\substack{ a_0(s)\in A_0, a_1(s)\in A_1 \\ a=a_0(s)+a_1(s) }} \left( \left\|s^{-\theta} a_0(s)\right\|^{q_0}_{L^{q_0}\left((0,\infty),\frac{ds}{s};A_0\right)} + \left\|s^{1-\theta} a_1(s)\right\|^{q_1}_{L^{q_1}\left((0,\infty),\frac{ds}{s};A_1\right)} \right)^\frac{1}{q}.
\end{equation}
It is possible to show (cf. \cite[Thm 3.12.1]{bergh}) that both norms above are equivalent to $\left\|a\right\|_{\left[A_0,A_1\right]_{\theta,q}}$ and thus define the same interpolation space.

%


\section{Main results}\label{main results}

In this section, we present our main results.

\subsection{Velocity averaging in $L^1_xL^p_v$, inhomogeneous case}

Our first result concerns the endpoint case $L^1_xL^p_v$.

\begin{thm}\label{pre averaging lemma p}
	Let $f(x,v) \in \widetilde L^1\left(\mathbb{R}^D_x;B^\alpha_{p,q}\left(\mathbb{R}^D_v\right)\right)$, where $1\leq p, q \leq \infty$ and $\alpha>-D\left(1-\frac 1p\right)>-1 $, be such that
	\begin{equation}\label{pre averaging lemma p.0}
		v \cdot \nabla_x f = g
	\end{equation}
	for some $g(x,v) \in \widetilde L^1\left(\mathbb{R}^D_x;B^\beta_{p,q}\left(\mathbb{R}^D_v\right)\right)$, where $\beta\in\mathbb{R}$.
	
	If $\beta + D \left(1-\frac1p\right)  <1$, then
	\begin{equation}
		\int_{\mathbb{R}^D} f(x,v) dv \in B^s_{p,q}\left(\mathbb{R}^D_x\right),
	\end{equation}
	where $s = \frac{\alpha+D \left( 1 - \frac 1 p \right)}{1+\alpha-\beta}  - D \left( 1 - \frac 1 p \right) $, and the following estimate holds
	\begin{equation}\label{pre averaging lemma p.1}
		\begin{gathered}
			\left\|\int_{\mathbb{R}^D} f(x,v) dv\right\|_{B^s_{p,q}\left(dx\right)}\leq
			 C \left(\left\|f\right\|_{\widetilde L^1\left(dx;B^\alpha_{p,q}\left(dv\right)\right)}+ \left\|g\right\|_{\widetilde L^1\left(dx;B^\beta_{p,q}\left(dv\right)\right)}\right),
		\end{gathered}
	\end{equation}
	where the constant $C>0$ only depends on fixed parameters.
	
	If $\beta + D \left(1-\frac1p\right)  > 1$ or, if $\beta + D \left(1-\frac1p\right) = 1$ and $q=1$, then
	\begin{equation}
		\int_{\mathbb{R}^D} f(x,v) dv \in B^{s}_{p,\infty}\left(\mathbb{R}^D_x\right),
	\end{equation}
	where $s = 1- D \left( 1 - \frac 1 p \right) $, and the following estimate holds
	\begin{equation}\label{pre averaging lemma p.2}
		\begin{gathered}
			\left\|\int_{\mathbb{R}^D} f(x,v) dv\right\|_{B^{s}_{p,\infty}\left(dx\right)}\leq
			 C \left(\left\|\Delta_0^{x,v}f\right\|_{L^1\left(dx;L^p\left(dv\right)\right)}+ \left\|g\right\|_{\widetilde L^1\left(dx;B^\beta_{p,q}\left(dv\right)\right)}\right),
		\end{gathered}
	\end{equation}
	where the constant $C>0$ only depends on fixed parameters.
\end{thm}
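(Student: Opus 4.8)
The plan is to work with the Littlewood--Paley decomposition in $x$ and estimate, for each dyadic frequency $2^k$, the quantity $\|\Delta_{2^k}^x \int f\,dv\|_{L^p_x}$, then sum the resulting geometric-type series in $k$ against the prescribed weight $2^{ks q}$. Fix $k\geq 0$ and split the average into two pieces according to a free parameter $\lambda>0$ to be optimized later: write
\begin{equation*}
	\int_{\mathbb{R}^D} \Delta_{2^k}^x f\,dv
	= \int_{\mathbb{R}^D} \Delta_{2^k}^x S_\lambda^v f\,dv
	+ \int_{\mathbb{R}^D} \Delta_{2^k}^x \big(\operatorname{Id}-S_\lambda^v\big) f\,dv
	=: A_k^\lambda + B_k^\lambda.
\end{equation*}
For the low-velocity-frequency term $A_k^\lambda$, the idea is the classical averaging mechanism: on the support of $\mathcal F^v(S_\lambda^v f)$ we have $|\xi_v|\lesssim \lambda$, so for $x$-frequencies of size $2^k$ the symbol $v\cdot\xi_x$ is ``large'' only away from a slab of $v$-measure $\sim 2^{-k}\lambda^{D-1}$ times a length $\sim(\text{something})$; more precisely one localizes $v$ to $|v\cdot\tfrac{\xi_x}{|\xi_x|}|\leq \mu$ versus $|v\cdot\tfrac{\xi_x}{|\xi_x|}|\geq \mu$, inverting $v\cdot\nabla_x$ on the latter region using $g$. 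This produces a bound of the form $\|A_k^\lambda\|_{L^p_x}\lesssim \mu^{1/p'}\lambda^{?}\,2^{-?}(\cdots)$ after using Bernstein in $v$ to pass from $B^\alpha_{p,q}$- and $B^\beta_{p,q}$-norms to $L^p_v$-norms at velocity frequency $\leq\lambda$ (this is exactly where the hypothesis $\alpha>-D(1-\tfrac1p)$, i.e. the Bernstein-admissibility of integrating $dv$ against an $L^p_v$ function at a fixed frequency, enters, and where the $\beta+D(1-\tfrac1p)$ threshold controls whether the high-$v$-frequency tail of $g$ converges). For the high-velocity-frequency term $B_k^\lambda$ one does not use the transport equation at all: one simply estimates $\|\int (\operatorname{Id}-S_\lambda^v)\Delta_j^v f\,dv\|_{L^p_x}$ by Bernstein in $v$, $\|\int \Delta_j^v h\,dv\|_{L^p_x}\lesssim 2^{jD(1-1/p)}\|\Delta_j^v h\|_{L^r_x L^p_v}$ with $r=1$ here, summed over $2^j\geq\lambda$; the constraint $\alpha+D(1-\tfrac1p)>0$ makes this sum converge and gives $\|B_k^\lambda\|_{L^p_x}\lesssim \lambda^{-(\alpha+D(1-1/p))}\|f\|_{\widetilde L^1_x B^\alpha_{p,q}}$ (with the $\widetilde L^1$ rather than $L^1$ being essential so that the $\ell^q$ sum in $j$ can be pulled outside the $L^1_x$ integral — this is the whole point of the Chemin--Lerner space).

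Next I would optimize. For fixed $k$, choosing $\lambda$ (and $\mu$) to balance the two contributions $A_k^\lambda$ and $B_k^\lambda$ yields, after the algebra, a bound $\|\Delta_{2^k}^x\int f\,dv\|_{L^p_x}\lesssim 2^{ks}\,(\text{coefficient}_k)$ where $s=\frac{\alpha+D(1-1/p)}{1+\alpha-\beta}-D(1-\tfrac1p)$ is precisely the exponent obtained from the interpolation/optimization, and $(\text{coefficient}_k)$ is an $\ell^q$-summable sequence controlled by the two Chemin--Lerner norms — provided $\beta+D(1-\tfrac1p)<1$, which is what guarantees that the optimal $\lambda$ genuinely lies in the high-frequency regime for every $k$ and that the exponent $s$ stays below the saturation value $1-D(1-\tfrac1p)$. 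Summing in $k$ (and handling $k=0$, i.e. the $\Delta_0^x$ block, separately by the crude $B_k^\lambda$-type estimate alone) gives \eqref{pre averaging lemma p.1}. In the complementary regime $\beta+D(1-\tfrac1p)\geq 1$, the optimization saturates: the parameter $\lambda$ can no longer be taken large, one simply takes $\lambda\sim 2^k$ (or a fixed multiple), the gain stalls at $s=1-D(1-\tfrac1p)$, and one only gets an $\ell^\infty$ (rather than $\ell^q$) bound in $k$ because the borderline sum in $j$ for the $g$-term must be estimated by a supremum; the endpoint case $\beta+D(1-\tfrac1p)=1$ with $q=1$ survives because then the $\ell^1$ sum still converges logarithmically-freely. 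This yields \eqref{pre averaging lemma p.2}, with the $\Delta_0^{x,v}f$ term on the right-hand side appearing because in this regime the low-low block is the only piece of $f$ that is not already absorbed into the $g$-estimate.

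The main obstacle, I expect, is making the splitting-and-optimization in the $A_k^\lambda$ term genuinely rigorous: one must implement the ``slab in $v$'' localization by an honest Fourier multiplier or a smooth cutoff in $v$ that commutes suitably with $S_\lambda^v$, control the inversion of $v\cdot\nabla_x$ on the complementary region in $L^p_x$ uniformly (this is a one-dimensional-in-$\xi_x$ multiplier estimate, but the $v$-dependence of the symbol must be tracked), and then verify that after integrating $dv$ and applying Bernstein the dependence on $\mu$ and $\lambda$ comes out with exactly the exponents needed for the balance to produce the stated $s$. A secondary technical point is the bookkeeping of the $\widetilde L^1_x$ norms: every application of Bernstein in $v$ and of Minkowski's integral inequality must be arranged so that the $\ell^q_j$-summation stays inside and the $L^1_x$-integration stays outside, which is exactly what the $\widetilde L$ spaces are designed for but requires care at each step. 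The division into the three sub-cases according to the sign of $\beta+D(1-\tfrac1p)-1$ is then a matter of reading off which term in the optimization dominates.
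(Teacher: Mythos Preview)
Your decomposition has a fatal flaw at the very first step. Since $\Delta_{2^j}^v f$ (for $j\geq 0$) has velocity Fourier transform supported away from the origin, one has $\int_{\mathbb{R}^D}\Delta_{2^j}^v f(x,v)\,dv=\widehat{\Delta_{2^j}^v f}(x,\xi=0)=0$ identically. Hence your high-velocity-frequency piece $B_k^\lambda=\int\Delta_{2^k}^x(\mathrm{Id}-S_\lambda^v)f\,dv$ vanishes, $A_k^\lambda$ is the full average, and the parameter $\lambda$ does nothing. The Bernstein inequality you invoke for $B_k^\lambda$ is vacuous (both sides are zero), and there is no nontrivial estimate of the type $\|\int\Delta_j^v h\,dv\|_{L^p_x}\lesssim 2^{jD(1-1/p)}\|\Delta_j^v h\|_{L^1_xL^p_v}$ that would let you pass from $L^1_x$ to $L^p_x$ integrability without using the equation. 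Relatedly, the slab/multiplier mechanism you sketch for $A_k^\lambda$ is the classical $L^2$ device; the multipliers $\rho(tv\cdot\eta)$ and $(1-\rho(tv\cdot\eta))/(iv\cdot\eta)$ are not obviously bounded on $L^1_x$, and this is precisely the obstruction the paper is built to overcome.

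The paper's route is genuinely different and is the missing idea: one writes the Duhamel formula $f(x,v)=f(x-tv,v)+\int_0^t g(x-sv,v)\,ds$, integrates in $v$, and then uses the \emph{transfer-of-frequencies} identity
\[
\Delta_{2^k}^x\int h(x-tv,v)\,dv=\int\bigl(\Delta_{t2^k}^v h\bigr)(x-tv,v)\,dv
\]
(which follows from the change of variables $y=x-tv$ in the convolution). This is what legitimately links spatial frequencies of the average to velocity frequencies of $f$ and $g$; the velocity-frequency localisation arises through the flow, not through a direct cutoff on $f$. The $L^1_xL^p_v\to L^p_x$ gain then comes from the Castella--Perthame dispersive estimate $\|h(x-tv,v)\|_{L^p_xL^1_v}\leq t^{-D(1-1/p)}\|h\|_{L^1_xL^p_v}$, and the free parameter to optimise is $t$ (not a velocity-frequency threshold $\lambda$). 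Once you have $\|A_{2^k}^t f\|_{L^p_x}\lesssim t^{-\alpha-D(1-1/p)}2^{-k\alpha}\|f\|_{\widetilde L^1 B^\alpha_{p,q}}$ and the analogous bound for the $g$-term, the balancing in $t$ and the summation in $k$ proceed roughly as you describe in your last paragraph, including the saturation and the $\ell^\infty$ outcome when $\beta+D(1-\tfrac1p)\geq 1$.
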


Notice that, with the sole bound $f(x,v) \in \widetilde L^1\left(\mathbb{R}^D_x;B^\alpha_{p,q}\left(\mathbb{R}^D_v\right)\right)$, and in particular without any information on the transport equation \eqref{pre averaging lemma p.0}, it is only possible to deduce, by Sobolev embedding, that the velocity average satisfies, for any $\phi(v)\in C_0^\infty\left(\mathbb{R}^D\right)$,
\begin{equation}
	\int_{\mathbb{R}^D} f(x,v)\phi(v) dv \in L^1\left(\mathbb{R}^D_x\right)\subset B_{p,\infty}^{-D\left(1-\frac 1p\right)}\left(\mathbb{R}^D_x\right).
\end{equation}
Therefore, in the above theorem, the gain on the velocity average can be measured by the difference between the regularity index $s$ in \eqref{pre averaging lemma p.1} and \eqref{pre averaging lemma p.2}, and the regularity index $-D\left(1-\frac 1p\right)$ obtained by Sobolev embedding. Thus, as long as $\beta + D \left(1-\frac1p\right)  <1$, the above theorem yields a gain of regularity of $\frac{\alpha+D \left( 1 - \frac 1 p \right)}{1+\alpha-\beta}$.

Note that this gain approaches one full derivative as $\beta$ tends to $1-D \left(1-\frac1p\right)$, or as $\alpha$ tends to infinity. However, since the transport operator is a differential operator of order one, it can never yield a net gain of regularity greater than one. This is precisely the reason why, when $\beta + D \left(1-\frac1p\right)  \geq 1$, the averaging lemma saturates and only yields a maximal gain of one full derivative, independently of $\alpha$.

Very loosely speaking, this result shows that the transport operator $v\cdot\nabla_x$ is fully invertible when $g$ is very regular in velocity. Thus, it becomes an elliptic operator through velocity averaging. This is also the reason why, quite remarkably, only the low frequencies of $f$ are involved in this case.

Notice also that the condition $\alpha>-D\left(1-\frac 1p\right)$ above is very natural, since otherwise the gain $\frac{\alpha+D \left( 1 - \frac 1 p \right)}{1+\alpha-\beta}$ becomes negative and thus the averaging lemma turns out to be weaker than the Sobolev embedding.

However, the condition $D\left(1-\frac 1p\right)<1$ seems less natural. Actually, its necessity comes from the handling of the low velocity frequencies of $g(x,v)$ (it can be interpreted as $\beta + D \left(1-\frac1p\right)  <1$ with $\beta=0$ for those low frequencies). Thus, it is possible to remove this condition by considering a corresponding version of Theorem \ref{pre averaging lemma p} for homogeneous Besov spaces, which is the content of Theorem \ref{pre averaging lemma p h} below.

The next result extends the preceding theorem to the case including spatial derivatives in the right-hand side.

\begin{thm}\label{pre averaging lemma p 2}
	Let $f(x,v) \in B^{a,\alpha}_{1,p,q}\left(\mathbb{R}^D_x\times\mathbb{R}^D_v\right)$, where $1\leq p, q \leq \infty$, $a\in\mathbb{R}$ and $\alpha> - D \left( 1-\frac{1}{p} \right)  > -1$, be such that
	\begin{equation}
		v \cdot \nabla_x f = g
	\end{equation}
	for some $g(x,v) \in B^{b,\beta}_{1,p,q}\left(\mathbb{R}^D_x\times\mathbb{R}^D_v\right)$, where $\beta\in\mathbb{R}$ and $b\geq a-1$.

	If $\beta + D \left(1-\frac1p\right)  <1$, then
	\begin{equation}
		\int_{\mathbb{R}^D} f(x,v) dv \in B^s_{p,q}\left(\mathbb{R}^D_x\right),
	\end{equation}
	where $ s = \left(1+b-a\right)\frac{\alpha+ D \left( 1 - \frac 1 p \right)}{1+\alpha-\beta} + a - D \left( 1 - \frac 1 p \right) $, and the following estimate holds
	\begin{equation}
		\begin{gathered}
			\left\|\int_{\mathbb{R}^D} f(x,v) dv\right\|_{B^s_{p,q}\left(dx\right)}\leq
			C \left( \left\| f \right\|_{B^{a,\alpha}_{1,p,q}\left(\mathbb{R}^D_x\times\mathbb{R}^D_v\right)}
			+
			\left\| g \right\|_{B^{b,\beta}_{1,p,q}\left(\mathbb{R}^D_x\times\mathbb{R}^D_v\right)} \right),
		\end{gathered}
	\end{equation}
	where the constant $C>0$ only depends on fixed parameters.
	
	If $\beta + D \left(1-\frac1p\right) > 1$ or, if $\beta + D \left(1-\frac1p\right) = 1$ and $q=1$, then
	\begin{equation}
		\int_{\mathbb{R}^D} f(x,v) dv \in B^s_{p,q}\left(\mathbb{R}^D_x\right),
	\end{equation}
	where $ s = \left(1+b-a\right) + a - D \left( 1 - \frac 1 p \right) $, and the following estimate holds
	\begin{equation}
		\begin{gathered}
			\left\|\int_{\mathbb{R}^D} f(x,v) dv\right\|_{B^s_{p,q}\left(dx\right)}\leq
			C \left( \left\|\Delta_0^{x,v}f\right\|_{L^1\left(dx;L^p\left(dv\right)\right)}
			+
			\left\| g \right\|_{B^{b,\beta}_{1,p,q}\left(\mathbb{R}^D_x\times\mathbb{R}^D_v\right)} \right),
		\end{gathered}
	\end{equation}
	where the constant $C>0$ only depends on fixed parameters.
	
	If $\beta + D \left(1-\frac1p\right) = 1$ and $q\neq 1$, then, for every $\epsilon>0$,
	\begin{equation}
		\int_{\mathbb{R}^D} f(x,v) dv \in B^{s-\epsilon}_{p,q}\left(\mathbb{R}^D_x\right),
	\end{equation}
	where $ s = \left(1+b-a\right) + a - D \left( 1 - \frac 1 p \right) $, and the following estimate holds
	\begin{equation}
		\begin{gathered}
			\left\|\int_{\mathbb{R}^D} f(x,v) dv\right\|_{B^{s-\epsilon}_{p,q}\left(dx\right)}\leq
			C \left( \left\| f \right\|_{B^{a,\alpha}_{1,p,q}\left(\mathbb{R}^D_x\times\mathbb{R}^D_v\right)}
			+
			\left\| g \right\|_{B^{b,\beta}_{1,p,q}\left(\mathbb{R}^D_x\times\mathbb{R}^D_v\right)} \right),
		\end{gathered}
	\end{equation}
	where the constant $C>0$ only depends on fixed parameters, in particular on $\epsilon>0$.
\end{thm}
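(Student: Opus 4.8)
The plan is to deduce Theorem~\ref{pre averaging lemma p 2} from Theorem~\ref{pre averaging lemma p} by localizing in the $x$-frequency and rescaling each dyadic block, and then — in order to reach the sharp index for the whole range $b\geq a-1$ and for an arbitrary third exponent $q$ — by interpolating in the velocity-regularity scale, in the spirit of \cite{diperna2}. First, since $\Lambda_x^a:=(1-\Delta_x)^{a/2}$ is an isomorphism from $B^{\sigma,\tau}_{1,p,q}$ onto $B^{\sigma-a,\tau}_{1,p,q}$ for every $\sigma,\tau$, and commutes with $v\cdot\nabla_x$ and with the $v$-integration, one may replace $(f,g)$ by $(\Lambda_x^af,\Lambda_x^ag)$ and assume $a=0$, $b\geq-1$. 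One then writes $f=\Delta_0^xf+\sum_{N\geq1}\Delta_N^xf$ and likewise for $g$, disposes of the block $\Delta_0^x$ by a direct elementary estimate (its average is a low-frequency function, controlled by the hypotheses), and, for each dyadic $N=2^k\geq1$, sets $f_N=\Delta_N^xf$, $g_N=\Delta_N^xg$, so that $v\cdot\nabla_xf_N=g_N$ and $\int f_N\,dv=\Delta_N^x\int f\,dv$ is localized at $x$-frequency $\sim N$. Rescaling $x\mapsto N^{-1}x$ turns this into a transport relation at $x$-frequency $\sim1$ whose right-hand side carries an extra factor $N^{-1}$, while $\widetilde L^1_x$-norms acquire a factor $N^D$; applying Theorem~\ref{pre averaging lemma p} to the rescaled block (in the regime $\beta+D(1-\frac1p)<1$ or in the elliptic regime $\beta+D(1-\frac1p)\geq1$, according to the hypothesis) and undoing the scaling yields, block by block, an $L^p_x$-bound for $\int f_N\,dv$ in terms of $\|f_N\|_{\widetilde L^1_x(B^\alpha_{p,q})}$ and $\|g_N\|_{\widetilde L^1_x(B^\beta_{p,q})}$. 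Summing over $N$ against the weights defining $B^s_{p,q}(dx)$ and $B^{b,\beta}_{1,p,q}$ then reproduces the statement \emph{exactly} in the balanced case $b=a-1$, where all powers of $N$ cancel, and produces a non-optimal estimate otherwise.

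To obtain the sharp index for all $b\geq a-1$, I would interpolate using the \emph{espaces de moyennes} of Section~\ref{interpolation}, realizing the velocity Besov spaces of $f$ and $g$ between two endpoints: a ``degenerate'' endpoint of velocity regularity close to $-D(1-\frac1p)$, at which the averaging estimate collapses to the trivial Sobolev bound $\int f\,dv\in B^{a-D(1-\frac1p)}_{p,\cdot}(dx)$ — the limit $\gamma\to0$, where $\gamma:=\frac{\alpha+D(1-\frac1p)}{1+\alpha-\beta}$ — and an ``elliptic'' endpoint of very high velocity regularity, at which $v\cdot\nabla_x$ is fully inverted on the low velocity frequencies of $f$ and $\int f\,dv\in B^{b+1-D(1-\frac1p)}_{p,\cdot}(dx)$ — the limit $\gamma\to1$. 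The coupling of $f$ and $g$ through the equation is dealt with, as in the classical argument, by splitting $f$ in each $x$-frequency annulus into its part supported near $\{v\cdot\xi=0\}$ — controlled by $\|f\|$ times the measure of a thin slab — and its complement — controlled by $\|g\|/|\xi|$ through the invertibility of the symbol $iv\cdot\xi$ — with the width of the slab as the free parameter; optimizing the split supplies precisely the $K$-functional bound needed to run the interpolation. The interpolation exponent turns out to be $\theta=\gamma$, so the regularity of the average is $(1-\gamma)\big(a-D(1-\frac1p)\big)+\gamma\big(b+1-D(1-\frac1p)\big)=(1+b-a)\gamma+a-D(1-\frac1p)=s$, and the third exponent $q$ is recovered automatically from the espaces de moyennes construction.

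It remains to separate the three regimes. If $\beta+D(1-\frac1p)<1$ then $\gamma<1$ and the interpolation above gives the first assertion. If $\beta+D(1-\frac1p)>1$, the interpolation saturates — a first-order operator cannot gain more than one derivative — so only the elliptic endpoint survives, giving $s=b+1-D(1-\frac1p)=(1+b-a)+a-D(1-\frac1p)$; moreover the reassembly of the $x$-blocks keeps the full $\ell^q$-summability carried by $B^{b,\beta}_{1,p,q}$ (and only $\Delta_0^{x,v}f$ enters from $f$), so the conclusion holds in $B^s_{p,q}(dx)$, an improvement over the weak space $B^s_{p,\infty}$ of Theorem~\ref{pre averaging lemma p}. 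At the borderline $\beta+D(1-\frac1p)=1$ one has $\gamma=1$ exactly: for $q=1$ the argument still closes, whereas for $q\neq1$ the elliptic-type endpoint estimate is available only into $B^s_{p,\infty}(dx)$ — at this exponent the geometric series in the Besov norm degenerates into a logarithmically divergent one — and one trades $\epsilon>0$ derivatives through $B^s_{p,\infty}(dx)\hookrightarrow B^{s-\epsilon}_{p,q}(dx)$, which accounts for the $\epsilon$-dependence of the constant. The step I expect to be the main obstacle is precisely this interpolation for general $b\geq a-1$: since $(f,g)\mapsto\int f\,dv$ is not a single linear operator on one interpolation couple ($f$ and $g$ being tied by the transport equation), the interpolation cannot be invoked directly but must be carried out by hand via the near/far threshold splitting reinterpreted as an espace-de-moyennes estimate, and the bookkeeping of the third exponent at $\beta+D(1-\frac1p)=1$, where exactness is lost, is the delicate point.
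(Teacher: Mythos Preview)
Your proposal has a real gap in the place you yourself flag as the obstacle. Two issues:

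\textbf{The rescaling detour.} After rescaling the block $f_N=\Delta_N^x f$ to $x$-frequency $\sim 1$, the velocity average is concentrated at a single dyadic shell, so all $B^{s_0}_{p,q}$ norms collapse to $\|\cdot\|_{L^p_x}$ and the regularity index $s_0$ from Theorem~\ref{pre averaging lemma p} is lost. What survives is
\[
\|\textstyle\int f_N\,dv\|_{L^p_x}\lesssim N^{D(1-\frac1p)}\|f_N\|_{\widetilde L^1 B^\alpha_{p,q}}+N^{D(1-\frac1p)-1}\|g_N\|_{\widetilde L^1 B^\beta_{p,q}},
\]
which is just the sum of the trivial Sobolev bound and the one-derivative gain. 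Summed against $N^{sq}$, the $f$-term forces $s\le a-D(1-\frac1p)$, i.e. the balanced case $b=a-1$ with zero net gain. So this step does not use Theorem~\ref{pre averaging lemma p} in any essential way and gives nothing towards the general case.

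\textbf{The thin-slab estimate.} In your interpolation step you write that the near-$\{v\cdot\xi=0\}$ part is ``controlled by $\|f\|$ times the measure of a thin slab''. That is the $L^2_{x,v}$ mechanism (Plancherel in $x$, then $\int_{|v\cdot\eta|\le \lambda}|\hat f|^2\phi\,dv\lesssim \lambda\|\hat f\|_{L^2_v}^2$). It does not transfer to $L^1_xL^p_v$: the multiplier $\rho(t\,\eta\cdot v)$ is not bounded on $L^1_x$ and there is no Plancherel to fall back on. The correct $L^1_xL^p_v$ mechanism is dispersion: writing $\mathcal F_x^{-1}\rho(t\eta\cdot v)\mathcal F_x f=\int f(x-stv,v)\tilde\rho(s)\,ds$ (the equivalence of \eqref{equiv f}) and using the Castella--Perthame estimate $\|h(x-tv,v)\|_{L^p_xL^1_v}\le t^{-D(1-\frac1p)}\|h\|_{L^1_xL^p_v}$, together with the frequency-transfer identity \eqref{x-to-v}, which localizes the near part in $v$-frequency at scale $\sim tN$. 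Without this, your $K$-functional bound for the near/far split cannot be established in the spaces at hand, and the interpolation does not close.

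\textbf{What the paper does instead.} The paper's proof is short and direct. It starts from the same decomposition
\[
\Delta_{2^k}^x\!\int f\,dv=A_{2^k}^t f+tB_{2^k}^t g
\]
of Proposition~\ref{crucial}, and uses two ingredients you are missing: (i) the dispersive bounds on $A_{2^k}^t$, $B_{2^k}^t$ in $L^1_xL^p_v$ (Lemma~\ref{operators norms dispersive}), and (ii) the $x$-frequency localization identities \eqref{crucial 3}, which give $A_{2^k}^t f=A_{2^k}^t\big(\Delta_{[2^{k-1},2^{k+1}]}^x f\big)$ and similarly for $B$. Point (ii) is what makes the mixed norm $B^{a,\alpha}_{1,p,q}$ appear: the bound on each block reads
\[
\|\Delta_{2^k}^x\!\textstyle\int f\,dv\|_{L^p_x}\le \frac{C}{t^{\alpha+D(1-\frac1p)}2^{k\alpha}}\|\Delta_{[2^{k-1},2^{k+1}]}^x f\|_{\widetilde L^1 B^\alpha_{p,q}}
+\frac{Ct^{1-\gamma-D(1-\frac1p)}}{2^{k\gamma}}\|\Delta_{[2^{k-1},2^{k+1}]}^x g\|_{\widetilde L^1 B^\beta_{p,q}}.
\]
One then simply optimizes $t=t_k=2^{-k\frac{(\alpha-\gamma)+(a-b)}{1+\alpha-\gamma}}$ (this is where the \emph{parameter} $t$, not interpolation, does the work), which produces exactly $2^{ka}\|\Delta^x_{2^k}f\|+2^{kb}\|\Delta^x_{2^k}g\|$ on the right, and sums in $\ell^q$. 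The elliptic regime is obtained by letting $t\to\infty$; the borderline $\beta+D(1-\frac1p)=1$, $q\neq 1$ carries the logarithmic loss from Lemma~\ref{operators norms dispersive}, whence the $\epsilon$. No interpolation machinery is needed.

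If you want to salvage your outline, replace the slab-measure step by the dispersive estimate above; then your ``espace de moyennes'' optimization in the width parameter becomes literally the optimization of $t_k$ in the paper's proof, and your rescaling step can be dropped.
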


The remarks formulated above about Theorem \ref{pre averaging lemma p} are still valid here regarding Theorem \ref{pre averaging lemma p 2}.

Thus, as long as $\beta + D \left(1-\frac1p\right)  <1$, the above theorem yields a gain of regularity of $(1+b-a)\frac{\alpha+D \left( 1 - \frac 1 p \right)}{1+\alpha-\beta}$ (compared to the Sobolev embedding). Notice that this gain approaches a derivative of order $(1+b-a)$ as $\beta$ tends to $1-D \left(1-\frac1p\right)$, or as $\alpha$ tends to infinity, which is optimal for a differential operator of order one. Therefore, the averaging lemma saturates beyond the value $\beta + D \left(1-\frac1p\right) = 1$ and only yields at most a gain of $1+b-a$ derivatives.

We do not know whether it is possible to achieve a full gain of $1+b-a$ derivatives in the case $\beta + D \left(1-\frac1p\right) = 1$ and $q\neq 1$. Nevertheless, the cases $\beta + D \left(1-\frac1p\right) > 1$ or $\beta + D \left(1-\frac1p\right) = 1$ and $q=1$ do yield an exact full gain of $1+b-a$ derivatives, independently of $\alpha$, which, again, is largely optimal. Quite remarkably, this is the first time that a velocity averaging result achieves exactly the maximal gain of regularity, i.e. one full derivative in the case $a=b=0$, say. Moreover, it is worth noting that only the low frequencies of $f$ are involved in this case, which, very loosely speaking, shows that the transport operator $v\cdot\nabla_x$ is fully invertible when $g$ is very regular in velocity.

Notice also that the conditions $\alpha>-D\left(1-\frac 1p\right)$ and $b\geq a-1$ above are very natural, since otherwise the gain $(1+b-a)\frac{\alpha+D \left( 1 - \frac 1 p \right)}{1+\alpha-\beta}$ becomes negative and thus the averaging lemma turns out to be weaker than the Sobolev embedding.

Finally, as for Theorem \ref{pre averaging lemma p}, the condition $D\left(1-\frac 1p\right)<1$ seems less natural. Actually, its necessity comes from the handling of the low velocity frequencies of $g(x,v)$ (it can be interpreted as $\beta + D \left(1-\frac1p\right)  <1$ with $\beta=0$ for those low frequencies). Thus, it is possible to remove this condition by considering a corresponding version of Theorem \ref{pre averaging lemma p 2} for homogeneous Besov spaces, which is the content of Theorem \ref{pre averaging lemma p 2 h} below.


\subsection{Velocity averaging in $L^1_xL^p_v$, homogeneous case}

This section contains the results in the endpoint case $L^1_xL^p_v$ formulated with homogeneous Besov spaces and corresponding to Theorems \ref{pre averaging lemma p} and \ref{pre averaging lemma p 2}.

\begin{thm}\label{pre averaging lemma p h}
	Let $f(x,v) \in \widetilde L^1\left(\mathbb{R}^D_x;\dot B^\alpha_{p,q}\left(\mathbb{R}^D_v\right)\right)$, where $1\leq p, q \leq \infty$ and $\alpha>-D\left(1-\frac 1p\right) $, be such that
	\begin{equation}
		v \cdot \nabla_x f = g
	\end{equation}
	for some $g(x,v) \in \widetilde L^1\left(\mathbb{R}^D_x;\dot B^\beta_{p,q}\left(\mathbb{R}^D_v\right)\right)$, where $\beta <  1-D \left(1-\frac1p\right)$.
	
	Then,
	\begin{equation}
		\int_{\mathbb{R}^D} f(x,v) dv \in \dot B^s_{p,q}\left(\mathbb{R}^D_x\right),
	\end{equation}
	where $s = \frac{\alpha+D \left( 1 - \frac 1 p \right)}{1+\alpha-\beta}  - D \left( 1 - \frac 1 p \right) $, and the following estimate holds
	\begin{equation}
		\begin{gathered}
			\left\|\int_{\mathbb{R}^D} f(x,v) dv\right\|_{\dot B^s_{p,q}\left(dx\right)}\leq
			 C \left\|f\right\|_{\widetilde L^1\left(dx;\dot B^\alpha_{p,q}\left(dv\right)\right)}^{\frac{1-\beta-D\left(1-\frac 1p\right)}{1+\alpha-\beta}}
			\times
			\left\|g\right\|_{\widetilde L^1\left(dx;\dot B^\beta_{p,q}\left(dv\right)\right)}^{\frac{\alpha+D\left(1-\frac 1p\right)}{1+\alpha-\beta}},
		\end{gathered}
	\end{equation}
	where the constant $C>0$ only depends on fixed parameters.
\end{thm}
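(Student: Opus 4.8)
The plan is to work frequency by frequency in $x$, exploiting the dispersive/smoothing properties of the kinetic transport semigroup, and then to optimize a splitting parameter using the \emph{espace de moyennes} description of homogeneous Besov spaces. First I would apply the spatial Littlewood-Paley projection $\Delta_i^x$ to the transport relation $v\cdot\nabla_x f=g$; since the multipliers commute, one gets $v\cdot\nabla_x(\Delta_i^x f)=\Delta_i^x g$, so it suffices to control $\bigl\|\int_{\mathbb{R}^D}\Delta_i^x f(x,v)\,dv\bigr\|_{L^1(dx)}$ and show it is bounded (up to a constant) by $2^{-is}$ times a geometric-type coefficient in $\ell^q$. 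Because $\Delta_i^x f$ has spatial frequency localized at $|\xi|\sim 2^i$, integrating the transport equation along characteristics and using the dispersive decay of $e^{-tv\cdot\nabla_x}$ on frequency-localized data will produce, for each truncation time $T>0$, a bound of the schematic form
\begin{equation}
\Bigl\|\int \Delta_i^x f\,dv\Bigr\|_{L^1_x}\lesssim T^{-\gamma_0}\,\bigl\|\Delta_i^x f\bigr\|_{\text{(velocity Besov)}}+2^{i}T^{\gamma_1}\,\bigl\|\Delta_i^x g\bigr\|_{\text{(velocity Besov)}},
\end{equation}
where the exponents $\gamma_0,\gamma_1$ and the relevant velocity-regularity indices come from the dispersion estimate applied to data of velocity regularity $\alpha$ (resp. $\beta$); the appearance of $D(1-\frac1p)$ is exactly the loss incurred in passing from $L^p_v$ localized data to $L^1_v$ via Bernstein, which is why the hypothesis $\alpha>-D(1-\frac1p)$ is what makes the first term's exponent favorable.

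Next I would recognize the two-term bound above as precisely the quantity being minimized in the $K$-functional / espace de moyennes construction from Section~\ref{interpolation}: the first term plays the role of $s^{-\theta}a_0(s)$ with $a_0$ measured in the "$f$" space and the second of $s^{1-\theta}a_1(s)$ with $a_1$ measured in the "$g$" space, after the change of variables relating $s$ to $T$. Choosing $\theta$ so that $\theta=\frac{\alpha+D(1-\frac1p)}{1+\alpha-\beta}$ (equivalently $1-\theta=\frac{1-\beta-D(1-\frac1p)}{1+\alpha-\beta}$) makes the two powers of $T$ balance, and the constraint $\beta<1-D(1-\frac1p)$ guarantees $\theta\in(0,1)$ so that the interpolation machinery applies and no endpoint degeneracy occurs. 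The exponent of $2^i$ coming out of the optimization is then $1-\theta$ on a derivative count, producing the stated index $s=\frac{\alpha+D(1-\frac1p)}{1+\alpha-\beta}-D(1-\frac1p)$; the Sobolev-embedding baseline $-D(1-\frac1p)$ is subtracted because we measured the velocity average of frequency-localized data in $L^1_x$ rather than in a $p$-based space.

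To assemble the homogeneous Besov norm I would raise the per-frequency bound to the $q$-th power and sum over $i\in\mathbb{Z}$, using that the velocity-Besov norms of $\Delta_i^x f$ and $\Delta_i^x g$, summed in $i$ with the right weights, are exactly $\|f\|_{\widetilde L^1(dx;\dot B^\alpha_{p,q}(dv))}^q$ and $\|g\|_{\widetilde L^1(dx;\dot B^\beta_{p,q}(dv))}^q$ by definition of the $\widetilde L^1(\dot B)$ space — this is precisely why the Chemin–Lerner space, rather than $L^1(dx;\dot B^\alpha_{p,q})$, is the natural hypothesis: the $v$-Littlewood–Paley sum must be on the outside to commute with the dispersive estimate performed at fixed velocity frequency. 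Applying the generalized Hölder/espace de moyennes inequality \eqref{espace moyenne} with exponents $q_0,q_1$ both equal to $q$ (or chosen compatibly) then yields the multiplicative bound $\|\,\cdot\,\|_{\widetilde L^1(\dot B^\alpha)}^{1-\theta}\|\,\cdot\,\|_{\widetilde L^1(\dot B^\beta)}^{\theta}$ with $\theta$ as above.

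The main obstacle, and the technical heart of the argument, is establishing the per-frequency dispersive bound \eqref{...} with the sharp exponents: one must carefully combine the explicit dispersion estimate for the kinetic transport flow (decay in $T$ for data localized in space frequency, at the cost of a velocity derivative) with a Bernstein-type inequality in $v$ to descend from $L^p_v$ to $L^1_v$, and keep track of all regularity indices so that the optimization lands exactly on $s$ rather than a lossy version of it. A secondary subtlety is the decomposition of the time integral into a "dispersive" regime and a "trivial" regime and justifying that the boundary term at $T$ is controlled — essentially the $K$-functional splitting $f=f_0(T)+f_1(T)$ must be done at the level of the Duhamel formula for the stationary transport equation, which requires a mild regularization/limiting argument to make rigorous in the distributional setting.
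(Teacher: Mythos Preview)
Your proposal has the right overall architecture---Duhamel plus optimization in a time parameter---but the mechanism you describe has a genuine gap. You say it suffices to control $\bigl\|\int \Delta_i^x f\,dv\bigr\|_{L^1(dx)}$ and that the factor $D(1-\tfrac1p)$ arises from a Bernstein inequality in $v$ passing from $L^p_v$ to $L^1_v$. Both points are wrong, and for the same underlying reason: in $L^1_x$ the transport flow gives \emph{no} dispersive decay at all (the map $f\mapsto f(x-tv,v)$ is an isometry on $L^1_{x,v}$), so your schematic bound with $T^{-\gamma_0}$ cannot be produced by measuring in $L^1_x$. The paper measures each dyadic block of the velocity average directly in $L^p_x$ via the Castella--Perthame estimate
\[
\Bigl\|\int_{\mathbb{R}^D} h(x-tv,v)\,dv\Bigr\|_{L^p_x}\leq t^{-D(1-\frac1p)}\,\|h\|_{L^1_xL^p_v},
\]
and this change of variables---not a Bernstein loss in $v$---is the true source of the exponent $D(1-\tfrac1p)$.

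The second missing ingredient is the frequency transfer identity
\[
\Delta_{2^k}^x\int_{\mathbb{R}^D} h(x-tv,v)\,dv=\int_{\mathbb{R}^D}\bigl(\Delta^v_{t2^k}h\bigr)(x-tv,v)\,dv,
\]
which converts the spatial block at scale $2^k$ into a \emph{velocity} block at scale $t2^k$. This is what makes the $\dot B^\alpha_{p,q}$ regularity of $f$ in $v$ enter: combining it with the dispersion bound gives $\|A_{2^k}^t f\|_{L^p_x}\leq C\,t^{-D(1-1/p)}\|\Delta^v_{t2^k}f\|_{L^1_xL^p_v}\leq C\,(t2^k)^{-\alpha}t^{-D(1-1/p)}\|f\|_{\widetilde L^1\dot B^\alpha}$. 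Your outline never explains how $\alpha$ appears, and your claim that ``summing the velocity-Besov norms of $\Delta_i^x f$ over $i$'' reconstructs $\|f\|_{\widetilde L^1\dot B^\alpha}$ is incorrect---that norm involves no $x$-decomposition. In the paper one simply sets $t=t_k=\lambda^{1/(1+\alpha-\beta)}2^{-k(\alpha-\beta)/(1+\alpha-\beta)}$ to balance the two terms and sums in $k$; the $\ell^q$ sum closes because $t_k2^k$ ranges monotonically over all dyadic scales, directly recovering the $\widetilde L^1\dot B^\alpha$ and $\widetilde L^1\dot B^\beta$ norms. No $K$-functional or espace de moyennes machinery is needed here.
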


\begin{thm}\label{pre averaging lemma p 2 h}
	Let $f(x,v) \in \dot B^{a,\alpha}_{1,p,q}\left(\mathbb{R}^D_x\times\mathbb{R}^D_v\right)$, where $1\leq p, q \leq \infty$, $a\in\mathbb{R}$ and $\alpha> - D \left( 1-\frac{1}{p} \right)$, be such that
	\begin{equation}
		v \cdot \nabla_x f = g
	\end{equation}
	for some $g(x,v) \in \dot B^{b,\beta}_{1,p,q}\left(\mathbb{R}^D_x\times\mathbb{R}^D_v\right)$, where $b\geq a-1$ and $\beta <  1-D \left(1-\frac1p\right)$.

	Then,
	\begin{equation}
		\int_{\mathbb{R}^D} f(x,v) dv \in \dot B^s_{p,q}\left(\mathbb{R}^D_x\right),
	\end{equation}
	where $ s = \left(1+b-a\right)\frac{\alpha+ D \left( 1 - \frac 1 p \right)}{1+\alpha-\beta} + a - D \left( 1 - \frac 1 p \right) $, and the following estimate holds
	\begin{equation}
		\left\|\int_{\mathbb{R}^D} f(x,v) dv\right\|_{\dot B^s_{p,q}\left(dx\right)}\leq
		C
		\left\| f \right\|_{\dot B^{a,\alpha}_{1,p,q}\left(\mathbb{R}^D_x\times\mathbb{R}^D_v\right)}^{\frac{1-\beta-D\left(1-\frac 1p\right)}{1+\alpha-\beta}}
		\times
		\left\| g \right\|_{\dot B^{b,\beta}_{1,p,q}\left(\mathbb{R}^D_x\times\mathbb{R}^D_v\right)}^{\frac{\alpha+D\left(1-\frac 1p\right)}{1+\alpha-\beta}},
	\end{equation}
	where the constant $C>0$ only depends on fixed parameters.
	
	Furthermore, if $\beta + D \left(1-\frac1p\right) = 1$ and $q=1$, then the above estimate remains valid.
\end{thm}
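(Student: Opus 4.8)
\emph{Strategy.} The plan is to combine the dispersive properties of the free transport flow $U(t)h(x,v)=h(x-tv,v)$ with a Duhamel identity, localized in the spatial frequency, and then to pass from an additive to the multiplicative bound by a velocity dilation. Put $d=D(1-\tfrac1p)=D/p'$, $m=\alpha+d$, $n=1-\beta-d$, $\theta=\tfrac{m}{m+n}=\tfrac{\alpha+D(1-1/p)}{1+\alpha-\beta}$, so that $m>0$ (this is the hypothesis $\alpha>-D(1-\tfrac1p)$), $n\ge0$ (with $n>0$ in the main case and $n=0$ in the borderline case), $\theta\in(0,1]$, and $s=(1+b-a)\theta+a-d$. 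First I would apply $|\nabla_x|^{a}$, an isomorphism between the relevant homogeneous spaces that commutes with $v\cdot\nabla_x$ and with velocity integration and sends $g$ to $|\nabla_x|^{a}g\in\dot B^{b-a,\beta}_{1,p,q}$ with $b-a\ge-1$; relabeling $b-a$ as $b$, this reduces matters to $a=0$. The meaning of $\rho:=\int f\,dv$ is the only point of interpretation special to the homogeneous framework: I would argue first for $f,g$ smooth and rapidly decreasing, prove for such data the additive a priori bound $\|\rho\|_{\dot B^s_{p,q}(dx)}\le C(\|f\|_{\dot B^{0,\alpha}_{1,p,q}}+\|g\|_{\dot B^{b,\beta}_{1,p,q}})$, and recover the general additive statement by approximation (only the a priori version is actually needed for the multiplicative estimate).

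\emph{Dispersion.} The analytic input is the dispersive estimate $\|\int U(t)h\,dv\|_{L^p_x}\lesssim|t|^{-d}\|h\|_{L^1_xL^p_v}$, obtained by the change of variables $y=x-tv$ and interpolation of the trivial bound $L^1_{x,v}\to L^1_x$ against the $|t|^{-D}$ bound $L^1_xL^\infty_v\to L^\infty_x$; together with the elementary fact that the $x$-Fourier transform of $\int U(t)h\,dv$ at $\xi$ equals $\hat h(\xi,t\xi)$, the full Fourier transform of $h$. Consequently, if $h$ has $x$-frequency $\sim2^j$ and $v$-frequency $\sim2^k$, then $\int U(t)h\,dv$ has $x$-frequency $\sim2^j$ and vanishes unless $|t|\sim2^{k-j}$ — this pins one scalar time to one pair of dyadic scales.

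\emph{Localized Duhamel and optimization.} At each spatial frequency $2^j$ I would use $\rho=\int U(T)f\,dv+\int_0^T\!\int U(s)g\,dv\,ds$ (integrating the transport equation along characteristics, then in $v$) with a parameter $T=T_j$. After applying $\Delta^x_{2^j}$ the first term is $\Delta^x_{2^j}\int U(T_j)\big(\Delta^x_{2^j}\Delta^v_{2^{k_j}}f\big)\,dv$ with $k_j=j+\log_2T_j$, hence $\lesssim T_j^{-d}\|\Delta^x_{2^j}\Delta^v_{2^{k_j}}f\|_{L^1_xL^p_v}$; in the second term, for each $s$ only the block of $g$ of $x$-frequency $2^j$ and $v$-frequency $\sim s2^j$ survives, so after the substitution $k=j+\log_2 s$ it is $\lesssim\sum_{k\le k_j}2^{(k-j)(1-d)}\|\Delta^x_{2^j}\Delta^v_{2^k}g\|_{L^1_xL^p_v}$. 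Writing $\|\Delta^x_{2^j}\Delta^v_{2^k}f\|_{L^1_xL^p_v}=2^{-\alpha k}F_{j,k}$ and $\|\Delta^x_{2^j}\Delta^v_{2^k}g\|_{L^1_xL^p_v}=2^{-bj-\beta k}G_{j,k}$ with $(F_{j,k}),(G_{j,k})\in\ell^q$ of norms $\|f\|,\|g\|$, the two contributions become $2^{dj-(d+\alpha)k_j}F_{j,k_j}$ and $2^{-(1-d+b)j}\sum_{k\le k_j}2^{nk}G_{j,k}$. For $n>0$ the one-sided geometric sum is $\lesssim2^{nk_j}\widetilde G_j$ with $(\widetilde G_j)_j\in\ell^q$ of norm $\lesssim\|g\|$ (Young against the kernel $(2^{n\ell}\mathbf{1}_{\ell\le0})_\ell$ and Minkowski in $j$), and the choice $k_j=\tfrac{1+b}{1+\alpha-\beta}j$ makes the $j$-power in \emph{both} contributions equal to $2^{-sj}$; taking $\ell^q$ in $j$ — noting $(F_{j,k_j})_j$ and $(\widetilde G_j)_j$ are $\ell^q$-controlled — yields the additive bound. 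For $n=0$ the sum degenerates to $\sum_{k\le k_j}G_{j,k}\le\|(G_{j,k})_k\|_{\ell^1}$, which the outer sum in $j$ absorbs exactly when $q=1$.

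\emph{Multiplicative bound and the main obstacle.} Applying the additive bound to $f^\nu(x,v)=f(x,v/\nu)$, which solves $v\cdot\nabla_xf^\nu=\nu\,g(x,\cdot/\nu)$ and satisfies $\int f^\nu\,dv=\nu^D\rho$, $\|f^\nu\|_{\dot B^{0,\alpha}_{1,p,q}}=\nu^{D/p-\alpha}\|f\|$, $\|\nu\,g(\cdot,\cdot/\nu)\|_{\dot B^{b,\beta}_{1,p,q}}=\nu^{1+D/p-\beta}\|g\|$, gives $\|\rho\|_{\dot B^s_{p,q}}\lesssim\nu^{-m}\|f\|+\nu^{n}\|g\|$ for all $\nu>0$; optimizing (interior minimum when $n>0$, limit $\nu\to\infty$ when $n=0$, both legitimate since $m>0$) produces $\|\rho\|_{\dot B^s_{p,q}}\lesssim\|f\|^{1-\theta}\|g\|^{\theta}$. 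The step I expect to be most delicate is the third one: verifying that the Fourier-support argument genuinely decouples the spatial and velocity scales, so that a single $T_j$ simultaneously balances the $f$- and $g$-contributions at every spatial frequency and the resulting one-sided sums resum against the sharp weights with constants depending only on the fixed exponents — the endpoint $\beta+D(1-\tfrac1p)=1$, where the $s$-integral is only logarithmically controlled near $s=0$, being precisely where this forces $q=1$.
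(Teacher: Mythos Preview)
Your proposal is correct and follows essentially the same line as the paper: the Duhamel/interpolation formula, the frequency-transfer identity $\widehat{\int U(t)h\,dv}(\xi)=\hat h(\xi,t\xi)$ (the paper's formula \eqref{x-to-v}), the dispersive $L^1_xL^p_v\to L^p_x$ bound, the dyadic splitting of the $s$-integral for the $g$-term with a one-sided Young/Minkowski resummation, and an optimal choice of the time parameter at each spatial frequency. Two presentational differences are worth noting. First, you reduce to $a=0$ by conjugating with $|\nabla_x|^{a}$; the paper keeps $a$ general and instead uses the localization identity \eqref{crucial 3}, namely $A^t_\delta f=A^t_\delta\big(\Delta^x_{[\delta/2,2\delta]}\Delta^v_{[t\delta/2,4t\delta]}f\big)$, to insert the weight $2^{ka}$ directly. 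Second, you prove an \emph{additive} bound and then upgrade to the multiplicative one by the velocity dilation $f\mapsto f(\cdot,\cdot/\nu)$ and optimization in $\nu$; the paper instead builds $\lambda=\|f\|/\|g\|$ into the interpolation time $t_k=\lambda^{1/(1+\alpha-\beta)}2^{-k((\alpha-\beta)+(a-b))/(1+\alpha-\beta)}$ and obtains the product form in one stroke. Both variants are legitimate and reach the same endpoint; your dilation argument is slightly cleaner conceptually (it isolates the scale-invariance behind the product bound), while the paper's direct choice avoids the reduction step and makes the coupling between the $f$- and $g$-contributions explicit at each dyadic scale.
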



\subsection{The classical $L^2_xL^2_v$ case revisited}

We give now a new very general version of the classical velocity averaging lemma in $L^2_xL^2_v$ in terms of Besov spaces. The proofs of this formulation have the advantage of employing the same principles and ideas as in the $L^1_xL^p_v$ cases. In particular, they are based on the same decompositions and operators (provided by Proposition \ref{crucial} below), which will be crucial in order to carry out interpolation arguments between the $L^1_xL^p_v$ and $L^2_xL^2_v$ cases later on.

Note that, as usual, exploiting the Hilbertian structure of Besov spaces (i.e. choosing $q=2$), the theorem below can readily be reformulated in terms of more standard Sobolev spaces.

\begin{thm}\label{classical}
	Let $f \in B_{2,2,q}^{a,\alpha}\left(\mathbb{R}^D_x\times\mathbb{R}^D_v\right)$, where $a\in\mathbb{R}$, $\alpha> -\frac 12$ and $1\leq q\leq \infty$, be such that
	\begin{equation}
		v\cdot \nabla_x f= g
	\end{equation}
	for some $g\in B_{2,2,q}^{b,\beta}\left(\mathbb{R}^D_x\times\mathbb{R}^D_v\right)$, where $b\geq a-1$ and $\beta\in\mathbb{R}$.
	
	If $\beta < \frac 12$, then, for any $\phi\in C_0^\infty\left(\mathbb{R}^D\right)$,
	\begin{equation}
		\int_{\mathbb{R}^D} f(x,v)\phi(v) dv \in B^s_{2,q}\left(\mathbb{R}^D_x\right),
	\end{equation}
	where $ s = (1+b-a) \frac{\alpha+\frac 12}{1+\alpha-\beta} + a $, and the following estimate holds
	\begin{equation}
		\left\|\int_{\mathbb{R}^D} f(x,v)\phi(v) dv\right\|_{B^s_{2,q}\left(\mathbb{R}^D_x\right)}\leq
		C_\phi \left( \left\| f \right\|_{B^{a,\alpha}_{2,2,q}\left(\mathbb{R}^D_x\times\mathbb{R}^D_v\right)}
		+
		\left\| g \right\|_{B^{b,\beta}_{2,2,q}\left(\mathbb{R}^D_x\times\mathbb{R}^D_v\right)} \right),
	\end{equation}
	where the constant $C_\phi>0$ only depends on $\phi$ and other fixed parameters.

	If $\beta > \frac 12$ or, if $\beta=\frac 12$ and $q=1$, then, for any $\phi\in C_0^\infty\left(\mathbb{R}^D\right)$,
	\begin{equation}
		\int_{\mathbb{R}^D} f(x,v)\phi(v) dv \in B^s_{2,q}\left(\mathbb{R}^D_x\right),
	\end{equation}
	where $ s = 1+b $, and the following estimate holds
	\begin{equation}
		\left\|\int_{\mathbb{R}^D} f(x,v)\phi(v) dv\right\|_{B^s_{2,q}\left(\mathbb{R}^D_x\right)}\leq
		C_\phi \left( \left\| \Delta_0^{x,v}f \right\|_{L^2\left(\mathbb{R}^D_x\times\mathbb{R}^D_v\right)}
		+
		\left\| g \right\|_{B^{b,\beta}_{2,2,q}\left(\mathbb{R}^D_x\times\mathbb{R}^D_v\right)} \right),
	\end{equation}
	where the constant $C_\phi>0$ only depends on $\phi$ and other fixed parameters.

	If $\beta=\frac 12$ and $q\neq 1$, then, for any $\phi\in C_0^\infty\left(\mathbb{R}^D\right)$ and every $\epsilon>0$,
	\begin{equation}
		\int_{\mathbb{R}^D} f(x,v)\phi(v) dv \in B^{s-\epsilon}_{2,q}\left(\mathbb{R}^D_x\right),
	\end{equation}
	where $ s = 1+b $, and the following estimate holds
	\begin{equation}
		\left\|\int_{\mathbb{R}^D} f(x,v)\phi(v) dv\right\|_{B^{s-\epsilon}_{2,q}\left(\mathbb{R}^D_x\right)}\leq
		C_\phi \left( \left\| f \right\|_{B^{a,\alpha}_{2,2,q}\left(\mathbb{R}^D_x\times\mathbb{R}^D_v\right)}
		+
		\left\| g \right\|_{B^{b,\beta}_{2,2,q}\left(\mathbb{R}^D_x\times\mathbb{R}^D_v\right)} \right),
	\end{equation}
	where the constant $C_\phi>0$ only depends on $\phi$ and other fixed parameters, in particular on $\epsilon>0$.
\end{thm}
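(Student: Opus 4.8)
The plan is to work on the Fourier side in $x$, reducing the assertion to a bound on each Littlewood--Paley block $\Delta_N^x\!\left(\int f\phi\,dv\right)$, $N\in\DD$, and then to reassemble the $B^s_{2,q}$-norm of the velocity average by a weighted $\ell^q$-summation over $N$; within each block the transport equation will be exploited through the decomposition and operators of Proposition \ref{crucial} in their $L^2_xL^2_v$ version, and the exponent $\tfrac{\alpha+1/2}{1+\alpha-\beta}$ will emerge from an interpolation (of \emph{espaces de moyennes} type, as in Section \ref{interpolation}) between a ``no-gain'' and a ``full-gain'' regime. First I would record the baseline: since $\phi\in C_0^\infty$ and $\Delta_j^v$ is self-adjoint, $\left|\int(\Delta_j^vh)\phi\,dv\right|\lesssim_M(1\vee j)^{-M}\|\Delta_j^vh\|_{L^2_v}$ for every $M$, so, summing in $j$ and using a discrete Hölder inequality, $\|\Delta_N^x\!\int f\phi\,dv\|_{L^2_x}\lesssim(1\vee N)^{-a}c_N$ with $\|(c_N)\|_{\ell^q}\lesssim\|f\|_{B^{a,\alpha}_{2,2,q}}$; this already places the average in $B^a_{2,q}$ and in particular settles the ranges of parameters for which the claimed gain is nonpositive. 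Hence I may assume $\alpha>-\tfrac12$ and $b\geq a-1$ and concentrate on improving the baseline for $N\geq1$.

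For fixed $N\geq1$ I would take the Fourier transform in $x$ and use $i\,v\cdot\xi\,\widehat f(\xi,v)=\widehat g(\xi,v)$ on $\{|\xi|\sim N\}$. Introducing a scale $0<\lambda\leq1$ and a smooth even cutoff, I split $\widehat{\Delta_N^x\!\int f\phi\,dv}(\xi)$ into a \emph{slab part}, the integral of $\widehat{\Delta_N^xf}(\xi,v)$ against $\chi\!\big(\tfrac{v\cdot\xi}{\lambda N}\big)\phi(v)$, and an \emph{elliptic part}, the integral of $\widehat{\Delta_N^xg}(\xi,v)$ against $\psi_{\lambda,\xi}(v):=\big(1-\chi(\tfrac{v\cdot\xi}{\lambda N})\big)\tfrac{\phi(v)}{i\,v\cdot\xi}$; the latter test function is smooth (the singularity has been removed), supported where $|v\cdot\xi|\gtrsim\lambda N$, of size $\lesssim(\lambda N)^{-1}$, and carries $v$-frequencies up to $1/\lambda$. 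The key estimates --- which are precisely what Proposition \ref{crucial} packages --- are obtained by duality in $v$, profitably performed in the single variable $t=v\cdot\tfrac{\xi}{|\xi|}$ after integrating out the $D-1$ transverse velocities against the Schwartz factor $\phi$: the slab part is controlled in $L^2_\xi(|\xi|\sim N)$ by $\|\chi(\cdot/\lambda)\|_{H^{-\alpha}(\mathbb R)}$ (comparable to $\lambda^{\alpha+1/2}$) times the $B^\alpha_{2,q,v}$-density of $f$ at frequency $N$, and the elliptic part by $\|\psi_{\lambda,\xi}\|_{H^{-\beta}_v}$ (which is $\lesssim N^{-1}\lambda^{\beta-1/2}$ when $\beta<\tfrac12$) times the $B^\beta_{2,q,v}$-density of $g$ at frequency $N$; the commutator $[v\cdot\nabla_x,\Delta_j^v]$ produced by the velocity localization turns out to be of the same order as the slab part and is thus absorbed.

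Writing the densities as $(1\vee N)^{-a}c_N$ and $(1\vee N)^{-b}d_N$ with $\|(c_N)\|_{\ell^q}\lesssim\|f\|_{B^{a,\alpha}_{2,2,q}}$ and $\|(d_N)\|_{\ell^q}\lesssim\|g\|_{B^{b,\beta}_{2,2,q}}$, the two estimates combine, for every $0<\lambda\leq1$, into
\[
\Big\|\Delta_N^x\!\int f\phi\,dv\Big\|_{L^2_x}\lesssim\lambda^{\alpha+\frac12}(1\vee N)^{-a}c_N+\lambda^{\beta-\frac12}(1\vee N)^{-b-1}d_N ,
\]
which I would read as a $K$-functional estimate for the couple $\big(B^a_{2,q},B^{b+1}_{2,q}\big)$ at height $t=\lambda^{1+\alpha-\beta}$. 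Optimizing in $\lambda$ (equivalently, invoking the espaces de moyennes with this parameter) yields $\big\|\Delta_N^x\!\int f\phi\,dv\big\|_{L^2_x}\lesssim(1\vee N)^{-s}c_N^{1-\theta}d_N^{\theta}$ with $\theta=\tfrac{\alpha+1/2}{1+\alpha-\beta}$ and $s=(1-\theta)a+\theta(b+1)=(1+b-a)\theta+a$, the asserted index when $\beta<\tfrac12$; taking the weighted $\ell^q$-norm in $N$ and applying Hölder with exponents $\big(\tfrac1{1-\theta},\tfrac1\theta\big)$ then gives $\|\int f\phi\,dv\|_{B^s_{2,q}}\lesssim\|f\|_{B^{a,\alpha}_{2,2,q}}^{1-\theta}\|g\|_{B^{b,\beta}_{2,2,q}}^{\theta}\leq\|f\|_{B^{a,\alpha}_{2,2,q}}+\|g\|_{B^{b,\beta}_{2,2,q}}$. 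For the saturated range $\beta>\tfrac12$ I would instead note that $\mathrm{p.v.}\,\tfrac{\phi(v)}{i\,v\cdot\xi}\in H^{-\beta}_v$, so $\|\psi_{\lambda,\xi}\|_{H^{-\beta}_v}\lesssim N^{-1}$ uniformly in $\lambda$, send $\lambda\to0$ so that the slab part (hence all of $f$ except $\Delta_0^{x,v}f$) drops out, and obtain the full gain $s=1+b$ with the stated estimate; the borderline $\beta=\tfrac12$ produces a logarithmic divergence of $\|\psi_{\lambda,\xi}\|_{H^{-1/2}_v}$ which is summable in the $\ell^1$ reassembly (giving again $s=1+b$ when $q=1$) but otherwise costs an arbitrarily small $\epsilon$ of regularity.

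The hard part will be the simultaneous bookkeeping of the cutoff scale $\lambda$ and the velocity Littlewood--Paley scale $2^j$: one has to keep the transport commutator, the off-diagonal $(\lambda,2^j)$-interactions, and the non-Hilbertian $\ell^q$ structure ($q\neq2$) all under control so that the optimization above delivers the sharp exponent $\tfrac{\alpha+1/2}{1+\alpha-\beta}$ uniformly in $1\leq q\leq\infty$ and in the whole range $\alpha>-\tfrac12$ --- and not merely in a restricted subrange where the crude Fourier split is already sharp --- which is exactly the role of the decomposition and operators set up in Proposition \ref{crucial}, shared with the $L^1_xL^p_v$ analysis.
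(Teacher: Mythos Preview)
Your proposal is correct and follows essentially the same route as the paper. Your Fourier-side split into a ``slab part'' and an ``elliptic part'' is precisely the decomposition $A_{2^k}^t(f\phi)+tB_{2^k}^t(g\phi)$ of Proposition~\ref{crucial} in its representation~\eqref{crucial 1}, with the correspondence $\lambda\sim(t2^k)^{-1}$; your one-dimensional duality estimates $\|\chi(\cdot/\lambda)\|_{H^{-\alpha}}\sim\lambda^{\alpha+1/2}$ and $\|\psi_{\lambda,\xi}\|_{H^{-\beta}}\lesssim N^{-1}\lambda^{\beta-1/2}$ are exactly the content of Lemmas~\ref{lem lambda strip} and~\ref{operators norms}; and your optimization in $\lambda$, the limit $\lambda\to0$ for $\beta>\tfrac12$, and the logarithmic loss at $\beta=\tfrac12$ all match the paper's choice $t2^k=2^{k(1+b-a)/(1+\alpha-\gamma)}$, the limit $t\to\infty$, and the $\log(1+t2^k)^{1/q'}$ factor respectively. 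One small remark: the commutator $[v\cdot\nabla_x,\Delta_j^v]$ you anticipate does not actually arise, since the paper (and your own duality scheme) applies the transport relation to $f\phi$ before any velocity localization; the passage from $\|f\phi\|_{\widetilde L^2 B^\alpha_{2,q}}$ back to $\|f\|_{\widetilde L^2 B^\alpha_{2,q}}$ is handled by elementary paraproduct estimates (Lemma~\ref{paradifferential}).
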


Notice that the theorem above provides a net gain of regularity of $(1+b-a) \frac{\alpha+\frac 12}{1+\alpha-\gamma}$ derivatives, where $\gamma=\min\left\{\beta,\frac 12\right\}$. Therefore, the restrictions $\alpha> -\frac 12$ and $1+b-a\geq 0$ on the parameters are, in fact, quite natural since the gain of regularity would possibly be negative otherwise.

Furthermore, the threshold at the value $\beta=\frac 12$ stems from the fact that, since the transport operator is a differential operator of order one, it cannot yield, in the case $a=b=0$ say, a gain of regularity which would be superior to one full derivative. In other words, necessarily $\frac{\alpha+\frac 12}{1+\alpha-\gamma}\leq 1$, which implies $\gamma\leq \frac 12$.

Quite remarkably, as for Theorem \ref{pre averaging lemma p 2}, the above theorem does achieve the maximal gain of regularity of $1+b-a$ derivatives in the cases $\beta>\frac 12$ or $\beta=\frac 12$ and $q=1$, independently of $\alpha$, which is unprecedented. Moreover, it is worth noting that only the low frequencies of $f$ are involved in this case, which, very loosely speaking, shows that the transport operator $v\cdot\nabla_x$ is fully invertible when $g$ is very regular in velocity.


\subsection{The $L_x^1L_v^p$ and $L_x^2L_v^2$ cases reconciled}

The following theorem results from a simple interpolation between Theorems \ref{pre averaging lemma p 2} and \ref{classical}. A more general, but far more complicated, interpolation procedure will yield the more general Theorem \ref{main averaging lemma 2} below.

\begin{thm}\label{main averaging lemma}
	Let $f(x,v) \in B^{a,\alpha}_{r,p,q}\left(\mathbb{R}^D_x\times\mathbb{R}^D_v\right)$, where $1\leq r\leq p\leq\infty$, $1\leq q<\infty$, $a\in\mathbb{R}$ and $\alpha> \frac 1r -1 - D \left( \frac 1r-\frac{1}{p} \right)  > -\frac 1r$, be such that
	\begin{equation}
		v \cdot \nabla_x f = g
	\end{equation}
	for some $g(x,v) \in B^{b,\beta}_{r,p,q}\left(\mathbb{R}^D_x\times\mathbb{R}^D_v\right)$, where $\beta\in\mathbb{R}$ and $b\geq a-1$.

	If $\beta< \frac 1r - D \left(\frac 1r-\frac1p\right) $, then, for any $\phi\in C_0^\infty\left(\mathbb{R}^D\right)$,
	\begin{equation}
		\int_{\mathbb{R}^D} f(x,v)\phi(v) dv \in B^s_{p,q}\left(\mathbb{R}^D_x\right),
	\end{equation}
	where $ s = \left(1+b-a\right) \frac{1+\alpha -\left(\frac 1r - D \left( \frac 1r - \frac 1 p \right)\right)}{1+\alpha-\beta} +a - D \left( \frac 1r - \frac 1 p \right) $, and the following estimate holds
	\begin{equation}
		\begin{gathered}
			\left\|\int_{\mathbb{R}^D} f(x,v)\phi(v) dv\right\|_{B^s_{p,q}\left(dx\right)}\leq
			C_\phi \left( \left\| f \right\|_{B^{a,\alpha}_{r,p,q}\left(\mathbb{R}^D_x\times\mathbb{R}^D_v\right)}
			+
			\left\| g \right\|_{B^{b,\beta}_{r,p,q}\left(\mathbb{R}^D_x\times\mathbb{R}^D_v\right)} \right),
		\end{gathered}
	\end{equation}
	where the constant $C_\phi>0$ only depends on $\phi$ and other fixed parameters.
	
	If $\beta> \frac 1r - D \left(\frac 1r-\frac1p\right) $ or, if $\beta= \frac 1r - D \left(\frac 1r-\frac1p\right) $ and $q=1$, then, for any $\phi\in C_0^\infty\left(\mathbb{R}^D\right)$,
	\begin{equation}
		\int_{\mathbb{R}^D} f(x,v)\phi(v) dv \in B^s_{p,q}\left(\mathbb{R}^D_x\right),
	\end{equation}
	where $ s = 1+b - D \left( \frac 1r - \frac 1 p \right) $, and the following estimate holds
	\begin{equation}
		\begin{gathered}
			\left\|\int_{\mathbb{R}^D} f(x,v)\phi(v) dv\right\|_{B^s_{p,q}\left(dx\right)}\leq
			C_\phi \left( \left\| \Delta_0^{x,v} f \right\|_{B^{a,\alpha}_{r,p,q}\left(\mathbb{R}^D_x\times\mathbb{R}^D_v\right)}
			+
			\left\| g \right\|_{B^{b,\beta}_{r,p,q}\left(\mathbb{R}^D_x\times\mathbb{R}^D_v\right)} \right),
		\end{gathered}
	\end{equation}
	where the constant $C_\phi>0$ only depends on $\phi$ and other fixed parameters.
	
	If $\beta= \frac 1r - D \left(\frac 1r-\frac1p\right) $ and $q\neq1$, then, for any $\phi\in C_0^\infty\left(\mathbb{R}^D\right)$ and every $\epsilon>0$,
	\begin{equation}
		\int_{\mathbb{R}^D} f(x,v)\phi(v) dv \in B^{s-\epsilon}_{p,q}\left(\mathbb{R}^D_x\right),
	\end{equation}
	where $ s = 1+b - D \left( \frac 1r - \frac 1 p \right) $, and the following estimate holds
	\begin{equation}
		\begin{gathered}
			\left\|\int_{\mathbb{R}^D} f(x,v)\phi(v) dv\right\|_{B^{s-\epsilon}_{p,q}\left(dx\right)}\leq
			C_\phi \left( \left\| f \right\|_{B^{a,\alpha}_{r,p,q}\left(\mathbb{R}^D_x\times\mathbb{R}^D_v\right)}
			+
			\left\| g \right\|_{B^{b,\beta}_{r,p,q}\left(\mathbb{R}^D_x\times\mathbb{R}^D_v\right)} \right),
		\end{gathered}
	\end{equation}
	where the constant $C_\phi>0$ only depends on $\phi$ and other fixed parameters, in particular on $\epsilon>0$.
\end{thm}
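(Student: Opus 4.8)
The plan is to obtain Theorem \ref{main averaging lemma} by real interpolation, applied to a single linear operator, between the endpoint estimate of Theorem \ref{pre averaging lemma p 2} (the regime $r=1$) and that of Theorem \ref{classical} (the regime $r=p=2$). This is made possible by the fact, already pointed out, that the proofs of both theorems rest on the same decomposition and the same family of operators supplied by Proposition \ref{crucial}. Concretely, these proofs do not estimate $\int f\phi\,dv$ as such, but through an explicit operator $\mathcal T_\phi$ that is linear in the pair $(f,g)$, defined on the full product of the relevant data spaces — it splits, block by block in the Littlewood--Paley decomposition, the part of $f$ supported where $|v\cdot\xi|$ is small from the part of $g=v\cdot\nabla_x f$ supported on the complement — and that coincides with $\int f\phi\,dv$ whenever $v\cdot\nabla_x f=g$. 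The two theorems then amount to the boundedness of $\mathcal T_\phi$ between fixed Besov spaces at the two endpoints.

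\textbf{Realizing the target space.} Fix the target parameters and put $\theta=2\left(1-\tfrac1r\right)\in[0,1)$, so that $\tfrac1r=(1-\theta)\cdot1+\theta\cdot\tfrac12$; at the $r=1$ endpoint take the velocity exponent $p_0$ with $\tfrac1{p_0}=\tfrac1{1-\theta}\left(\tfrac1p-\tfrac\theta2\right)$ (note $\tfrac1{p_0}\le1$ by $p\ge r$), and at the $r=p=2$ endpoint the velocity exponent is $2$. Choose the smoothness parameters $(a_0,\alpha_0,b_0,\beta_0)$ and $(a_1,\alpha_1,b_1,\beta_1)$ so that $a=(1-\theta)a_0+\theta a_1$, $\alpha=(1-\theta)\alpha_0+\theta\alpha_1$, and likewise for $b$ and $\beta$, arranging in addition that each endpoint falls in the appropriate regime (both $\beta_0+D(1-\tfrac1{p_0})<1$ and $\beta_1<\tfrac12$, or both on the corresponding critical line; the identity $(1-\theta)\left(1-\tfrac1{p_0}\right)=\tfrac1r-\tfrac1p$ shows these critical lines are mutually compatible). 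Using the Littlewood--Paley retraction of the mixed Besov spaces onto weighted sequence spaces with values in $L^r_x(L^p_v)$, the Lions--Peetre identity for interpolation of vector-valued $L^r$ spaces, the equivalence with the espaces de moyennes recalled in Section~\ref{interpolation}, and the embeddings $B^{0,s}_{r,p,1}\subset\widetilde L^r\!\left(B^s_{p,q}\right)\subset B^{0,s}_{r,p,\infty}$ to pass between the genuinely mixed spaces and the iterated Chemin--Lerner ones, one identifies $\left(B^{a_0,\alpha_0}_{1,p_0,q},\,B^{a_1,\alpha_1}_{2,2,q}\right)_{\theta,q}$ with $B^{a,\alpha}_{r,p,q}$, and similarly the $g$-data space and $\left(B^{s_0}_{p_0,q},\,B^{s_1}_{2,q}\right)_{\theta,q}=B^s_{p,q}$ with $s=(1-\theta)s_0+\theta s_1$.

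\textbf{Interpolating and matching the indices.} Exactness of the real method then gives $\mathcal T_\phi$ bounded from $B^{a,\alpha}_{r,p,q}\times B^{b,\beta}_{r,p,q}$ into $B^s_{p,q}$ with the interpolation inequality for the constant, so only the arithmetic remains: substituting $s_0=(1+b_0-a_0)\frac{\alpha_0+D(1-1/p_0)}{1+\alpha_0-\beta_0}+a_0-D(1-1/p_0)$ from Theorem \ref{pre averaging lemma p 2} and $s_1=(1+b_1-a_1)\frac{\alpha_1+1/2}{1+\alpha_1-\beta_1}+a_1$ from Theorem \ref{classical} into $s=(1-\theta)s_0+\theta s_1$ must reproduce the value claimed in the statement. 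Since the gain ratios $\frac{\alpha_\bullet+\cdots}{1+\alpha_\bullet-\beta_\bullet}$ are not affine in the parameters, one must pick $(\alpha_0,\beta_0)$ and $(\alpha_1,\beta_1)$ so that both ratios equal the common value $\lambda=\frac{1+\alpha-\left(\frac1r-D\left(\frac1r-\frac1p\right)\right)}{1+\alpha-\beta}$; checking that this system is solvable within the admissible region is exactly where the hypotheses $\alpha>\frac1r-1-D\left(\frac1r-\frac1p\right)$ and $b\ge a-1$ enter. The three regimes of $\beta$ are obtained by matching them at the endpoints: the sub-critical regime as above; the regimes $\beta>\frac1r-D\left(\frac1r-\frac1p\right)$ or ($\beta=\frac1r-D\left(\frac1r-\frac1p\right)$ and $q=1$) by interpolating the ``$s=1+b$''-type estimates of the two theorems, which is why only $\Delta_0^{x,v}f$ enters; and the remaining critical case with $q\ne1$ by interpolating the two $\epsilon$-loss estimates, which yields the stated bound in $B^{s-\epsilon}_{p,q}$.

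\textbf{Main difficulty.} The delicate part is the identification of the interpolation spaces in the second step: real interpolation of two mixed Besov spaces with different inner Lebesgue exponents ($p_0\ne2$) and different outer exponents ($1\ne2$) does not, in general, return a Besov space but a Lorentz-type substitute. What rescues the computation is that the interpolation is genuinely off-diagonal in the smoothness indices, together with the systematic use of the Chemin--Lerner spaces $\widetilde L^r\!\left(B^s_{p,q}\right)$ as intermediaries and the fact that any residual mismatch of third indices can be traded against the $\epsilon$-losses already present in the critical case. Making all of this precise, and verifying that the chain of interpolation identities composes to give exactly the spaces and the regularity index $s$ displayed in the statement, is the bulk of the argument.
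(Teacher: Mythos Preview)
Your overall idea --- interpolate between the $r=1$ endpoint (Theorem \ref{pre averaging lemma p 2}) and the $r=2$ endpoint (Theorem \ref{classical}) --- is exactly the paper's, but the \emph{level} at which the interpolation is carried out differs, and the difference matters.

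The paper does not interpolate the two theorems themselves. It interpolates the \emph{pointwise-in-$(t,k)$ operator bounds}: the estimates of Lemma \ref{operators norms dispersive} (dispersive, $L^1_xL^{p_0}_v$) and Lemma \ref{operators norms} (Hilbertian, $L^2_xL^2_v$) are combined by \emph{complex} interpolation to produce Lemma \ref{operators norms interpolation}, valid uniformly in $t\ge 2^{-k}$. Only after that does the paper plug these interpolated bounds into the decomposition \eqref{crucial 0}, choose the optimizing $t_k$ \emph{for the target parameters}, use the frequency localization \eqref{crucial 3} to restrict to $\Delta^x_{[2^{k-1},2^{k+1}]}$, and sum in $k$ to assemble the $B^{a,\alpha}_{r,p,q}$ norm. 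Two things make this route clean: complex interpolation of $\widetilde L^rB^{\alpha}_{p,q}$ spaces returns $\widetilde L^rB^{\alpha}_{p,q}$ on the nose (no Lorentz substitute), and the assembly into the mixed space $B^{a,\alpha}_{r,p,q}$ is done \emph{after} interpolation, so one never has to identify $\left[B^{a_0,\alpha_0}_{1,p_0,q},B^{a_1,\alpha_1}_{2,2,q}\right]_{\theta,q}$ at all.

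Your plan instead requires exactly that identification, and you are right that it is the delicate point: real interpolation with varying inner and outer Lebesgue exponents typically produces Lorentz-valued sequence spaces, and the rescue you invoke (``off-diagonal in the smoothness indices'') is plausible for single-index Besov spaces but is not a standard result for the doubly-indexed spaces $B^{a,\alpha}_{r,p,q}$; you leave this as ``the bulk of the argument'' without carrying it out. There is also a hidden consistency issue in defining a single $\mathcal T_\phi$: the proofs of Theorems \ref{pre averaging lemma p 2} and \ref{classical} use \emph{different} interpolation times $t_k$, so the explicit operator realizing the split is not literally the same at the two endpoints. (One can force the $t_k$'s to coincide by tuning the endpoint parameters so that the gain ratios match, as you propose, but this has to be checked and is an extra constraint your sketch does not verify.) The paper sidesteps both issues by interpolating one stage earlier, at fixed $t$, where the operators $A_\delta^t$, $B_\delta^t$ are genuinely $t$-parametrized linear maps and complex interpolation applies directly.
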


Notice that the theorem above corresponds exactly to Theorems \ref{pre averaging lemma p 2} and \ref{classical} in the limiting cases $r=1$ and $r=2$, respectively. It provides a net gain of regularity, compared to the Sobolev embedding, of $\left(1+b-a\right) \frac{1+\alpha -\left(\frac 1r - D \left( \frac 1r - \frac 1 p \right)\right)}{1+\alpha-\gamma}$ derivatives, where $\gamma=\left\{\beta, \frac 1r - D \left(\frac 1r-\frac1p\right) \right\}$. Therefore, the restrictions $\alpha> \frac 1r -1 - D \left( \frac 1r-\frac{1}{p} \right)$ and $1+b-a\geq 0$ on the parameters are, in fact, quite natural since the gain of regularity would possibly be negative otherwise.

Furthermore, the threshold at the value $\beta= \frac 1r - D \left(\frac 1r-\frac1p\right) $ stems from the fact that, since the transport operator is a differential operator of order one, it cannot yield, in the case $a=b=0$ say, a gain of regularity which would be superior to one full derivative. In other words, necessarily $\frac{1+\alpha -\left(\frac 1r - D \left( \frac 1r - \frac 1 p \right)\right)}{1+\alpha-\gamma}\leq 1$, which implies $\gamma\leq \frac 1r - D \left(\frac 1r-\frac1p\right)$.

Quite remarkably, as for Theorems \ref{pre averaging lemma p 2} and \ref{classical}, the above theorem does achieve the maximal gain of regularity of $1+b-a$ derivatives in the cases $\beta>\frac 1r - D \left(\frac 1r-\frac1p\right)$ or $\beta=\frac 1r - D \left(\frac 1r-\frac1p\right)$ and $q=1$, independently of $\alpha$, which is unprecedented. Moreover, it is worth noting that only the low frequencies of $f$ are involved in this case, which, very loosely speaking, shows that the transport operator $v\cdot\nabla_x$ is fully invertible when $g$ is very regular in velocity.

The following theorem is the most general result presented in this work. However, it does not contain all the previous theorems. It follows from a general abstract interpolation procedure of the preceding results.

\begin{thm}\label{main averaging lemma 2}
	Let $f(x,v) \in B^{a,\alpha}_{r_0,p_0,q_0}\left(\mathbb{R}^D_x\times\mathbb{R}^D_v\right)$, where $1\leq r_0\leq p_0 \leq r_0' \leq \infty$, $1\leq q_0 < \infty$, $a\in\mathbb{R}$ and $\alpha>\frac{1}{r_0}-1-D\left(\frac{1}{r_0}-\frac{1}{p_0}\right)$, be such that
	\begin{equation}
		v \cdot \nabla_x f = g
	\end{equation}
	for some $g(x,v) \in B^{b,\beta}_{r_1,p_1,q_1}\left(\mathbb{R}^D_x\times\mathbb{R}^D_v\right)$, where $1\leq r_1\leq p_1 \leq r_1' \leq \infty$, $1\leq q_1 < \infty$, $b\in\mathbb{R}$ and $\beta<\frac{1}{r_1}-D\left(\frac{1}{r_1}-\frac{1}{p_1}\right)$ satisfy
	\begin{equation}
		\frac{2}{r_1}-1-D\left(\frac{1}{r_1}-\frac{1}{p_1}\right)>0
		\qquad\text{or}\qquad
		p_1=r_1=2
	\end{equation}
	and
	\begin{equation}
		(1-\theta)\frac{1}{p_0}+\theta\frac{1}{p_1} =(1-\theta)\frac{1}{q_0}+\theta\frac{1}{q_1},
	\end{equation}
	where
	\begin{equation}
		\theta
		= \frac{\left[\alpha + 1 -\frac{1}{r_0} + D\left(\frac{1}{r_0}-\frac{1}{p_0}\right)\right]}
		{\left[\alpha + 1 -\frac{1}{r_0} + D\left(\frac{1}{r_0}-\frac{1}{p_0}\right)\right]
		+\left[
		-\beta+\frac{1}{r_1}-D\left(\frac{1}{r_1}-\frac{1}{p_1}\right)\right]}\in (0,1).
	\end{equation}

	Then, for any $\chi,\phi\in C_0^\infty\left(\mathbb{R}^D\right)$,
	\begin{equation}
		\int_{\mathbb{R}^D} f(x,v)\chi(x)\phi(v) dv \in B^s_{p,p}\left(\mathbb{R}^D_x\right),
	\end{equation}
	where
	\begin{equation}
		\begin{aligned}
			s & = (1-\theta)\left(a-D\left(\frac 1{r_0}-\frac 1{p_0}\right)\right)
			+\theta\left(b-D\left(\frac 1{r_1}-\frac 1{p_1}\right)\right) + \theta, \\
			\frac{1}{p} & = (1-\theta)\frac{1}{p_0}+\theta\frac{1}{p_1} = (1-\theta)\frac{1}{q_0}+\theta\frac{1}{q_1},
		\end{aligned}
	\end{equation}
	and the following estimate holds
	\begin{equation}\label{general estimate}
		\begin{aligned}
			& \left\|\int_{\mathbb{R}^D} f(x,v)\chi(x)\phi(v) dv\right\|_{B^s_{p,p}\left(dx\right)} \\
			& \leq
			C_\phi \left( \left\| f \right\|_{B^{a,\alpha}_{r_0,p_0,q_0}\left(\mathbb{R}^D_x\times\mathbb{R}^D_v\right)}
			+
			\left\| g \right\|_{B^{b,\beta}_{r_1,p_1,q_1}\left(\mathbb{R}^D_x\times\mathbb{R}^D_v\right)} \right),
		\end{aligned}
	\end{equation}
	where the constant $C_\phi>0$ only depends on $\phi$ and other fixed parameters.
	
	Furthermore, if $p\geq r_0$, the space localization through a cutoff $\chi(x)$ is not necessary and it is possible to take $\chi(x)\equiv 1$ in the above statements.
\end{thm}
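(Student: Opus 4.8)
The strategy I would follow is to obtain Theorem \ref{main averaging lemma 2} as a real interpolation --- carried out through the \emph{espaces de moyennes} \eqref{espace moyenne} rather than through the plain $K$-functional --- applied directly to the two-term decomposition of the (localized) velocity average furnished by Proposition \ref{crucial}. The extra flexibility of the moyennes is essential here because the two conceptual endpoints live in genuinely different scales $B^{a,\alpha}_{r_0,p_0,q_0}$ and $B^{b,\beta}_{r_1,p_1,q_1}$, with all three indices $(r,p,q)$ varying at once; it is precisely the ``with power'' version of the moyennes norm \eqref{espace moyenne} that collapses the interpolated target onto a \emph{diagonal} space $B^s_{p,p}$, and the coupling conditions $\tfrac1p=(1-\theta)\tfrac1{p_0}+\theta\tfrac1{p_1}=(1-\theta)\tfrac1{q_0}+\theta\tfrac1{q_1}$ together with $1\le q_0,q_1<\infty$ imposed in the statement are exactly the compatibility conditions under which this construction is available (cf.\ the parameter range recalled in Section \ref{interpolation}).

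Conceptually, the two endpoints are as follows. The endpoint $\theta=0$ is the trivial bound in the scale $(r_0,p_0,q_0)$: using $r_0\le p_0$, Bernstein's inequality in $x$ and the rapid decay of $\Delta_j^v\phi$ in $j$, one gets, without using the transport equation at all,
\[
\Big\|\int_{\mathbb{R}^D}f(x,v)\phi(v)\,dv\Big\|_{B^{a-D(1/r_0-1/p_0)}_{p_0,q_0}(dx)}\le C_\phi\,\|f\|_{B^{a,\alpha}_{r_0,p_0,q_0}},
\]
that is, no gain. The endpoint $\theta=1$ corresponds to a \emph{complete inversion} of the transport operator in the scale $(r_1,p_1,q_1)$, with the maximal gain of one full derivative, $s=1+b-D(1/r_1-1/p_1)$; at the level of the decomposition of Proposition \ref{crucial} it is realized by retaining the full $g$-term while only the block $\Delta_0^{x,v}f$ of $f$ survives --- the high frequencies of $f$ being absorbed into a factor that vanishes in this limit --- in agreement with the remarks made after Theorems \ref{pre averaging lemma p 2} and \ref{main averaging lemma}. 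This second endpoint is available precisely when the underlying dispersive decay is fast enough, which is what the conjunction of $\beta<\tfrac1{r_1}-D(\tfrac1{r_1}-\tfrac1{p_1})$ with $\tfrac2{r_1}-1-D(\tfrac1{r_1}-\tfrac1{p_1})>0$ (or $p_1=r_1=2$) encodes, the latter being nothing but the nondegeneracy of the parameter range of Theorem \ref{main averaging lemma} (respectively the $L^2_xL^2_v$ case of Theorem \ref{classical}) in that scale. Likewise, the standing hypothesis $\alpha>\tfrac1{r_0}-1-D(\tfrac1{r_0}-\tfrac1{p_0})$ is exactly what makes $P_0:=\alpha+1-\tfrac1{r_0}+D(\tfrac1{r_0}-\tfrac1{p_0})$ positive, so that the value of $\theta$ below lies in $(0,1)$.

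The interpolation is then run as follows. After reducing to $\chi\equiv1$ --- which is legitimate when $p\ge r_0$, and otherwise achieved by a routine spatial-localization argument compensating the loss of global integrability when $p<r_0$ --- and applying the $x$-frequency Littlewood--Paley decomposition, Proposition \ref{crucial} provides, for every dyadic block and every scale parameter $\delta>0$, a splitting $\Delta_k^x\big(\int f(x,v)\phi(v)\,dv\big)=a_0(\delta)+a_1(\delta)$ in which the block norms of $a_0(\delta)$ are controlled by a positive power of $\delta$ times the corresponding blocks of $\|f\|_{B^{a,\alpha}_{r_0,p_0,q_0}}$, and those of $a_1(\delta)$ by a negative power of $\delta$ times the corresponding blocks of $\|g\|_{B^{b,\beta}_{r_1,p_1,q_1}}$. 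Feeding this splitting into \eqref{espace moyenne} with the exponent $q_0$ on the $f$-term, the exponent $q_1$ on the $g$-term and $\tfrac1q=\tfrac{1-\theta}{q_0}+\tfrac{\theta}{q_1}$, and then invoking \cite[Thm 3.12.1]{bergh} to identify the resulting espace de moyennes with the $K$-method space, places the velocity average in the real interpolate of $B^{a-D(1/r_0-1/p_0)}_{p_0,q_0}$ and $B^{1+b-D(1/r_1-1/p_1)}_{p_1,q_1}$; under the coupling conditions this interpolate is $B^s_{p,p}$, with $s=(1-\theta)\big(a-D(\tfrac1{r_0}-\tfrac1{p_0})\big)+\theta\big(b-D(\tfrac1{r_1}-\tfrac1{p_1})\big)+\theta$ the convex combination of the two endpoint regularities. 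Finally, the displayed value of $\theta$ is the one that makes the positive and negative powers of $\delta$ in the two terms balance --- equivalently, the one for which the interpolated gain equals $\theta\cdot1$ --- and a short computation identifies it with $\tfrac{P_0}{P_0+Q_1}$, where $Q_1:=-\beta+\tfrac1{r_1}-D(\tfrac1{r_1}-\tfrac1{p_1})>0$; this is the expression in the statement and automatically belongs to $(0,1)$.

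The step I expect to be the genuine obstacle is the identification of the abstract interpolation space with $B^s_{p,p}$. Because $r$, $p$ and $q$ all move between the two endpoints, one cannot appeal to a one-parameter interpolation of Besov spaces; instead one must verify with some care that the scaling weights $\delta^{-\theta},\delta^{1-\theta}$ appearing in \eqref{espace moyenne} are matched by the powers of $\delta$ produced by Proposition \ref{crucial}, and that the power version of the moyennes norm is precisely what forces both the $x$-integrability and the frequency-summation index of the target down to the single value $p$. A secondary point requiring attention is that the dichotomy $\tfrac2{r_1}-1-D(\tfrac1{r_1}-\tfrac1{p_1})>0$ or $p_1=r_1=2$ really does exhaust the range of validity of the $\theta=1$ endpoint, i.e.\ that the dispersive estimates underlying Proposition \ref{crucial} hide no further restriction there; and, when $p<r_0$, that the cutoff $\chi$ can be inserted without degrading the gain --- which is exactly the source of the final sentence of the statement, asserting that $\chi\equiv1$ is admissible as soon as $p\ge r_0$.
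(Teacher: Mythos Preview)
Your overall strategy matches the paper's: use the splitting $\Delta_{2^k}^x\!\int f\phi\,dv=A_{2^k}^t(f\phi)+tB_{2^k}^t(g\phi)$ from Proposition~\ref{crucial} as the $a_0(s)+a_1(s)$ decomposition in the espaces de moyennes \eqref{espace moyenne}, applied blockwise in $x$ to realize $L^p_x$ as $[L^{p_0}_x,L^{p_1}_x]_{\theta,p}$ with exponents $q_0,q_1$; then sum in $\ell^p_k$ to land in $B^s_{p,p}$. Your remarks on the low frequencies and on the role of $\chi$ when $p<r_0$ are also correct.

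There is, however, a genuine technical step you have not accounted for. The operator bounds you invoke (``controlled by a power of $\delta$'') are the pointwise-in-$t$ estimates of Lemma~\ref{operators norms interpolation}, i.e.\ only weak-Lorentz control $t^{\lambda}\|A_{2^k}^t(f\phi)\|_{L^{p_0}_x}\in L^{m,\infty}_t$. But the moyennes norm \eqref{espace moyenne} demands the \emph{strong} $L^{q_0}_s$ norm of $s^{-\theta}a_0(s)$; after the change of variables $s=s(t)$ chosen to cancel the $k$-dependent weights, this becomes an $L^{q_0}_t$ norm with weight exactly $t^{\lambda}$, and a pure power bound at the matching exponent gives a divergent integral. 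The paper closes this gap with an additional interpolation (Lemma~\ref{operators norms interpolation lorentz}): perturbing the velocity-regularity index $\alpha$ to $\alpha_0<\alpha<\alpha_1$ yields two weak-$L^{m_i,\infty}_t$ bounds whose real interpolation upgrades to a genuine $L^{m,q_0}_t=L^{q_0}_t$ bound, and likewise for $B_{2^k}^t$ with $\beta$ and $q_1$. This step is not decorative --- it is exactly why the third Besov index of the target is forced down to $p$ and why $q_0,q_1<\infty$ must be assumed.

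A secondary point: you describe the argument as interpolating between two Besov estimates on the velocity average, with $\theta=1$ the ``full inversion'' endpoint. That is a helpful heuristic, but the paper never establishes the $\theta=1$ endpoint as a standalone bound (it is the degenerate limit $\beta\to\tfrac1{r_1}-D(\tfrac1{r_1}-\tfrac1{p_1})$). The moyennes is applied directly to the $A/B$ operator norms, not to two pre-existing theorems; the bijection $s\leftrightarrow t$ and the value of $\theta$ are dictated by making $s(t)^{-\theta}t^{-\lambda-1/q_0}$ and $s(t)^{1-\theta}t^{1-\tau-1/q_1}$ simultaneously independent of $t$, which is the computation that produces the displayed formula for~$\theta$.
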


The interpretation of net gain of regularity is not as straightforward as it is for the preceding theorems. Thus, we provide now a somewhat alternative analysis of the regularity index $s$.

The above theorem essentially establishes an estimate on the velocity average which stems from an interpolation of order $\theta$ between the controls
\begin{equation}
	\int_{\mathbb{R}^D} f(x,v)\phi(v) dv \in B^{a}_{r_0,q_0}\left(\mathbb{R}^D_x\right)
	\subset B^{a-D\left(\frac{1}{r_0}-\frac{1}{p_0}\right)}_{p_0,q_0}\left(\mathbb{R}^D_x\right)
\end{equation}
and
\begin{equation}
	\int_{\mathbb{R}^D} g(x,v)\phi(v) dv \in B^{b}_{r_1,q_1}\left(\mathbb{R}^D_x\right)
	\subset B^{b-D\left(\frac{1}{r_1}-\frac{1}{p_1}\right)}_{p_1,q_1}\left(\mathbb{R}^D_x\right),
\end{equation}
which follow from standard Sobolev embeddings. If the functions $f(x,v)$ and $g(x,v)$ were linked by a relation $f=Tg$, where $T$ is some bounded and invertible operator of differential order $r\in\mathbb{R}$ acting only on $x$, then it would be natural to expect, by interpolation of order $\theta$, a control on the velocity average in the Besov space
\begin{equation}
	B^{(1-\theta)\left(a-D\left(\frac 1{r_0}-\frac 1{p_0}\right)\right)
	+\theta\left(b-D\left(\frac 1{r_1}-\frac 1{p_1}\right)+r\right)}_{p,p}\left(\mathbb{R}^D_x\right),
\end{equation}
where $\frac{1}{p} = (1-\theta)\frac{1}{p_0}+\theta\frac{1}{p_1} = (1-\theta)\frac{1}{q_0}+\theta\frac{1}{q_1}$. This formal reasoning shows that the net gain of regularity given by a differential relation $f=Tg$ of order $r$ through an interpolation of order $\theta$ is at most $\theta r$, when compared to a differential operator of zero (e.g. the identity).

From that viewpoint, the above theorem asserts that, the transport operator $T=v\cdot\nabla_x$ being a differential operator of order one, it is possible to obtain a maximal net gain of regularity $\theta$ through velocity averaging. We insist that here the net gain of regularity is found by comparing the actual regularity index $s$ with the interpolation of the indices obtained by Sobolev embeddings.

Therefore, the restriction $\alpha>\frac{1}{r_0}-1-D\left(\frac{1}{r_0}-\frac{1}{p_0}\right)$ on the parameters is, in fact, quite natural since the gain of regularity $\theta$ would possibly be negative otherwise.

Furthermore, the constraint $\beta<\frac{1}{r_1}-D\left(\frac{1}{r_1}-\frac{1}{p_1}\right)$ stems from the fact that, since the transport operator is a differential operator of order one, it cannot yield a gain of regularity $\theta$ which would be superior to one full derivative. In other words, necessarily $\theta< 1$, which implies $\beta< \frac{1}{r_1}-D\left(\frac{1}{r_1}-\frac{1}{p_1}\right)$.

It is quite interesting to note that some kind of space localization is definitely necessary in the case $p<r_0$ of the above theorem. It is in fact explicit from the proofs that this restriction comes from the control of low frequencies. Indeed, let us suppose that the estimate \eqref{general estimate} holds with $\chi\equiv 1$ for some given choice of parameters in the one dimensional case $D=1$. Further consider $f(x,v)\in \mathcal{S}\left(\mathbb{R}_x\times\mathbb{R}_v\right)$ such that its space and velocity frequencies are localized in a bounded domain. In other words, we suppose that $\Delta_{2^k}^xf=\Delta_{2^k}^vf=0$, for every $k\geq 0$, say. Therefore, in virtue of estimate \eqref{general estimate}, it holds that
\begin{equation}
		\left\|\int_{\mathbb{R}^D} f(x,v)\phi(v) dv\right\|_{L^p_x}\leq
		C_\phi \left( \left\| f \right\|_{L^{r_0}_xL^{p_0}_v}
		+
		\left\| v\partial_x f \right\|_{L^{r_1}_xL^{p_1}_v} \right).
\end{equation}
In particular, since the transformation $f_R(x,v)=f\left(\frac{x}{R},v\right)$ preserves de localization of low frequencies for any $R>1$, we deduce that it must also hold that
\begin{equation}
	\begin{aligned}
		R^{\frac 1p} \left\|\int_{\mathbb{R}^D} f(x,v)\phi(v) dv\right\|_{L^p_x}
		& = \left\|\int_{\mathbb{R}^D} f_R(x,v)\phi(v) dv\right\|_{L^p_x} \\
		& \leq
		C_\phi \left( \left\| f_R \right\|_{L^{r_0}_xL^{p_0}_v}
		+
		\left\| v\partial_x f_R \right\|_{L^{r_1}_xL^{p_1}_v} \right) \\
		& =
		C_\phi \left( R^{\frac{1}{r_0}}\left\| f \right\|_{L^{r_0}_xL^{p_0}_v}
		+ R^{\frac{1}{r_1}-1}
		\left\| v\partial_{x} f \right\|_{L^{r_1}_xL^{p_1}_v}
		\right).
	\end{aligned}
\end{equation}
It follows that $R^{\frac 1p-\frac 1{r_0}}$ must remain bounded as $R$ tends towards infinity, which forces $r_0\leq p$.

Finally, we would like to emphasize that we have chosen to present, in this work, cases of velocity averaging lemmas dealing with $L^r_xL^p_v$ integrability only, where $r\leq p$, principally because our analysis of dispersion allowed to handle these previously unsettled cases. But we insist that this is by no means a restriction of our method, which is, in fact, very robust and enables to also treat the actually easier setting of $L^p_vL_x^r$ integrability, where $p\leq r$, and thus, to recover most of previously known results in sharper Besov spaces. Our precise interpolation techniques can even reach settings where the left-hand side $f$ enjoys $L^{r_0}_xL^{p_0}_v$ integrability, where $r_0\leq p_0$, while the right-hand side $g$ displays $L^{p_1}_vL_x^{r_1}$ integrability, where $p_1\leq r_1$, and vice versa.


\section{Ellipticity, dispersion and averaging}\label{methods}

Here, we explain the concepts which will lead to the proofs of the main results in this work.

The classical theory of velocity averaging lemmas in $L^2_{x,v}$, first developed in \cite{GLPS88,GPS85}, is based on a simple but ingenious microlocal decomposition. More precisely, if $f(x,v),g(x,v)\in L^2\left(\mathbb{R}^D_x\times\mathbb{R}^D_v\right) $ satisfy the transport relation \eqref{transport 1} then, considering the Fourier transforms $\hat f(\eta,v)$ and $\hat g(\eta,v)$ in the space variable only, it holds that
\begin{equation}
	i v\cdot\eta \hat f(\eta,v) = \hat g(\eta,v).
\end{equation}
Therefore, it is possible to exploit some ellipticity of the transport operator as long as one remains on an appropriate microlocal domain. In other words, we may invert the transport operator as long as the quantity $\left|v\cdot\eta\right|$ remains uniformly bounded away from zero:
\begin{equation}
	\hat f(\eta,v) = \frac{1}{i v\cdot\eta} \hat g(\eta,v),\qquad \text{on $\left\{|v\cdot\eta|>1\right\}$, say.}
\end{equation}
Thus, introducing some cutoff function $\rho\in\mathcal{S}\left(\mathbb{R}\right)$ ($\mathcal{S}$ denotes the Schwartz space of rapidly decaying functions) such that $\rho(0)=1$ and an interpolation parameter $t>0$, we may decompose
\begin{equation}\label{elliptic}
	\hat f(\eta,v)= \rho\left(t v\cdot\eta\right) \hat f(\eta,v)
	+ \frac{1 - \rho\left(t v\cdot\eta\right)}{i v\cdot\eta} \hat g(\eta,v).
\end{equation}
It follows that each term in the right-hand side may then be estimated locally in $L^2_v$ and the remainder of the proof simply consists in choosing the optimal value for the interpolation parameter $t$ (which will depend on $\eta$). The conclusion of this method yields that, for every test function $\phi(v)\in C^\infty_0$, the velocity average $\int f(x,v)\phi(v)dv$ belongs to $H^\frac{1}{2}_x$. This approach yields optimal results and exhibits the crucial regularizing properties of the transport operator, which, we insist, is based on exploiting some partial ellipticity.

As mentioned before, several extensions of this method are possible (cf. \cite{bezard, diperna2}), in particular, the $L^p_{x,v}$ case of velocity averaging lemmas is obtained by interpolating the preceding $L^2_{x,v}$ result with the degenerate case in $L^1_{x,v}$. Indeed, if $f(x,v),g(x,v)\in L^1_{x,v}$, then absolutely no regularity may be gained on the velocity averages from the transport equation, which is unfortunately optimal as far as the gain of regularity is concerned.

In this work, we obtain refined velocity averaging results by further exploiting the dispersive properties of the transport operator discovered by Castella and Perthame in \cite{CP96}. This requires the development of a suitable interpolation formula, more refined than \eqref{elliptic}, and the study of its properties. Thus, introducing an interpolation parameter $t>0$, it trivially holds, from \eqref{transport 1}, that
\begin{equation}
	\begin{cases}
		(\partial_t+v\cdot\nabla_x)f=g,\\
		f(t=0)=f.
	\end{cases}
\end{equation}
Hence the interpolation formula, 
\begin{equation}\label{interpolation formula}
	f(x,v)=f(x-tv,v)+\int_0^tg(x-sv,v) ds,
\end{equation}
which is in fact dual to the interpolation formula employed in \cite{golse3} and is merely Duhamel's representation formula for the time dependent transport equation.

Furthermore, considering any $f\in\mathcal{S}\left(\mathbb{R}^D_x\times\mathbb{R}^D_v\right)$, $\chi\in\mathcal{S}\left(\mathbb{R}^D\right)$ and denoting $\chi_\lambda(\cdot)=\frac{1}{\lambda^D}\chi\left(\frac{\cdot}{\lambda}\right)$, where $\lambda>0$, one easily verifies the following rule of action of convolutions on velocity averages,
\begin{equation}
	\begin{aligned}
		\chi_\lambda*_x\int_{\mathbb{R}^D} f(x-tv,v) dv & =
		\int_{\mathbb{R}^D\times\mathbb{R}^D}\frac{1}{\lambda^D}\chi\left(\frac{x-y}{\lambda}\right)f(y-tv,v) dvdy\\
		& = \int_{\mathbb{R}^D\times\mathbb{R}^D} \frac{1}{\left(\frac{\lambda}{t}\right)^D}
		\chi\left(\frac{w-v}{\frac{\lambda}{t}}\right)f(x-tw,v) dwdv\\
		& = \int_{\mathbb{R}^D}\left(\chi_{\frac{\lambda}{t}}*_vf\right)(x-tw,w) dw,
	\end{aligned}
\end{equation}
where we used the change of variables $\left(v,y\right) \mapsto \left(v,w = \frac{x-y}t + v\right) $.

In the notation of Section \ref{LP decomposition} and in particular \eqref{dyadic block} and \eqref{dyadic block 2}, one then checks employing the above rule of action of convolutions that the dyadic frequency blocks act on velocity averages according to the identities, where $\delta>0$,
\begin{equation}  \label{x-to-v}
	\begin{aligned}
		\Delta_0^x\int f(x-tv,v) dv & =
		\int \left(S_{t}^vf\right)(x-tv,v) dv,\\
		\Delta_\delta^x\int f(x-tv,v) dv & =
		\int \left(\Delta_{t\delta}^vf\right)(x-tv,v) dv.
	\end{aligned}
\end{equation}
Next, we apply this identity to the velocity averages of the above interpolation formula \eqref{interpolation formula} to deduce
\begin{equation} \label{D-x}
	\begin{aligned}
		\Delta_{0}^x\int_{\mathbb{R}^D} f(x,v) dv & =
		\int_{\mathbb{R}^D} \left(S_{t}^v f\right)(x-tv,v) dv \\
		& +
		\int_0^t\int_{\mathbb{R}^D} \left(S_{s}^v g\right)(x-sv,v) dvds,\\
		\Delta_{2^k}^x\int_{\mathbb{R}^D} f(x,v) dv & =
		\int_{\mathbb{R}^D} \left(\Delta_{t2^k}^v f\right)(x-tv,v) dv \\
		& +
		\int_0^t\int_{\mathbb{R}^D} \left(\Delta_{s2^k}^v g\right)(x-sv,v) dvds.
	\end{aligned}
\end{equation}

The above representation formula can be used explicitly to establish the simplest forms of our main results. It is the first key idea in our approach. Indeed, it shows that the space frequencies of the averages of $f(x,v)$ may be controlled with the velocity frequencies of $f(x,v)$ and $g(x,v)$. This property of transfer of frequencies is linked to the hypoellipticity of the transport operator and we refer to \cite{arsenio2} for recent developments on this matter.

Moreover, it can be used in some case to give a sense to the velocity average $\int_{\mathbb{R}^D}f(x,v)dv$ even when $f(x,v)$ is a priori not globally integrable in velocity. Indeed, supposing that we can show (using dispersive estimates for instance) that $\left(\Delta_{t2^k}^v f\right)(x-tv,v)$ and $\int_0^t \left(\Delta_{s2^k}^v g\right)(x-sv,v) ds$ actually are globally integrable in velocity for each given $t>0$, then we may nevertheless define the velocity average $\Delta_{2^k}^x\int_{\mathbb{R}^D} f(x,v) dv$ by the identities \eqref{D-x}, and similarly for the low frequencies component. This principle is used implicitly in the statements of Theorems \ref{pre averaging lemma p}, \ref{pre averaging lemma p 2}, \ref{pre averaging lemma p h} and \ref{pre averaging lemma p 2 h}, but we will never render this argument explicit for the sake of simplicity.

A very simple but useful refinement of the above formulas \eqref{D-x} follows from the fact that $S_2^x \Delta_0^x=\Delta_0^x$ and $\Delta_{\left[2^{k-1},2^{k+1}\right]}^x \Delta_{2^k}^x = \Delta_{2^k}^x$. Thus, we obtain
\begin{equation} \label{D-x 2}
	\begin{aligned}
		\Delta_{0}^x\int_{\mathbb{R}^D} f(x,v) dv & =
		\int_{\mathbb{R}^D} \left(S_2^xS_t^v f\right)(x-tv,v) dv \\
		& +
		\int_0^t\int_{\mathbb{R}^D} \left(S_2^xS_s^v g\right)(x-sv,v) dvds,\\
		\Delta_{2^k}^x\int_{\mathbb{R}^D} f(x,v) dv & =
		\int_{\mathbb{R}^D} \left(\Delta_{\left[2^{k-1},2^{k+1}\right]}^x\Delta_{t2^k}^v f\right)(x-tv,v) dv \\
		& +
		\int_0^t\int_{\mathbb{R}^D} \left(\Delta_{\left[2^{k-1},2^{k+1}\right]}^x\Delta_{s2^k}^v g\right)(x-sv,v) dvds,
	\end{aligned}
\end{equation}
which considerably decreases the set of frequencies of $f(x,v)$ and $g(x,v)$ required to control the frequencies of the velocity averages.

The formulas \eqref{D-x} and \eqref{D-x 2} will be used to treat the velocity averages in the $L^1_xL^p_v$ and the $L^1_vL^p_x$ settings, which are endpoint cases. As usual, the more general cases will then be obtained by interpolation with the classical $L_{x,v}^2$ case of velocity averaging. However, a significant obstruction to this interpolating strategy lies in that the representation formulas \eqref{elliptic}, which exploits the elliptic properties of the transport operator, and \eqref{interpolation formula}, which is based on the dispersion of the transport operator, are of different nature and thus seem at first to be incompatible. There is however an elementary but crucial link between them which we establish now. Similar ideas are used in \cite{arsenio2}.

To this end, we first notice, recalling the Fourier inversion formula $\rho(r)=\frac{1}{2\pi}\int_\mathbb{R} e^{i rs} \hat \rho(s) ds$, that it trivially holds, for any $t\in\mathbb{R}$, that
\begin{equation}
	\hat f\left(\eta,v\right) \rho\left(t \eta\cdot v\right)
	=\frac{1}{2\pi}
	\int_\mathbb{R} \hat f\left(\eta,v\right) e^{i st \eta\cdot v} \hat \rho(s) ds,
\end{equation}
and similarly, since $1=\rho(0)=\frac{1}{2\pi}\int_\mathbb{R} \hat \rho(s) ds$ and further noticing
\begin{equation}
	-\int_0^{st} e^{i \eta\cdot v \sigma} d\sigma
	=
	\frac{1-e^{i st \eta\cdot v}}{i  \eta\cdot v},
\end{equation}
that
\begin{equation}
	\begin{aligned}
		\hat g\left(\eta,v\right) \frac{1-\rho\left(t \eta\cdot v\right)}{i\eta\cdot v}
		=&\frac{1}{2\pi}
		\int_\mathbb{R} \hat g\left(\eta,v\right) \frac{ 1 - e^{i st \eta\cdot v} }{i\eta\cdot v} \hat \rho(s) ds\\
		=&
		- \frac{1}{2\pi} \int_\mathbb{R} \int_0^{st} \hat g\left(\eta,v\right) e^{i \eta\cdot v \sigma} d\sigma \hat \rho(s) ds.
	\end{aligned}
\end{equation}
Then, simply taking the inverse Fourier transform of the above identities, we obtain
\begin{equation}\label{equiv f}
	\begin{aligned}
		\mathcal{F}_x^{-1}
		\rho\left(t \eta\cdot v\right)
		\mathcal{F}_x f\left(x,v\right)
		=&\frac{1}{2\pi}
		\int_\mathbb{R}
		\mathcal{F}_x^{-1}
		e^{i st \eta\cdot v}
		\mathcal{F}_x
		f\left(x,v\right) \hat \rho(s) ds\\
		=&\frac{1}{2\pi}
		\int_\mathbb{R}
		f\left(x+stv,v\right) \hat \rho(s) ds,
	\end{aligned}
\end{equation}
and
\begin{equation}\label{equiv g}
	\begin{aligned}
		\mathcal{F}_x^{-1}
		\frac{1-\rho\left(t \eta\cdot v\right)}{i \eta\cdot v}
		\mathcal{F}_x
		g\left(x,v\right)
		=&
		-\frac{1}{2\pi}
		\int_\mathbb{R} \int_0^{st}
		\mathcal{F}_x^{-1}
		e^{i \sigma \eta\cdot v }
		\mathcal{F}_x g\left(x,v\right)  d\sigma \hat \rho(s) ds\\
		=&
		- \frac{1}{2\pi} \int_\mathbb{R} \int_0^{st}
		g\left(x+\sigma v,v\right)  d\sigma \hat \rho(s) ds.
	\end{aligned}
\end{equation}

Therefore, incorporating identities \eqref{equiv f} and \eqref{equiv g} into the interpolation formulas \eqref{elliptic} or \eqref{interpolation formula}, we obtain, for each $t\in\mathbb{R}$, the following refined decomposition
\begin{equation}\label{refined interpolation formula}
	f(x,v)=T_A^t f(x,v) + t T_B^t g(x,v),
\end{equation}
where
\begin{equation}\label{T 1}
	\begin{aligned}
		T_A^tf(x,v)&=\frac{1}{2\pi}\int_\mathbb{R}
		f\left(x+stv,v\right) \hat \rho(s) ds\\
		&=\int_\mathbb{R}
		f\left(x-stv,v\right) \tilde \rho(s) ds,
		\\
		T_B^tg(x,v)&=- \frac 1t \frac{1}{2\pi}\int_\mathbb{R} \int_0^{st}
		g\left(x+\sigma v,v\right)  d\sigma \hat \rho(s) ds\\
		& =-\frac{1}{2\pi} \int_\mathbb{R} \int_0^{s}
		g\left(x+\sigma t v,v\right)  d\sigma \hat \rho(s) ds\\
		& = \int_\mathbb{R} \int_0^{s}
		g\left(x-\sigma t v,v\right)  d\sigma \tilde \rho(s) ds,
	\end{aligned}
\end{equation}
and
\begin{equation}\label{T 2}
	\begin{aligned}
		\mathcal{F}_xT_A^tf(\eta,v)&=
		\rho\left(t \eta\cdot v\right)
		\mathcal{F}_x f\left(\eta,v\right),
		\\
		\mathcal{F}_xT_B^tg(\eta,v)&=
		\frac{1-\rho\left(t \eta\cdot v\right)}{ i t \eta\cdot v}
		\mathcal{F}_x
		g\left(\eta,v\right),
	\end{aligned}
\end{equation}
which shows that formulas \eqref{elliptic} and \eqref{interpolation formula} are in fact equivalent. Indeed, it is readily seen that \eqref{elliptic} follows from \eqref{refined interpolation formula} by use of the Fourier transform, while \eqref{interpolation formula} is deduced from \eqref{refined interpolation formula} by setting $\tilde\rho(s)$ equal to an arbitrary approximation of the Dirac mass at $s=1$, which makes sense since we have merely imposed on the cutoff that $\rho(0)=\frac{1}{2\pi}\int_\mathbb{R}\hat\rho(s)ds=\int_\mathbb{R}\tilde\rho(s)ds=1$. Equivalently, formula \eqref{refined interpolation formula} can be derived from \eqref{interpolation formula} by replacing $t$ by $st$ and then integrating against $\tilde\rho(s)ds$.

Notice also that, by setting $\tilde \rho(s)=e^{-s}\mathbb{1}_{\left\{s\geq 0\right\}}$ in \eqref{refined interpolation formula}, one arrives at the standard representation formula, for any $t>0$,
\begin{equation}
	\begin{aligned}
		f(x,v) & =\int_0^\infty
		f\left(x-stv,v\right) e^{-s}  + t  \int_0^{s}
		g\left(x-\sigma t v,v\right)  d\sigma e^{-s} ds\\
		& =\int_0^\infty
		\left[\frac{1}{t}f\left(x-sv,v\right)  +
		g\left(x-s v,v\right)\right]  e^{-\frac st} ds,
	\end{aligned}
\end{equation}
which is usually obtained by directly solving the equivalent transport equation
\begin{equation}
	\left[\frac 1t +v\cdot\nabla_x\right] f= \frac 1t f+g,
\end{equation}
but we will not make any use of this decomposition (cf. \cite{golse3, jabin} for uses of this formula). Essentially, this particular choice of cutoff is not appropriate because its frequencies are not well localized, even though its Fourier transform decays exponentially. The importance of having a strong frequencies cutoff is made fully explicit in the localization identities \eqref{crucial 3} of Proposition \ref{crucial} below.

Much more importantly, this refined decomposition \eqref{refined interpolation formula} shows that the elliptic and dispersive properties of the transport operator may be exploited through the same interpolation formula. Thus, employing formula \eqref{x-to-v} on the transfer of frequencies for velocity averages with the above decomposition, we obtain the following crucial proposition, which will systematically be the starting point of the proofs for the averaging lemmas presented in this work.

\begin{prop}\label{crucial}
	Let $f(x,v),g(x,v)\in\mathcal{S}\left(\mathbb{R}^D\times\mathbb{R}^D\right)$ be such that
	\begin{equation}
		v\cdot\nabla_x f = g.
	\end{equation}
	For all $t>0$, $\delta\geq 0$ and for every cutoff function $\rho\in\mathcal{S}\left(\mathbb{R}\right)$ such that $\rho(0)=\frac{1}{2\pi}\int_\mathbb{R}\hat\rho(s)ds=\int_\mathbb{R}\tilde\rho(s)ds=1$, we consider the decomposition
	\begin{equation}\label{crucial 0}
		\Delta_{\delta}^x\int_{\mathbb{R}^D} f(x,v) dv = A_{\delta}^tf(x) + t B_{\delta}^tg(x),
	\end{equation}
	where the operators $A_\delta^t$ and $B_\delta^t$ are defined by
	\begin{equation}
		\begin{aligned}
			A_\delta^t f(x) &= \Delta_{\delta}^x \int_{\mathbb{R}^D} T_A^t f(x,v) dv,\\
			B_\delta^t g(x) &= \Delta_{\delta}^x \int_{\mathbb{R}^D} T_B^t g(x,v) dv.
		\end{aligned}
	\end{equation}
	
	Then it holds that
	\begin{equation}\label{crucial 1}
		\begin{aligned}
			A_{\delta}^t f(x)&=
			\int_{\mathbb{R}^D}
			\mathcal{F}_x^{-1}
			\rho\left(t \eta\cdot v\right)
			\mathcal{F}_x
			\Delta_{\delta}^x
			f(x,v)dv,\\
			B_{\delta}^t g(x)&=
			\int_{\mathbb{R}^D}
			\mathcal{F}_x^{-1}
			\tau\left(t \eta\cdot v\right)
			\mathcal{F}_x
			\Delta_{\delta}^x
			g(x,v)dv,
		\end{aligned}
	\end{equation}
	where $\tau(s)=\frac{1-\rho\left( s \right)}{i s}$ is smooth, and
	\begin{equation}\label{crucial 2}
		\begin{aligned}
			A_{\delta}^t f(x)
			& = \int_\mathbb{R} \left[ \int_{\mathbb{R}^D}
			\Delta_{\delta}^x f \left(x-stv,v\right) dv \right] \tilde \rho(s) ds , \\
			B_{\delta}^t g(x)
			& = \int_\mathbb{R} \left[ \int_0^1 \int_{\mathbb{R}^D}
			\Delta_{\delta}^x g \left(x-\sigma st v,v\right)  dv d\sigma \right] s\tilde \rho(s) ds.
		\end{aligned}
	\end{equation}
	
	Furthermore, if the cutoff $\rho\in\mathcal{S}\left(\mathbb{R}\right)$ is such that $\tilde\rho$ is compactly supported inside $\left[1,2\right]$, then, for every $\delta>0$,
	\begin{equation}\label{crucial 3}
		\begin{aligned}
			A_{0}^t f
			& = A_{0}^t \left( S_2^xS_{4t}^v f \right), &
			B_{0}^t g
			& = B_{0}^t \left(S_2^xS_{4t}^v g\right),\\
			A_{\delta}^t f
			& = A_{\delta}^t \left( \Delta_{\left[\frac \delta 2,2\delta\right]}^x\Delta_{\left[{t\delta\over 2},4t\delta\right]}^v f \right), &
			B_{\delta}^t g
			& = B_{\delta}^t \left(\Delta_{\left[\frac \delta 2,2\delta\right]}^xS_{8t \delta}^v g\right).
		\end{aligned}
	\end{equation}
\end{prop}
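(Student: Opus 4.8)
The plan is to unwind the definitions and then push the dyadic multipliers through the representation formulas established in Section \ref{methods}. First I would verify \eqref{crucial 1}: starting from the definition $A_\delta^t f(x) = \Delta_\delta^x \int_{\mathbb{R}^D} T_A^t f(x,v)\,dv$, I would use that $\Delta_\delta^x$ acts only on $x$ to move it inside the velocity integral, and then invoke the Fourier-side identity \eqref{T 2}, namely $\mathcal{F}_x T_A^t f(\eta,v) = \rho(t\eta\cdot v)\mathcal{F}_x f(\eta,v)$, so that $\Delta_\delta^x T_A^t f = \mathcal{F}_x^{-1} \rho(t\eta\cdot v)\mathcal{F}_x \Delta_\delta^x f$; the same argument with \eqref{T 2} for $T_B^t$, together with $\mathcal{F}_x T_B^t g(\eta,v) = \tau(t\eta\cdot v)\mathcal{F}_x g(\eta,v)$ where $\tau(s)=\frac{1-\rho(s)}{is}$, gives the second line. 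Smoothness of $\tau$ is immediate since $\rho(0)=1$ so the numerator vanishes to first order at $s=0$, making the singularity removable. For \eqref{crucial 2} I would instead use the physical-space representations in \eqref{T 1}: substituting $T_A^t f(x,v)=\int_{\mathbb{R}} f(x-stv,v)\tilde\rho(s)\,ds$ into the definition, commuting $\Delta_\delta^x$ (a convolution in $x$) with the $s$-integral and the $v$-integral, and noting $\Delta_\delta^x[f(\cdot-stv,v)](x) = (\Delta_\delta^x f)(x-stv,v)$ by translation invariance; the analogous computation for $T_B^t g$, after the change of variable $\sigma \mapsto \sigma s t$ inside $\int_0^{st}$, yields the $\int_0^1$ form with the weight $s\tilde\rho(s)$.

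The substantive part is the localization identity \eqref{crucial 3}, and this is where I expect the main work to lie. The idea is purely frequency-support bookkeeping. Consider $A_\delta^t f$ with $\delta>0$. From \eqref{crucial 1}, the $x$-Fourier support of $A_\delta^t f$ is contained in that of $\Delta_\delta^x$, i.e. in $\{\delta/2 \le |\eta| \le 2\delta\}$, so $A_\delta^t f = A_\delta^t(\Delta_{[\delta/2,2\delta]}^x f)$ is immediate. The subtler claim is that one may also insert $\Delta_{[t\delta/2,\,4t\delta]}^v$ in front of $f$. Here I would use formula \eqref{x-to-v} — the transfer-of-frequencies rule $\Delta_\delta^x \int f(x-tv,v)\,dv = \int (\Delta_{t\delta}^v f)(x-tv,v)\,dv$ — applied inside the $s$-integral representation \eqref{crucial 2}: since $\tilde\rho$ is supported in $[1,2]$, the relevant shifts are $x \mapsto x - st v$ with $s\in[1,2]$, and \eqref{x-to-v} (with $t$ replaced by $st$) transfers the block $\Delta_\delta^x$ into a velocity block at frequency scale $st\delta \in [t\delta, 2t\delta]$; enlarging slightly to the fattened annulus $[t\delta/2, 4t\delta]$ absorbs the range of $s$ and the factor-of-two slack in the definition of $\Delta_{[\cdot,\cdot]}$ from \eqref{dyadic block 2}, and since such a fattened block acts as the identity on functions already localized there, one gets $A_\delta^t f = A_\delta^t(\Delta_{[\delta/2,2\delta]}^x \Delta_{[t\delta/2,4t\delta]}^v f)$. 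For $B_\delta^t g$ the representation \eqref{crucial 2} additionally integrates over $\sigma\in[0,1]$, so the effective shift is $x \mapsto x - \sigma s t v$ with $\sigma s t \in [0, 2t]$; the transferred velocity frequency scale is then $\sigma s t \delta \in [0, 4t\delta]$, which is why one only gets a low-pass $S_{8t\delta}^v$ (after the usual factor-of-two enlargement) rather than an annular block — and the $x$-block stays $\Delta_{[\delta/2,2\delta]}^x$ exactly as before. The $\delta=0$ cases are the same argument with $\Delta_0^x = S_1$ and $S_\cdot^v$ in place of the annular blocks: $s\in[1,2]$ gives the factor $S_{4t}^v$ for $A$, and $\sigma s t \in[0,2t]$ gives $S_{4t}^v$ for $B$ as well, while $S_2^x$ appears because $S_2^x S_1^x = S_1^x = \Delta_0^x$.

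The main obstacle, then, is not any deep estimate but the careful matching of the numerical thresholds in the fattened Littlewood--Paley blocks $\Delta_{[\delta_1,\delta_2]}$ — tracking exactly how the support $s\in[1,2]$ (and $\sigma\in[0,1]$) interacts with the scaling in \eqref{x-to-v} and with the factor-$2$ conventions in \eqref{dyadic block} and \eqref{dyadic block 2}, and confirming that the stated annuli $[\delta/2,2\delta]$, $[t\delta/2,4t\delta]$ and the low-pass cutoffs $S_{4t}^v$, $S_{8t\delta}^v$ are wide enough to contain all the frequencies produced while still being the identity on the already-localized pieces. I would also need to double-check that commuting $\Delta_\delta^x$, the $s$-integral against $\tilde\rho$, and the $v$-integral is legitimate for Schwartz $f,g$, which is routine by Fubini and the rapid decay of $\tilde\rho$ and of $f,g$ in all variables.
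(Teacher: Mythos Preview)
Your proposal is correct and follows essentially the same route as the paper: identities \eqref{crucial 1} and \eqref{crucial 2} are read off directly from \eqref{T 1}--\eqref{T 2}, and for \eqref{crucial 3} the paper likewise applies the transfer rule \eqref{x-to-v} inside the representation \eqref{crucial 2}, uses the support condition $\tilde\rho\subset[1,2]$ to insert the appropriate fattened velocity blocks (via identities such as $\Delta_{st\delta}^v = \Delta_{st\delta}^v \Delta_{[t\delta/2,4t\delta]}^v$ and $\Delta_{\sigma st\delta}^v = \Delta_{\sigma st\delta}^v S_{8t\delta}^v$), and then reapplies \eqref{x-to-v} in reverse together with $\Delta_{[\delta/2,2\delta]}^x\Delta_\delta^x=\Delta_\delta^x$. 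Your informal bookkeeping of the numerical thresholds matches the paper's explicit computation.
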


\begin{proof}
	It is readily seen that identities \eqref{crucial 1} and \eqref{crucial 2} are a mere transcription in terms of velocity averages of properties \eqref{T 1} and \eqref{T 2} for the operators $T_A^t$ and $T_B^t$. Therefore, we only have to justify the crucial property \eqref{crucial 3}. To this end, notice first that property \eqref{x-to-v} on the transfer of spatial to velocity frequencies implies, according to \eqref{crucial 2}, that, when $\delta>0$,
	\begin{equation}
		\begin{aligned}
			A_{0}^t f(x)
			& = \int_\mathbb{R} \left[ \int_{\mathbb{R}^D}
			\left(S_{st}^v f\right) \left(x-stv,v\right) dv \right] \tilde \rho(s) ds , \\
			B_{0}^t g(x)
			& = \int_\mathbb{R} \left[ \int_0^1 \int_{\mathbb{R}^D}
			\left( S_{\sigma st}^v g \right) \left(x-\sigma st v,v\right)  dv d\sigma \right] s\tilde \rho(s) ds,\\
			A_{\delta}^t f(x)
			& = \int_\mathbb{R} \left[ \int_{\mathbb{R}^D}
			\left(\Delta_{st\delta}^v f\right) \left(x-stv,v\right) dv \right] \tilde \rho(s) ds , \\
			B_{\delta}^t g(x)
			& = \int_\mathbb{R} \left[ \int_0^1 \int_{\mathbb{R}^D}
			\left( \Delta_{\sigma st\delta}^v g \right) \left(x-\sigma st v,v\right)  dv d\sigma \right] s\tilde \rho(s) ds.
		\end{aligned}
	\end{equation}
	
	Then, since $t\leq st\leq 2t$ and $\sigma st\leq 2t$ on the support of $\tilde\rho(s)$, it holds that
	\begin{equation}
		\begin{aligned}
			S_{st}^v & = S_{st}^v S_{4t}^v, &
			S_{\sigma st}^v & = S_{\sigma st}^v S_{4t}^v,\\
			\Delta_{st\delta}^v & = \Delta_{st\delta}^v \Delta_{\left[{t\delta\over 2},4t\delta\right]}^v, &
			\Delta_{\sigma st\delta}^v & = \Delta_{\sigma st\delta}^v S_{8t \delta}^v.
		\end{aligned}
	\end{equation}
	Hence, employing identities \eqref{x-to-v} again, we find that
	\begin{equation}
		\begin{aligned}
			A_{0}^t f(x)
			& = \int_\mathbb{R} \left[ \int_{\mathbb{R}^D}
			\left(S_{st}^v S_{4t}^v f\right) \left(x-stv,v\right) dv \right] \tilde \rho(s) ds \\
			& = \int_\mathbb{R} \left[ \int_{\mathbb{R}^D}
			\left(\Delta_{0}^x S_{4t}^v f\right) \left(x-stv,v\right) dv \right] \tilde \rho(s) ds = A_{0}^t \left( S_{4t}^v f \right)(x) , \\
			B_{0}^t g(x)
			& = \int_\mathbb{R} \left[ \int_0^1 \int_{\mathbb{R}^D}
			\left( S_{\sigma st}^v S_{4t}^v g \right) \left(x-\sigma st v,v\right)  dv d\sigma \right] s\tilde \rho(s) ds\\
			& = \int_\mathbb{R} \left[ \int_0^1 \int_{\mathbb{R}^D}
			\left( \Delta_{0}^x S_{4t}^v g \right) \left(x-\sigma st v,v\right)  dv d\sigma \right] s\tilde \rho(s) ds = B_{0}^t \left(S_{4t}^v g\right)(x),\\
			A_{\delta}^t f(x)
			& = \int_\mathbb{R} \left[ \int_{\mathbb{R}^D}
			\left(\Delta_{st\delta}^v \Delta_{\left[{t\delta\over 2},4t\delta\right]}^v f\right) \left(x-stv,v\right) dv \right] \tilde \rho(s) ds \\
			& = \int_\mathbb{R} \left[ \int_{\mathbb{R}^D}
			\left(\Delta_{\delta}^x \Delta_{\left[{t\delta\over 2},4t\delta\right]}^v f\right) \left(x-stv,v\right) dv \right] \tilde \rho(s) ds = A_{\delta}^t \left( \Delta_{\left[{t\delta\over 2},4t\delta\right]}^v f \right)(x) , \\
			B_{\delta}^t g(x)
			& = \int_\mathbb{R} \left[ \int_0^1 \int_{\mathbb{R}^D}
			\left( \Delta_{\sigma st\delta}^v S_{8t \delta}^v g \right) \left(x-\sigma st v,v\right)  dv d\sigma \right] s\tilde \rho(s) ds\\
			& = \int_\mathbb{R} \left[ \int_0^1 \int_{\mathbb{R}^D}
			\left( \Delta_{\delta}^x S_{8t \delta}^v g \right) \left(x-\sigma st v,v\right)  dv d\sigma \right] s\tilde \rho(s) ds = B_{\delta}^t \left(S_{8t \delta}^v g\right)(x).
		\end{aligned}
	\end{equation}
	Finally, utilizing that $S_2^x \Delta_0^x=\Delta_0^x$ and $\Delta_{\left[\frac \delta 2,2\delta\right]}^x \Delta_{\delta}^x = \Delta_{\delta}^x$ concludes the proof of the proposition.
\end{proof}


\section{Control of concentrations in $L^1$}\label{L1}

The methods developed in Section \ref{methods}, which eventually led to the main results from Section \ref{main results}, were initially motivated by an important application to the viscous incompressible hydrodynamic limit of the Boltzmann equation with long-range interactions in \cite{arsenio, arsenio3}. We present in this section some technical lemmas which were crucially employed therein and are contained in our main results. Essentially, we show here how the methods from Section \ref{methods} can be used to gain control over the concentrations in kinetic transport equations through velocity averaging. This application relies on a refined regularity estimate for the Boltzmann equation without cutoff established in \cite{arsenio4}, wherein the reader will also find a complete discussion of this application to the hydrodynamic limit. An alternative method has been developed in \cite{arsenio2}.

The difficult problem of concentrations in the viscous incompressible hydrodynamic limit of the Boltzmann equation was first successfully handled in \cite{golse} for bounded collision kernels with cutoff and then, using the same approach, extended in \cite{golse5, levermore} to more general cross-sections with cutoff. In any case, the method consisted in an application of a simple, but subtle, velocity averaging lemma in $L^1$ established in \cite{golse3}, which we briefly present now. Recall first the following definition from \cite{golse3}.

\begin{defi}

	A bounded sequence $\left\{f_n(x,v)\right\}_{n=0}^\infty\subset L^1_{\textrm{loc}}\left(\mathbb{R}^D_x\times\mathbb{R}^D_v\right)$ is said to be \textbf{equi-integrable in $v$} if and only if for every $\eta>0$ and each compact $K\subset\mathbb{R}^D\times\mathbb{R}^D$, there exists $\delta>0$ such that for any measurable set $\Omega\subset\mathbb{R}^D\times\mathbb{R}^D$ with $\sup_{x\in\mathbb{R}^D}\int\mathbb{1}_\Omega(x,v) dv<\delta$, we have that
	\begin{equation}
		\int_{\Omega\cap K}|f_n(x,v)| dxdv<\eta \quad\text{for every $n$.}
	\end{equation}

\end{defi}

The crucial compactness lemma in $L^1$ obtained in \cite{golse3} and relevant to the rigorous derivations of the hydrodynamic limit in the cutoff case \cite{golse, golse5, levermore} is contained in the following theorem.

\begin{thm}[\cite{golse3}]\label{velocity averaging golse}

	Let the sequence $\{f_n(x,v)\}_{n=0}^\infty$ be bounded in $L^1_\mathrm{loc}\left(\mathbb{R}^D_x\times\mathbb{R}^D_v\right)$, equi-integrable in $v$, and such that
	\begin{equation}\label{kinetic}
		v\cdot\nabla_xf_n=g_n
	\end{equation}
	for some sequence $\{g_n\}_{n=0}^\infty$ bounded in $L^1_\mathrm{loc}\left(\mathbb{R}^D_x\times\mathbb{R}^D_v\right)$.
	
	Then,
	\begin{enumerate}

		\item $\{f_n\}_{n=0}^\infty$ is equi-integrable (in all variables $x$ and $v$) and,

		\item for each $\psi\in L^\infty(\mathbb{R}^D)$ with compact support, the family of velocity averages
			\begin{equation}
				\int_{\mathbb{R}^D} f_n(x,v)\psi(v) dv\qquad\text{is relatively compact in }L^1_{\mathrm{loc}}(\mathbb{R}^D).
			\end{equation}

	\end{enumerate}

\end{thm}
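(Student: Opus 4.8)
The plan is to obtain Theorem~\ref{velocity averaging golse}, which is the averaging lemma established in \cite{golse3}, from the dispersive decomposition of Proposition~\ref{crucial}, with the hypothesis of equi-integrability in $v$ playing the role that positive Besov regularity in $v$ plays in Theorems~\ref{pre averaging lemma p}--\ref{main averaging lemma 2}. First I would dispose of preliminaries: absorb $\psi(v)$ into $f_n$ and $g_n$ (legitimate, since $\psi$ depends on $v$ only and thus commutes with $v\cdot\nabla_x$), localize with a spatial cutoff $\chi(x)$ (the commutator $(v\cdot\nabla_x\chi)f_n$ stays bounded in $L^1$ because $v$ ranges over a bounded set --- for assertion (1) the compact $K$ has bounded $v$-section, for (2) $\psi$ is compactly supported), and mollify $f_n,g_n$ in $(x,v)$ so that Proposition~\ref{crucial} applies, passing to the limit afterwards. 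Thus I may assume $f_n,g_n$ bounded in $L^1(\mathbb{R}^D_x\times B^v_R)$ for a common $R$, and I work with $\Delta^x_\delta\int f_n\,dv=A^t_\delta f_n+tB^t_\delta g_n$ together with the characteristic and frequency-localized forms \eqref{crucial 2}--\eqref{crucial 3}, choosing $\tilde\rho$ supported in $[1,2]$.

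For assertion (1), the usual truncation $f_n=\beta_\lambda(f_n)+(f_n-\beta_\lambda(f_n))$ at height $\lambda$ (the first summand being trivially equi-integrable and $\|f_n-\beta_\lambda(f_n)\|_{L^1}\le\int_{\{|f_n|>\lambda\}}|f_n|$) reduces the problem to showing that the $x$-concentration modulus $\omega(\delta):=\sup_n\sup_{A\subset\mathbb{R}^D_x,\ |A|\le\delta}\int_{A\times B_R}|f_n|\,dx\,dv$ tends to $0$ as $\delta\to0^+$; granting this, equi-integrability in $v$ disposes of the part of an arbitrary small set whose $v$-sections are small, and (1) follows. To bound $\omega$ I would invoke the characteristic form \eqref{crucial 2}: from $|f_n(x,v)|\le|f_n(x-stv,v)|+\int_0^{st}|g_n(x-\sigma v,v)|\,d\sigma$, averaging $s$ against $\tilde\rho$, integrating over $v\in B_R$ and over a set $A$ with $|A|\le\delta$, and using that the shears $(x,v)\mapsto(x-stv,v)$ preserve Lebesgue measure, I get a bound of the schematic shape $\int_{A\times B_R}|f_n|\le C_1 t+\int_A\big[\int_{\mathbb{R}}\int_{B_R}|f_n(y-stv,v)|\,dv\,\tilde\rho(s)\,ds\big]\,dy$. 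Splitting $|f_n|=|f_n|\mathbb{1}_{|f_n|\le\lambda}+|f_n|\mathbb{1}_{|f_n|>\lambda}$: on the bounded part the Castella--Perthame dispersive estimate $\big\|\int h(\cdot-stv,v)\,dv\big\|_{L^\infty_x}\le(st)^{-D}\|h\|_{L^1_xL^\infty_v}$ --- whose factor $(st)^{-D}$ is absorbed by the volume $\lesssim(stR)^D$ of the $v$-integration --- yields an $L^\infty_x$-bound of order $\lambda R^D$ and hence a contribution $\le C(\lambda,R)\,\delta$; on the unbounded part, Chebyshev shows that the set of $x$ on which the $v$-section of $\{|f_n|>\lambda\}$ exceeds a threshold $\delta_0$ has $x$-measure $\le C_0/(\lambda\delta_0)$, and equi-integrability in $v$ applies off that set. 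This produces a self-referential estimate of the form $\omega(\delta)\le C_1 t+\eta(\delta_0)+C(\lambda,R,t)\,\delta+\omega(C_0/(\lambda\delta_0))$, valid for every $t,\lambda,\delta_0>0$.

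Closing this inequality --- extracting $\omega(0^+)=0$ --- is, I expect, the main obstacle; it is precisely the technical heart of \cite{golse3}. One must let $\lambda\to\infty$ as $\delta\to0$ and run a finer iteration in which the term $\omega(C_0/(\lambda\delta_0))$ reappears with a genuine gain. The mechanism is the \emph{dilation} effect of the shears: a thin $x$-set $A$ is spread, transversally to the varying velocity direction $v$, so that reinstating the velocity integration over $B_R$ against the family of translates $A-stv$ --- which sweep a region of measure $\gg|A|$ --- costs only a fractional power $|A|^{(D-1)/D}$ of the measure rather than $|A|$; this is a quantitative X-ray/Radon-transform estimate, exactly the content underlying the dispersion of \cite{CP96}. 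I would isolate it as a lemma stating that a bounded sequence in $L^1(\mathbb{R}^D_x\times B^v_R)$ with equi-integrable $v$-sections satisfying the transport relation cannot concentrate in $x$.

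For assertion (2), once (1) is established I would reduce to $\psi\in C_c(\mathbb{R}^D)$ by approximating $\psi\in L^\infty_c$ by continuous $\psi_\varepsilon$ converging pointwise a.e. and boundedly, the error $\int f_n(\psi-\psi_\varepsilon)\,dv$ being small in $L^1_x$ uniformly in $n$ by the equi-integrability of $\{f_n\}$. The averages $\rho_n:=\int f_n\psi\,dv$ are then bounded in $L^1_{\mathrm{loc}}$ and uniformly integrable (again by (1)), so by Fréchet--Kolmogorov it remains to prove the equicontinuity $\sup_n\|\rho_n(\cdot+h)-\rho_n\|_{L^1(K)}\to0$ as $h\to0$. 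Here I would use the Duhamel splitting $\rho_n=\int f_n(\cdot-tv,v)\psi\,dv+\int_0^t\!\int g_n(\cdot-sv,v)\psi\,dv\,ds$ --- the last term being $O(t)$ in $L^1_x$ uniformly in $n$ --- together with the convolution rule preceding \eqref{x-to-v}, which turns a spatial mollification of the first term into a velocity mollification of $f_n\psi$, and the frequency-localized identities \eqref{crucial 3}, which tie the spatial frequency $\delta$ of that term to the velocity frequency $\sim t\delta$ of $f_n$. Combining this with the equi-integrability of $\{f_n\}$ from (1) --- which, fed back through the transport equation by exactly the shear/dilation argument of the previous step, supplies the $L^1$-equicontinuity in $v$ that is otherwise missing --- I would show that for fixed $t$ the family $\{\int f_n(\cdot-tv,v)\psi\,dv\}_n$ is relatively compact in $L^1_{\mathrm{loc}}$ with a spatial high-frequency tail small uniformly in $n$. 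A routine totally-bounded argument (given $\varepsilon$, fix $t$ so that the Duhamel remainder and the tail are $<\varepsilon/2$ for all $n$, then cover the remaining finite-band piece by finitely many $\varepsilon/2$-balls) then gives that $\{\rho_n\}$ is relatively compact in $L^1_{\mathrm{loc}}$.
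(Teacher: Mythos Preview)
The paper does not prove Theorem~\ref{velocity averaging golse}: it is stated as a result of Golse and Saint-Raymond \cite{golse3} and is quoted only for context, to motivate the extensions in Theorems~\ref{averaging lemma} and~\ref{averaging lemma 2}. So there is no ``paper's own proof'' to compare against; what you have written is an attempt to reprove the Golse--Saint-Raymond result through the dispersive formalism of Section~\ref{methods}.

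That attempt has a genuine gap, and you identify it yourself. Your argument for assertion~(1) produces the self-referential inequality
\[
\omega(\delta)\le C_1 t+\eta(\delta_0)+C(\lambda,R,t)\,\delta+\omega\!\left(\tfrac{C_0}{\lambda\delta_0}\right),
\]
and you then say that closing it ``is, I expect, the main obstacle; it is precisely the technical heart of \cite{golse3}.'' That is correct, and you do not close it. The description that follows --- letting $\lambda\to\infty$, invoking a ``dilation effect of the shears,'' and appealing to an unnamed ``X-ray/Radon-transform estimate'' --- is not a proof but a wish list. The last term $\omega(C_0/(\lambda\delta_0))$ does not depend on $\delta$, so no choice of $\delta$ shrinks it; to make it small you must take $\lambda\delta_0$ large, but $\delta_0$ must simultaneously be small for $\eta(\delta_0)$ to be small, which forces $\lambda$ large, and then $C(\lambda,R,t)$ blows up before you have controlled anything. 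The actual argument in \cite{golse3} does not proceed via this kind of bootstrap on a concentration modulus; it uses a direct characterization of equi-integrability together with the transport structure in a way that your dispersive splitting does not by itself reproduce.

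For assertion~(2) you also rely on an unjustified step: you claim that equi-integrability of $\{f_n\}$, ``fed back through the transport equation by exactly the shear/dilation argument,'' supplies $L^1$-equicontinuity in $v$. Equi-integrability and $L^1$-equicontinuity are distinct properties, and neither the transport equation nor the frequency identities \eqref{crucial 3} convert one into the other. The standard route once (1) is known is different and simpler: truncate $f_n$ at a large height to land in $L^2$ (equi-integrability makes the remainder uniformly small in $L^1$), apply the classical $L^2$ averaging lemma to the truncation, and conclude by a diagonal argument.
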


Unfortunately, the above result is not directly applicable to the viscous incompressible hydrodynamic limit of the Boltzmann equation with long-range interactions. Indeed, its use is prevented by the particular structure of the Boltzmann collision operator, which behaves as a nonlinear differential operator for collision kernels without cutoff (cf. \cite{arsenio, arsenio3, arsenio4}). Consequently, the strategy from \cite{golse, golse5, levermore} would require a generalization of the preceding theorem to kinetic transport equations \eqref{kinetic} with velocity derivatives in its right-hand side.

However, a simple counterexample shows that a straight generalization won't be possible. Indeed, consider the locally integrable functions
\begin{equation}
	\begin{aligned}
		f_n(x,v) & =n\varphi(nx_1)\cos(nv_1),\\
		g_n(x,v) & =v_1n\varphi'(nx_1)\sin(nv_1)\\
		\text{and}\quad h_n(x,v) & =n\varphi'(nx_1)\sin(nv_1),
	\end{aligned}
\end{equation}
where $\varphi\in C_0^\infty(\mathbb{R})$ has non-trivial mass. Then, one easily checks that
\begin{equation}
	\begin{gathered}
		v\cdot\nabla_xf_n=\partial_{v_1}g_n-h_n\\
		\text{and}\quad \{f_n\}_{n=0}^\infty \text{ is equi-integrable in }v.
	\end{gathered}
\end{equation}
However, $\{f_n\}_{n=0}^\infty$ is not equi-integrable in all variables, which shows that the first assertion of Theorem \ref{velocity averaging golse} doesn't hold if one has derivatives on the right-hand side of the transport equation \eqref{kinetic}.

Nevertheless, note that the velocity averages of $f_n$ do converge strongly to zero and thus, it is unclear whether there holds a direct extension of the second assertion of Theorem \ref{velocity averaging golse} or not. Furthermore, this counterexample uses crucially the fact that $f_n$ is allowed to alternate sign and so, it might still be possible that preventing this oscillatory behavior by simply imposing a nonnegativity condition on the $f_n$'s would allow for a generalization of the result.

We wish now to extend the above Theorem \ref{velocity averaging golse} in order to allow the use of derivatives in the right-hand side of the transport equation, which, again, is necessary for the rigorous derivation of the hydrodynamic limit in the non-cutoff case. It turns out that a slightly better control on the high velocity frequencies of the $f_n$'s will suffice to provide a relevant generalization. This is the content of the coming theorem.

\begin{thm}\label{averaging lemma}
	
	Let $\{f_n(x,v)\}_{n=0}^\infty$ be a bounded sequence in $L^1\left(\mathbb{R}^D_x;B^0_{1,1}\left(\mathbb{R}^D_v\right)\right)$, such that $f_n\geq0$ and
	\begin{equation}
		v\cdot\nabla_xf_n=\left(1-\Delta_{v}\right)^\frac{\gamma}{2}g_n
	\end{equation}
	for some sequence $\{g_n(x,v)\}_{n=0}^\infty$ bounded in $L^1\left(\mathbb{R}^D_x\times \mathbb{R}^D_v\right)$ and some $\gamma\in\mathbb{R}$.
	
	Then,
	\begin{equation}
		\text{$\{f_n\}_{n=0}^\infty$ is equi-integrable (in all variables $x$ and $v$).}
	\end{equation}
	
\end{thm}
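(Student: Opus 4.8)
The plan is to reduce the equi-integrability of $\{f_n\}$ in all variables to a combination of two ingredients: (i) the quantitative velocity-averaging estimates encoded in Proposition~\ref{crucial}, which transfer spatial regularity of velocity averages into velocity regularity of $f_n$ and $g_n$, and (ii) the nonnegativity of $f_n$, which is what allows one to pass from control of velocity averages against cutoffs $\phi(v)$ to control of the full mass. The key structural observation is that, for $f_n\geq 0$, the equi-integrability of $\{f_n\}$ in $(x,v)$ on a compact set $K$ is essentially equivalent to the equi-integrability of the sequence of velocity averages $\bigl\{\int_{\mathbb{R}^D} f_n(x,v)\phi(v)\,dv\bigr\}$ in $x$, for a suitable family of nonnegative bumps $\phi$ exhausting $\mathbb{R}^D$; one direction is Fubini, the other uses $f_n\geq 0$ together with a partition of $\mathbb{R}^D_v$ and the a priori bound in $L^1_x B^0_{1,1,v}$ to control the tail in $v$. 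So the heart of the matter is to show that the velocity averages are equi-integrable in $x$.

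For that, I would decompose $\int f_n\phi\,dv$ into dyadic spatial blocks $\Delta^x_\delta$ and apply Proposition~\ref{crucial} to each block: writing $\Delta^x_\delta\int f_n\,dv = A^t_\delta f_n + tB^t_\delta g_n$, one uses the localization identities \eqref{crucial 3} to see that $A^t_\delta f_n$ only sees velocity frequencies of $f_n$ of size $\sim t\delta$ while $B^t_\delta g_n$ only sees low velocity frequencies of $g_n$ of size $\lesssim t\delta$. Now, since $g_n = (1-\Delta_v)^{-\gamma/2}\bigl[(1-\Delta_v)^{\gamma/2}g_n\bigr]$ — i.e.\ the actual right-hand side of the transport equation is $(1-\Delta_v)^{\gamma/2}g_n$ with $g_n$ bounded in $L^1_{x,v}$ — applying $S^v_{8t\delta}$ to it produces a factor that behaves like $(t\delta)^{\gamma\vee 0}$ (for $\gamma>0$ one loses; for $\gamma\le 0$ one gains), using Bernstein-type estimates on Littlewood–Paley blocks in $L^1_v$. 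Combined with the gain coming from $\int(\ldots)(x-stv,v)\,dv$ — which, via the change of variables in \eqref{x-to-v}, is exactly the dispersion/averaging mechanism and is quantified in the main theorems of Section~\ref{main results} — one obtains, after optimizing in $t=t(\delta)$, a bound of the form
\begin{equation*}
	\Bigl\|\Delta^x_\delta\!\int f_n\,dv\Bigr\|_{L^1_x}\leq C\,\delta^{-\kappa}\Bigl(\omega_n(\delta) + \text{(small tail)}\Bigr),
\end{equation*}
where $\kappa\ge 0$ is the (possibly vanishing, in the favorable parameter range) loss and $\omega_n$ is a modulus of equi-integrability in $v$ of $f_n$, uniform in $n$. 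The point is that the high spatial frequencies of the velocity averages are controlled by the high velocity frequencies of $f_n$ — which are equi-integrable by hypothesis since $\{f_n\}$ is bounded in $L^1_x B^0_{1,1,v}$ and this is a strictly better space than $L^1_{x,v}$ — plus a contribution from $g_n$ that is handled by the boundedness in $L^1_{x,v}$ and the $t\to 0$ degeneracy.

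From such a frequency-by-frequency bound, equi-integrability in $x$ of $\int f_n\phi\,dv$ follows by a now-standard argument: split into low frequencies $\Delta^x_{[0,N]}$, which map bounded sets of $L^1_x$ into an equi-integrable (indeed relatively compact, by Riesz–Fréchet–Kolmogorov, since the convolution kernel is fixed and Schwartz) family, and high frequencies $\sum_{\delta>N}\Delta^x_\delta$, whose $L^1_x$ norm is made uniformly small by summing the above estimates and using that the $v$-equi-integrability modulus $\omega_n(\delta)\to 0$ uniformly in $n$ as $\delta\to\infty$ (this is exactly the content of the $B^0_{1,1}$ bound: $\sum_j \|\Delta^v_{2^j}f_n\|_{L^1}$ converges uniformly). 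The main obstacle I anticipate is precisely the bookkeeping at this last step when $\gamma>0$ (the genuinely new case motivating the theorem): one must verify that the negative power $\delta^{-\kappa}$ — or rather the interplay between the spatial-frequency loss, the velocity-frequency gain from dispersion, and the $(t\delta)^\gamma$ loss from the $\gamma/2$ derivatives — still sums to something small, which forces the optimization in $t$ to be done carefully and is the reason the hypothesis is an $L^1_x B^0_{1,1,v}$ bound on $f_n$ rather than merely $L^1_{x,v}$; the extra summability in the velocity Littlewood–Paley index is exactly what absorbs the loss. Nonnegativity of $f_n$ is used only in the reduction (first paragraph) and in discarding the sign-oscillation counterexample, not in the quantitative estimate itself.
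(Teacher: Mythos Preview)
Your overall architecture is sound---reduce to equi-integrability of the velocity averages in $x$, combine with equi-integrability in $v$, and glue via positivity (this last step is Lemma~5.2 of \cite{golse3}, which the paper invokes explicitly). The gap is in the step where you claim the high spatial frequencies of $\int f_n\,dv$ have uniformly small $L^1_x$-mass. You write that ``the $B^0_{1,1}$ bound: $\sum_j \|\Delta^v_{2^j}f_n\|_{L^1}$ converges uniformly'', but boundedness in $B^0_{1,1}$ only says the series is \emph{uniformly bounded}, not that its tails are uniformly small. The trivial counterexample is $f_n=\mathcal{F}^{-1}\varphi_{2^n}$: then $\|f_n\|_{B^0_{1,1}}\sim 1$ for all $n$, yet $\sum_{k>N}\|\Delta_{2^k}f_n\|_{L^1}=1$ whenever $n>N+1$, and these $f_n$ concentrate at scale $2^{-n}$, hence are not equi-integrable. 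Tracing through the proof of Proposition~\ref{pre averaging lemma}, the high-$x$-frequency tail of $\int f_n\,dv$ is controlled by a high-$v$-frequency tail of $f_n$, and the same obstruction applies. So the quantitative estimate you outline gives exactly $\int f_n\,dv\in B^0_{1,1}(\mathbb{R}^D_x)$ uniformly---which is the content of Proposition~\ref{pre averaging lemma}---but that alone does not give equi-integrability.

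The paper closes this gap by a route you do not mention: the embedding $B^0_{1,1}=F^0_{1,1}\hookrightarrow F^0_{1,2}=h_1$ into the local Hardy space, followed by Stein's $L\log L$ theorem, which says that a \emph{nonnegative} function bounded in $h_1$ lies in $L\log L$ locally. This yields local $L\log L$ bounds on $\int f_n\,dv$ (in $x$) and on $f_n$ (in $v$, via the same embedding applied in the $v$-variable), hence equi-integrability of each by de la Vall\'ee Poussin. So positivity is used not only in the final gluing step but, crucially, in passing from the $B^0_{1,1}$ bound to equi-integrability; your statement that ``nonnegativity of $f_n$ is used only in the reduction\ldots not in the quantitative estimate itself'' is precisely where the argument breaks.
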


The above theorem will be obtained as a corollary of the following proposition, which is, in fact, a particular case of Theorem \ref{pre averaging lemma p}. Nevertheless, we do provide below a complete justification for this result, because its proof is simpler and contains some of the essential ideas presented in Section \ref{methods}. Thus, we hope that it will also serve as a primer to the more convoluted demonstrations of Section \ref{proofs}.

\begin{prop}\label{pre averaging lemma}
	Let $f(x,v) \in  L^1\left(\mathbb{R}^D_x;B^0_{1,1}\left(\mathbb{R}^D_v\right)\right)$ be such that
	\begin{equation}
		v \cdot \nabla_x f = g
	\end{equation}
	for some $g(x,v) \in  L^1\left(\mathbb{R}^D_x;B^\beta_{1,1}\left(\mathbb{R}^D_v\right)\right)$, where $\beta\in\mathbb{R}$.
	
	Then,
	\begin{equation}
		\int_{\mathbb{R}^D} f(x,v) dv \in B^0_{1,1}\left(\mathbb{R}^D_x\right),
	\end{equation}
	and the following estimate holds
	\begin{equation}
		\begin{gathered}
			\left\|\int_{\mathbb{R}^D} f(x,v) dv\right\|_{B^0_{1,1}\left(dx\right)}\leq
			 C \left(\left\|f\right\|_{ L^1\left(dx;B^0_{1,1}\left(dv\right)\right)}+ \left\|g\right\|_{ L^1\left(dx;B^\beta_{1,1}\left(dv\right)\right)}\right),
		\end{gathered}
	\end{equation}
	where the constant $C>0$ only depends on fixed parameters.
\end{prop}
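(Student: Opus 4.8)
The plan is to apply Proposition~\ref{crucial} to each dyadic spatial frequency block of $\int f\,dv$, with an interpolation parameter $t$ chosen to depend on the frequency scale, and then to sum the resulting bounds in $L^1(dx)$. By the standard regularization procedure (approximating $f$ and $g$ by Schwartz functions compatibly with the transport relation), it suffices to establish the asserted estimate for $f,g\in\mathcal S(\mathbb R^D\times\mathbb R^D)$. Accordingly I would fix once and for all a cutoff $\rho\in\mathcal S(\mathbb R)$ such that $\tilde\rho$ is supported in $[1,2]$ with $\int_\mathbb R\tilde\rho(s)\,ds=1$: such a $\rho$ exists since the Fourier transform of a function in $C_0^\infty([1,2])$ is Schwartz, and then $\tau(s)=(1-\rho(s))/(is)$ is automatically smooth and bounded, so that Proposition~\ref{crucial} — and in particular the sharp localizations \eqref{crucial 3} — is available.

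From \eqref{crucial 2}, Minkowski's inequality, the translation invariance of Lebesgue measure, and the uniform $L^1(dx)$-boundedness of the Littlewood--Paley multipliers $\Delta_\delta^x$, one gets the crude bounds $\|A_\delta^t h\|_{L^1(dx)}+\|B_\delta^t h\|_{L^1(dx)}\le C\|h\|_{L^1(dx;L^1(dv))}$, uniformly in $t>0$ and $\delta\ge0$. Feeding in the localizations \eqref{crucial 3} and abbreviating $\mu:=t\delta$, these upgrade, for $\delta>0$, to
\[
\|A_\delta^t f\|_{L^1(dx)}\le C\,\bigl\|\Delta^v_{[\mu/2,4\mu]}f\bigr\|_{L^1(dx;L^1(dv))},\qquad
\|t B_\delta^t g\|_{L^1(dx)}\le C\,t\,\bigl\|S^v_{8\mu}g\bigr\|_{L^1(dx;L^1(dv))},
\]
with the obvious analogues involving $S^v_{4t}$ for the block $\delta=0$. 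To control the low-velocity-frequency projection $S^v_M$ occurring in the transport term I would use the elementary inequality
\[
\bigl\|S^v_M g\bigr\|_{L^1(dx;L^1(dv))}\le C\,(1+M)^{(-\beta)\vee0}\,\|g\|_{L^1(dx;B^\beta_{1,1}(dv))},\qquad M>0,
\]
obtained by splitting $S^v_M$ into dyadic pieces: when $\beta\ge0$ the factors $2^{-j\beta}$ keep the sum bounded, while when $\beta<0$ only the top piece matters and contributes the factor $M^{-\beta}$.

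Finally, for $\delta=2^k$ with $k\ge0$ I would choose $t=t_k:=2^{-\lambda k}$, and $t=1$ for the block $\delta=0$, where $\lambda$ is any fixed exponent with $\frac{(-\beta)\vee0}{1+(-\beta)\vee0}<\lambda<1$ (an interval which is always non-empty). Then $\mu_k:=t_k2^k=2^{(1-\lambda)k}\nearrow\infty$, so the velocity windows $\Delta^v_{[\mu_k/2,4\mu_k]}$ have overlap bounded by $C(1-\lambda)^{-1}$, whence $\sum_{k\ge0}\|A_{2^k}^{t_k}f\|_{L^1(dx)}\le C\|f\|_{L^1(dx;B^0_{1,1}(dv))}$, and likewise $\|A_0^1 f\|_{L^1(dx)}\le C\|f\|_{L^1(dx;B^0_{1,1}(dv))}$; on the other hand,
\[
\sum_{k\ge0}\|t_k B_{2^k}^{t_k}g\|_{L^1(dx)}\le C\sum_{k\ge0}2^{-\lambda k}(1+\mu_k)^{(-\beta)\vee0}\|g\|_{L^1(dx;B^\beta_{1,1}(dv))}
=C\sum_{k\ge0}2^{k\left[(1-\lambda)((-\beta)\vee0)-\lambda\right]}\|g\|_{L^1(dx;B^\beta_{1,1}(dv))},
\]
and the geometric series converges precisely because $\lambda>\frac{(-\beta)\vee0}{1+(-\beta)\vee0}$, while $\|B_0^1 g\|_{L^1(dx)}\le C\|g\|_{L^1(dx;B^\beta_{1,1}(dv))}$ by the same inequality. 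Summing the decomposition \eqref{crucial 0} over $\delta\in\{0\}\cup\{2^k:k\ge0\}$ produces the claimed bound on the $B^0_{1,1}(dx)$ norm, and a density argument removes the Schwartz restriction. The genuinely delicate point is this choice of $t_k$: convergence of the $A$-sum forces $\mu_k=t_k2^k$ to grow, so $\lambda<1$, while convergence of the $B$-sum forces $t_k$ to decay fast enough to absorb the growth $(1+t_k2^k)^{(-\beta)\vee0}$ of the low velocity frequencies of $g$, so $\lambda>\frac{(-\beta)\vee0}{1+(-\beta)\vee0}$; these two constraints are compatible, and the freedom left in $\lambda$ reflects the fact that only the non-sharp conclusion $s=0$ is being claimed — pinning down the extremal value of $\lambda$ is exactly what is needed to reach the sharp regularity indices of Theorem~\ref{pre averaging lemma p}.
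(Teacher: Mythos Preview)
Your proof is correct and follows the same overall strategy as the paper's: decompose each spatial dyadic block via \eqref{crucial 0}, choose a $k$-dependent interpolation parameter, and sum. The execution differs in one respect worth noting. For the $B$-term the paper works with the Dirac-cutoff version \eqref{D-x} and must therefore handle the time integral $\int_0^{t_k}\|\Delta^v_{s2^k}g\|_{L^1_{x,v}}\,ds$ directly, which it does via the telescoping identity \eqref{besov averaging 2} followed by a dyadic splitting \eqref{besov averaging 5}. You instead invoke the smooth-cutoff localization \eqref{crucial 3}, which collapses the $B$-term to a single low-frequency projection $t\,\|S^v_{8\mu}g\|_{L^1_{x,v}}$ and makes the summation in $k$ a one-line geometric series. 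This is cleaner here, and is in fact the mechanism the paper itself uses for the sharper Theorems \ref{pre averaging lemma p} and \ref{pre averaging lemma p 2}; the paper's proof of the present proposition is deliberately written as an elementary ``primer'' using only \eqref{D-x}. Your flexible choice of $\lambda$ (versus the paper's specific $t_k=2^{k\beta/(1-\beta)}$, assuming $\beta<1$) correctly reflects that only the non-sharp index $s=0$ is being claimed.
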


\begin{proof}
	Notice first that, in virtue of the inclusions $\widetilde L^1B^{\beta_1}_{1,1} \subset \widetilde L^1B^{\beta_2}_{1,1}$ whenever $\beta_1\geq\beta_2$, we may assume without any loss of generality that $\beta<1$. Furthermore, it is to be emphasized that $\widetilde L^1B^{\beta}_{1,1} = L^1B^{\beta}_{1,1}$, for any $\beta\in\mathbb{R}$.
	
	Integrating the dyadic interpolation formula \eqref{D-x} in space yields
	\begin{equation}
		\left\|\Delta_{2^k}^x\int_{\mathbb{R}^D} f(x,v) dv\right\|_{L^1_x}\leq \left\|\Delta_{t2^k}^v f\right\|_{L^1_{x,v}}
		+\int_0^t\left\|\Delta_{s2^k}^v g\right\|_{L^1_{x,v}}ds.
	\end{equation}
	For each value of $k$, we fix now the interpolation parameter $t$ as $t_k=2^{k\frac{\beta}{1-\beta}}$. Thus, summing the above estimate over $k$ yields
	\begin{equation}\label{besov averaging 1}
		\sum_{k=0}^\infty \left\|\Delta_{2^k}^x\int_{\mathbb{R}^D} f(x,v) dv\right\|_{L^1_x}
		\leq
		\sum_{k=0}^\infty \left\|\Delta_{t_k2^k}^v f\right\|_{L^1_{x,v}}
		+ \sum_{k=0}^\infty \int_0^{t_k}\left\|\Delta_{s2^k}^v g\right\|_{L^1_{x,v}}ds.
	\end{equation}
	
	The last term above is then handled through the following calculation, noticing that $\frac{t_{k-1}}{2}<t_k$,
	\begin{equation}
		\begin{aligned}
			\sum_{k=0}^\infty \int_0^{t_k}\left\|\Delta_{s2^k}^v g\right\|_{L^1_{x,v}}ds
			& =
			\sum_{k=0}^\infty \int_0^{\frac{t_{k-1}}{2}}\left\|\Delta_{s2^k}^v g\right\|_{L^1_{x,v}}ds
			+ \int_{\frac{t_{k-1}}{2}}^{t_k}\left\|\Delta_{s2^k}^v g\right\|_{L^1_{x,v}}ds
			\\
			& =
			\sum_{k=0}^\infty \frac 12 \int_0^{t_{k-1}}\left\|\Delta_{s2^{k-1}}^v g\right\|_{L^1_{x,v}}ds
			+ \int_{\frac{t_{k-1}}{2}}^{t_k}\left\|\Delta_{s2^k}^v g\right\|_{L^1_{x,v}}ds
			\\
			& =
			\frac 12 \sum_{k=-1}^\infty \int_0^{t_{k}}\left\|\Delta_{s2^{k}}^v g\right\|_{L^1_{x,v}}ds
			+ \sum_{k=0}^\infty \int_{\frac{t_{k-1}}{2}}^{t_k}\left\|\Delta_{s2^k}^v g\right\|_{L^1_{x,v}}ds,
		\end{aligned}
	\end{equation}
	from which we deduce
	\begin{equation}\label{besov averaging 2}
		\sum_{k=0}^\infty \int_0^{t_k}\left\|\Delta_{s2^k}^v g\right\|_{L^1_{x,v}}ds
		=
		\int_0^{t_{(-1)}}\left\|\Delta_{\frac s2}^v g\right\|_{L^1_{x,v}}ds
		+ 2 \sum_{k=0}^\infty \int_{\frac{t_{k-1}}{2}}^{t_k}\left\|\Delta_{s2^k}^v g\right\|_{L^1_{x,v}}ds.
	\end{equation}
	Consequently, combining \eqref{besov averaging 1} with \eqref{besov averaging 2}, we infer
	\begin{equation}\label{besov averaging 3}
		\begin{aligned}
			& \sum_{k=0}^\infty \left\|\Delta_{2^k}^x\int_{\mathbb{R}^D} f(x,v) dv\right\|_{L^1_x}\\
			& \leq
			\sum_{k=0}^\infty \left\|\Delta_{t_k2^k}^v f\right\|_{L^1_{x,v}}
			+  2 \sum_{k=0}^\infty \int_{\frac{t_{k-1}}{2}}^{t_k}\left\|\Delta_{s2^k}^v g\right\|_{L^1_{x,v}}ds
			+C\|g\|_{\widetilde L^1\left(dx;B^\beta_{1,1}\left(dv\right)\right)},
		\end{aligned}
	\end{equation}
	where $C>0$ is a fixed constant that only depends on fixed parameters.

	Now, let us recall a basic principle from the theory of Littlewood and Paley. Consider any $\delta>0$. Then, there exists a unique integer $l_\delta$ such that $2^{l_\delta}\leq \delta < 2^{l_\delta+1}$. In the notation of section \ref{LP decomposition} and specially \eqref{dyadic block}, it then holds that $\sum_{j=-1}^{2}\varphi_{2^{l_\delta+j}}\equiv 1$ on the support of $\varphi_\delta$, which therefore implies for any $f\in\mathcal{S}'\left(\mathbb{R}^D\right)$ and any $1\leq p\leq \infty$ that
	\begin{equation}
		\left\|\Delta_\delta f\right\|_{L^p}=\left\|\sum_{j=-1}^2\Delta_{2^{l_\delta+j}}\Delta_\delta f\right\|_{L^p}\leq
		C\sum_{j=-1}^2\left\|\Delta_{2^{l_\delta+j}}f\right\|_{L^p},
	\end{equation}
	for some fixed constant $C>0$ independent of $\delta$.
	
	We wish now to apply this principle to our situation. Thus, denoting by $[\cdot]$ the integer part of a number and noticing that $2^{\left[k\frac{1}{1-\beta}\right]}\leq t_k2^k=2^{k\frac{1}{1-\beta}}<2^{\left[k\frac{1}{1-\beta}\right]+1}$, we obtain, on the one hand, that
	\begin{equation}\label{besov averaging 4}
		\sum_{k=0}^\infty \left\|\Delta_{t_k2^k}^v f\right\|_{L^1_{x,v}}
		\leq C \sum _{j=-1}^2
		\sum_{k=0}^\infty \left\|\Delta_{2^{\left[k\frac{1}{1-\beta}\right]+j}}^v f\right\|_{L^1_{x,v}}
		\leq C \left\|f\right\|_{\widetilde L^1\left(dx;B^0_{1,1}\left(dv\right)\right)}.
	\end{equation}
	And, on the other hand, we have that
	\begin{equation}\label{besov averaging 5}
		\begin{aligned}
			\sum_{k=0}^\infty \int_{\frac{t_{k-1}}{2}}^{t_k}\left\|\Delta_{s2^k}^v g\right\|_{L^1_{x,v}}ds
			& \leq
			\sum_{j=-\left[\frac{1}{1-\beta}\right]-2}^2
			\sum_{k=0}^\infty \int_{\frac{t_{k-1}}{2}}^{t_k}\left\|
			\Delta_{2^{\left[k\frac{1}{1-\beta}\right]+j}}^v
			\Delta_{s2^k}^v g\right\|_{L^1_{x,v}}ds \\
			& \leq C
			\sum_{j=-\left[\frac{1}{1-\beta}\right]-2}^2
			\sum_{k=0}^\infty \left(t_k-\frac{t_{k-1}}{2}\right)
			\left\|
			\Delta_{2^{\left[k\frac{1}{1-\beta}\right]+j}}^v
			g\right\|_{L^1_{x,v}} \\
			& \leq C
			\sum_{j=-\left[\frac{1}{1-\beta}\right]-2}^2
			\sum_{k=0}^\infty
			\left(2^{\left[k\frac{1}{1-\beta}\right]+j}\right)^\beta
			\left\|
			\Delta_{2^{\left[k\frac{1}{1-\beta}\right]+j}}^v
			g\right\|_{L^1_{x,v}} \\
			& \leq C \left\|g\right\|_{\widetilde L^1\left(dx ; B^\beta_{1,1}\left(dv\right)\right)}.
		\end{aligned}
	\end{equation}

	Finally, combining \eqref{besov averaging 3} with \eqref{besov averaging 4} and \eqref{besov averaging 5} yields
	\begin{equation}
		\left\|\int_{\mathbb{R}^D} f(x,v) dv\right\|_{B^0_{1,1}\left(dx\right)}\leq
		 C \left(\left\|f\right\|_{\widetilde L^1\left(dx;B^0_{1,1}\left(dv\right)\right)}+ \left\|g\right\|_{\widetilde L^1\left(dx;B^\beta_{1,1}\left(dv\right)\right)}\right),
	\end{equation}
	where the constant $C>0$ only depends on fixed parameters, which concludes our proof.
\end{proof}

\begin{proof}[Proof of Theorem \ref{averaging lemma}]
	
	Notice first that there exists $\beta<1$ so that
	\begin{equation}
		\left(1-\Delta_{v}\right)^\frac{\gamma}{2}g_n \in L^1\left(\mathbb{R}^D_x;B^\beta_{1,1}\left(\mathbb{R}^D_v\right)\right).
	\end{equation}
	We may therefore straightforwardly apply Proposition \ref{pre averaging lemma} to infer that
	\begin{equation}
		\text{$\left\{\int_{\mathbb{R}^D} f_n(x,v) dv\right\}_{n=0}^\infty$ is bounded in $B^0_{1,1}\left(\mathbb{R}^D_x\right)$.}
	\end{equation}

	Let us recall now a few basic facts from the theory of functions spaces. We refer the reader to \cite{runst} or \cite{triebel} for more details on the subject. The norm that defines the Triebel-Lizorkin spaces $F^s_{p,q}\left(\mathbb{R}^D\right)$, for any $1\leq p,q<\infty$ and $s\in\mathbb{R}$, is given by (in the notation of Section \ref{LP decomposition})

	\begin{equation}
		\left\|f\right\|_{F^s_{p,q}}=\left(\int_{\mathbb{R}^D}\left(\left|\Delta_0 f(x)\right|^q+\sum_{k=0}^\infty 2^{ksq}\left|\Delta_{2^k}f(x)\right|^q\right)^\frac{p}{q} dx\right)^\frac{1}{p},
	\end{equation}
	where $f\in\mathcal{S}'\left(\mathbb{R}^D\right)$. Thus, it is clear that $B^0_{1,1}=F^0_{1,1}$ and therefore $B^0_{1,1}$ is continuously embedded in $F^0_{1,2}$, which is nothing but the local Hardy space $h_1$ (cf. \cite[Section 2.3.5]{triebel}).

	Therefore, it holds that
	\begin{equation}
		\begin{gathered}
			\text{$\left\{\int_{\mathbb{R}^D} f_n(x,v) dv\right\}_{n=0}^\infty$ is bounded in $h_1\left(\mathbb{R}^D_x\right)$}\\
			\text{and $\left\{f_n(x,v)\right\}_{n=0}^\infty$ is bounded in $L^1\left(\mathbb{R}^D_x;h_1\left(\mathbb{R}^D_v\right)\right)$.}
		\end{gathered}
	\end{equation}
	We recall now Stein's $L\log L$ result (cf. \cite{stein3}, \cite[Chapter 1, \S5.2]{stein2} and \cite[Chapter I, \S8.14, Chapter III, \S5.3]{stein}), which states that a sequence of positive functions uniformly bounded in $h_1$ necessarily satisfies locally a uniform $L\log L$ bound. Here, for a given compact subset $K\subset \mathbb{R}^D$, the Orlicz space $L\log L\left(K\right)$ is defined as the Banach space endowed with the norm
	\begin{equation}
		\left\|f\right\|_{L\log L\left(K\right)}=\inf\set{\lambda>0}{\int_Kh\left(\frac{|f(x)|}{\lambda}\right)dx\leq 1},
	\end{equation}
	where $h(z)=(1+z)\log(1+z)-z$ is a convex function. Thus, we conclude that, for any compact subsets $K_x,K_v\subset\mathbb{R}^D$,
	\begin{equation}
		\begin{gathered}
			\text{$\left\{\int_{\mathbb{R}^D} f_n(x,v) dv\right\}_{n=0}^\infty$ is bounded in $L\log L\left(K_x\right)$}\\
			\text{and $\left\{f_n(x,v)\right\}_{n=0}^\infty$ is bounded in $L^1\left(\mathbb{R}^D_x;L\log L\left(K_v\right)\right)$.}
		\end{gathered}
	\end{equation}
	It then follows that
	\begin{equation}
		\begin{gathered}
			\text{$\left\{\int_{\mathbb{R}^D} f_n(x,v) dv\right\}_{n=0}^\infty$ is equi-integrable}\\
			\text{and $\left\{f_n(x,v)\right\}_{n=0}^\infty$ is equi-integrable in $v$,}
		\end{gathered}
	\end{equation}
	and this is, together with the positiveness of the sequences, exactly what is required by Lemma 5.2 of \cite{golse3} in order to deduce that
	\begin{equation}
		\text{$\{f_n(x,v)\}_{n=0}^\infty$ is equi-integrable (in all variables $x$ and $v$),}
	\end{equation}
	which concludes the proof of the theorem.
\end{proof}

We provide now simple extensions of Theorem \ref{averaging lemma} and Proposition \ref{pre averaging lemma} to the time dependent transport equation with a vanishing temporal derivative. These variants of the preceding results are precisely the versions needed for their application to the viscous incompressible hydrodynamic limit of the Boltzmann equation without cutoff in \cite{arsenio3}. Moreover, the proofs provided below show how to smoothly adapt the general methods developed in this work to the time dependent kinetic transport equation.

\begin{thm}\label{averaging lemma 2}

	Let $\{f_n(t,x,v)\}_{n=0}^\infty$ be a bounded sequence in $L^\infty \left(\mathbb{R}_t;L^1\left(\mathbb{R}^D_x\times\mathbb{R}^D_v\right)\right)$ and in $L^1\left(\mathbb{R}_t\times\mathbb{R}^D_x;B^0_{1,1}\left(\mathbb{R}^D_v\right)\right)$, such that $f_n\geq0$ and that
	\begin{equation}
		\left(\delta_n\partial_t+v\cdot\nabla_x\right)f_n=\left(1-\Delta_{v}\right)^\frac{\gamma}{2}g_n
	\end{equation}
	for some sequence $\{g_n(t,x,v)\}_{n=0}^\infty$ bounded in $L^1\left(\mathbb{R}_t\times\mathbb{R}^D_x\times \mathbb{R}^D_v\right)$, some $\gamma\in\mathbb{R}$ and where the sequence $\delta_n>0$ vanishes as $n\rightarrow \infty$.
	
	Then,
	\begin{equation}
		\text{$\{f_n\}_{n=0}^\infty$ is equi-integrable (in all variables $t$, $x$ and $v$).}
	\end{equation}

\end{thm}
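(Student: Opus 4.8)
The plan is to reproduce, over the extended ``space'' variable $y=(t,x)\in\mathbb{R}^{D+1}$, the proof of Theorem \ref{averaging lemma}. Writing the transport operator as $\delta_n\partial_t+v\cdot\nabla_x=(\delta_n,v)\cdot\nabla_{(t,x)}$, one sees that the only genuinely new feature is that this operator provides no smoothing in the time direction, so that $t$ will have to be handled through the hypothesis $f_n\in L^\infty(\mathbb{R}_t;L^1(\mathbb{R}^D_x\times\mathbb{R}^D_v))$ rather than through velocity averaging. Exactly as in the proof of Theorem \ref{averaging lemma}, we first fix some $\beta<1$ for which $G_n:=(1-\Delta_v)^{\gamma/2}g_n$ is bounded in $L^1\left(\mathbb{R}_t\times\mathbb{R}^D_x;B^\beta_{1,1}\left(\mathbb{R}^D_v\right)\right)$.

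\emph{Step 1: an averaging estimate in the variable $x$, with $t$ as a spectator.} Along the characteristics of $\delta_n\partial_t+v\cdot\nabla_x$, Duhamel's formula reads
\begin{equation*}
	f_n(t,x,v)=f_n(t-\delta_n\tau,x-\tau v,v)+\int_0^\tau G_n(t-\delta_n s,x-sv,v)\,ds,\qquad\tau>0,
\end{equation*}
which is exactly \eqref{interpolation formula} up to the harmless translation $t\mapsto t-\delta_n s$. Taking velocity averages, applying the dyadic blocks $\Delta_{2^k}^x$, and using the transfer-of-frequencies identity \eqref{x-to-v} — which involves only the variables $(x,v)$ and is therefore blind to the spectator variable $t$ — we obtain, as in \eqref{D-x}, that for every $\tau>0$ and $k\geq0$,
\begin{equation*}
	\begin{aligned}
		\Delta_{2^k}^x\int_{\mathbb{R}^D}f_n(t,x,v)\,dv
		&=\int_{\mathbb{R}^D}\left(\Delta_{\tau2^k}^v f_n\right)(t-\delta_n\tau,x-\tau v,v)\,dv\\
		&\quad+\int_0^\tau\int_{\mathbb{R}^D}\left(\Delta_{s2^k}^v G_n\right)(t-\delta_n s,x-sv,v)\,dv\,ds.
	\end{aligned}
\end{equation*}
Taking the $L^1\left(\mathbb{R}_t\times\mathbb{R}^D_x\right)$ norm and using Minkowski's integral inequality together with the translation invariance of Lebesgue measure in $t$ (where the scaling factor $\delta_n$ plays no role at all), one arrives precisely at the inequality opening the proof of Proposition \ref{pre averaging lemma}, with $L^1_x$ replaced throughout by $L^1_{t,x}$. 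Running that argument verbatim — choosing $\tau=\tau_k=2^{k\beta/(1-\beta)}$, rearranging the integrals $\int_0^{\tau_k}$ and applying the elementary Littlewood--Paley device recalled there — produces a constant $C$ independent of $n$ with
\begin{equation*}
	\left\|\int_{\mathbb{R}^D}f_n\,dv\right\|_{L^1\left(\mathbb{R}_t;B^0_{1,1}\left(\mathbb{R}^D_x\right)\right)}\leq C\left(\|f_n\|_{L^1\left(\mathbb{R}_t\times\mathbb{R}^D_x;B^0_{1,1}(dv)\right)}+\|G_n\|_{L^1\left(\mathbb{R}_t\times\mathbb{R}^D_x;B^\beta_{1,1}(dv)\right)}\right).
\end{equation*}
Since $B^0_{1,1}=F^0_{1,1}\hookrightarrow F^0_{1,2}=h_1$ (cf.\ \cite{triebel}), the family $\left\{\int f_n\,dv\right\}$ is in particular bounded in $L^1\left(\mathbb{R}_t;h_1\left(\mathbb{R}^D_x\right)\right)$.

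\emph{Step 2: from these bounds to equi-integrability.} Applying Stein's $L\log L$ theorem (cf.\ \cite{stein,stein2,stein3}) in the variable $x$ for a.e.\ fixed $t$ to the nonnegative function $x\mapsto\int f_n(t,x,v)\,dv$, Step 1 yields, for each compact $K_x\subset\mathbb{R}^D$, a bound $\int_{\mathbb{R}_t}\left\|\int f_n(t,\cdot,v)\,dv\right\|_{L\log L(K_x)}dt\leq C_{K_x}$, while the hypothesis $f_n\in L^\infty(\mathbb{R}_t;L^1_{x,v})$ gives $\sup_{t,n}\left\|\int f_n(t,\cdot,v)\,dv\right\|_{L^1\left(\mathbb{R}^D_x\right)}<\infty$. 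Combining these through the elementary Orlicz estimate $\int_A|g|\leq\omega(|A|)\,\|g\|_{L\log L(K_x)}$ (with $\omega(r)\to0$ as $r\to0$), and, for a set $\Omega$ of small measure, splitting the time integration according to whether the section $\Omega_t\subset\mathbb{R}^D$ is small or not, one deduces that $\left\{\int f_n\,dv\right\}$ is equi-integrable on $\mathbb{R}_t\times\mathbb{R}^D_x$. Likewise, the hypothesis that $f_n$ is bounded in $L^1\left(\mathbb{R}_t\times\mathbb{R}^D_x;B^0_{1,1}(dv)\right)\hookrightarrow L^1\left(\mathbb{R}_t\times\mathbb{R}^D_x;h_1\left(\mathbb{R}^D_v\right)\right)$, together with Stein's theorem now in the variable $v$, shows that $f_n$ is bounded in $L^1\left(\mathbb{R}_t\times\mathbb{R}^D_x;L\log L(K_v)\right)$ for every compact $K_v$, hence (again via the elementary Orlicz estimate) that $\{f_n\}$ is equi-integrable in $v$ in the sense of the Definition, with $(t,x)$ playing the role of the space variable. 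Since moreover $f_n\geq0$, these are exactly the hypotheses of Lemma 5.2 of \cite{golse3}, used with the space variable $(t,x)\in\mathbb{R}^{D+1}$ and the velocity variable $v\in\mathbb{R}^D$, and we conclude that $\{f_n\}$ is equi-integrable in all variables $t$, $x$ and $v$.

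The main obstacle is the complete absence of time smoothing: the averaging mechanism yields regularity of $\int f_n\,dv$ only in $x$, so $\int f_n\,dv$ cannot be placed in any Hardy or Besov space over $\mathbb{R}^{D+1}$, and the time direction must be controlled entirely through the $L^\infty_t$ bound. The delicate point is then to reconcile the $L^1_t$-type control coming from the averaging estimate with this $L^\infty_t$ control in order to recover genuine equi-integrability on the whole $(t,x)$-space; this is precisely what the splitting argument in Step 2 achieves, and it is where the hypothesis $f_n\in L^\infty(\mathbb{R}_t;L^1(\mathbb{R}^D_x\times\mathbb{R}^D_v))$ enters in an essential way.
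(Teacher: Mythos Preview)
Your proof is correct and follows essentially the same route as the paper. Step 1 is exactly Proposition \ref{pre averaging lemma 2}, and Step 2 matches the paper's argument; the only cosmetic difference is that you spell out the splitting-in-time argument to obtain equi-integrability of $\int f_n\,dv$ in $(t,x)$ directly, whereas the paper records the three partial equi-integrabilities (in $t$, in $x$, in $v$) and then invokes Lemma 5.2 of \cite{golse3} iteratively---your splitting argument is precisely the content of the first such iteration.
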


As before, the above theorem will be obtained as a corollary of the following proposition.

\begin{prop}\label{pre averaging lemma 2}

	Let $\{f_n(t,x,v)\}_{n=0}^\infty$ be a bounded sequence in $L^1\left(\mathbb{R}_t\times\mathbb{R}^D_x;B^0_{1,1}\left(\mathbb{R}^D_v\right)\right)$ such that
	\begin{equation}
		\left(\delta_n\partial_t+v\cdot\nabla_x\right)f_n=g_n
	\end{equation}
	for some bounded sequence $\{g_n(x,v)\}_{n=0}^\infty$ in $L^1\left(\mathbb{R}_t\times\mathbb{R}^D_x;B^\beta_{1,1}\left(\mathbb{R}^D_v\right)\right)$, where $\beta\in\mathbb{R}$ and the sequence $\delta_n>0$ vanishes as $n\rightarrow \infty$.
	
	Then,
	\begin{equation}
		\text{$\left\{\int_{\mathbb{R}^D} f_n(t,x,v) dv\right\}_{n=0}^\infty$ is bounded in $L^1\left(\mathbb{R}_t;B^0_{1,1}\left(\mathbb{R}^D_x\right)\right)$.}
	\end{equation}

\end{prop}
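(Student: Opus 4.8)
The plan is to adapt the argument of Proposition~\ref{pre averaging lemma} with only cosmetic changes, viewing $(t,x)\in\mathbb{R}_t\times\mathbb{R}^D_x$ as a single $(1+D)$-dimensional spatial variable carried by the transport velocity $(\delta_n,v)$, so that $\delta_n\partial_t+v\cdot\nabla_x=(\delta_n,v)\cdot\nabla_{(t,x)}$. The guiding point is that the parameter $\delta_n$ enters the estimates only through a translation in the $t$ variable, which is measure preserving whatever the value of $\delta_n$; hence the constants obtained will be independent of $n$. As a preliminary reduction, exactly as at the beginning of the proof of Proposition~\ref{pre averaging lemma}, the embedding $\widetilde L^1B^{\beta_1}_{1,1}\subset\widetilde L^1B^{\beta_2}_{1,1}$ for $\beta_1\geq\beta_2$ lets us assume $\beta<1$, and we recall that $\widetilde L^1\left(\mathbb{R}_t\times\mathbb{R}^D_x;B^\beta_{1,1}\right)=L^1\left(\mathbb{R}_t\times\mathbb{R}^D_x;B^\beta_{1,1}\right)$.

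First I would write Duhamel's formula for $\left(\delta_n\partial_t+v\cdot\nabla_x\right)f_n=g_n$: for any interpolation parameter $\tau>0$,
\[
	f_n(t,x,v)=f_n(t-\delta_n\tau,x-\tau v,v)+\int_0^\tau g_n(t-\delta_n\sigma,x-\sigma v,v)\,d\sigma .
\]
Taking the velocity average, applying the dyadic block $\Delta_{2^k}^x$ and using the rule of action of convolutions on velocity averages that underlies \eqref{x-to-v} — the extra shift $t\mapsto t-\delta_n\tau$ commutes with $\Delta_{2^k}^x$, acting on a different variable, and may simply be carried along — gives
\[
	\Delta_{2^k}^x\!\int_{\mathbb{R}^D}\!f_n\,dv
	=\int_{\mathbb{R}^D}\!\left(\Delta_{\tau 2^k}^v f_n\right)\!(t-\delta_n\tau,x-\tau v,v)\,dv
	+\int_0^\tau\!\!\int_{\mathbb{R}^D}\!\left(\Delta_{\sigma 2^k}^v g_n\right)\!(t-\delta_n\sigma,x-\sigma v,v)\,dv\,d\sigma ,
\]
and similarly for the low-frequency block $\Delta_0^x$ with $S_\tau^v$ replacing $\Delta_{\tau 2^k}^v$. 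Integrating in $(t,x)$ and using that the translations $(t,x)\mapsto(t-\delta_n\sigma,x-\sigma v)$ preserve Lebesgue measure, I would deduce by Fubini
\[
	\left\|\Delta_{2^k}^x\!\int_{\mathbb{R}^D}\!f_n\,dv\right\|_{L^1_{t,x}}
	\leq\left\|\Delta_{\tau 2^k}^v f_n\right\|_{L^1_{t,x,v}}
	+\int_0^\tau\left\|\Delta_{\sigma 2^k}^v g_n\right\|_{L^1_{t,x,v}}\,d\sigma ,
\]
which is exactly the estimate opening the proof of Proposition~\ref{pre averaging lemma}, with $L^1_x$, $L^1_{x,v}$ replaced by $L^1_{t,x}$, $L^1_{t,x,v}$.

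From this point on the remainder of the proof of Proposition~\ref{pre averaging lemma} carries over word for word: choosing $\tau=\tau_k=2^{k\beta/(1-\beta)}$, summing over $k$, carrying out the telescoping computation \eqref{besov averaging 2} and invoking the elementary Littlewood--Paley localization principle as in \eqref{besov averaging 4}--\eqref{besov averaging 5}, one obtains
\[
	\left\|\int_{\mathbb{R}^D}\!f_n\,dv\right\|_{L^1\left(\mathbb{R}_t;B^0_{1,1}\left(\mathbb{R}^D_x\right)\right)}
	\leq C\left(\left\|f_n\right\|_{L^1\left(dtdx;B^0_{1,1}(dv)\right)}
	+\left\|g_n\right\|_{L^1\left(dtdx;B^\beta_{1,1}(dv)\right)}\right),
\]
where $C>0$ depends only on fixed parameters, and where I have used that, since $q=1$, Tonelli's theorem identifies $\sum_{k\geq0}\|\Delta_{2^k}^x h\|_{L^1_{t,x}}+\|\Delta_0^x h\|_{L^1_{t,x}}$ with $\|h\|_{L^1(\mathbb{R}_t;B^0_{1,1}(\mathbb{R}^D_x))}$. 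Since the right-hand side is bounded uniformly in $n$ by hypothesis, the conclusion follows.

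The only genuine point to watch — the would-be obstacle — is precisely the uniformity of $C$ as $\delta_n\to 0$, and this is automatic here because $\delta_n$ occurs only inside the measure-preserving characteristics $t\mapsto t-\delta_n\sigma$ and never in a denominator; thus the vanishing of $\delta_n$ plays no role in this proposition (it will matter only later, for the equi-integrability statement of Theorem~\ref{averaging lemma 2}). As usual one should first establish Duhamel's identity for smooth $f_n,g_n$ and then pass to general data following the convention discussed after \eqref{D-x}, but this is routine.
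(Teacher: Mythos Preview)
Your proposal is correct and follows essentially the same route as the paper: write Duhamel's formula along the characteristics $(t,x)\mapsto(t-\delta_n\tau,x-\tau v)$, apply the transfer-of-frequencies identity \eqref{x-to-v} (the $t$-shift being a harmless parameter commuting with $\Delta_{2^k}^x$), integrate in $(t,x)$ using measure preservation, and then reproduce verbatim the dyadic summation argument of Proposition~\ref{pre averaging lemma}. Your explicit remark that $\delta_n$ never appears in a denominator and hence the constant is uniform in $n$ is exactly the point the paper leaves implicit.
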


\begin{proof}
	We give here a short demonstration of this lemma by following closely the proof of Proposition \ref{pre averaging lemma} and emphasizing the necessary changes. Thus, we first introduce an interpolation parameter $s\in\mathbb{R}$. Then, it trivially holds that
	\begin{equation}
		\begin{cases}
			(\partial_s+\delta_n\partial_t+v\cdot\nabla_x)f_n=g_n,\\
			f_n(s=0)=f_n.
		\end{cases}
	\end{equation}
	Hence the interpolation formula
	\begin{equation}
		f_n(t,x,v)=f_n(t-\delta_n s,x-sv,v)+\int_0^sg_n\left(t-\delta_n\sigma,x-\sigma v,v\right) d\sigma.
	\end{equation}
	It follows that, in virtue of \eqref{x-to-v},
	\begin{equation}
		\begin{aligned}
			\Delta_{2^k}^x\int_{\mathbb{R}^D} f_n(t,x,v) dv & =\int_{\mathbb{R}^D} \left(\Delta_{s2^k}^v f_n\right)(t-\delta_n s,x-sv,v) dv\\
			& +\int_0^s\int_{\mathbb{R}^D} \left(\Delta_{\sigma 2^k}^v g_n\right)(t-\delta_n\sigma,x-\sigma v,v) dvd\sigma,
		\end{aligned}
	\end{equation}
	which substitutes the interpolation formula \eqref{D-x}.
	
	Then, integrating in time and space yields
	\begin{equation}
		\left\|\Delta_{2^k}^x\int_{\mathbb{R}^D} f_n(t,x,v) dv\right\|_{L^1_{t,x}}\leq \left\|\Delta_{s2^k}^v f_n\right\|_{L^1_{t,x,v}}
		+\int_0^s\left\|\Delta_{\sigma 2^k}^v g_n\right\|_{L^1_{t,x,v}}d\sigma.
	\end{equation}
	For each value of $k$, we fix now the interpolation parameter $s$ as $s_k=2^{k\frac{\beta}{1-\beta}}$, which yields, summing over $k$,
	\begin{equation}
		\begin{aligned}
			\sum_{k=0}^\infty \left\|\Delta_{2^k}^x\int_{\mathbb{R}^D} f_n(t,x,v) dv\right\|_{L^1_{t,x}}
			\leq
			\sum_{k=0}^\infty \left\|\Delta_{s_k2^k}^v f_n\right\|_{L^1_{t,x,v}}
			+\sum_{k=0}^\infty \int_0^{s_k}\left\|\Delta_{\sigma2^k}^v g_n\right\|_{L^1_{t,x,v}}d\sigma.
		\end{aligned}
	\end{equation}
	
	The remainder of the demonstration only consists in an analysis of the velocity frequencies and it is thus strictly identical to the proof of Proposition \ref{pre averaging lemma}. And so, we infer that
	\begin{equation}
		\left\|\int_{\mathbb{R}^D} f_n(t,x,v) dv\right\|_{L^1\left(dt;B^0_{1,1}\left(dx\right)\right)}\leq
		 C\left( \left\|f_n\right\|_{L^1\left(dtdx;B^0_{1,1}\left(dv\right)\right)}+
		\left\|g_n\right\|_{L^1\left(dtdx;B^\beta_{1,1}\left(dv\right)\right)}\right),
	\end{equation}
	which concludes our proof.
\end{proof}

\begin{proof}[Proof of Theorem \ref{averaging lemma 2}]
	Notice first that there exists $\beta<1$ so that
	\begin{equation}
		\left(1-\Delta_{v}\right)^\frac{\gamma}{2}g_n \in L^1\left(\mathbb{R}_t\times \mathbb{R}^D_x;B^\beta_{1,1}\left(\mathbb{R}^D_v\right)\right).
	\end{equation}
	We may therefore straightforwardly apply Proposition \ref{pre averaging lemma 2} to infer that
	\begin{equation}
		\text{$\left\{\int_{\mathbb{R}^D} f_n(t,x,v) dv\right\}_{n=0}^\infty$ is bounded in $L^1\left(\mathbb{R}_t;B^0_{1,1}\left(\mathbb{R}^D_x\right)\right)$.}
	\end{equation}
	
	Then, just as in the proof of lemma \ref{averaging lemma}, using Stein's $L\log L$ result, we conclude that, for any compact subsets $K_x,K_v\subset\mathbb{R}^D$,
	\begin{equation}
		\begin{gathered}
			\text{$\left\{\int_{\mathbb{R}^D} f_n(t,x,v) dv\right\}_{n=0}^\infty$ is bounded in $L^1\left(\mathbb{R}_t;L\log L\left(K_x\right)\right)$}\\
			\text{and $\left\{f_n(t,x,v)\right\}_{n=0}^\infty$ is bounded in $L^1\left(\mathbb{R}_t\times\mathbb{R}^D_x;L\log L\left(K_v\right)\right)$.}
		\end{gathered}
	\end{equation}
	Since $\{f_n(t,x,v)\}_{n=0}^\infty$ is also bounded in $L^\infty \left(\mathbb{R}_t;L^1\left(\mathbb{R}^D_x\times\mathbb{R}^D_v\right)\right)$, it then follows that
	\begin{equation}
		\begin{gathered}
			\text{$\left\{\int_{\mathbb{R}^D\times\mathbb{R}^D	} f_n(t,x,v) dxdv\right\}_{n=0}^\infty$ is equi-integrable in $t$,}\\
			\text{$\left\{\int_{\mathbb{R}^D} f_n(t,x,v) dv\right\}_{n=0}^\infty$ is equi-integrable in $x$}\\
			\text{and $\left\{f_n(t,x,v)\right\}_{n=0}^\infty$ is equi-integrable in $v$.}
		\end{gathered}
	\end{equation}
	By the positiveness of the sequences, it is then possible to apply here Lemma 5.2 of \cite{golse3} iteratively (one may also consult the proof of Proposition 3.3.5 of \cite{saintraymond}) in order to conclude that
	\begin{equation}
		\text{$\{f_n(t,x,v)\}_{n=0}^\infty$ is equi-integrable (in all variables $t$, $x$ and $v$),}
	\end{equation}
	which concludes the proof of the theorem.
\end{proof}


\section{Proofs of the main results}\label{proofs}

We proceed now to the demonstrations of the main results from Section \ref{main results}. Since all our proofs are based on the crucial decomposition \eqref{crucial 0} given by Proposition \ref{crucial}, we will systematically start the proofs by establishing sharp dispersive estimates on the operators $A_\delta^t$ and $B_\delta^t$ appearing in the right-hand side of \eqref{crucial 0}. In accordance with the hypotheses of Proposition \ref{crucial}, we will always consider the operators $A_\delta^t$ and $B_\delta^t$ as defined for some given cutoff $\rho\in\mathcal{S}\left(\mathbb{R}\right)$ such that $\rho(0)=\frac{1}{2\pi}\int_\mathbb{R}\hat\rho(s)ds=\int_\mathbb{R}\tilde\rho(s)ds=1$ and that $\tilde\rho$ is compactly supported inside $\left[1,2\right]$.

\subsection{Velocity averaging in $L^1_xL^p_v$, inhomogeneous case}

In this section, we show the proofs of the averaging lemmas in the endpoint case $L^1_xL^p_v$, i.e. of Theorems \ref{pre averaging lemma p} and \ref{pre averaging lemma p 2}.

For the sake of simplicity, we will only consider here the operators $A_\delta^t$ and $B_\delta^t$ as defined for the specific cutoff $\tilde\rho(s)=\delta_{\left\{s=1\right\}}$, so that
\begin{equation}
	\begin{aligned}
		A_{\delta}^t f(x)
		& = \int_{\mathbb{R}^D}
		\Delta_{\delta}^x f \left(x-tv,v\right) dv , \\
		B_{\delta}^t g(x)
		& = \int_0^1 \int_{\mathbb{R}^D}
		\Delta_{\delta}^x g \left(x-\sigma t v,v\right)  dv d\sigma.
	\end{aligned}
\end{equation}
This simplification is performed at absolutely no cost of generality, since it is then possible to easily extend the results to the case of a general smooth cutoff $\rho\in\mathcal{S}\left(\mathbb{R}\right)$ by considering
\begin{equation}
	\int_{\mathbb{R}} \left[A_{\delta}^{st} f(x)\right] \tilde \rho(s)ds
	\qquad\text{and}\qquad
	\int_{\mathbb{R}} \left[B_{\delta}^{st} g(x)\right] s\tilde \rho(s)ds.
\end{equation}

It is to be emphasized that the above simplification is possible only because the identities \eqref{crucial 1} from Proposition \ref{crucial} will not be employed here.

\begin{lem}\label{operators norms dispersive}
	For every $1\leq p, q\leq \infty$, $\alpha\in\mathbb{R}$, $k\in\mathbb{N}$ and $t\geq 2^{-k}$, it holds that
	\begin{equation}
		\left\|A_{0}^1f\right\|_{L^p_x} \leq C \left\| \Delta_{0}^v f \right\|_{L^1_xL^p_v}
		\leq C \left\| f \right\|_{\widetilde L^1\left( \mathbb{R}^D_x; B^\alpha_{p,q}\left(\mathbb{R}^D_v\right)\right)}
	\end{equation}
	and
	\begin{equation}
		\begin{aligned}
			\left\|A_{2^k}^t f\right\|_{L^p_x}
			& \leq {C\over t^{D\left(1-\frac 1p\right)}} \left\| \Delta_{t2^k}^v f \right\|_{L^1_xL^p_v}\\
			& \leq {C\over t^{\alpha+D\left(1-\frac 1p\right)} 2^{k\alpha}} \left\| f \right\|_{\widetilde L^1\left( \mathbb{R}^D_x; B^\alpha_{p,q}\left(\mathbb{R}^D_v\right)\right)},
		\end{aligned}
	\end{equation}
	where $C>0$ is independent of $t$ and $2^k$.
	
	Furthermore, if
	\begin{equation}
		D\left(1-\frac 1p\right) < 1,
	\end{equation}
	then, for any $\beta\in\mathbb{R}$, it holds that
	\begin{equation}
		\left\|B_{0}^1g\right\|_{L^p_x}
		\leq C \left\| S_{2}^v g \right\|_{L^1_xL^p_v}
		\leq C \left\| g \right\|_{\widetilde L^1\left( \mathbb{R}^D_x; B^\beta_{p,q}\left(\mathbb{R}^D_v\right)\right)}
	\end{equation}
	and, if further $\beta + D\left(1-\frac 1p\right) \neq 1$,
	\begin{equation}
		\begin{aligned}
			\left\|B_{2^k}^tg\right\|_{L^p_x}
			& \leq
			{C\over t^{\beta+D\left(1-\frac 1p\right)} 2^{k\beta}}
			\Bigg(
			2^{ -j_k \left( 1 - \beta - D \left( 1 - \frac{1}{p} \right) \right) }
			\left\| \Delta_0^v g \right\|_{L^1_x L^p_v}\\
			&\hspace{20mm} +
			\sum_{j=0}^{j_k+1}
			2^{ -\left( j_k + 1 - j \right) \left( 1 - \beta - D \left( 1 - \frac{1}{p} \right) \right) }
			2^{j\beta} \left\| \Delta_{2^j}^v g \right\|_{L^1_x L^p_v}\Bigg)\\
			& \leq {C\over t^{\gamma+D\left(1-\frac 1p\right)} 2^{k\gamma}}
			\left\| g \right\|_{\widetilde L^1\left( \mathbb{R}^D_x; B^\beta_{p,q}\left(\mathbb{R}^D_v\right)\right)},
		\end{aligned}
	\end{equation}
	where $\gamma=\min \left\{\beta,1-D\left(1-\frac 1p\right)\right\}$, $j_k\geq 1$ is the largest integer such that $2^{j_k-1} \leq t2^k $ and $C>0$ is independent of $t$ and $2^k$.
	
	Finally, if $\beta + D\left(1-\frac 1p\right) = 1$, then
	\begin{equation}
		\begin{aligned}
			\left\|B_{2^k}^tg\right\|_{L^p_x}
			& \leq
			{C\over t 2^{k\beta}}
			\left(
			\left\| \Delta_0^v g \right\|_{L^1_x L^p_v}+
			\sum_{j=0}^{j_k+1}
			2^{j\beta} \left\| \Delta_{2^j}^v g \right\|_{L^1_x L^p_v}\right)\\
			& \leq {C\over t 2^{k\beta}}\log\left(1+t2^k\right)^\frac{1}{q'}
			\left\| g \right\|_{\widetilde L^1\left( \mathbb{R}^D_x; B^\beta_{p,q}\left(\mathbb{R}^D_v\right)\right)},
		\end{aligned}
	\end{equation}
	where $C>0$ is independent of $t$ and $2^k$.
\end{lem}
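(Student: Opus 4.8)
The whole lemma will be deduced from a single \emph{dispersive estimate}: for every $h\in\mathcal{S}\left(\mathbb{R}^D\times\mathbb{R}^D\right)$, $t>0$ and $1\leq p\leq\infty$,
\[
	\left\|\int_{\mathbb{R}^D} h(x-tv,v)\,dv\right\|_{L^p_x}
	\leq \frac{C}{t^{D\left(1-\frac1p\right)}}\left\|h\right\|_{L^1_xL^p_v},
\]
which is nothing but the Castella--Perthame dispersion bound in the form we need. It follows by interpolation from its two endpoints: for $p=1$, Tonelli's theorem and translation invariance in $x$ give $\left\|\int h(x-tv,v)\,dv\right\|_{L^1_x}\leq\|h\|_{L^1_{x,v}}$; for $p=\infty$, the change of variables $y=x-tv$ rewrites the velocity average as $t^{-D}\int h\left(y,\frac{x-y}{t}\right)dy$, whose modulus is at most $t^{-D}\int\|h(y,\cdot)\|_{L^\infty_v}\,dy$. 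Complex interpolation of the vector-valued scales $L^1_xL^p_v$ and $L^p_x$ then yields the displayed bound.

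For the operators $A_\delta^t$, I first rewrite them through the transfer-of-frequencies identity \eqref{x-to-v}: $A_{2^k}^tf(x)=\Delta_{2^k}^x\int f(x-tv,v)\,dv=\int\left(\Delta_{t2^k}^vf\right)(x-tv,v)\,dv$, and $A_0^1f(x)=\int\left(\Delta_0^vf\right)(x-v,v)\,dv$ (using $S_1=\Delta_0$). Applying the dispersive estimate with $h=\Delta_{t2^k}^vf$, respectively $h=\Delta_0^vf$ and $t=1$, gives the first inequalities. The second inequalities are the elementary Littlewood--Paley principle recalled in the proof of Proposition \ref{pre averaging lemma}: since $t\geq2^{-k}$ forces $t2^k\geq1$, the block $\Delta_{t2^k}^v$ is controlled by the finitely many dyadic blocks $\Delta_{2^j}^v$ with $2^j\sim t2^k$, whence $\|\Delta_{t2^k}^vf\|_{L^1_xL^p_v}\leq C(t2^k)^{-\alpha}\|f\|_{\widetilde L^1\left(B^\alpha_{p,q}\right)}$.

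The heart of the lemma is the analysis of $B_\delta^t$. Applying \eqref{x-to-v} with interpolation time $\sigma t$ inside the $\sigma$-average gives $B_{2^k}^tg(x)=\int_0^1\int\left(\Delta_{\sigma t2^k}^vg\right)(x-\sigma tv,v)\,dv\,d\sigma$ and $B_0^1g(x)=\int_0^1\int\left(S_\sigma^vg\right)(x-\sigma v,v)\,dv\,d\sigma$. Estimating the inner velocity average by the dispersive bound (with time $\sigma t$, resp. $\sigma$) and using Minkowski's inequality yields
\[
	\left\|B_{2^k}^tg\right\|_{L^p_x}\leq C\int_0^1(\sigma t)^{-D\left(1-\frac1p\right)}\left\|\Delta_{\sigma t2^k}^vg\right\|_{L^1_xL^p_v}\,d\sigma ,
\]
and it is precisely here that the hypothesis $D\left(1-\frac1p\right)<1$ is needed, to make this integral convergent at $\sigma=0$ --- i.e. to absorb the low velocity frequencies of $g$, those with $\sigma t2^k\lesssim1$. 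Next I bound $\|\Delta_{\sigma t2^k}^vg\|_{L^1_xL^p_v}$ by the Littlewood--Paley principle: by $C\|\Delta_0^vg\|_{L^1_xL^p_v}$ when $\sigma t2^k\lesssim1$, and by $C\sum_{2^j\sim\sigma t2^k}\|\Delta_{2^j}^vg\|_{L^1_xL^p_v}$ otherwise. Exchanging the $\sigma$-integral with the finite $j$-sum, one computes $\int_0^{c2^{-k}/t}(\sigma t)^{-D(1-\frac1p)}\,d\sigma\sim t^{-1}2^{-k\left(1-D(1-\frac1p)\right)}$ for the low range, while for each $j$ with $1\leq2^j\lesssim t2^k$ the contributing $\sigma$'s form an interval of length $\sim2^j/(t2^k)$ on which $(\sigma t)^{-D(1-\frac1p)}\sim2^{(k-j)D(1-\frac1p)}$, giving the term $t^{-1}2^{(j-k)\left(1-D(1-\frac1p)\right)}\|\Delta_{2^j}^vg\|_{L^1_xL^p_v}$. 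Summing over $0\leq j\leq j_k+1$ and using $2^{j_k}\sim t2^k$ rearranges these into exactly the first displayed bound of the statement (the $\mu=0$ form of that bound being its specialization, $\mu:=1-\beta-D(1-\frac1p)$). The estimate for $B_0^1g$ is the same computation at the single scale $\delta=0$, using $S_\sigma^v=S_\sigma^vS_2^v$ (since $\psi\equiv1$ near the origin) and $\int_0^1\sigma^{-D(1-\frac1p)}\,d\sigma<\infty$.

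Finally, to pass from this explicit bound to the closed form in $\|g\|_{\widetilde L^1\left(B^\beta_{p,q}\right)}$, write $c_{-1}=\|\Delta_0^vg\|_{L^1_xL^p_v}$ and $c_j=2^{j\beta}\|\Delta_{2^j}^vg\|_{L^1_xL^p_v}$, so that $\|g\|_{\widetilde L^1\left(B^\beta_{p,q}\right)}=\left(c_{-1}^q+\sum_{j\geq0}c_j^q\right)^{1/q}$; then apply Hölder's inequality in $j$, with exponents $q$ and $q'$, to the sum $\sum_{j=0}^{j_k+1}2^{-(j_k+1-j)\mu}c_j$. The geometric prefactor $\left(\sum_{m=0}^{j_k+1}2^{-m\mu q'}\right)^{1/q'}$ is $O(1)$ when $\mu>0$ (the regime $\beta+D(1-\frac1p)<1$, where $\gamma=\beta$), is $\sim(t2^k)^{-\mu}$ when $\mu<0$ (where $\gamma=1-D(1-\frac1p)$ and the powers of $t$ and $2^k$ recombine correctly), and equals $\sim(j_k+2)^{1/q'}\sim\log(1+t2^k)^{1/q'}$ when $\mu=0$; the low-frequency piece $2^{-j_k\mu}c_{-1}$ is handled identically, using $c_{-1}\leq\|g\|_{\widetilde L^1\left(B^\beta_{p,q}\right)}$ and $(t2^k)^{-\mu}\leq1$ for $\mu\geq0$. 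This delivers the second displayed bound when $\beta+D(1-\frac1p)\neq1$ and the logarithmic bound when $\beta+D(1-\frac1p)=1$ (the latter correctly degenerating to no loss when $q=1$, consistently with $q'=\infty$ in the exponent). The main obstacle is essentially bookkeeping in this $B$-estimate: aligning the interpolation time $\sigma t$ with the velocity-frequency scale $\sigma t2^k$, integrating accurately over each dyadic $\sigma$-range, and choosing the Hölder exponents so that the geometric series is summable with the right power of $t2^k$ exactly when $\beta+D(1-\frac1p)<1$, the threshold case producing the logarithmic factor.
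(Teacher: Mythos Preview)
Your proof is correct and follows essentially the same route as the paper's own argument: the dispersive estimate combined with the transfer-of-frequencies identity \eqref{x-to-v} for the $A$ operators, and the same integral-in-$\sigma$ bound followed by a dyadic decomposition in $\sigma t2^k$ for the $B$ operators, concluding with the identical H\"older-in-$j$ argument to reach the closed form. The only cosmetic difference is that you derive the dispersive bound by interpolation between the $p=1$ and $p=\infty$ endpoints, whereas the paper obtains it directly via the change of variables $v\mapsto y=x-tv$ and Minkowski's inequality in one stroke.
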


\begin{proof}
	These bounds will follow from the dispersive properties of the transport operator, which were first established by Castella and Perthame in \cite{CP96}. More precisely, we will utilize the following dispersive estimate, valid for any $1\leq p\leq\infty$, which is a simple consequence of the change of variables $v \mapsto y = x-tv$,
	\begin{equation}
		\begin{aligned}
			\left\|h(x-tv,v)\right\|_{L^p_xL^1_v} & =
			t^{-D}\left\|h\left(y,{x-y \over t}\right)\right\|_{L^p_xL^1_y} \\
			& \leq t^{-D}\left\|h\left(y,{x-y \over t}\right)\right\|_{L^1_yL^p_x} \\
			& = t^{-D\left(1-\frac 1p\right)}\left\|h\left(x,v\right)\right\|_{L^1_xL^p_v} .
		\end{aligned}
	\end{equation}
	
	Thus, we easily deduce, utilizing identities \eqref{x-to-v}, that
	\begin{equation}
		\left\|A_{0}^1f(x)\right\|_{L^p_x} \leq C \left\| \left(\Delta_{0}^v f\right)(x,v) \right\|_{L^1_xL^p_v}
		\leq C \left\| f(x,v) \right\|_{\widetilde L^1\left( \mathbb{R}^D_x; B^\alpha_{p,q}\left(\mathbb{R}^D_v\right)\right)},
	\end{equation}
	and
	\begin{equation}
		\begin{aligned}
			\left\|A_{2^k}^tf(x)\right\|_{L^p_x} &\leq Ct^{-D\left(1-\frac 1p\right)} \left\| \left(\Delta_{t2^k}^v f\right)(x,v) \right\|_{L^1_xL^p_v} \\
			& \leq Ct^{-\alpha-D\left(1-\frac 1p\right)} 2^{-k\alpha} \left\| f(x,v) \right\|_{\widetilde L^1\left( \mathbb{R}^D_x; B^\alpha_{p,q}\left(\mathbb{R}^D_v\right)\right)},
		\end{aligned}
	\end{equation}
	which concludes the estimate for $A_{0}^1$ and $A_{2^k}^t$.
	
	As for $B_{0}^1$ and $B_{2^k}^t$, we first obtain
	\begin{equation}
			\left\|B_{0}^1g(x)\right\|_{L^p_x}
			\leq C\int_0^1 s^{-D\left(1-\frac 1p\right)} \left\| \left(S_{s}^v g\right)(x,v) \right\|_{L^1_xL^p_v} ds
	\end{equation}
	and
	\begin{equation}\label{high freq B}
		\left\|B_{2^k}^tg(x)\right\|_{L^p_x} \leq C\int_0^1 (st)^{-D\left(1-\frac 1p\right)} \left\| \left(\Delta_{st2^k}^v g\right)(x,v) \right\|_{L^1_xL^p_v} ds.
	\end{equation}
	Then, since $D\left(1-\frac 1p\right)<1$ and $S_{s}^v=S_{s}^vS_{2}^v$, we deduce that
	\begin{equation}
			\left\|B_{0}^1g(x)\right\|_{L^p_x}
			\leq C \left\| \left(S_{2}^v g\right)(x,v) \right\|_{L^1_xL^p_v}
			\leq C \left\| g(x,v) \right\|_{\widetilde L^1\left( \mathbb{R}^D_x; B^\beta_{p,q}\left(\mathbb{R}^D_v\right)\right)},
	\end{equation}
	which concludes the estimate on the low frequency component.
	
	Regarding the high frequencies, we further split the integral in \eqref{high freq B} into small dyadic intervals
	\begin{equation}
		\begin{gathered}
			I_0 = \left[ 0, {1\over t2^k} \right],\ 
			I_1 = \left[ {1\over t2^k}, {2\over t2^k} \right],\ 
			\ldots ,\ 
			I_j = \left[ {2^{j-1}\over t2^k},  {2^j\over t2^k} \right] \\
			\text{and finally } I_{j_k} = \left[ {2^{j_k-1}\over t2^k}, 1 \right],
		\end{gathered}
	\end{equation}
	where $j_k\geq 1$ is the largest integer such that $2^{j_k-1} \leq t2^k $. Thus, on each dyadic interval $I_j$, where $ 1 \leq j \leq j_k$, the frequency $st2^k$ is between $2^{j-1}$ and $2^{j}$. Therefore, we deduce that, for any $1 \leq j \leq j_k$,
	\begin{equation}
		\begin{aligned}
			\int_{I_j} (st)^{-D \left( 1 - \frac 1 p \right) }
			\left\| \Delta_{st2^k}^v g \right\|_{L^1_x L^p_v} ds
			& =
			\int_{I_j} (st)^{-D \left( 1 - \frac 1 p \right) }
			\left\| \Delta_{\left[2^{j-2},2^{j+1}\right]}^v
			\Delta_{st2^k}^v g \right\|_{L^1_x L^p_v} ds \\
			& \leq C \left|I_j\right| 2^{ -(j-k)D \left( 1 - \frac 1 p \right) }
			\left\| \Delta_{\left[2^{j-2},2^{j+1}\right]}^v g \right\|_{L^1_x L^p_v} \\
			& =\frac Ct
			2^{ \left( j - k \right) \left( 1 - D \left( 1 - \frac{1}{p} \right) \right) }
			\left\| \Delta_{\left[2^{j-2},2^{j+1}\right]}^v g \right\|_{L^1_x L^p_v}.
		\end{aligned}
	\end{equation}
	Furthermore, when $j=0$, it is readily seen, since $D\left(1-\frac{1}{p}\right) < 1$, that
	\begin{equation}
		\begin{aligned}
			& \int_{I_0} (st)^{-D \left( 1 - \frac 1 p \right) }
			\left\| \Delta_{st2^k}^v g \right\|_{L^1_x L^p_v} ds\\
			& \hspace{30mm} =
			\int_{I_0} (st)^{-D \left( 1 - \frac 1 p \right) }
			\left\| S_4^v
			\Delta_{st2^k}^v g \right\|_{L^1_x L^p_v} ds \\
			& \hspace{30mm} \leq C \int_{I_0} (st)^{ -D \left( 1 - \frac 1 p \right) } ds
			\left\| S_4^v g \right\|_{L^1_x L^p_v} \\
			& \hspace{30mm} =C \frac{1}{t\left(1-D\left(1-\frac{1}{p}\right)\right)}
			2^{ - k \left( 1 - D \left( 1 - \frac{1}{p} \right) \right) }
			\left\| S_4^v g \right\|_{L^1_x L^p_v}.
		\end{aligned}
	\end{equation}
	Thus, on the whole, we have obtained that
	\begin{equation}
		\begin{aligned}
			&\left\|B_{2^k}^tg\right\|_{L^p_x}\\
			& \leq C\int_0^1 (st)^{-D\left(1-\frac 1p\right)} \left\| \Delta_{st2^k}^v g \right\|_{L^1_xL^p_v} ds\\
			& = C\sum_{j=0}^{j_k} \int_{I_j} (st)^{-D\left(1-\frac 1p\right)} \left\| \Delta_{st2^k}^v g \right\|_{L^1_xL^p_v} ds \\
			& \leq {C\over t}
			\left(
			2^{ - k \left( 1 - D \left( 1 - \frac{1}{p} \right) \right) }
			\left\| \Delta_0^v g \right\|_{L^1_x L^p_v}
			+
			\sum_{j=0}^{j_k+1}
			2^{ \left( j - k \right) \left( 1 - D \left( 1 - \frac{1}{p} \right) \right) }
			\left\| \Delta_{2^j}^v g \right\|_{L^1_x L^p_v}
			\right) \\
			& \leq C
			t^{ - \beta - D \left( 1 - \frac{1}{p} \right) }
			2^{-k\beta}
			\Bigg(
			2^{ - j_k \left( 1 - \beta - D \left( 1 - \frac{1}{p} \right) \right) }
			\left\| \Delta_0^v g \right\|_{L^1_x L^p_v}\\
			&\hspace{20mm}+
			\sum_{j=0}^{j_k+1}
			2^{ -\left( j_k + 1 - j \right) \left( 1 - \beta - D \left( 1 - \frac{1}{p} \right) \right) }
			2^{j\beta} \left\| \Delta_{2^j}^v g \right\|_{L^1_x L^p_v}
			\Bigg).
		\end{aligned}
	\end{equation}
	
	It follows that, if $1 > \beta + D\left( 1 - \frac 1 p \right)$, a further application of H\"older's inequality to the preceding estimate yields
	\begin{equation}
		\begin{aligned}
			&\left\|B_{2^k}^tg(x)\right\|_{L^p_x}\\
			& \leq C
			t^{ - \beta - D \left( 1 - \frac{1}{p} \right) }
			2^{-k\beta}
			\left\| \left\{ 2^{- j \left( 1 - \beta - D\left( 1 - \frac 1 p \right) \right) } \right\}_{j=0}^\infty \right\|_{\ell^{q'}}
			\left\| g(x,v) \right\|_{\widetilde L^1\left( \mathbb{R}^D_x; B^\beta_{p,q}\left(\mathbb{R}^D_v\right)\right)}.
		\end{aligned}
	\end{equation}
	And in case $1 < \beta + D\left( 1 - \frac 1 p \right)$, we obtain
	\begin{equation}
		\begin{aligned}
			&\left\|B_{2^k}^tg(x)\right\|_{L^p_x}\\
			& \leq \frac Ct
			2^{ k \left( D \left( 1 - \frac{1}{p} \right) - 1 \right) }
			\left\| \left\{ 2^{ j \left( 1 - \beta - D\left( 1 - \frac 1 p \right) \right) } \right\}_{j=0}^\infty \right\|_{\ell^{q'}}
			\left\| g(x,v) \right\|_{\widetilde L^1\left( \mathbb{R}^D_x; B^\beta_{p,q}\left(\mathbb{R}^D_v\right)\right)}.
		\end{aligned}
	\end{equation}
	Finally, if $1 = \beta + D\left( 1 - \frac 1 p \right)$, since $j_k+3\leq\frac{\log\left(t2^{k+4}\right)}{\log 2}$, then
	\begin{equation}
		\begin{aligned}
			\left\|B_{2^k}^tg\right\|_{L^p_x}
			& \leq
			{C\over t 2^{k\beta}}
			\left(
			\left\| \Delta_0^v g \right\|_{L^1_x L^p_v}+
			\sum_{j=0}^{j_k+1}
			2^{j\beta} \left\| \Delta_{2^j}^v g \right\|_{L^1_x L^p_v}\right)\\
			& \leq {C\over t 2^{k\beta}}\left(j_k+3\right)^\frac{1}{q'}
			\left\| g \right\|_{\widetilde L^1\left( \mathbb{R}^D_x; B^\beta_{p,q}\left(\mathbb{R}^D_v\right)\right)}\\
			& \leq {C\over t 2^{k\beta}}\log\left(1+t2^k\right)^\frac{1}{q'}
			\left\| g \right\|_{\widetilde L^1\left( \mathbb{R}^D_x; B^\beta_{p,q}\left(\mathbb{R}^D_v\right)\right)},
		\end{aligned}
	\end{equation}
	which concludes the proof of the lemma.
\end{proof}

We proceed now to the proof of Theorem \ref{pre averaging lemma p}, which is the simplest of our main results.

\begin{proof}[Proof of Theorem \ref{pre averaging lemma p}]
	According to Proposition \ref{crucial}, we begin with the following dyadic interpolation formulas, for each $k\in\mathbb{N}$,
	\begin{equation}
		\begin{aligned}
			\Delta_{0}^x\int_{\mathbb{R}^D} f(x,v) dv
			& = A_{0}^1 f (x) + B_{0}^1 g (x), \\
			\Delta_{2^k}^x\int_{\mathbb{R}^D} f(x,v) dv
			& = A_{2^k}^t f (x) + t B_{2^k}^t g (x).
		\end{aligned}
	\end{equation}
	
	Then, a direct application of Lemma \ref{operators norms dispersive} yields the estimates, for every $t\geq 2^{-k}$,
	\begin{equation}\label{high low frequencies}
		\begin{aligned}
			\left\|\Delta_0^x\int_{\mathbb{R}^D} f(x,v) dv\right\|_{L^p_x}
			& \leq
			C\left\| f \right\|_{\widetilde L^1\left( \mathbb{R}^D_x; B^\alpha_{p,q}\left(\mathbb{R}^D_v\right)\right)}
			+
			C\left\| g \right\|_{\widetilde L^1\left( \mathbb{R}^D_x; B^\beta_{p,q}\left(\mathbb{R}^D_v\right)\right)},
			\\
			\left\|\Delta_{2^k}^x\int_{\mathbb{R}^D} f(x,v) dv\right\|_{L^p_x}
			& \leq
			{C\over t^{\alpha+D\left(1-\frac 1p\right)} 2^{k\alpha}} \left(t2^k\right)^\alpha\left\| \Delta_{t2^k}^v f \right\|_{L^1_xL^p_v} \\
			&+
			{C\over t^{\beta-1+D\left(1-\frac 1p\right)} 2^{k\beta}}
			\Bigg(
			2^{ - j_k \left( 1 - \beta - D \left( 1 - \frac{1}{p} \right) \right) }
			\left\| \Delta_0^v g \right\|_{L^1_x L^p_v}\\
			&+
			\sum_{j=0}^{j_k+1}
			2^{ -\left( j_k + 1 - j \right) \left( 1 - \beta - D \left( 1 - \frac{1}{p} \right) \right) }
			2^{j\beta} \left\| \Delta_{2^j}^v g \right\|_{L^1_x L^p_v}\Bigg),
		\end{aligned}
	\end{equation}
	where $j_k\geq 1$ is the largest integer such that $2^{j_k-1} \leq t2^k $ and $C>0$ is independent of $t$ and $2^k$, which concludes the estimate on the low frequencies of the velocity averages $\Delta_{0}^x \int_{\mathbb{R}^D} f(x,v) dv$. Note that the above estimate remains valid for the value $\beta + D\left(1-\frac 1p\right) = 1$.

	Next, in order to treat the high frequencies, for each value of $k$, we have to select the value of the interpolation parameter $t$ which will optimize \eqref{high low frequencies}, i.e. minimize its right-hand side. The heuristic argument yielding the optimal value for $t$ goes as follows. In virtue of Lemma \ref{operators norms dispersive}, the estimate on the high frequencies in \eqref{high low frequencies} essentially amounts to
	\begin{equation}
		\left\|\Delta_{2^k}^x\int_{\mathbb{R}^D} f(x,v) dv\right\|_{L^p_x}\\
		\leq C\left(
		{1\over t^{\alpha+D\left(1-\frac 1p\right)} 2^{k\alpha}}
		+
		{1\over t^{\gamma-1+D\left(1-\frac 1p\right)} 2^{k\gamma}}
		\right),
	\end{equation}
	where $\gamma=\min \left\{\beta,1-D\left(1-\frac 1p\right)\right\}$. Therefore, up to multiplicative constants, the right-hand side above will be minimized when both terms are equal, which leads to an optimal value of the interpolation parameter $t$ of
	\begin{equation}
			t_k =2^{-k\frac{\alpha-\gamma}{1+\alpha-\gamma}}\geq 2^{-k},
	\end{equation}
	where we have used the hypothesis $\alpha>-D\left(1-\frac 1p\right)$ to deduce that $1+\alpha-\gamma>0$.
	
	Note that, in the case $\beta+D\left(1-\frac 1p\right)\geq 1$, we can choose $t=\infty$, which is, in fact, more optimal than $t2^k=2^{k\frac{1}{1+\alpha-\gamma}}$, since it eliminates the first term in the right-hand side of the above estimates. This case is discussed later on.
	
	
	Thus, in the case $ \beta + D \left( 1 - \frac 1 p \right)<1$ so that $\gamma=\beta$, setting $t=t_k$ in \eqref{high low frequencies}, denoting $ s = \frac{\alpha+D \left( 1 - \frac 1 p \right)}{1+\alpha-\gamma}  - D \left( 1 - \frac 1 p \right)  $, noticing that $j_k=\left[1+{k\over 1+\alpha-\gamma}\right]$ and summing over $k$, yields that
	\begin{equation}\label{besov averaging p}
		\begin{aligned}
			& \left\| \left\{ 2^{ k s } \left\| \Delta_{2^k}^x \int_{\mathbb{R}^D} f(x,v) dv \right\|_{L^p_x} \right\}_{k=0}^\infty \right\|_{\ell^q} \\
			&\hspace{20mm} \leq
			C \left\| \left\{ \left(t_k2^k\right)^\alpha \left\| \Delta_{t_k2^k}^v f \right\|_{L^1_x L^p_v} \right\}_{k=0}^\infty \right\|_{\ell^q}\\
			&\hspace{20mm} + C \left\| \left\{ 2^{ - k \frac{1}{ 1 + \alpha - \beta } \left( 1 - \beta - D \left( 1 - \frac 1 p \right) \right) } \right\}_{k=0}^\infty \right\|_{\ell^q} \left\| \Delta_0^v g \right\|_{L^1_x L^p_v} \\
			&\hspace{20mm} + C \left\| \left\{ \sum_{j=0}^{ j_k + 1 } 2^{-\left(j_k + 1 - j\right) \left( 1 - \beta - D\left( 1 - \frac 1 p \right) \right) } 2^{j\beta}
			\left\| \Delta_{2^j }^v g \right\|_{L^1_x L^p_v} \right\}_{k=0}^\infty \right\|_{\ell^q}.
		\end{aligned}
	\end{equation}
	Then, it is readily seen that the first term in the right-hand side above is controlled by $\left\| f \right\|_{\widetilde L^1\left(\mathbb{R}^D_x;B^\alpha_{p,q}\left(\mathbb{R}^D_v\right)\right)}$, for $t_k2^k=2^{k{1\over 1+\alpha-\beta}}\rightarrow\infty$ as $k\rightarrow\infty$. Furthermore, writing $ a_j =  2^{ j \beta } \left\| \Delta_{2^j }^v g \right\|_{L^1_x L^p_v} \mathbb{1}_{\left\{j\geq 0\right\}} $ and $ b_j = 2^{ -j \left( 1 -\beta - D \left( 1 - \frac 1 p \right) \right)}  \mathbb{1}_{ \left\{ j \geq 0 \right\} } $, for all $j\in\mathbb{Z}$, we see that $a=\left\{a_j\right\}_{j\in\mathbb{Z}} \in \ell^q$, $b=\left\{b_j\right\}_{j\in\mathbb{Z}} \in \ell^1$ and that
	\begin{equation}
		\begin{aligned}
		\left\| \left\{ \sum_{j=0}^{ j_k + 1 } 2^{-\left(j_k + 1 - j\right) \left( 1 - \beta - D\left( 1 - \frac 1 p \right) \right) } 2^{j\beta}
		\left\| \Delta_{2^j }^v g \right\|_{L^1_x L^p_v}\right\}_{k=0}^\infty \right\|_{\ell^q}
		& = \left\| \left\{\left( a*b \right)_{j_k+1}\right\}_{k=0}^\infty \right\|_{\ell^q}\\
		& \leq \left\| a \right\|_{\ell^q} \left\| b \right\|_{\ell^1} \\
		& \leq C
		\left\| g \right\|_{\widetilde L^1\left(\mathbb{R}^D_x;B^\beta_{p,q}\left(\mathbb{R}^D_v\right)\right)}.
		\end{aligned}
	\end{equation}
	Thus, on the whole, combining the preceding estimates with \eqref{besov averaging p}, we deduce that
	\begin{equation}
		\begin{aligned}
			& \left\| \left\{ 2^{ k s } \left\| \Delta_{2^k}^x \int_{\mathbb{R}^D} f(x,v) dv \right\|_{L^p_x} \right\}_{k=0}^\infty \right\|_{\ell^q} \\
			& \hspace{40mm} \leq 
			C \left\| f \right\|_{\widetilde L^1\left(\mathbb{R}^D_x;B^\alpha_{p,q}\left(\mathbb{R}^D_v\right)\right)}
			+
			C \left\| g \right\|_{\widetilde L^1\left(\mathbb{R}^D_x;B^\beta_{p,q}\left(\mathbb{R}^D_v\right)\right)},
		\end{aligned}
	\end{equation}
	which concludes the proof in the case $ \beta + D \left( 1 - \frac 1 p \right)<1$.

	Regarding the case $ \beta + D \left( 1 - \frac 1 p \right)\geq1$ so that $\gamma=1-D \left( 1 - \frac 1 p \right)$, we note that the estimate \eqref{high low frequencies} on the high frequencies implies that, recalling $j_k$ satisfies $t2^{k}\leq 2^{j_k}\leq t2^{k+1}$,
	\begin{equation}
		\begin{aligned}
			\left\|\Delta_{2^k}^x\int_{\mathbb{R}^D} f(x,v) dv\right\|_{L^p_x}
			& \leq
			{C\over t^{\alpha+D\left(1-\frac 1p\right)} 2^{k\alpha}}
			\left\| f \right\|_{\widetilde L^1\left(\mathbb{R}^D_x;B^\alpha_{p,q}\left(\mathbb{R}^D_v\right)\right)} \\
			&+
			{C\over 2^{k\left(1-D\left(1-\frac 1p\right)\right)}}
			\Bigg(
			\left\| \Delta_0^v g \right\|_{L^1_x L^p_v}\\
			&+
			\sum_{j=0}^{j_k+1}
			2^{ j \left( 1 - \beta - D \left( 1 - \frac{1}{p} \right) \right) }
			2^{j\beta} \left\| \Delta_{2^j}^v g \right\|_{L^1_x L^p_v}\Bigg).
		\end{aligned}
	\end{equation}
	Therefore, letting $t$ tend to infinity, denoting $ s = \frac{\alpha+D \left( 1 - \frac 1 p \right)}{1+\alpha-\gamma}  - D \left( 1 - \frac 1 p \right) = 1  - D \left( 1 - \frac 1 p \right)  $ and noticing that $\lim_{t\rightarrow\infty}j_k=\infty$, we find
	\begin{equation}
		\begin{aligned}
			2^{ks} & \left\|\Delta_{2^k}^x\int_{\mathbb{R}^D} f(x,v) dv\right\|_{L^p_x}\\
			& \leq
			C
			\left(
			\left\| \Delta_0^v g \right\|_{L^1_x L^p_v}
			+
			\sum_{j=0}^{\infty}
			2^{ j \left( 1 - \beta - D \left( 1 - \frac{1}{p} \right) \right) }
			2^{j\beta} \left\| \Delta_{2^j}^v g \right\|_{L^1_x L^p_v}\right).
		\end{aligned}
	\end{equation}
	
	It then follows that
	\begin{equation}
		\left\| \left\{ 2^{ k s } \left\| \Delta_{2^k}^x \int_{\mathbb{R}^D} f(x,v) dv \right\|_{L^p_x} \right\}_{k=0}^\infty \right\|_{\ell^\infty}
		\leq 
		C \left\| g \right\|_{\widetilde L^1\left(\mathbb{R}^D_x;B^\beta_{p,1}\left(\mathbb{R}^D_v\right)\right)},
	\end{equation}
	and, in case $ \beta + D \left( 1 - \frac 1 p \right)> 1$,
	\begin{equation}
		\left\| \left\{ 2^{ k s } \left\| \Delta_{2^k}^x \int_{\mathbb{R}^D} f(x,v) dv \right\|_{L^p_x} \right\}_{k=0}^\infty \right\|_{\ell^\infty}
		\leq 
		C \left\| g \right\|_{\widetilde L^1\left(\mathbb{R}^D_x;B^\beta_{p,q}\left(\mathbb{R}^D_v\right)\right)},
	\end{equation}
	which concludes the proof of the theorem.
\end{proof}

Next is the demonstration of Theorem \ref{pre averaging lemma p 2}, which builds upon the previous proof of Theorem \ref{pre averaging lemma p}.

\begin{proof}[Proof of Theorem \ref{pre averaging lemma p 2}]
	As in the proof of Theorem \ref{pre averaging lemma p}, we begin with the dyadic decomposition provided by Proposition \ref{crucial}, for each $k\in\mathbb{N}$,
	\begin{equation}
		\begin{aligned}
			\Delta_{0}^x\int_{\mathbb{R}^D} f(x,v) dv
			& = A_{0}^1 f (x) + B_{0}^1 g (x), \\
			\Delta_{2^k}^x\int_{\mathbb{R}^D} f(x,v) dv
			& = A_{2^k}^t f (x) + t B_{2^k}^t g (x),
		\end{aligned}
	\end{equation}
	and we utilize Lemma \ref{operators norms dispersive} to obtain, in virtue of property \eqref{crucial 3} from Proposition \ref{crucial}, that
	\begin{equation}\label{pre averaging lemma p 2.1}
		\begin{aligned}
			\left\|
			\Delta_{0}^x\int_{\mathbb{R}^D} f(x,v) dv
			\right\|_{L^p_x}
			& \leq
			C\left\| S_2^x f \right\|_{\widetilde L^1\left( \mathbb{R}^D_x; B^\alpha_{p,q}\left(\mathbb{R}^D_v\right)\right)}
			+ 
			C \left\| S_2^x g \right\|_{\widetilde L^1\left( \mathbb{R}^D_x; B^\beta_{p,q}\left(\mathbb{R}^D_v\right)\right)},\\
			\left\|
			\Delta_{2^k}^x\int_{\mathbb{R}^D} f(x,v) dv
			\right\|_{L^p_x}
			& \leq C
			{1\over t^{\alpha+D\left(1-\frac 1p\right)} 2^{k\alpha}} \left\| \Delta_{\left[2^{k-1},2^{k+1}\right]}^x f \right\|_{\widetilde L^1\left( \mathbb{R}^D_x; B^\alpha_{p,q}\left(\mathbb{R}^D_v\right)\right)}\\
			& + 
			C
			\left[
			\begin{tabular}{cc}
			${t^{1-\gamma-D\left(1-\frac 1p\right)}\over 2^{k\gamma}}$ &
			\footnotesize\text{if }$\beta + D\left(1-\frac 1p\right) < 1$
			\\
			${\log\left(1+t2^k\right)^\frac{1}{q'}\over 2^{k\gamma}}$ &
			\footnotesize\text{if }$\beta + D\left(1-\frac 1p\right) = 1$
			\\
			${1\over 2^{k\gamma}}$ &
			\footnotesize\text{if }$\beta + D\left(1-\frac 1p\right) > 1$
			\end{tabular}
			\right]\\
			&\times
			\left\| \Delta_{\left[2^{k-1},2^{k+1}\right]}^x g \right\|_{\widetilde L^1\left( \mathbb{R}^D_x; B^\beta_{p,q}\left(\mathbb{R}^D_v\right)\right)},
		\end{aligned}
	\end{equation}
	where $\gamma=\min \left\{\beta,1-D\left(1-\frac 1p\right)\right\}$. This concludes the estimate on the low frequencies of the velocity averages $\Delta_{0}^x \int_{\mathbb{R}^D} f(x,v) dv$.
	
	Next, in order to control the high frequencies, optimizing in $t$ for each value of $k$, we fix the interpolation parameter $t$ as $t_k=2^{-k\frac{\left(\alpha-\gamma\right) + \left(a-b\right)}{1+\alpha-\gamma}}$ (cf. proof of Theorem \ref{pre averaging lemma p} for a heuristic explanation on how to choose this optimal parameter). Note that $t_k\geq 2^{-k}$, for $b\geq a-1$ and $1+\alpha-\gamma>0$, and that this choice is independent of $1\leq p, q \leq \infty$.
	
	Furthermore, in the cases $\beta+D\left(1-\frac 1p\right)>1$ or $\beta+D\left(1-\frac 1p\right)=1$ and $q=1$, we can choose $t=\infty$, which is, in fact, more optimal than $t2^k=2^{k\frac{1+b-a}{1+\alpha-\gamma}}$, since it eliminates the first term in the right-hand side of the above estimate. This case is discussed later on.
	

	Therefore, denoting $ s = \left(1+b-a\right)\frac{\alpha+ D \left( 1 - \frac 1 p \right)}{1+\alpha-\gamma} + a - D \left( 1 - \frac 1 p \right) $, we find that
	\begin{equation}
		\begin{aligned}
			& 2^{ks} \left\|
			\Delta_{2^k}^x\int_{\mathbb{R}^D} f(x,v) dv
			\right\|_{L^p_x}\\
			& \leq C
			2^{ka} \left\| \Delta_{\left[2^{k-1},2^{k+1}\right]}^x f \right\|_{\widetilde L^1\left( \mathbb{R}^D_x; B^\alpha_{p,q}\left(\mathbb{R}^D_v\right)\right)}\\
			& + C
			\left[
			\begin{tabular}{cc}
			$2^{kb}$ &
			\footnotesize\text{if }$\beta + D\left(1-\frac 1p\right) \neq 1$
			\\
			$2^{kb}\left(1+\frac{1+b-a}{1+\alpha-\gamma}k\right)^\frac{1}{q'}$ &
			\footnotesize\text{if }$\beta + D\left(1-\frac 1p\right) = 1$
			\end{tabular}
			\right]
			\left\| \Delta_{\left[2^{k-1},2^{k+1}\right]}^x g \right\|_{\widetilde L^1\left( \mathbb{R}^D_x; B^\beta_{p,q}\left(\mathbb{R}^D_v\right)\right)}.
		\end{aligned}
	\end{equation}
	Consequently, summing over $k$, we obtain, in case $\beta + D\left(1-\frac 1p\right) \neq 1$ or $q=1$,
	\begin{equation}
		\begin{aligned}
			& \left\| \left\{
			2^{ks} \left\|
			\Delta_{2^k}^x\int_{\mathbb{R}^D} f(x,v) dv
			\right\|_{L^p_x} \right\}_{k=0}^\infty
			\right\|_{\ell^q} \\
			&\hspace{30mm} \leq C
			\left\| \left\{
			2^{ka} \left\| \Delta_{2^k}^x f \right\|_{\widetilde L^1\left( \mathbb{R}^D_x; B^\alpha_{p,q}\left(\mathbb{R}^D_v\right)\right)} \right\}_{k=-1}^\infty
			\right\|_{\ell^q} \\
			&\hspace{30mm} +
			C
			\left\|\left\{
			2^{kb} \left\| \Delta_{2^k}^x g \right\|_{\widetilde L^1\left( \mathbb{R}^D_x; B^\beta_{p,q}\left(\mathbb{R}^D_v\right)\right)}
			\right\}_{k=-1}^\infty\right\|_{\ell^q}\\
			&\hspace{30mm}=
			C \left\| f \right\|_{B^{a,\alpha}_{1,p,q}\left(\mathbb{R}^D_x\times\mathbb{R}^D_v\right)}
			+
			C \left\| g \right\|_{B^{b,\beta}_{1,p,q}\left(\mathbb{R}^D_x\times\mathbb{R}^D_v\right)},
		\end{aligned}
	\end{equation}
	while, if $\beta + D\left(1-\frac 1p\right) = 1$ and $q\neq 1$, we find, for any $\epsilon>0$,
	\begin{equation}
		\begin{aligned}
			& \left\| \left\{
			2^{k(s-\epsilon)} \left\|
			\Delta_{2^k}^x\int_{\mathbb{R}^D} f(x,v) dv
			\right\|_{L^p_x} \right\}_{k=0}^\infty
			\right\|_{\ell^q} \\
			&\hspace{30mm} \leq C
			\left\| \left\{
			2^{k(a-\epsilon)} \left\| \Delta_{2^k}^x f \right\|_{\widetilde L^1\left( \mathbb{R}^D_x; B^\alpha_{p,q}\left(\mathbb{R}^D_v\right)\right)} \right\}_{k=-1}^\infty
			\right\|_{\ell^q} \\
			&\hspace{30mm} +
			C
			\left\|\left\{
			2^{kb} \left\| \Delta_{2^k}^x g \right\|_{\widetilde L^1\left( \mathbb{R}^D_x; B^\beta_{p,q}\left(\mathbb{R}^D_v\right)\right)}
			\right\}_{k=-1}^\infty\right\|_{\ell^q}\\
			&\hspace{30mm}=
			C \left\| f \right\|_{B^{a-\epsilon,\alpha}_{1,p,q}\left(\mathbb{R}^D_x\times\mathbb{R}^D_v\right)}
			+
			C \left\| g \right\|_{B^{b,\beta}_{1,p,q}\left(\mathbb{R}^D_x\times\mathbb{R}^D_v\right)}.
		\end{aligned}
	\end{equation}
	
	We handle now the cases $\beta+D\left(1-\frac 1p\right)>1$ or $\beta+D\left(1-\frac 1p\right)=1$ and $q=1$, by letting $t$ tend to infinity in \eqref{pre averaging lemma p 2.1}, as mentioned previously. This leads to
	\begin{equation}
		\left\|
		\Delta_{2^k}^x\int_{\mathbb{R}^D} f(x,v) dv
		\right\|_{L^p_x}
		\leq
		\frac{C}{2^{k\gamma}}
		\left\| \Delta_{\left[2^{k-1},2^{k+1}\right]}^x g \right\|_{\widetilde L^1\left( \mathbb{R}^D_x; B^\beta_{p,q}\left(\mathbb{R}^D_v\right)\right)}.
	\end{equation}
	Hence, recalling $s=1+b-D\left(1-\frac 1p\right)$ and summing over $k$ yields
	\begin{equation}
		\begin{aligned}
			& \left\| \left\{
			2^{ks} \left\|
			\Delta_{2^k}^x\int_{\mathbb{R}^D} f(x,v) dv
			\right\|_{L^p_x} \right\}_{k=0}^\infty
			\right\|_{\ell^q} \\
			&\hspace{30mm} \leq
			C
			\left\|\left\{
			2^{kb} \left\| \Delta_{2^k}^x g \right\|_{\widetilde L^1\left( \mathbb{R}^D_x; B^\beta_{p,q}\left(\mathbb{R}^D_v\right)\right)}
			\right\}_{k=-1}^\infty\right\|_{\ell^q}\\
			&\hspace{30mm}=
			C \left\| g \right\|_{B^{b,\beta}_{1,p,q}\left(\mathbb{R}^D_x\times\mathbb{R}^D_v\right)},
		\end{aligned}
	\end{equation}
	which concludes the proof of the theorem.
\end{proof}

	\subsection{Velocity averaging in $L^1_xL^p_v$, homogeneous case}

	Here, we explain how the proofs of the previous section in the inhomogeneous case are adapted to the homogeneous case and we give justifications of Theorems \ref{pre averaging lemma p h} and \ref{pre averaging lemma p 2 h}.
	
	Again, for the sake of simplicity as in the previous section and without any loss of generality, we will only consider here the operators $A_\delta^t$ and $B_\delta^t$ as defined for the specific cutoff $\tilde\rho(s)=\delta_{\left\{s=1\right\}}$

	\begin{lem}\label{operators norms dispersive h}
		For every $1\leq p, q\leq \infty$, $\alpha,\beta\in\mathbb{R}$, $k\in\mathbb{N}$ and $t>0$ such that
		\begin{equation}
			\beta + D\left(1-\frac 1p\right) < 1,
		\end{equation}
		it holds that
		\begin{equation}
			\begin{aligned}
				\left\|A_{2^k}^t f\right\|_{L^p_x}
				& \leq {C\over t^{D\left(1-\frac 1p\right)}} \left\| \Delta_{t2^k}^v f \right\|_{L^1_xL^p_v}\\
				& \leq {C\over t^{\alpha+D\left(1-\frac 1p\right)} 2^{k\alpha}} \left\| f \right\|_{\widetilde L^1\left( \mathbb{R}^D_x; \dot B^\alpha_{p,q}\left(\mathbb{R}^D_v\right)\right)},
			\end{aligned}
		\end{equation}
		and
		\begin{equation}
			\begin{aligned}
				\left\|B_{2^k}^tg\right\|_{L^p_x}
				& \leq
				{C\over t^{\beta+D\left(1-\frac 1p\right)} 2^{k\beta}}
				\sum_{j=-\infty}^{j_k+1}
				2^{ -\left( j_k + 1 - j \right) \left( 1 - \beta - D \left( 1 - \frac{1}{p} \right) \right) }
				2^{j\beta} \left\| \Delta_{2^j}^v g \right\|_{L^1_x L^p_v}\\
				& \leq {C\over t^{\beta+D\left(1-\frac 1p\right)} 2^{k\beta}}
				\left\| g \right\|_{\widetilde L^1\left( \mathbb{R}^D_x; \dot B^\beta_{p,q}\left(\mathbb{R}^D_v\right)\right)},
			\end{aligned}
		\end{equation}
		where $j_k\geq 1$ is the largest integer such that $2^{j_k-1} \leq t2^k $ and $C>0$ is independent of $t$ and $2^k$.

		Furthermore, if $\beta + D\left(1-\frac 1p\right) = 1$ and $q=1$, then it still holds that
		\begin{equation}
			\begin{aligned}
				\left\|B_{2^k}^tg\right\|_{L^p_x}
				& \leq
				{C\over t 2^{k\beta}}
				\sum_{j=-\infty}^{j_k+1}
				2^{j\beta} \left\| \Delta_{2^j}^v g \right\|_{L^1_x L^p_v}\\
				& \leq {C\over t 2^{k\beta}}
				\left\| g \right\|_{\widetilde L^1\left( \mathbb{R}^D_x; \dot B^\beta_{p,1}\left(\mathbb{R}^D_v\right)\right)}.
			\end{aligned}
		\end{equation}
	\end{lem}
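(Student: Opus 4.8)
The plan is to follow the proof of Lemma \ref{operators norms dispersive} almost verbatim; the one structural simplification afforded by the homogeneous setting is that there is no low-frequency endpoint block to treat separately, since every dyadic piece is controlled uniformly by the homogeneous semi-norm through $2^{j\alpha}\left\|\Delta_{2^j}^v h\right\|_{L^1_xL^p_v}\leq\left\|h\right\|_{\widetilde L^1\left(\mathbb{R}^D_x;\dot B^\alpha_{p,q}\left(\mathbb{R}^D_v\right)\right)}$, valid for \emph{all} $j\in\mathbb{Z}$. I would begin with the bound on $A_{2^k}^t$. Using the transfer-of-frequencies identity \eqref{x-to-v} one has $A_{2^k}^t f(x)=\int_{\mathbb{R}^D}\left(\Delta_{t2^k}^v f\right)(x-tv,v)\,dv$, and the dispersive estimate $\left\|h(x-tv,v)\right\|_{L^p_xL^1_v}\leq t^{-D\left(1-\frac 1p\right)}\left\|h\right\|_{L^1_xL^p_v}$ recorded in the proof of Lemma \ref{operators norms dispersive} yields the first inequality at once. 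The second follows by decomposing $\Delta_{t2^k}^v$, via the Littlewood--Paley principle, into a bounded number of genuine dyadic blocks at scale comparable to $t2^k$ and then invoking the homogeneous Besov norm; the fact that $t2^k$ may now be arbitrarily small is harmless precisely because that norm dominates every block.

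For $B_{2^k}^t$ I would start from $B_{2^k}^t g(x)=\int_0^1\int_{\mathbb{R}^D}\left(\Delta_{\sigma t2^k}^v g\right)(x-\sigma tv,v)\,dv\,d\sigma$, again a consequence of \eqref{x-to-v} after the evident rescaling, and apply the dispersive estimate with time parameter $\sigma t$ to get $\left\|B_{2^k}^t g\right\|_{L^p_x}\leq C\int_0^1(\sigma t)^{-D\left(1-\frac 1p\right)}\left\|\Delta_{\sigma t2^k}^v g\right\|_{L^1_xL^p_v}\,d\sigma$. I would then split $\sigma\in(0,1]$ into the dyadic intervals $I_j=[\,2^{j-1}/(t2^k),\,2^j/(t2^k)\,]$, $j\leq j_k+1$, on which the velocity frequency $\sigma t2^k$ has size $\sim 2^j$, so that $\left\|\Delta_{\sigma t2^k}^v g\right\|_{L^1_xL^p_v}\leq C\left\|\Delta_{[2^{j-2},2^{j+1}]}^v g\right\|_{L^1_xL^p_v}$. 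Combining this with $|I_j|=2^{j-1-k}/t$ and summing over all $j\leq j_k+1$ gives $\left\|B_{2^k}^t g\right\|_{L^p_x}\leq \frac{C}{t}\sum_{j\leq j_k+1}2^{(j-k)\left(1-D\left(1-\frac 1p\right)\right)}\left\|\Delta_{2^j}^v g\right\|_{L^1_xL^p_v}$; writing $1-D\left(1-\frac 1p\right)=\left(1-\beta-D\left(1-\frac 1p\right)\right)+\beta$ and using $2^{j_k}\sim t2^k$ rearranges this into the weighted form stated in the lemma.

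The passage to the Besov norm is then the convolution inequality $\ell^q*\ell^1\hookrightarrow\ell^\infty$: setting $a_j=2^{j\beta}\left\|\Delta_{2^j}^v g\right\|_{L^1_xL^p_v}\in\ell^q(\mathbb{Z})$ and $b_j=2^{-j\left(1-\beta-D\left(1-\frac 1p\right)\right)}\mathbb{1}_{\{j\geq 0\}}$, the displayed sum equals $(a*b)_{j_k+1}$, hence is bounded by $\left\|a\right\|_{\ell^q}\left\|b\right\|_{\ell^1}\leq C\left\|g\right\|_{\widetilde L^1\left(\mathbb{R}^D_x;\dot B^\beta_{p,q}\left(\mathbb{R}^D_v\right)\right)}$. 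Here $b\in\ell^1$ is exactly the hypothesis $\beta+D\left(1-\frac 1p\right)<1$, and this — together with the convergence of the sum over the negative dyadic indices — is the only place where that assumption really enters, so it is the one step I expect to demand genuine (though mild) care. In the critical case $\beta+D\left(1-\frac 1p\right)=1$ with $q=1$ one has $1-\beta-D\left(1-\frac 1p\right)=0$, the weights disappear, and the same dyadic splitting gives directly $\left\|B_{2^k}^t g\right\|_{L^p_x}\leq \frac{C}{t2^{k\beta}}\sum_{j\leq j_k+1}2^{j\beta}\left\|\Delta_{2^j}^v g\right\|_{L^1_xL^p_v}\leq \frac{C}{t2^{k\beta}}\left\|g\right\|_{\widetilde L^1\left(\mathbb{R}^D_x;\dot B^\beta_{p,1}\left(\mathbb{R}^D_v\right)\right)}$, the last step being merely the definition of the $\dot B^\beta_{p,1}$ norm. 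Since, as in Proposition \ref{crucial}, the statement is proved for $f,g\in\mathcal{S}$, all quantities involved are a priori finite, so no real obstruction to convergence arises and the proof is a routine homogenization of the inhomogeneous argument.
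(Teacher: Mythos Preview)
Your proposal is correct and follows essentially the same approach as the paper's proof: the same dispersive estimate for $A_{2^k}^t$, the same dyadic splitting of $(0,1]$ for $B_{2^k}^t$ with the sum now extending over all $j\in\mathbb{Z}$ rather than truncating at a low-frequency block, and the same passage to the Besov norm (the paper phrases the last step via H\"older's inequality in $\ell^{q'}$, which is equivalent to your $\ell^q*\ell^1\hookrightarrow\ell^\infty$ convolution argument). Your observation that the homogeneous setting removes the need for the separate treatment of the interval $I_0=[0,1/(t2^k)]$---and with it the hypothesis $D(1-\tfrac1p)<1$---is exactly the structural simplification the paper exploits.
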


	\begin{proof}
		We follow precisely the steps of the proof of Lemma \ref{operators norms dispersive}. Thus, we first have, as a consequence of the dispersive properties of the transport operator, that
		\begin{equation}
			\begin{aligned}
				\left\|A_{2^k}^tf(x)\right\|_{L^p_x} &\leq Ct^{-D\left(1-\frac 1p\right)} \left\| \left(\Delta_{t2^k}^v f\right)(x,v) \right\|_{L^1_xL^p_v} \\
				& \leq Ct^{-\alpha-D\left(1-\frac 1p\right)} 2^{-k\alpha} \left\| f(x,v) \right\|_{\widetilde L^1\left( \mathbb{R}^D_x; \dot B^\alpha_{p,q}\left(\mathbb{R}^D_v\right)\right)},
			\end{aligned}
		\end{equation}
		which concludes the estimate for $A_{2^k}^t$.

		As for $B_{2^k}^t$, we first obtain
		\begin{equation}
			\left\|B_{2^k}^tg(x)\right\|_{L^p_x} \leq C\int_0^1 (st)^{-D\left(1-\frac 1p\right)} \left\| \left(\Delta_{st2^k}^v g\right)(x,v) \right\|_{L^1_xL^p_v} ds.
		\end{equation}
		Then, we further split the above integral into small dyadic intervals
		\begin{equation}
				I_j = \left[ {2^{j-1}\over t2^k},  {2^j\over t2^k} \right]
				\quad\text{and}\quad I_{j_k} = \left[ {2^{j_k-1}\over t2^k}, 1 \right],
		\end{equation}
		where $j,j_k\in\mathbb{Z}$, $j\leq j_k$ and $j_k$ is the largest integer such that $2^{j_k-1} \leq t2^k $. Thus, on each dyadic interval $I_j$, the frequency $st2^k$ is between $2^{j-1}$ and $2^{j}$. Therefore, we deduce that
		\begin{equation}
			\begin{aligned}
				\int_{I_j} (st)^{-D \left( 1 - \frac 1 p \right) }
				\left\| \Delta_{st2^k}^v g \right\|_{L^1_x L^p_v} ds
				& =
				\int_{I_j} (st)^{-D \left( 1 - \frac 1 p \right) }
				\left\| \Delta_{\left[2^{j-2},2^{j+1}\right]}^v
				\Delta_{st2^k}^v g \right\|_{L^1_x L^p_v} ds \\
				& \leq C \left|I_j\right| 2^{ -(j-k)D \left( 1 - \frac 1 p \right) }
				\left\| \Delta_{\left[2^{j-2},2^{j+1}\right]}^v g \right\|_{L^1_x L^p_v} \\
				& =\frac Ct
				2^{ \left( j - k \right) \left( 1 - D \left( 1 - \frac{1}{p} \right) \right) }
				\left\| \Delta_{\left[2^{j-2},2^{j+1}\right]}^v g \right\|_{L^1_x L^p_v}.
			\end{aligned}
		\end{equation}
		Thus, on the whole, we have obtained that
		\begin{equation}
			\begin{aligned}
				\left\|B_{2^k}^tg\right\|_{L^p_x}
				& \leq C\int_0^1 (st)^{-D\left(1-\frac 1p\right)} \left\| \Delta_{st2^k}^v g \right\|_{L^1_xL^p_v} ds\\
				& = C\sum_{j=-\infty}^{j_k} \int_{I_j} (st)^{-D\left(1-\frac 1p\right)} \left\| \Delta_{st2^k}^v g \right\|_{L^1_xL^p_v} ds \\
				& \leq {C\over t}
				\sum_{j=-\infty}^{j_k+1}
				2^{ \left( j - k \right) \left( 1 - D \left( 1 - \frac{1}{p} \right) \right) }
				\left\| \Delta_{2^j}^v g \right\|_{L^1_x L^p_v}
				\\
				& \leq C
				t^{ - \beta - D \left( 1 - \frac{1}{p} \right) }2^{-k\beta}
				\sum_{j=-\infty}^{j_k+1}
				2^{ - \left( j_k+1-j \right) \left( 1 - \beta - D \left( 1 - \frac{1}{p} \right) \right) }2^{j\beta}
				\left\| \Delta_{2^j}^v g \right\|_{L^1_x L^p_v}.
			\end{aligned}
		\end{equation}

		It follows that, if $1 - \beta - D\left( 1 - \frac 1 p \right)>0$, a further application of H\"older's inequality to the preceding estimate yields
		\begin{equation}
			\begin{aligned}
				&\left\|B_{2^k}^tg(x)\right\|_{L^p_x}\\
				& \leq C
				t^{ - \beta - D \left( 1 - \frac{1}{p} \right) }
				2^{-k\beta}
				\left\| \left\{ 2^{- j \left( 1 - \beta - D\left( 1 - \frac 1 p \right) \right) } \right\}_{j=0}^\infty \right\|_{\ell^{q'}}
				\left\| g(x,v) \right\|_{\widetilde L^1\left( \mathbb{R}^D_x; \dot B^\beta_{p,q}\left(\mathbb{R}^D_v\right)\right)},
			\end{aligned}
		\end{equation}
		which concludes the proof of the lemma.
	\end{proof}

	\begin{proof}[Proof of Theorem \ref{pre averaging lemma p h}]
		According to Proposition \ref{crucial}, we begin with the following dyadic interpolation formula, for each $k\in\mathbb{N}$,
		\begin{equation}
			\Delta_{2^k}^x\int_{\mathbb{R}^D} f(x,v) dv
			= A_{2^k}^t f (x) + t B_{2^k}^t g (x).
		\end{equation}

		Then, a direct application of Lemma \ref{operators norms dispersive h} yields the estimate, for every $t>0$,
		\begin{equation}\label{high low frequencies h}
			\begin{aligned}
				&\left\|\Delta_{2^k}^x\int_{\mathbb{R}^D} f(x,v) dv\right\|_{L^p_x}\\
				& \leq
				{C\over t^{\alpha+D\left(1-\frac 1p\right)} 2^{k\alpha}} \left(t2^k\right)^\alpha\left\| \Delta_{t2^k}^v f \right\|_{L^1_xL^p_v} \\
				&+
				{C\over t^{\beta-1+D\left(1-\frac 1p\right)} 2^{k\beta}}
				\sum_{j=-\infty}^{j_k+1}
				2^{ -\left( j_k + 1 - j \right) \left( 1 - \beta - D \left( 1 - \frac{1}{p} \right) \right) }
				2^{j\beta} \left\| \Delta_{2^j}^v g \right\|_{L^1_x L^p_v},
			\end{aligned}
		\end{equation}
		where $j_k\in\mathbb{Z}$ is the largest integer such that $2^{j_k-1} \leq t2^k $ and $C>0$ is independent of $t$ and $2^k$.

		Next, optimizing in $t$ for each value of $k$, we fix the interpolation parameter $t$ as
		\begin{equation}
			t_k= \lambda^\frac{1}{1+\alpha-\beta}
			2^{-k\frac{\alpha-\beta}{1+\alpha-\beta}},
		\end{equation}
		where $\lambda=\frac{\left\| f \right\|_{\widetilde L^1\left(\mathbb{R}^D_x;\dot B^\alpha_{p,q}\left(\mathbb{R}^D_v\right)\right)}}{\left\| g \right\|_{\widetilde L^1\left(\mathbb{R}^D_x;\dot B^\beta_{p,q}\left(\mathbb{R}^D_v\right)\right)}}$ (cf. proof of Theorem \ref{pre averaging lemma p} for a heuristic explanation on how to choose this optimal parameter). Note that $1+\alpha-\beta>0$ since $\alpha>-D\left(1-\frac 1p\right)$. Thus, setting $t=t_k$ in \eqref{high low frequencies h}, denoting $ s = \frac{\alpha+D \left( 1 - \frac 1 p \right)}{1+\alpha-\beta}  - D \left( 1 - \frac 1 p \right)  $, noticing that $j_k=\left[1+{\log \lambda\over ( 1+\alpha-\beta )\log 2}+{k\over 1+\alpha-\beta}\right]$ and summing over $k$, yields that
		\begin{equation}\label{besov averaging p h}
			\begin{aligned}
				& \left\| \left\{ 2^{ k s } \left\| \Delta_{2^k}^x \int_{\mathbb{R}^D} f(x,v) dv \right\|_{L^p_x} \right\}_{k=-\infty}^\infty \right\|_{\ell^q} \\
				& \leq
				\frac{C}{\lambda^{\frac{\alpha+D\left(1-\frac 1p\right)}{1+\alpha-\beta}}} \left\| \left\{ \left(t_k2^k\right)^\alpha \left\| \Delta_{t_k2^k}^v f \right\|_{L^1_x L^p_v} \right\}_{k=-\infty}^\infty \right\|_{\ell^q}\\
				& + \frac{C}{\lambda^{\frac{\beta-1+D\left(1-\frac 1p\right)}{1+\alpha-\beta}}} \left\| \left\{ \sum_{j=-\infty}^{ j_k + 1 } 2^{-\left(j_k + 1 - j\right) \left( 1 - \beta - D\left( 1 - \frac 1 p \right) \right) } 2^{j\beta}
				\left\| \Delta_{2^j }^v g \right\|_{L^1_x L^p_v} \right\}_{k=-\infty}^\infty \right\|_{\ell^q}.
			\end{aligned}
		\end{equation}
		Then, it is readily seen that the first term in the right-hand side above is controlled by $\left\| f \right\|_{\widetilde L^1\left(\mathbb{R}^D_x;\dot B^\alpha_{p,q}\left(\mathbb{R}^D_v\right)\right)}$, for $t_k2^k=\left(\lambda 2^{k}\right)^{1\over 1+\alpha-\beta}\rightarrow\pm\infty$ as $k\rightarrow\pm\infty$. Furthermore, writing $ a_j =  2^{ j \beta } \left\| \Delta_{2^j }^v g \right\|_{L^1_x L^p_v} $ and $ b_j = 2^{ -j \left( 1 -\beta - D \left( 1 - \frac 1 p \right) \right)}  \mathbb{1}_{ \left\{ j \geq 0 \right\} } $, for all $j\in\mathbb{Z}$, we see that $a=\left\{a_j\right\}_{j\in\mathbb{Z}} \in \ell^q$, $b=\left\{b_j\right\}_{j\in\mathbb{Z}} \in \ell^1$ and that
		\begin{equation}
			\begin{aligned}
			& \left\| \left\{ \sum_{j=-\infty}^{ j_k + 1 } 2^{-\left(j_k + 1 - j\right) \left( 1 - \beta - D\left( 1 - \frac 1 p \right) \right) } 2^{j\beta}
			\left\| \Delta_{2^j }^v g \right\|_{L^1_x L^p_v}\right\}_{k=-\infty}^\infty \right\|_{\ell^q} \\
			& =  \left\| \left\{\left( a*b \right)_{j_k+1}\right\}_{k=-\infty}^\infty \right\|_{\ell^q}
			\leq  \left\| a \right\|_{\ell^q} \left\| b \right\|_{\ell^1} 
			\leq  C
			\left\| g \right\|_{\widetilde L^1\left(\mathbb{R}^D_x;\dot B^\beta_{p,q}\left(\mathbb{R}^D_v\right)\right)}.
			\end{aligned}
		\end{equation}
		Thus, on the whole, combining the preceding estimates with \eqref{besov averaging p h}, we deduce that
		\begin{equation}
			\begin{aligned}
				& \left\| \left\{ 2^{ k s } \left\| \Delta_{2^k}^x \int_{\mathbb{R}^D} f(x,v) dv \right\|_{L^p_x} \right\}_{k=-\infty}^\infty \right\|_{\ell^q} \\
				& \hspace{40mm} \leq 
				C \left\| f \right\|_{\widetilde L^1\left(\mathbb{R}^D_x;\dot B^\alpha_{p,q}\left(\mathbb{R}^D_v\right)\right)}^{\frac{1-\beta-D\left(1-\frac 1p\right)}{1+\alpha-\beta}}
				\times
				\left\| g \right\|_{\widetilde L^1\left(\mathbb{R}^D_x;\dot B^\beta_{p,q}\left(\mathbb{R}^D_v\right)\right)}^{\frac{\alpha+D\left(1-\frac 1p\right)}{1+\alpha-\beta}},
			\end{aligned}
		\end{equation}
		which concludes the proof of the theorem.
	\end{proof}

	\begin{proof}[Proof of Theorem \ref{pre averaging lemma p 2 h}]
		As in the proof of Theorem \ref{pre averaging lemma p h}, we begin with the dyadic decomposition provided by Proposition \ref{crucial}, for each $k\in\mathbb{N}$,
		\begin{equation}
				\Delta_{2^k}^x\int_{\mathbb{R}^D} f(x,v) dv
				= A_{2^k}^t f (x) + t B_{2^k}^t g (x),
		\end{equation}
		and we utilize Lemma \ref{operators norms dispersive h} to obtain, in virtue of property \eqref{crucial 3} from Proposition \ref{crucial}, that
		\begin{equation}
			\begin{aligned}
				\left\|
				\Delta_{2^k}^x\int_{\mathbb{R}^D} f(x,v) dv
				\right\|_{L^p_x}
				& \leq C
				{1\over t^{\alpha+D\left(1-\frac 1p\right)} 2^{k\alpha}} \left\| \Delta_{\left[2^{k-1},2^{k+1}\right]}^x f \right\|_{\widetilde L^1\left( \mathbb{R}^D_x; \dot B^\alpha_{p,q}\left(\mathbb{R}^D_v\right)\right)}\\
				& + 
				C
				{t^{1-\beta-D\left(1-\frac 1p\right)}\over 2^{k\beta}}
				\left\| \Delta_{\left[2^{k-1},2^{k+1}\right]}^x g \right\|_{\widetilde L^1\left( \mathbb{R}^D_x; \dot B^\beta_{p,q}\left(\mathbb{R}^D_v\right)\right)}.
			\end{aligned}
		\end{equation}

		Next, optimizing in $t$ for each value of $k$, we fix the interpolation parameter $t$ as
		\begin{equation}
			t_k= \lambda^\frac{1}{1+\alpha-\beta}
			2^{-k\frac{(\alpha-\beta)+(a-b)}{1+\alpha-\beta}},
		\end{equation}
		where $\lambda=\frac{\left\| f \right\|_{\dot B^{a,\alpha}_{1,p,q}\left(\mathbb{R}^D_x\times\mathbb{R}^D_v\right)}}{\left\| g \right\|_{\dot B^{b,\beta}_{1,p,q}\left(\mathbb{R}^D_x\times\mathbb{R}^D_v\right)}}$ (cf. proof of Theorem \ref{pre averaging lemma p} for a heuristic explanation on how to choose this optimal parameter). Note that $1+\alpha-\beta>0$ since $\alpha>-D\left(1-\frac 1p\right)$ and that this choice is independent of $1\leq p, q \leq \infty$. Therefore, denoting $ s = \left(1+b-a\right)\frac{\alpha+ D \left( 1 - \frac 1 p \right)}{1+\alpha-\beta} + a - D \left( 1 - \frac 1 p \right) $, we find that
		\begin{equation}
			\begin{aligned}
				2^{ks} \left\|
				\Delta_{2^k}^x\int_{\mathbb{R}^D} f(x,v) dv
				\right\|_{L^p_x}
				& \leq \frac{C}{\lambda^{\frac{\alpha+D\left(1-\frac 1p\right)}{1+\alpha-\beta}}}
				2^{ka} \left\| \Delta_{\left[2^{k-1},2^{k+1}\right]}^x f \right\|_{\widetilde L^1\left( \mathbb{R}^D_x; \dot B^\alpha_{p,q}\left(\mathbb{R}^D_v\right)\right)}\\
				& + \frac{C}{\lambda^{\frac{\beta-1+D\left(1-\frac 1p\right)}{1+\alpha-\beta}}}
				2^{kb}
				\left\| \Delta_{\left[2^{k-1},2^{k+1}\right]}^x g \right\|_{\widetilde L^1\left( \mathbb{R}^D_x; \dot B^\beta_{p,q}\left(\mathbb{R}^D_v\right)\right)}.
			\end{aligned}
		\end{equation}

		Finally, summing over $k$, we obtain
		\begin{equation}
			\begin{aligned}
				& \left\| \left\{
				2^{ks} \left\|
				\Delta_{2^k}^x\int_{\mathbb{R}^D} f(x,v) dv
				\right\|_{L^p_x} \right\}_{k=-\infty}^\infty
				\right\|_{\ell^q} \\
				&\hspace{30mm} \leq \frac{C}{\lambda^{\frac{\alpha+D\left(1-\frac 1p\right)}{1+\alpha-\beta}}}
				\left\| \left\{
				2^{ka} \left\| \Delta_{2^k}^x f \right\|_{\widetilde L^1\left( \mathbb{R}^D_x; \dot B^\alpha_{p,q}\left(\mathbb{R}^D_v\right)\right)} \right\}_{k=-\infty}^\infty
				\right\|_{\ell^q} \\
				&\hspace{30mm} +
				\frac{C}{\lambda^{\frac{\beta-1+D\left(1-\frac 1p\right)}{1+\alpha-\beta}}}
				\left\|\left\{
				2^{kb} \left\| \Delta_{2^k}^x g \right\|_{\widetilde L^1\left( \mathbb{R}^D_x; \dot B^\beta_{p,q}\left(\mathbb{R}^D_v\right)\right)}
				\right\}_{k=-\infty}^\infty\right\|_{\ell^q}\\
				&\hspace{30mm}=C
				\left\| f \right\|_{\dot B^{a,\alpha}_{1,p,q}\left(\mathbb{R}^D_x\times\mathbb{R}^D_v\right)}^{\frac{1-\beta-D\left(1-\frac 1p\right)}{1+\alpha-\beta}}
				\times
				\left\| g \right\|_{\dot B^{b,\beta}_{1,p,q}\left(\mathbb{R}^D_x\times\mathbb{R}^D_v\right)}^{\frac{\alpha+D\left(1-\frac 1p\right)}{1+\alpha-\beta}},
			\end{aligned}
		\end{equation}
		which concludes the proof of the theorem.
	\end{proof}

\subsection{The classical $L^2_xL^2_v$ case revisited}

We provide now the proofs for the classical Hilbertian case of velocity averaging.

\begin{lem}\label{operators norms}
	Let $\phi(v)\in C_0^\infty\left(\mathbb{R}^D\right)$. For every $1\leq q\leq \infty$, $\alpha,\beta\in\mathbb{R}$, $\beta\neq \frac 12$, $k\in\mathbb{N}$ and $t\geq 2^{-k}$, it holds that
	\begin{equation}
		\begin{aligned}
			\left\|A_0^1(f\phi)\right\|_{L^2_x}
			&\leq
			C
			\left\| \Delta_0^x f \right\|_{\widetilde L^2 \left( \mathbb{R}^D_x ; B^{\alpha}_{2,q} \left(\mathbb{R}^D_v\right) \right)}
			\leq
			C
			\left\| f \right\|_{\widetilde L^2 \left( \mathbb{R}^D_x ; B^{\alpha}_{2,q} \left(\mathbb{R}^D_v\right) \right)},\\
			\left\|A_{2^k}^t(f\phi)\right\|_{L^2_x}
			&\leq
			\frac{C}{\left(t2^k\right)^{\alpha+\frac 12}}
			\left\| \Delta_{2^k}^x f \right\|_{\widetilde L^2 \left( \mathbb{R}^D_x ; B^{\alpha}_{2,q} \left(\mathbb{R}^D_v\right) \right)}
			\leq
			\frac{C}{\left(t2^k\right)^{\alpha+\frac 12}}
			\left\| f \right\|_{\widetilde L^2 \left( \mathbb{R}^D_x ; B^{\alpha}_{2,q} \left(\mathbb{R}^D_v\right) \right)},
		\end{aligned}
	\end{equation}
	and
	\begin{equation}
		\begin{aligned}
			\left\|B_0^1(g\phi)\right\|_{L^2_x}
			&\leq
			C
			\left\| \Delta_0^x g \right\|_{\widetilde L^2 \left( \mathbb{R}^D_x ; B^{\beta}_{2,q} \left(\mathbb{R}^D_v\right) \right)}
			\leq
			C
			\left\| g \right\|_{\widetilde L^2 \left( \mathbb{R}^D_x ; B^{\beta}_{2,q} \left(\mathbb{R}^D_v\right) \right)},\\
			\left\|B_{2^k}^t(g\phi)\right\|_{L^2_x}
			&\leq
			\frac{C}{\left(t2^k\right)^{\gamma+\frac 12}}
			\left\| \Delta_{2^k}^x g \right\|_{\widetilde L^2 \left( \mathbb{R}^D_x ; B^{\beta}_{2,q} \left(\mathbb{R}^D_v\right) \right)}
			\leq
			\frac{C}{\left(t2^k\right)^{\gamma+\frac 12}}
			\left\| g \right\|_{\widetilde L^2 \left( \mathbb{R}^D_x ; B^{\beta}_{2,q} \left(\mathbb{R}^D_v\right) \right)},
		\end{aligned}
	\end{equation}
	where $\gamma=\min\left\{\beta,\frac 12\right\}$ and $C>0$ is independent of $t$ and $2^k$.
	
	Furthermore, if $\beta=\frac 12$, then, it holds that
	\begin{equation}
		\begin{aligned}
			\left\|B_0^1(g\phi)\right\|_{L^2_x}
			&\leq
			C
			\left\| \Delta_0^x g \right\|_{\widetilde L^2 \left( \mathbb{R}^D_x ; B^{\beta}_{2,q} \left(\mathbb{R}^D_v\right) \right)}
			\leq
			C
			\left\| g \right\|_{\widetilde L^2 \left( \mathbb{R}^D_x ; B^{\beta}_{2,q} \left(\mathbb{R}^D_v\right) \right)},\\
			\left\|B_{2^k}^t(g\phi)\right\|_{L^2_x}
			&\leq
			\frac{C}{t2^k}\log\left(1+t2^k\right)^\frac{1}{q'}
			\left\| \Delta_{2^k}^x g \right\|_{\widetilde L^2 \left( \mathbb{R}^D_x ; B^{\beta}_{2,q} \left(\mathbb{R}^D_v\right) \right)}\\
			&\leq
			\frac{C}{t2^k}\log\left(1+t2^k\right)^\frac{1}{q'}
			\left\| g \right\|_{\widetilde L^2 \left( \mathbb{R}^D_x ; B^{\beta}_{2,q} \left(\mathbb{R}^D_v\right) \right)}.
		\end{aligned}
	\end{equation}
\end{lem}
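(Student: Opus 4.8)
The plan is to exploit the Hilbertian structure directly, through Plancherel's theorem in $x$, using the frequency-side representation \eqref{crucial 1} of the operators $A_\delta^t$ and $B_\delta^t$ rather than the physical-space dispersive estimate of Lemma \ref{operators norms dispersive}. We keep the standing cutoff $\rho\in\mathcal S(\mathbb R)$ fixed at the beginning of Section \ref{proofs}; the only features that matter here are that $\widehat\rho$ is supported in $[-2,-1]$ (equivalently $\tilde\rho$ in $[1,2]$) and that, as one checks by a direct computation, the symbol $\tau(s)=\frac{1-\rho(s)}{is}$ then has $\widehat\tau$ supported in $[-2,0]$. First I would apply \eqref{crucial 1} together with Plancherel in $x$ (legitimate since $p=2$): for $\delta=t2^k$ this gives
\begin{equation*}
  \left\|A_{2^k}^t(f\phi)\right\|_{L^2_x}^2 \sim
  \int_{\mathbb R^D}\bigl|\varphi_{2^k}(\eta)\bigr|^2
  \left|\int_{\mathbb R^D}\rho(t\eta\cdot v)\,\widehat f(\eta,v)\,\phi(v)\,dv\right|^2 d\eta ,
\end{equation*}
and similarly for $B_{2^k}^t(g\phi)$ with $\tau$ in place of $\rho$, the cases $\delta=0$, $t=1$ being identical with $\psi$ in place of $\varphi_{2^k}$. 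Decomposing $f=\sum_{j\in\DD}\Delta_j^v f$ in velocity and using self-adjointness of the Littlewood--Paley projections in $v$, the inner integral equals $\sum_{j\in\DD}\langle\widehat{\Delta_j^v f}(\eta,\cdot),\,\overline{\tilde\Delta_j^v[\rho(t\eta\cdot v)\phi(v)]}\rangle_{L^2_v}$, where $\tilde\Delta_j^v$ is a fattened $v$-projection; Cauchy--Schwarz in $v$, the triangle inequality and Plancherel once more then yield
\begin{equation*}
  \left\|A_{2^k}^t(f\phi)\right\|_{L^2_x}\lesssim
  \sum_{j\in\DD}\Bigl(\sup_{|\eta|\sim 2^k}\bigl\|\tilde\Delta_j^v[\rho(t\eta\cdot v)\phi(v)]\bigr\|_{L^2_v}\Bigr)
  \bigl\|\Delta_{2^k}^x\Delta_j^v f\bigr\|_{L^2_{x,v}},
\end{equation*}
and the analogous bound for $B$.

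The core of the argument is the estimate of the two symbol norms $\|\tilde\Delta_j^v[\rho(t\eta\cdot v)\phi(v)]\|_{L^2_v}$ and $\|\tilde\Delta_j^v[\tau(t\eta\cdot v)\phi(v)]\|_{L^2_v}$ for $|\eta|\sim 2^k$; write $\mu:=t2^k\ge 1$. The dispersive gain is the elementary change-of-variables bound (already used in the proof of Lemma \ref{operators norms dispersive}) $\|\rho(t\eta\cdot v)\phi(v)\|_{L^2_v}\lesssim\mu^{-1/2}$. Since $\widehat\rho$ is supported in $[-2,-1]$, the function $v\mapsto\rho(t\eta\cdot v)$ has $v$-spectrum contained in the segment $\{\,s\,\eta/|\eta|:\ |s|\in[\mu,2\mu]\,\}$, so $\rho(t\eta\cdot v)\phi(v)$ has $v$-spectrum within $O(1)$ of that segment; hence $\|\tilde\Delta_j^v[\rho(t\eta\cdot v)\phi(v)]\|_{L^2_v}\lesssim_N\mu^{-1/2}\bigl(1+|j-\mu|/\mu\bigr)^{-N}$ for every $N$, so it is negligible unless $j\sim\mu$. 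For $\tau$, the support of $\widehat\tau$ being $[-2,0]$, the relevant segment is $\{\,s\,\eta/|\eta|:\ |s|\le 2\mu\,\}$ and the $v$-Fourier transform of $\tau(t\eta\cdot v)$ has profile $O(\mu^{-1})$ on it; this gives $\|\tilde\Delta_j^v[\tau(t\eta\cdot v)\phi(v)]\|_{L^2_v}\lesssim_N(1\vee j)^{1/2}\mu^{-1}$ when $j\lesssim\mu$, and $\lesssim_N\mu^{-1/2}(j/\mu)^{-N}$ when $j\gtrsim\mu$. (For $\delta=0$, $t=1$ one has $\mu=|\eta|\le 1$ and only finitely many $v$-blocks survive.)

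Finally I would feed these into the two sums and close them by H\"older's inequality in $j$, writing $\|\Delta_{2^k}^x\Delta_j^v f\|_{L^2_{x,v}}=(1\vee j)^{-\alpha}d_j$ with $\|\{d_j\}_{j\in\DD}\|_{\ell^q}=\|\Delta_{2^k}^x f\|_{\widetilde L^2(B^\alpha_{2,q})}$ (and likewise for $g$, $\beta$). For $A$, the sequence $\{(1\vee j)^{-\alpha}\sup_\eta\|\tilde\Delta_j^v[\rho(t\eta\cdot v)\phi(v)]\|_{L^2_v}\}_{j\in\DD}$ is concentrated on $j\sim\mu$ and has $\ell^{q'}$-norm $\lesssim\mu^{-\alpha-1/2}$, which gives the stated bound for $A_{2^k}^t(f\phi)$, with no endpoint phenomenon. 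For $B$, the corresponding $\ell^{q'}$-norm is controlled by $\bigl(\sum_{j\le\mu}\bigl((1\vee j)^{1/2-\beta}\mu^{-1}\bigr)^{q'}\bigr)^{1/q'}$ together with a rapidly decaying tail coming from $j\gtrsim\mu$: for $\beta<\tfrac12$ the geometric sum is dominated by $j\sim\mu$ and is comparable to $\mu^{-\beta-1/2}$; for $\beta>\tfrac12$ it is dominated by $j=O(1)$ and is comparable to $\mu^{-1}$; for $\beta=\tfrac12$ all $O(\log\mu)$ terms are comparable to $\mu^{-1}$, so the sum is comparable to $\mu^{-1}(\log\mu)^{1/q'}$. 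These are exactly the three asserted bounds, with $\gamma=\min\{\beta,\tfrac12\}$, and the hypothesis $\beta\neq\tfrac12$ in the first part is precisely what rules out the logarithm. The estimates with $\|f\|_{\widetilde L^2(B^\alpha_{2,q})}$ (resp.\ $\|g\|_{\widetilde L^2(B^\beta_{2,q})}$) in place of $\|\Delta_{2^k}^x f\|_{\widetilde L^2(B^\alpha_{2,q})}$ follow since $\Delta_{2^k}^x$ has $L^2$-operator norm at most one, and the $\delta=0$ estimates go through identically, the geometric decay in $j$ making every series converge. \textbf{The main obstacle} is the middle paragraph: locating the velocity-frequency profile of the multiplier symbols, and in particular the contrast between $\rho$, which localizes to $j\sim\mu$ so that no logarithm can arise for $A$, and $\tau$, which spreads over all $j\lesssim\mu$ and thereby produces the logarithmic loss at the endpoint $\beta=\tfrac12$.
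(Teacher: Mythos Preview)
Your approach is correct and essentially coincides with the paper's. Both proofs use Plancherel in $x$ together with the frequency-side representation \eqref{crucial 1}, and both reduce to estimating the dual $B^{-\alpha}_{2,q'}$-norm in $v$ of the symbol $\rho(t\eta\cdot v)\phi(v)$ (resp.\ $\tau(t\eta\cdot v)\phi(v)$); your step ``H\"older in $j$ against $\{(1\vee j)^{-\alpha}c_j\}$ in $\ell^{q'}$'' is exactly this duality. The paper organizes the symbol computation into two separate lemmas (Lemmas \ref{lem lambda strip} and \ref{lem lambda strip low}), where the cutoff $\phi$ is replaced by a model cutoff $\chi$ depending only on $|v-(v\cdot\eta/|\eta|)\eta/|\eta||$ (high frequencies) or on $|v|$ (low frequencies), so that the $v$-Fourier transform factorizes cleanly after rotating $\eta$ to $e_D$; the passage from $f\phi$ back to $f$ in the Besov norm is then relegated to a paradifferential lemma (Lemma \ref{paradifferential}). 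Your version keeps the general $\phi$ throughout and reads off the block norms $\|\tilde\Delta_j^v[\rho(t\eta\cdot v)\phi(v)]\|_{L^2_v}$ directly from the Fourier-side convolution with $\widehat\phi$, which is more compact but less modular. The key dichotomy you identify --- that $\widehat\rho$ is supported away from the origin so the $v$-spectrum concentrates at $j\sim\mu$, whereas $\widehat\tau$ contains the origin so the $v$-spectrum spreads over all $j\lesssim\mu$ --- is precisely the content of \eqref{lambda strip support} versus \eqref{lambda strip}--\eqref{lambda strip log} in the paper.
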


The above lemma will be obtained as a consequence of the following Lemmas \ref{lem lambda strip} and \ref{lem lambda strip low}.

\begin{lem}\label{lem lambda strip}
	For any $\chi\in\mathcal{S}\left(\mathbb{R}\right)$ and $\rho\in C^\infty\left(\mathbb{R}\right)$ such that $\chi\equiv 1$ in a neighborhood of the origin and $\hat\rho$ is compactly supported and bounded pointwise, and for each $\lambda\geq 1$, $R>0$ and $t\geq\frac 1R$, let $h_1(\eta,v)$ and $h_2(\eta,v)$ be defined by
	\begin{equation}
		\begin{aligned}
			h_1(\eta,v) & =
			\chi\left( \left|v-\frac{v\cdot\eta}{|\eta|}\frac{\eta}{|\eta|}\right| \right)
			\rho\left(\lambda\frac{v\cdot\eta}{|\eta|}\right),\\
			h_2(\eta,v) & =
			\mathbb{1}_{\left\{\frac R2 \leq |\eta| \leq 2R\right\}} \chi\left( \left|v-\frac{v\cdot\eta}{|\eta|}\frac{\eta}{|\eta|}\right| \right)
			\rho\left(t v\cdot\eta \right),
		\end{aligned}
	\end{equation}
	for all $\eta,v\in\mathbb{R}^D$.
	
	Then, for each $k\in\mathbb{N}$, $\left\|\Delta_0^v h_1\right\|_{L^2_v}$ and $\left\|\Delta_{2^k}^v h_1\right\|_{L^2_v}$ are independent of $\eta$ and, for every $1\leq q\leq\infty$ and $\alpha\neq \frac 12$, it holds that
	\begin{equation}\label{lambda strip}
		\begin{gathered}
			\left\|h_1\right\|_{B^{-\alpha}_{2,q}\left(\mathbb{R}^D_v\right)}
			\leq \frac{C}{\lambda^{\gamma+\frac 12}},\\
			\left\|h_2\right\|_{\widetilde L^\infty \left( \mathbb{R}^D_\eta ; B^{-\alpha}_{2,q} \left(\mathbb{R}^D_v\right) \right)}
			\leq \frac{C}{\left(tR\right)^{\gamma+\frac 12}},
		\end{gathered}
	\end{equation}
	where $\gamma=\min\left\{\alpha,\frac 12\right\}$ and the constant $C>0$ is independent of $\lambda$, $t$ and $R$.
	
	Furthermore, if $\alpha=\frac 12$, then it holds that
	\begin{equation}\label{lambda strip log}
		\begin{gathered}
			\left\|h_1\right\|_{B^{-\alpha}_{2,q}\left(\mathbb{R}^D_v\right)}
			\leq \frac{C}{\lambda}\log\left(1+\lambda\right)^\frac{1}{q'},\\
			\left\|h_2\right\|_{\widetilde L^\infty \left( \mathbb{R}^D_\eta ; B^{-\alpha}_{2,q} \left(\mathbb{R}^D_v\right) \right)}
			\leq \frac{C}{tR}\log\left(1+tR\right)^\frac{1}{q'}.
		\end{gathered}
	\end{equation}
	
	Finally, if the support of $\hat\rho$ doesn't contain the origin, then, for every $\alpha\in\mathbb{R}$, it holds that
	\begin{equation}\label{lambda strip support}
		\begin{gathered}
			\left\|h_1\right\|_{B^{-\alpha}_{2,q}\left(\mathbb{R}^D_v\right)}
			\leq \frac{C}{\lambda^{\alpha+\frac 12}},\\
			\left\|h_2\right\|_{\widetilde L^\infty \left( \mathbb{R}^D_\eta ; B^{-\alpha}_{2,q} \left(\mathbb{R}^D_v\right) \right)}
			\leq \frac{C}{\left(tR\right)^{\alpha+\frac 12}}.
		\end{gathered}
	\end{equation}
\end{lem}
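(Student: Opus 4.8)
\emph{Strategy.} The key point is that, after decomposing $v$ along the direction $e:=\eta/|\eta|$ as $v=v_\parallel e+v_\perp$ with $v_\parallel\in\mathbb{R}$ and $v_\perp\in e^\perp\cong\mathbb{R}^{D-1}$, both $h_1$ and $h_2$ become \emph{products} of a one–dimensional factor carrying the dilation scale and a $(D-1)$–dimensional Schwartz factor. Indeed, since $v\cdot\eta=|\eta|v_\parallel$,
\begin{equation}
h_1(\eta,v)=\chi(|v_\perp|)\,\rho(\lambda v_\parallel),\qquad
h_2(\eta,v)=\mathbb{1}_{\{R/2\le|\eta|\le 2R\}}\,\chi(|v_\perp|)\,\rho\big((t|\eta|)v_\parallel\big),
\end{equation}
so that for each admissible $\eta$ the slice $h_2(\eta,\cdot)$ is precisely $h_1$ with $\lambda$ replaced by $t|\eta|\in[tR/2,2tR]$. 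As $v\mapsto(v_\parallel,v_\perp)$ is an orthogonal change of coordinates depending only on the direction of $\eta$, and the Littlewood--Paley projections $\Delta_0^v,\Delta_{2^k}^v$ and the $L^2_v$ norm commute with rotations, the quantities $\|\Delta_0^v h_1\|_{L^2_v}$ and $\|\Delta_{2^k}^v h_1\|_{L^2_v}$ are independent of $\eta$. Moreover, since the bounds below are decreasing in the scale parameter, the $\widetilde L^\infty_\eta$ estimate for $h_2$ follows by taking the supremum over the annulus $|\eta|\sim R$, reducing everything to the $h_1$ estimate with scale $\sim tR$. It therefore suffices to prove the three asserted bounds on $\|h_1\|_{B^{-\alpha}_{2,q}(\mathbb{R}^D_v)}$.

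\emph{The dyadic estimate.} First I would compute the Fourier transform in $v$: writing $\xi=\xi_\parallel e+\xi_\perp$, the product structure gives
\begin{equation}
\widehat{h_1}(\xi)=c_D\,\frac1\lambda\,\widehat\rho\!\left(\frac{\xi_\parallel}{\lambda}\right)\Psi(\xi_\perp),\qquad
\Psi(\zeta):=\int_{\mathbb{R}^{D-1}}e^{-i\zeta\cdot w}\chi(|w|)\,dw,
\end{equation}
where $\Psi\in\mathcal{S}(\mathbb{R}^{D-1})$ since $\chi\in\mathcal{S}$. By Plancherel and the frequency support of the dyadic block,
\begin{equation}
\left\|\Delta_{2^k}^v h_1\right\|_{L^2_v}^2\sim\frac1{\lambda^2}\int_{2^{k-1}\le|\xi|\le 2^{k+1}}\left|\widehat\rho\!\left(\tfrac{\xi_\parallel}{\lambda}\right)\right|^2|\Psi(\xi_\perp)|^2\,d\xi_\parallel\,d\xi_\perp .
\end{equation}
The technical heart is to reconcile the \emph{isotropic} shell $|\xi|\sim 2^k$ with the \emph{anisotropic} profile: one splits the shell into the slab $\{|\xi_\perp|\lesssim 1\}$, where $|\xi_\parallel|\sim 2^k$ and the $\xi_\perp$–integral of $|\Psi|^2$ is a finite constant, and its complement, where $|\xi_\perp|\sim 2^k$ and the Schwartz decay of $\Psi$ contributes an error decaying faster than any power of $2^{-k}$. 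After substituting $u=\xi_\parallel/\lambda$ this yields
\begin{equation}
\left\|\Delta_{2^k}^v h_1\right\|_{L^2_v}\le\frac{C}{\lambda^{1/2}}\left(\int_{|u|\sim 2^k/\lambda}|\widehat\rho(u)|^2\,du\right)^{1/2}\le\frac{C\,2^{k/2}}{\lambda}\qquad(1\le 2^k\lesssim\lambda),
\end{equation}
together with $\|\Delta_0^v h_1\|_{L^2_v}\le C/\lambda$ and, because $\widehat\rho$ is compactly supported, a tail $\|\Delta_{2^k}^v h_1\|_{L^2_v}$ decaying faster than any power of $2^{-k}$ once $2^k\gtrsim\lambda$. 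If moreover the support of $\widehat\rho$ avoids the origin, then for $2^k\lesssim\lambda$ the shell $\{|u|\sim 2^k/\lambda\}$ lies where $\widehat\rho$ vanishes, so only the rapidly decaying regime survives for every $k$.

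\emph{Assembling the Besov norm.} With $2^K\sim\lambda$ (the size of $\operatorname{supp}\widehat\rho$ absorbed) and $\gamma=\min\{\alpha,\tfrac12\}$, the above gives
\begin{equation}
\|h_1\|_{B^{-\alpha}_{2,q}}^q\le\frac{C}{\lambda^q}\sum_{k=0}^{K}2^{kq(\frac12-\alpha)}+C_N\lambda^{-N}.
\end{equation}
When $\alpha<\tfrac12$ the geometric sum is governed by its top term $\sim\lambda^{q(\frac12-\alpha)}$, so $\|h_1\|_{B^{-\alpha}_{2,q}}\lesssim\lambda^{-(\alpha+\frac12)}=\lambda^{-(\gamma+\frac12)}$; when $\alpha>\tfrac12$ the sum converges, giving $\lesssim\lambda^{-1}=\lambda^{-(\gamma+\frac12)}$; and when $\alpha=\tfrac12$ the summand is constant over the $K+1\sim\log(1+\lambda)$ critical frequencies, so Hölder's inequality over that range produces the logarithmic loss $\lambda^{-1}\log(1+\lambda)^{1/q'}$. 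The low–frequency contribution $\lambda^{-q}$ is always absorbed because $\gamma+\tfrac12\le 1$. In the vanishing–at–origin case the top term never dominates and one obtains $\lambda^{-(\alpha+\frac12)}$ for all $\alpha$, which is \eqref{lambda strip support}; carrying out the same computation with $\lambda$ replaced by $t|\eta|\sim tR$ and taking the supremum over $|\eta|\sim R$ gives \eqref{lambda strip}, \eqref{lambda strip log} and \eqref{lambda strip support} for $h_2$. The main obstacle throughout is the anisotropic localization step in the dyadic estimate — controlling the part of the shell $|\xi|\sim 2^k$ outside the unit slab by the Schwartz decay of $\Psi$ — together with the bookkeeping of the three (four, with the support case) regimes; the remaining steps are routine.
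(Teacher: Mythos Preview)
Your approach is essentially identical to the paper's: rotate so that $\eta/|\eta|$ becomes a coordinate direction, compute the Fourier transform as a tensor product $\lambda^{-1}\hat\rho(\xi_\parallel/\lambda)\Psi(\xi_\perp)$, use Plancherel, split the dyadic shell anisotropically into the region where $|\xi_\parallel|\sim 2^k$ (controlled by the support of $\hat\rho$) and the region where $|\xi_\perp|$ is large (controlled by the Schwartz decay of $\Psi$), and then sum the resulting geometric series in $k$. The paper splits at $|\xi_\perp|\lesssim 2^{k-2}$ rather than your $|\xi_\perp|\lesssim 1$, which makes the complement genuinely satisfy $|\xi_\perp|\sim 2^k$ for \emph{every} $k$; with your cutoff this is only true once $2^k\gg\lambda$, so your sentence ``its complement, where $|\xi_\perp|\sim 2^k$'' is not literally correct in the range $2^k\lesssim\lambda$, though the final dyadic bound $C\,2^{k/2}/\lambda$ still holds there.

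There is one genuine omission in the support case \eqref{lambda strip support}: you must observe that $\Delta_0^v h_1=0$. Indeed, on $\operatorname{supp}\psi$ one has $|\xi_\parallel|\le 1$, and after normalizing $\operatorname{supp}\hat\rho\subset\{|u|\ge 1\}$ the condition $\lambda\ge 1$ forces $\hat\rho(\xi_\parallel/\lambda)=0$ there. Without this, your low-frequency contribution $\lambda^{-1}$ would dominate the claimed bound $\lambda^{-(\alpha+1/2)}$ whenever $\alpha>\tfrac12$, and your sentence ``the low-frequency contribution $\lambda^{-q}$ is always absorbed because $\gamma+\tfrac12\le 1$'' does not apply in that regime since $\gamma$ has been replaced by $\alpha$. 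The paper makes this vanishing explicit.
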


\begin{lem}\label{lem lambda strip low}
	For any $\chi\in\mathcal{S}\left(\mathbb{R}\right)$ and $\rho\in C^\infty\left(\mathbb{R}\right)$ such that $\chi\equiv 1$ in a neighborhood of the origin and $\hat\rho$ is compactly supported and bounded pointwise, let $h(\eta,v)$ be defined by
	\begin{equation}
		h(\eta,v) =
		\mathbb{1}_{\left\{ |\eta| \leq 1\right\}} \chi\left( \left|v\right| \right)
		\rho\left(v\cdot\eta \right),
	\end{equation}
	for all $\eta,v\in\mathbb{R}^D$.
	
	Then, for every $1\leq q\leq\infty$ and $\alpha\in\mathbb{R}$, it holds that
	\begin{equation}
		h(\eta,v) \in \widetilde L^\infty \left( \mathbb{R}^D_\eta ; B^{-\alpha}_{2,q} \left(\mathbb{R}^D_v\right) \right).
	\end{equation}
\end{lem}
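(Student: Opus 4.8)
The plan is to show that the cutoff $\mathbb{1}_{\left\{|\eta|\leq 1\right\}}$ together with the full velocity cutoff $\chi(|v|)$ makes $\left\{h(\eta,\cdot)\right\}_{|\eta|\leq 1}$ a uniformly Schwartz family in $v$, so that membership in the large space $\widetilde L^\infty\left(\mathbb{R}^D_\eta;B^{-\alpha}_{2,q}\left(\mathbb{R}^D_v\right)\right)$ — which only requires mild decay of the Littlewood--Paley pieces in $v$, uniformly in $\eta$ — follows with room to spare. First, I would record two elementary facts. Since $\hat\rho$ is compactly supported and bounded pointwise, differentiating under the integral in $\rho(r)=\frac{1}{2\pi}\int_\mathbb{R}e^{irs}\hat\rho(s)\,ds$ shows that $\rho\in C^\infty(\mathbb{R})$ with $\left\|\rho^{(j)}\right\|_{L^\infty(\mathbb{R})}<\infty$ for every $j\geq0$. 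And since $\chi\equiv1$ in a neighborhood of the origin, the function $v\mapsto\chi(|v|)$ is smooth across $v=0$ (where it is constant) and, being rapidly decreasing together with all its derivatives, belongs to $\mathcal{S}\left(\mathbb{R}^D_v\right)$.

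The key step is the uniform Sobolev bound: for every $N\in\mathbb{N}$ there is $C_N>0$ such that
\[
	\sup_{|\eta|\leq1}\left\|h(\eta,\cdot)\right\|_{H^N\left(\mathbb{R}^D_v\right)}\leq C_N.
\]
To prove this I would use the Leibniz rule and the identity $\partial_v^\gamma\left[\rho(v\cdot\eta)\right]=\eta^\gamma\,\rho^{(|\gamma|)}(v\cdot\eta)$ for any multi-index $\gamma$. Since $|\eta|\leq1$ forces $|\eta^\gamma|\leq|\eta|^{|\gamma|}\leq1$, each factor $\partial_v^\gamma\left[\rho(v\cdot\eta)\right]$ is bounded by $\left\|\rho^{(|\gamma|)}\right\|_{L^\infty}$ uniformly in $\eta$, while the complementary factors $\partial_v^{\beta-\gamma}\left[\chi(|v|)\right]$ lie in $L^2_v$; summing over $|\beta|\leq N$ yields the bound.

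Finally I would pass from Sobolev to Besov. By a Bernstein-type inequality, $\left\|\Delta_0^vh(\eta,\cdot)\right\|_{L^2_v}\leq\left\|h(\eta,\cdot)\right\|_{L^2_v}$ and $\left\|\Delta_{2^k}^vh(\eta,\cdot)\right\|_{L^2_v}\leq C2^{-kN}\left\|h(\eta,\cdot)\right\|_{H^N_v}$ for all $k\in\mathbb{N}$; since $h(\eta,\cdot)\equiv0$ when $|\eta|>1$, taking the supremum over $\eta$ and invoking the previous step gives $\left\|\Delta_0^vh\right\|_{L^\infty_\eta L^2_v}\leq C$ and $\left\|\Delta_{2^k}^vh\right\|_{L^\infty_\eta L^2_v}\leq C_N\,2^{-kN}$. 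Choosing $N>|\alpha|$ then makes the series defining the norm,
\[
	\left\|h\right\|_{\widetilde L^\infty\left(\mathbb{R}^D_\eta;B^{-\alpha}_{2,q}\left(\mathbb{R}^D_v\right)\right)}^q=\left\|\Delta_0^vh\right\|_{L^\infty_\eta L^2_v}^q+\sum_{k=0}^\infty2^{-k\alpha q}\left\|\Delta_{2^k}^vh\right\|_{L^\infty_\eta L^2_v}^q,
\]
converge (with the usual modification when $q=\infty$), which is the assertion. I do not anticipate a genuine obstacle: the whole point of this lemma, as opposed to the estimates for $h_1$ and $h_2$ in Lemma \ref{lem lambda strip} — where only the component of $v$ perpendicular to $\eta$ is cut off and the bounds degenerate to the powers $\lambda^{-\gamma-1/2}$ or $(tR)^{-\gamma-1/2}$ — is that the full cutoff $\chi(|v|)$ combined with the restriction $|\eta|\leq1$ removes all degeneracy, so $h$ lands in every Besov class. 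The only place asking for a line of care is the verification that $v\mapsto\chi(|v|)$ is smooth at the origin, which is exactly what the hypothesis $\chi\equiv1$ near $0$ provides.
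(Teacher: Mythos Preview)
Your proof is correct and takes a genuinely different, more elementary route than the paper's. The paper proceeds exactly as in its proof of Lemma~\ref{lem lambda strip}: it rotates so that $\eta/|\eta|$ becomes the last coordinate axis, writes $\mathcal{F}_vh(\eta,\xi)$ as a convolution in the last variable between the Schwartz function $\pi(\xi)=\mathcal{F}\left[\chi(|v|)\right](\xi)$ and the scaled $\hat\rho$, and then estimates $\left\|\Delta_{2^k}^vh\right\|_{L^2_v}$ via Plancherel by splitting the annulus $\{2^{k-1}\leq|\xi|\leq2^{k+1}\}$ according to whether $|\xi'|$ or $|\xi_D|$ is large. By contrast, you bypass all the Fourier-side geometry by observing directly that $\{h(\eta,\cdot)\}_{|\eta|\leq1}$ is bounded in every $H^N_v$, which immediately forces $\left\|\Delta_{2^k}^vh\right\|_{L^\infty_\eta L^2_v}\leq C_N2^{-kN}$ and hence membership in $\widetilde L^\infty B^{-\alpha}_{2,q}$ for every $\alpha$.

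What each approach buys: the paper's method is simply the specialization of the machinery that is \emph{required} for Lemma~\ref{lem lambda strip}, where the cutoff is only transverse to $\eta$ and the family is not uniformly Schwartz, so precise tracking of the Fourier side is unavoidable there; reusing that machinery here keeps the two proofs parallel. Your argument is shorter and more conceptual for this particular lemma, and it makes transparent why the full cutoff $\chi(|v|)$ together with $|\eta|\leq1$ removes all degeneracy---exactly the point you yourself flag at the end. (A small remark: you need only $N>-\alpha$ for the $\ell^q$ sum to converge, though your choice $N>|\alpha|$ is of course sufficient.)
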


We defer the proofs of Lemmas \ref{lem lambda strip} and \ref{lem lambda strip low} and proceed now to the proof of Lemma \ref{operators norms}.

\begin{proof}[Proof of Lemma \ref{operators norms}]
	First, notice that, for any $\rho(s)\in\mathcal{S}\left(\mathbb{R}\right)$, it holds
	\begin{equation}
		\mathcal{F}\left({\rho(s)-\rho(0)\over is}\right)(r)=
		2\pi \rho(0)\mathbb{1}_{\left\{r\geq 0\right\}}-\int_{-\infty}^r\hat\rho(\sigma)d\sigma,
	\end{equation}
	in the sense of tempered distributions. Indeed, one easily obtains by duality, for any test function $\varphi(r)\in\mathcal{S}\left(\mathbb{R}\right)$, that
	\begin{equation}
		\begin{aligned}
			\int_\mathbb{R}\mathcal{F}\left({\rho(s)-\rho(0)\over is}\right)(r)\varphi(r)dr&=
			\int_{\mathbb{R}}\frac{1}{2\pi}\int_{\mathbb{R}}{e^{is\sigma}-1\over is}\hat\rho(\sigma)d\sigma \hat\varphi(s) ds
			\\
			&=
			\int_{\mathbb{R}}\frac{1}{2\pi}\int_{\mathbb{R}}\left[\int_0^\sigma e^{ ist}dt\right]\hat\rho(\sigma)d\sigma \hat\varphi(s) ds
			\\
			&=
			\int_{\mathbb{R}}\left[\int_0^\sigma \varphi(t)dt\right]\hat\rho(\sigma)d\sigma
			\\
			&=
			\int_0^\infty\int_t^\infty \varphi(t) \hat\rho(\sigma)d\sigma dt
			-
			\int_{-\infty}^0\int_{-\infty}^t \varphi(t) \hat\rho(\sigma)d\sigma dt
			\\
			&=
			\int_0^\infty\int_\mathbb{R} \hat\rho(\sigma)d\sigma \varphi(t) dt
			-
			\int_{\mathbb{R}}\int_{-\infty}^t \hat\rho(\sigma)d\sigma \varphi(t) dt
			\\
			&=
			\int_{\mathbb{R}}\left[
			2\pi\rho(0)\mathbb{1}_{\left\{t\geq 0\right\}}-\int_{-\infty}^t\hat\rho(\sigma)d\sigma
			\right]\varphi(t)dt.
		\end{aligned}
	\end{equation}

	In the particular setting of the present lemma, we impose that $\hat\rho$ be compactly supported, supported away from the origin and that $\rho(0)=1$. It then follows that $\tau(s)=\frac{1-\rho(s)}{i s}$ is smooth near the origin, that
	\begin{equation}
		\hat\tau(r)=\int_\mathbb{R}e^{-i sr} \frac{1-\rho(s)}{i s} ds
		=
		\int_{-\infty}^r \hat\rho(\sigma) d\sigma - 2\pi \mathbb{1}_{\left\{r\geq 0\right\}}
	\end{equation}
	is compactly supported as well and that $|\hat\tau(r)|$ is bounded pointwise. Notice, however, that the origin is always contained in the support of $\hat\tau$.
	
	Consider now $R>0$ so that $\mathrm{supp}\phi(v)\subset\left\{|v|\leq R\right\}$ and let $\chi(r)\in C_0^\infty\left(\mathbb{R}\right)$ be a cutoff function satisfying $\mathbb{1}_{\left\{|r|\leq R\right\}}\leq \chi(r)\leq\mathbb{1}_{\left\{|r|\leq 2R\right\}}$. Then, according to the identities \eqref{crucial 1} from Proposition \ref{crucial} and Lemmas \ref{lem lambda strip} and \ref{lem lambda strip low}, for any $t\geq 2^{-k}$, we obtain
	\begin{equation}
		\begin{aligned}
			\left\|A_{0}^1(f\phi)\right\|_{L^2_x} &=\frac{1}{(2\pi)^\frac{D}{2}}
			\left\| \int_{\mathbb{R}^D} \psi\left(\eta\right) \hat f (\eta,v) \phi(v)
			\chi\left( \left|v\right| \right)
			\rho\left(v\cdot\eta\right) dv \right\|_{L^2_\eta}\\
				&\leq
				\left\| \Delta_{0}^xf \phi \right\|_{\widetilde L^2 \left( \mathbb{R}^D_x ; B^{\alpha}_{2,q} \left(\mathbb{R}^D_v\right) \right)}
				\left\|
				\mathbb{1}_{\left\{|\eta|\leq 1\right\}}
				\chi\left( \left|v\right| \right)
				\rho\left(v\cdot\eta\right) \right\|_{\widetilde L^\infty \left( \mathbb{R}^D_\eta ; B^{-\alpha}_{2,q'} \left(\mathbb{R}^D_v\right) \right)}\\
			& \leq
			C
			\left\| \Delta_{0}^x f \phi \right\|_{\widetilde L^2 \left( \mathbb{R}^D_x ; B^{\alpha}_{2,q} \left(\mathbb{R}^D_v\right) \right)}
			\leq
			C
			\left\| f \phi \right\|_{\widetilde L^2 \left( \mathbb{R}^D_x ; B^{\alpha}_{2,q} \left(\mathbb{R}^D_v\right) \right)},\\
			\left\|A_{2^k}^t(f\phi)\right\|_{L^2_x} &=\frac{1}{(2\pi)^\frac{D}{2}}
			\left\| \int_{\mathbb{R}^D} \varphi\left(\frac{\eta}{2^k}\right) \hat f (\eta,v) \phi(v)
			\chi\left( \left|v-\frac{v\cdot\eta}{|\eta|}\frac{\eta}{|\eta|}\right| \right)
			\rho\left(tv\cdot\eta\right) dv \right\|_{L^2_\eta}\\
				&\leq
				\left\| \Delta_{2^k}^xf \phi \right\|_{\widetilde L^2 \left( \mathbb{R}^D_x ; B^{\alpha}_{2,q} \left(\mathbb{R}^D_v\right) \right)}\\
				&\times\left\|
				\mathbb{1}_{\left\{2^{k-1}\leq |\eta|\leq 2^{k+1}\right\}}
				\chi\left( \left|v-\frac{v\cdot\eta}{|\eta|}\frac{\eta}{|\eta|}\right| \right)
				\rho\left(tv\cdot\eta\right) \right\|_{\widetilde L^\infty \left( \mathbb{R}^D_\eta ; B^{-\alpha}_{2,q'} \left(\mathbb{R}^D_v\right) \right)}\\
			& \leq
			\frac{C}{\left(t2^k\right)^{\alpha+\frac 12}}
			\left\| \Delta_{2^k}^x f \phi \right\|_{\widetilde L^2 \left( \mathbb{R}^D_x ; B^{\alpha}_{2,q} \left(\mathbb{R}^D_v\right) \right)}\\
			& \leq
			\frac{C}{\left(t2^k\right)^{\alpha+\frac 12}}
			\left\| f \phi \right\|_{\widetilde L^2 \left( \mathbb{R}^D_x ; B^{\alpha}_{2,q} \left(\mathbb{R}^D_v\right) \right)}
		\end{aligned}
	\end{equation}
	and, similarly,
	\begin{equation}
		\begin{aligned}
			\left\|B_{0}^1(g\phi)\right\|_{L^2_x}
			&\leq
			C
			\left\| \Delta_{0}^x g \phi \right\|_{\widetilde L^2 \left( \mathbb{R}^D_x ; B^{\beta}_{2,q} \left(\mathbb{R}^D_v\right) \right)}
			\leq
			C
			\left\| g \phi \right\|_{\widetilde L^2 \left( \mathbb{R}^D_x ; B^{\beta}_{2,q} \left(\mathbb{R}^D_v\right) \right)},\\
			\left\|B_{2^k}^t(g\phi)\right\|_{L^2_x}
			&\leq
			\frac{C}{t2^k}\log\left(1+tR\right)^\frac{1}{q'}
			\left\| \Delta_{2^k}^x g \phi \right\|_{\widetilde L^2 \left( \mathbb{R}^D_x ; B^{\frac 12}_{2,q} \left(\mathbb{R}^D_v\right) \right)}\\
			& \leq
			\frac{C}{t2^k}\log\left(1+tR\right)^\frac{1}{q'}
			\left\| g \phi \right\|_{\widetilde L^2 \left( \mathbb{R}^D_x ; B^{\frac 12}_{2,q} \left(\mathbb{R}^D_v\right) \right)},
		\end{aligned}
	\end{equation}
	and, if $\beta\neq \frac 12$,
	\begin{equation}
		\begin{aligned}
			\left\|B_{2^k}^t(g\phi)\right\|_{L^2_x}
			&\leq
			\frac{C}{\left(t2^k\right)^{\gamma+\frac 12}}
			\left\| \Delta_{2^k}^x g \phi \right\|_{\widetilde L^2 \left( \mathbb{R}^D_x ; B^{\beta}_{2,q} \left(\mathbb{R}^D_v\right) \right)}\\
			& \leq
			\frac{C}{\left(t2^k\right)^{\gamma+\frac 12}}
			\left\| g \phi \right\|_{\widetilde L^2 \left( \mathbb{R}^D_x ; B^{\beta}_{2,q} \left(\mathbb{R}^D_v\right) \right)},
		\end{aligned}
	\end{equation}
	where $\gamma=\min\left\{\beta,\frac 12\right\}$ and $C>0$ is independent of $t$ and $2^k$.
	
	Then, the conclusion of the lemma easily follows from a straightforward application of the methods of paradifferential calculus to the products $f(x,v)\phi(v)$ and $g(x,v)\phi(v)$. For the sake of completeness, we have reproduced in Lemma \ref{paradifferential} the precise estimates that are required here in order to conclude the proof.
\end{proof}

\begin{proof}[Proof of Lemma \ref{lem lambda strip}]
	For each $\eta\in\mathbb{R}^D$, considering an orthogonal transformation $R_\eta:\mathbb{R}^D\to\mathbb{R}^D$ such that the unit vector $\frac{\eta}{|\eta|}$ is mapped onto $\left(0,\dots,0,1\right)$ and writing $v'=\left(v_1,\dots,v_{D-1}\right)$ so that $v=\left(v',v_D\right)$, it holds that
	\begin{equation}
		\begin{aligned}
			\mathcal{F}_v h_1(\eta,\xi)
			&=
			\int_{\mathbb{R}^D} e^{-i v\cdot\left(R_\eta \xi\right)}
			\chi\left( \left|v'\right| \right) \rho\left(\lambda v_D\right)
			dv\\
			&=
			\pi\left(\left(R_\eta\xi\right)_1,\ldots,\left(R_\eta\xi\right)_{D-1}\right)
			\frac{1}{\lambda}\hat\rho\left(\frac{1}{\lambda}\left(R_\eta\xi\right)_D\right)
		\end{aligned}
	\end{equation}
	and
	\begin{equation}
		\begin{aligned}
			\mathcal{F}_v h_2(\eta,\xi)
			&= \mathbb{1}_{\left\{\frac R2 \leq |\eta| \leq 2R\right\}}
			\int_{\mathbb{R}^D} e^{-i v\cdot\left(R_\eta \xi\right)}
			\chi\left( \left|v'\right| \right) \rho\left(t|\eta| v_D\right)
			dv\\
			&= \mathbb{1}_{\left\{\frac R2 \leq |\eta| \leq 2R\right\}}
			\pi\left(\left(R_\eta\xi\right)_1,\ldots,\left(R_\eta\xi\right)_{D-1}\right)
			\frac{1}{t|\eta|}\hat\rho\left(\frac{1}{t|\eta|}\left(R_\eta\xi\right)_D\right),
		\end{aligned}
	\end{equation}
	where $\pi\left(\xi'\right)=\int_{\mathbb{R}^{D-1}}e^{- i \xi'\cdot v'}\chi\left(\left|v'\right|\right)dv'\in\mathcal{S}\left(\mathbb{R}^{D-1}\right)$.
	
	Consequently, by Plancherel's theorem,
	\begin{equation}
		\begin{aligned}
			\left\| \Delta_{0}^v h_1 \right\|_{L^2_v}&=
			\frac{1}{\lambda(2\pi)^\frac{D}{2}}
			\left\|
			\psi\left(\xi\right)
			\pi\left(\xi_1,\ldots,\xi_{D-1}\right)
			\hat\rho\left(\frac{1}{\lambda} \xi_D \right)
			\right\|_{L^2_\xi},\\
			\left\| \Delta_{2^k}^v h_1 \right\|_{L^2_v}&=
			\frac{1}{\lambda(2\pi)^\frac{D}{2}}
			\left\|
			\varphi\left(\frac{\xi}{2^k}\right)
			\pi\left(\xi_1,\ldots,\xi_{D-1}\right)
			\hat\rho\left(\frac{1}{\lambda} \xi_D \right)
			\right\|_{L^2_\xi}
		\end{aligned}
	\end{equation}
	and
	\begin{equation}
		\begin{aligned}
			\left\| \Delta_{0}^v h_2 \right\|_{L^2_v}&=
			\mathbb{1}_{\left\{\frac R2 \leq |\eta| \leq 2R\right\}} \frac{1}{t|\eta|(2\pi)^\frac{D}{2}}
			\left\|
			\psi\left(\xi\right)
			\pi\left(\xi_1,\ldots,\xi_{D-1}\right)
			\hat\rho\left(\frac{1}{t|\eta|} \xi_D \right)
			\right\|_{L^2_\xi},\\
			\left\| \Delta_{2^k}^v h_2 \right\|_{L^2_v}&=
			\mathbb{1}_{\left\{\frac R2 \leq |\eta| \leq 2R\right\}} \frac{1}{t|\eta|(2\pi)^\frac{D}{2}}
			\left\|
			\varphi\left(\frac{\xi}{2^k}\right)
			\pi\left(\xi_1,\ldots,\xi_{D-1}\right)
			\hat\rho\left(\frac{1}{t|\eta|} \xi_D \right)
			\right\|_{L^2_\xi}.
		\end{aligned}
	\end{equation}
	
	Recall now that $\mathrm{supp}\psi\left(\xi\right) \subset \left\{ \left|\xi\right|\leq 1 \right\}$ and $\mathrm{supp}\varphi\left(\frac{\xi}{2^k}\right) \subset \left\{2^{k-1}\leq\left|\xi\right|\leq 2^{k+1} \right\}$. In particular, for every $k\in\mathbb{N}$, we have that
	\begin{equation}
		\begin{aligned}
			\varphi\left(\frac{\xi}{2^k}\right) & = \varphi\left(\frac{\xi}{2^k}\right) \left( \mathbb{1}_{\left\{\left|\xi'\right|\geq 2^{k-2}\right\}} + \mathbb{1}_{\left\{\left|\xi'\right|<2^{k-2}\right\}} \right)\\
			& \leq
			\mathbb{1}_{\left\{ 2^{k-2} \leq \left|\xi'\right| \leq 2^{k+1}\right\}}
			\mathbb{1}_{\left\{\left|\xi_D\right| \leq 2^{k+1}\right\}}
			+
			\mathbb{1}_{\left\{ \left|\xi'\right| < 2^{k-2}\right\}}
			\mathbb{1}_{\left\{ 2^{k-2} \leq \left|\xi_D\right| \leq 2^{k+1}\right\}}.
		\end{aligned}
	\end{equation}
	Hence, we deduce that
	\begin{equation}\label{lambda strip 1}
			\left\| \Delta_{0}^v h_1 \right\|_{L^2_v}\leq \frac C\lambda,\qquad
			\left\| \Delta_{0}^v h_2 \right\|_{L^2_v}\leq \frac C{tR},
	\end{equation}
	and, utilizing that $\pi\left(\xi'\right)$ decays faster than any polynomial and assuming without any loss of generality that $\hat\rho(r)$ is supported inside $\left\{|r|\leq 2\right\}$, we further obtain that
	\begin{equation}\label{support 1}
		\begin{aligned}
			\left\| \Delta_{2^k}^v h_1 \right\|_{L^2_v}
			&\leq \frac{1}{\lambda}
			\left\|
			\mathbb{1}_{\left\{ 2^{k-2} \leq \left|\xi'\right| \leq 2^{k+1}\right\}}
			\pi\left(\xi'\right)
			\right\|_{L^2_{\xi'}}
			\left\|
			\mathbb{1}_{\left\{\left|\xi_D\right| \leq 2^{k+1}\right\}}
			\hat\rho\left(\frac{1}{\lambda} \xi_D\right)
			\right\|_{L^2_{\xi_D}}\\
			&+
			\frac{1}{\lambda}
			\left\|
			\mathbb{1}_{\left\{ \left|\xi'\right| < 2^{k-2}\right\}}
			\pi\left(\xi'\right)
			\right\|_{L^2_{\xi'}}
			\left\|
			\mathbb{1}_{\left\{ 2^{k-2} \leq \left|\xi_D\right| \leq 2^{k+1}\right\}}
			\hat\rho\left(\frac{1}{\lambda} \xi_D\right)
			\right\|_{L^2_{\xi_D}}\\
			& \leq
			\frac{2^{\frac{k}{2}}}{\lambda}
			\left\|
			\mathbb{1}_{\left\{ 2^{k-2} \leq \left|\xi'\right| \leq 2^{k+1}\right\}}
			\pi\left(\xi'\right)
			\right\|_{L^2_{\xi'}}
			\left\|
			\mathbb{1}_{\left\{\left|\xi_D\right| \leq 2\right\}}
			\hat\rho\left(\frac{2^k}{\lambda} \xi_D\right)
			\right\|_{L^2_{\xi_D}}\\
			&+
			C \frac{2^\frac{k}{2}}{\lambda}
			\left\|
			\mathbb{1}_{\left\{ \frac{1}{4} \leq \left|\xi_D\right| \leq 2 \right\}}
			\hat\rho\left(\frac{2^k}{\lambda} \xi_D\right)
			\right\|_{L^2_{\xi_D}} \\
			&\leq
			C\left(
			\frac{2^{\frac k2}}{\lambda\left(1+2^{k\left( 1 - \alpha\right)}\right)}
			+
			\frac{2^\frac{k}{2}}{\lambda}
			\mathbb{1}_{\left\{ \frac{2^{k}}{\lambda} \leq 8 \right\}}
			\right)
		\end{aligned}
	\end{equation}
	and, similarly, that
	\begin{equation}\label{support 2}
		\begin{aligned}
			\left\| \Delta_{2^k}^v h_2 \right\|_{L^2_v}
			&\leq C \mathbb{1}_{\left\{\frac R2 \leq |\eta| \leq 2R\right\}}
			\left(
			\frac{2^{\frac k2}}{t|\eta|\left(1+2^{k\left( 1 - \alpha\right)}\right)}
			+
			\frac{2^\frac{k}{2}}{t|\eta|}
			\mathbb{1}_{\left\{ \frac{2^{k}}{t|\eta|} \leq 8 \right\}}
			\right)\\
			&\leq C
			\left(
			\frac{2^{\frac k2}}{tR\left(1+2^{k\left( 1 - \alpha\right)}\right)}
			+
			\frac{2^\frac{k}{2}}{tR}
			\mathbb{1}_{\left\{ \frac{2^{k}}{tR} \leq 16 \right\}}
			\right).
		\end{aligned}
	\end{equation}
	
	Then, recalling the geometric sum formula $\sum_{k=0}^n x^k=\frac{x^{n+1}-1}{x-1}$, valid for any $x\neq 1$, it follows that, in the case $1\leq q<\infty$ and $\alpha\neq\frac 12$,
	\begin{equation}\label{lambda strip 2}
		\begin{aligned}
			& \left\|\left\{
			2^{-k\alpha}
			\left\| \Delta_{2^k}^v h_1 \right\|_{L^2_v}
			\right\}_{k=0}^\infty\right\|_{\ell^q}\\
			&\hspace{14mm} \leq
			\frac C\lambda \left(
			\left\|\left\{
			\frac{2^{k\left(\frac 12-\alpha\right)}}{1+2^{k\left( 1 - \alpha\right)}}
			\right\}_{k=0}^\infty\right\|_{\ell^q}
			+
			\left\|\left\{
			2^{k\left(\frac 12-\alpha\right)}
			\mathbb{1}_{\left\{ k \leq \frac{\log\left(8\lambda\right)}{\log 2} \right\}}
			\right\}_{k=0}^\infty\right\|_{\ell^q}
			\right)
			\\
			&\hspace{14mm} \leq
			\frac C\lambda
			\left(
			1+\left(\frac{\left(16\lambda\right)^{\left(\frac 12 -\alpha\right)q}-1}{2^{\left(\frac 12 - \alpha\right)q}-1}\right)^\frac{1}{q}
			\right)
			\leq
			\frac{C}{\lambda^{\gamma+\frac 12}},
		\end{aligned}
	\end{equation}
	where $\gamma=\min\left\{\alpha, \frac 12\right\}$, and, similarly, that
	\begin{equation}\label{lambda strip 3}
		\left\|\left\{
		2^{-k\alpha}
		\left\| \Delta_{2^k}^v h_2 \right\|_{L^2_v}
		\right\}_{k=0}^\infty\right\|_{\ell^q}
		\leq
		\frac{C}{\left(tR\right)^{\gamma+\frac 12}}.
	\end{equation}
	Furthermore, both cases $q=\infty$ and $\alpha=\frac 12$ follow from obvious modifications of the preceding estimates. Thus, combining \eqref{lambda strip 1}, \eqref{lambda strip 2} and \eqref{lambda strip 3} readily shows that the estimates \eqref{lambda strip} and \eqref{lambda strip log} hold.
	
	It only remains to handle the case $\alpha\geq \frac 12$ and $0\notin \mathrm{supp}\hat\rho$. Without any loss of generality, we may assume that $\hat\rho(r)$ is supported inside $\left\{1\leq |r|\leq 2\right\}$. In this setting, it is possible to refine estimates \eqref{support 1} and \eqref{support 2} as to obtain
	\begin{equation}
		\begin{aligned}
			\left\| \Delta_{2^k}^v h_1 \right\|_{L^2_v}
			& \leq
			\frac{2^{\frac{k}{2}}}{\lambda}
			\left\|
			\mathbb{1}_{\left\{ 2^{k-2} \leq \left|\xi'\right| \leq 2^{k+1}\right\}}
			\pi\left(\xi'\right)
			\right\|_{L^2_{\xi'}}
			\left\|
			\mathbb{1}_{\left\{\left|\xi_D\right| \leq 2\right\}}
			\hat\rho\left(\frac{2^k}{\lambda} \xi_D\right)
			\right\|_{L^2_{\xi_D}}\\
			&+
			C \frac{2^\frac{k}{2}}{\lambda}
			\left\|
			\mathbb{1}_{\left\{ \frac{1}{4} \leq \left|\xi_D\right| \leq 2 \right\}}
			\hat\rho\left(\frac{2^k}{\lambda} \xi_D\right)
			\right\|_{L^2_{\xi_D}} \\
			&\leq
			C\left(
			\frac{2^{\frac k2}}{\lambda\left(1+2^{k}\right)}
			\mathbb{1}_{\left\{ \frac 12\leq \frac{2^{k}}{\lambda} \right\}}
			+
			\frac{2^\frac{k}{2}}{\lambda}
			\mathbb{1}_{\left\{\frac 12 \leq \frac{2^{k}}{\lambda} \leq 8 \right\}}
			\right) \\
			&\leq
			C\left(
			\frac{2^{k\alpha}}{\lambda^{\alpha+\frac 12}\left(1+2^{k}\right)}
			\mathbb{1}_{\left\{ \frac 12\leq \frac{2^{k}}{\lambda} \right\}}
			+
			\frac{2^{k\alpha}}{\lambda^{\alpha+\frac 12}}
			\mathbb{1}_{\left\{\frac 12 \leq \frac{2^{k}}{\lambda} \leq 8 \right\}}
			\right)
		\end{aligned}
	\end{equation}
	and, similarly, that
	\begin{equation}
		\begin{aligned}
			\left\| \Delta_{2^k}^v h_2 \right\|_{L^2_v}
			&\leq C \mathbb{1}_{\left\{\frac R2 \leq |\eta| \leq 2R\right\}}
			\left(
			\frac{2^{\frac k2}}{t|\eta|\left(1+2^{k\left( 1 + \alpha\right)}\right)}
			\mathbb{1}_{\left\{ \frac 12\leq \frac{2^{k}}{t|\eta|} \right\}}
			+
			\frac{2^\frac{k}{2}}{t|\eta|}
			\mathbb{1}_{\left\{ \frac 12 \leq \frac{2^{k}}{t|\eta|} \leq 8 \right\}}
			\right)\\
			&\leq C
			\left(
			\frac{2^{\frac k2}}{tR\left(1+2^{k}\right)}
			\mathbb{1}_{\left\{ \frac 14\leq \frac{2^{k}}{tR} \right\}}
			+
			\frac{2^\frac{k}{2}}{tR}
			\mathbb{1}_{\left\{ \frac 14 \leq \frac{2^{k}}{tR} \leq 16 \right\}}
			\right)\\
			&\leq
			C\left(
			\frac{2^{k\alpha}}{(tR)^{\alpha+\frac 12}\left(1+2^{k}\right)}
			\mathbb{1}_{\left\{ \frac 14\leq \frac{2^{k}}{tR} \right\}}
			+
			\frac{2^{k\alpha}}{(tR)^{\alpha+\frac 12}}
			\mathbb{1}_{\left\{\frac 14 \leq \frac{2^{k}}{tR} \leq 16 \right\}}
			\right).
		\end{aligned}
	\end{equation}
	
	It is then readily seen that, for any $1\leq q\leq\infty$,
	\begin{equation}
		\begin{aligned}
			& \left\|\left\{
			2^{-k\alpha}
			\left\| \Delta_{2^k}^v h_1 \right\|_{L^2_v}
			\right\}_{k=0}^\infty\right\|_{\ell^q}\\
			&\hspace{14mm} \leq
			\frac C{\lambda^{\alpha+\frac 12}} \left(
			\left\|\left\{
			\frac{1}{1+2^k}
			\right\}_{k=0}^\infty\right\|_{\ell^q}
			+
			\left\|\left\{
			\mathbb{1}_{\left\{ \frac{\log \lambda}{\log 2} - 1 \leq k \leq \frac{\log\lambda}{\log 2} + 3 \right\}}
			\right\}_{k=0}^\infty\right\|_{\ell^q}
			\right)
			\\
			&\hspace{14mm}
			\leq
			\frac{C}{\lambda^{\alpha+\frac 12}},
		\end{aligned}
	\end{equation}
	and, similarly, that
	\begin{equation}
		\left\|\left\{
		2^{-k\alpha}
		\left\| \Delta_{2^k}^v h_2 \right\|_{L^2_v}
		\right\}_{k=0}^\infty\right\|_{\ell^q}
		\leq
		\frac{C}{\left(tR\right)^{\alpha+\frac 12}}.
	\end{equation}
	Finally, notice that, since $\mathrm{supp}\psi(\xi)\subset\left\{|\xi_D|\leq 1\right\}$ and $\lambda\geq 1$, $tR\geq 1$,
	\begin{equation}
		\begin{aligned}
			\left\| \Delta_{0}^v h_1 \right\|_{L^2_v}&=
			\frac{1}{\lambda}
			\left\|
			\psi\left(\xi\right)
			\pi\left(\xi_1,\ldots,\xi_{D-1}\right)
			\hat\rho\left(\frac{1}{\lambda} \xi_D \right)
			\right\|_{L^2_\xi}\mathbb{1}_{\left\{\lambda<1\right\}}=0,\\
			\left\| \Delta_{0}^v h_2 \right\|_{L^2_v}&=
			\mathbb{1}_{\left\{\frac R2 \leq |\eta| \leq 2R\right\}} \frac{1}{t|\eta|}
			\left\|
			\psi\left(\xi\right)
			\pi\left(\xi_1,\ldots,\xi_{D-1}\right)
			\hat\rho\left(\frac{1}{t|\eta|} \xi_D \right)
			\right\|_{L^2_\xi}\mathbb{1}_{\left\{tR<1\right\}}=0,
		\end{aligned}
	\end{equation}
	which readily establishes \eqref{lambda strip support} and thus, concludes the proof of the lemma.
\end{proof}

\begin{proof}[Proof of Lemma \ref{lem lambda strip low}]
	For each $\eta\in\mathbb{R}^D$, considering an orthogonal transformation $R_\eta:\mathbb{R}^D\to\mathbb{R}^D$ such that $\frac{\eta}{|\eta|}$ is mapped onto $\left(0,\ldots,0,1\right)$ and writing $v'=\left(v_1,\ldots,v_{D-1}\right)$ so that $v=\left(v',v_D\right)$, it holds that
	\begin{equation}
		\begin{aligned}
			\mathcal{F}_v h(\eta,\xi)
			&= \mathbb{1}_{\left\{|\eta| \leq 1\right\}}
			\int_{\mathbb{R}^D} e^{- i v\cdot\left(R_\eta \xi\right)}
			\chi\left( \left|v\right| \right) \rho\left(|\eta| v_D\right)
			dv\\
			&= \mathbb{1}_{\left\{|\eta| \leq 1\right\}}
			\int_{\mathbb{R}}
			\pi\left(\left(R_\eta\xi\right)_1,\ldots,\left(R_\eta\xi\right)_{D-1},\left(R_\eta\xi\right)_D-r\right)
			\frac{1}{|\eta|}\hat\rho\left(\frac{1}{|\eta|}r\right)dr,
		\end{aligned}
	\end{equation}
	where $\pi\left(\xi\right)=\int_{\mathbb{R}^{D-1}}e^{-i \xi\cdot v}\chi\left(\left|v\right|\right)dv\in\mathcal{S}\left(\mathbb{R}^{D}\right)$. Consequently,
	\begin{equation}
		\begin{aligned}
			\left\| \Delta_{0}^v h \right\|_{L^2_v}&=
			\mathbb{1}_{\left\{|\eta| \leq 1\right\}}\frac{1}{\left(2\pi\right)^\frac{D}{2}}
			\left\|
			\psi\left(\xi\right)
			\int_{\mathbb{R}}
			\pi\left(\xi',\xi_D-r\right)
			\frac{1}{|\eta|}\hat\rho\left(\frac{1}{|\eta|}r\right)dr
			\right\|_{L^2_\xi}\\
			&\leq\left\|\pi(\xi)\right\|_{L^2_\xi}\left\|\hat\rho(\xi_D)\right\|_{L^1_{\xi_D}}
			,\\
			\left\| \Delta_{2^k}^v h \right\|_{L^2_v}&=
			\mathbb{1}_{\left\{ |\eta| \leq 1\right\}}\frac{1}{\left(2\pi\right)^\frac{D}{2}}
			\left\|
			\varphi\left(\frac{\xi}{2^k}\right)
			\int_{\mathbb{R}}
			\pi\left(\xi',\xi_D-r\right)
			\frac{1}{|\eta|}\hat\rho\left(\frac{1}{|\eta|}r\right)dr
			\right\|_{L^2_\xi}.
		\end{aligned}
	\end{equation}
	
	Recall now that $\mathrm{supp}\varphi\left(\frac{\xi}{2^k}\right) \subset \left\{2^{k-1}\leq\left|\xi\right|\leq 2^{k+1} \right\}$. In particular, for every $k\in\mathbb{N}$, we have that
	\begin{equation}
		\begin{aligned}
			\varphi\left(\frac{\xi}{2^k}\right) & = \varphi\left(\frac{\xi}{2^k}\right) \left( \mathbb{1}_{\left\{\left|\xi'\right|\geq 2^{k-2}\right\}} + \mathbb{1}_{\left\{\left|\xi'\right|<2^{k-2}\right\}} \right)\\
			& \leq
			\mathbb{1}_{\left\{ 2^{k-2} \leq \left|\xi'\right| \leq 2^{k+1}\right\}}
			\mathbb{1}_{\left\{\left|\xi_D\right| \leq 2^{k+1}\right\}}
			+
			\mathbb{1}_{\left\{ \left|\xi'\right| < 2^{k-2}\right\}}
			\mathbb{1}_{\left\{ 2^{k-2} \leq \left|\xi_D\right| \leq 2^{k+1}\right\}}.
		\end{aligned}
	\end{equation}
	Hence, we deduce, utilizing that $\pi\left(\xi\right)$ decays faster than any polynomial and assuming without any loss of generality that $\hat\rho(r)$ is supported inside $\left\{|r|\leq 2\right\}$, that
	\begin{equation}
		\begin{aligned}
			\left\| \Delta_{2^k}^v h \right\|_{L^2_v}
			&\leq
			\left\|
			\mathbb{1}_{\left\{ 2^{k-2} \leq \left|\xi'\right| \leq 2^{k+1}\right\}}
			\pi\left(\xi\right)
			\right\|_{L^2_{\xi}}
			\left\|
			\hat\rho\left( \xi_D\right)
			\right\|_{L^1_{\xi_D}}\\
			&+
			\left\|
			\mathbb{1}_{\left\{ 2^{k-2}-2 \leq \left|\xi_D\right| \leq 2^{k+1}+2\right\}}
			\pi\left(\xi\right)
			\right\|_{L^2_{\xi}}
			\left\|
			\hat\rho\left(\xi_D\right)
			\right\|_{L^1_{\xi_D}}\\
			&\leq
			\frac{C}{2^{k\left( 1 - \alpha\right)}}.
		\end{aligned}
	\end{equation}
	It clearly follows that
	\begin{equation}
		\left\|\left\{
		2^{-k\alpha}
		\left\| \Delta_{2^k}^v h \right\|_{L^\infty_\eta L^2_v}
		\right\}_{k=0}^\infty\right\|_{\ell^q}
		\leq
		2C,
	\end{equation}
	which concludes the proof of the lemma.
\end{proof}

Now that the technical Lemmas \ref{operators norms}, \ref{lem lambda strip} and \ref{lem lambda strip low} are established, we may proceed to the proof of the main result Theorem \ref{classical}.

\begin{proof}[Proof of Theorem \ref{classical}]
	
	First of all, we easily obtain that
	\begin{equation}
		\begin{aligned}
			\left\|\Delta_{0}^x\int_{\mathbb{R}^D} f(x,v)\phi(v)dv\right\|_{L^2_x}
			&\leq C
			\left\|\Delta_{0}^xf(x,v)\right\|_{\widetilde L^2 \left( \mathbb{R}^D_x ; B^{\alpha}_{2,q} \left(\mathbb{R}^D_v\right) \right)}
			\left\|\phi(v)\right\|_{B^{-\alpha}_{2,q'} \left(\mathbb{R}^D_v\right)}\\
			&\leq C
			\left\|f(x,v)\right\|_{B^{a,\alpha}_{2,2,q} \left(\mathbb{R}^D_x\times\mathbb{R}^D_v\right)}
			\left\|\phi(v)\right\|_{B^{-\alpha}_{2,q'} \left(\mathbb{R}^D_v\right)},
		\end{aligned}
	\end{equation}
	which concludes the control of the low frequencies. Notice, however, that it will be convenient, in order to carry out interpolation arguments later on, to also estimate the low frequencies with the same decomposition that was used in the proof of Theorem \ref{pre averaging lemma p 2} and provided by Proposition \ref{crucial}, i.e.
	\begin{equation}
		\Delta_{0}^x\int_{\mathbb{R}^D} f(x,v)\phi(v) dv
		= A_{0}^1 (f\phi) (x) + B_{0}^1 (g\phi) (x).
	\end{equation}
	This is easily done with an application of Lemma \ref{operators norms}, thus yielding
	\begin{equation}
		\begin{aligned}
			\left\|A_0^1(f\phi)\right\|_{L^2_x}
			&\leq
			C
			\left\| \Delta_0^x f \right\|_{\widetilde L^2 \left( \mathbb{R}^D_x ; B^{\alpha}_{2,q} \left(\mathbb{R}^D_v\right) \right)}
			\leq
			C
			\left\| f \right\|_{B^{a,\alpha}_{2,2,q} \left(\mathbb{R}^D_x\times\mathbb{R}^D_v\right)},
		\end{aligned}
	\end{equation}
	and
	\begin{equation}
		\begin{aligned}
			\left\|B_0^1(g\phi)\right\|_{L^2_x}
			&\leq
			C
			\left\| \Delta_0^x g \right\|_{\widetilde L^2 \left( \mathbb{R}^D_x ; B^{\beta}_{2,q} \left(\mathbb{R}^D_v\right) \right)}
			\leq
			C
			\left\| g \right\|_{B^{b,\beta}_{2,2,q} \left(\mathbb{R}^D_x\times\mathbb{R}^D_v\right)}.
		\end{aligned}
	\end{equation}
	
	In order to estimate the high frequencies, according to proposition \ref{crucial}, we consider $\rho\in \mathcal{S}\left(\mathbb{R}\right)$ a cutoff function such that $\tilde\rho(r)=\frac{1}{2\pi}\hat\rho(-r)$ is compactly supported $\mathrm{supp}\tilde\rho(r)\subset\left\{1\leq |r|\leq 2\right\}$ and $\rho(0)=\frac{1}{2\pi}\int_{\mathbb{R}}\hat\rho(r)dr=1$, so that, for any $t>0$, we have the dyadic frequency decompositions, for each $k\in\mathbb{N}$,
	\begin{equation}	
			\Delta_{2^k}^x\int_{\mathbb{R}^D} f(x,v) \phi(v) dv
			= A_{2^k}^t (f\phi) (x) + t B_{2^k}^t (g\phi) (x).
	\end{equation}
	
	Consequently, in virtue of lemma \ref{operators norms}, we infer that, if $\beta\neq \frac 12$ or $q=1$, for any $t\geq 2^{-k}$,
	\begin{equation}\label{classical 2}
		\begin{aligned}
			\left\|\Delta_{2^k}^x\int_{\mathbb{R}^D} f(x,v)\phi(v) dv\right\|_{L^2_x}
			&\leq
			\left\| A_{2^k}^t (f\phi) \right\|_{L^2_x} + t \left\| B_{2^k}^t (g\phi) \right\|_{L^2_x} \\
			& \leq
			\frac{C}{\left(t2^k\right)^{\alpha+\frac{1}{2}}} \left\|\Delta_{2^k}^x f \right\|_{\widetilde L^2 \left( \mathbb{R}^D_x ; B^{\alpha}_{2,q} \left(\mathbb{R}^D_v\right) \right)} \\
			& +
			2^{-k}\frac{C}{\left(t2^k\right)^{\gamma-\frac{1}{2}}} \left\| \Delta_{2^k}^x g \right\|_{\widetilde L^2 \left( \mathbb{R}^D_x ; B^{\beta}_{2,q} \left(\mathbb{R}^D_v\right) \right)},
		\end{aligned}
	\end{equation}
	where $\gamma=\min\left\{\beta,\frac 12\right\}$, and similarly, if $\beta=\frac 12$ and $q\neq 1$,
	\begin{equation}
		\begin{aligned}
			\left\|\Delta_{2^k}^x\int_{\mathbb{R}^D} f(x,v)\phi(v) dv\right\|_{L^2_x}
			&\leq
			\left\| A_{2^k}^t (f\phi) \right\|_{L^2_x} + t \left\| B_{2^k}^t (g\phi) \right\|_{L^2_x} \\
			& \leq
			\frac{C}{\left(t2^k\right)^{\alpha+\frac{1}{2}}} \left\|\Delta_{2^k}^x f \right\|_{\widetilde L^2 \left( \mathbb{R}^D_x ; B^{\alpha}_{2,q} \left(\mathbb{R}^D_v\right) \right)} \\
			& +
			\frac{C}{2^k}\log\left(1+t2^k\right)^\frac{1}{q'} \left\| \Delta_{2^k}^x g \right\|_{\widetilde L^2 \left( \mathbb{R}^D_x ; B^{\beta}_{2,q} \left(\mathbb{R}^D_v\right) \right)},
		\end{aligned}
	\end{equation}
	where $C>0$ is independent of $t$ and $2^k$.

	Then, optimizing the interpolation parameter $t$, we choose $t2^k=2^{k\frac{1+b-a}{1+\alpha-\gamma}}$, which is admissible since $\frac{1+b-a}{1+\alpha-\gamma}\geq 0$ and thus $t\geq 2^{-k}$.
	
	Furthermore, in the cases $\beta>\frac 12$ or $\beta=\frac 12$ and $q=1$, we can choose $t=\infty$, which is, in fact, more optimal than $t2^k=2^{k\frac{1+b-a}{1+\alpha-\gamma}}$, since it eliminates the first term in the right-hand side of the above estimates. This case is discussed later on.
	

	It follows that, recalling $ s = (1+b-a) \frac{\alpha+\frac 12}{1+\alpha-\gamma} +a = (1+b-a) \frac{\gamma-\frac 12}{1+\alpha-\gamma}+1+b$, in the case $\beta\neq \frac 12$ or $q=1$,
	\begin{equation}
		\begin{aligned}
			& 2^{ks} \left\|\Delta_{2^k}^x\int_{\mathbb{R}^D} f(x,v)\phi(v) dv\right\|_{L^2_x} \\
			&\hspace{20mm} \leq 
			C 2^{ka} \left\|\Delta_{2^k}^x f \right\|_{\widetilde L^2 \left( \mathbb{R}^D_x ; B^{\alpha}_{2,q} \left(\mathbb{R}^D_v\right) \right)}+
			C 2^{kb} \left\| \Delta_{2^k}^x g \right\|_{\widetilde L^2 \left( \mathbb{R}^D_x ; B^{\beta}_{2,q} \left(\mathbb{R}^D_v\right) \right)},
		\end{aligned}
	\end{equation}
	and similarly, if $\beta=\frac 12$ and $q\neq 1$, for every $\epsilon>0$,
	\begin{equation}
		\begin{aligned}
			& 2^{k(s-\epsilon)} \left\|\Delta_{2^k}^x\int_{\mathbb{R}^D} f(x,v)\phi(v) dv\right\|_{L^2_x} \\
			&\hspace{20mm} \leq 
			C 2^{ka} \left\|\Delta_{2^k}^x f \right\|_{\widetilde L^2 \left( \mathbb{R}^D_x ; B^{\alpha}_{2,q} \left(\mathbb{R}^D_v\right) \right)}+
			C 2^{kb} \left\| \Delta_{2^k}^x g \right\|_{\widetilde L^2 \left( \mathbb{R}^D_x ; B^{\beta}_{2,q} \left(\mathbb{R}^D_v\right) \right)}.
		\end{aligned}
	\end{equation}
	Finally, summing over $k\in\mathbb{N}$ yields that, in the case $\beta\neq \frac 12$ or $q=1$,
	\begin{equation}
		\begin{aligned}
			&\left\|\left\{
			2^{ks}\left\|\Delta_{2^k}^x\int_{\mathbb{R}^D} f(x,v) \phi(v) dv\right\|_{L^2_x}
			\right\}_{k=0}^\infty\right\|_{\ell^q} \\
			&\hspace{40mm} \leq
			C
			\left\|\left\{ 2^{ka}
			\left\|\Delta_{2^k}^x f \right\|_{\widetilde L^2 \left( \mathbb{R}^D_x ; B^{\alpha}_{2,q} \left(\mathbb{R}^D_v\right) \right)}
			\right\}_{k=0}^\infty \right\|_{\ell^q} \\
			&\hspace{40mm} +
			C
			\left\|\left\{ 2^{kb}
			\left\| \Delta_{2^k}^x g \right\|_{\widetilde L^2 \left( \mathbb{R}^D_x ; B^{\beta}_{2,q} \left(\mathbb{R}^D_v\right) \right)}
			\right\}_{k=0}^\infty \right\|_{\ell^q},
		\end{aligned}
	\end{equation}
	and similarly, when $\beta=\frac 12$ and $q\neq 1$,
	\begin{equation}
		\begin{aligned}
			&\left\|\left\{
			2^{k(s-\epsilon)}\left\|\Delta_{2^k}^x\int_{\mathbb{R}^D} f(x,v) \phi(v) dv\right\|_{L^2_x}
			\right\}_{k=0}^\infty\right\|_{\ell^q} \\
			&\hspace{40mm} \leq
			C
			\left\|\left\{ 2^{ka}
			\left\|\Delta_{2^k}^x f \right\|_{\widetilde L^2 \left( \mathbb{R}^D_x ; B^{\alpha}_{2,q} \left(\mathbb{R}^D_v\right) \right)}
			\right\}_{k=0}^\infty \right\|_{\ell^q} \\
			&\hspace{40mm} +
			C
			\left\|\left\{ 2^{kb}
			\left\| \Delta_{2^k}^x g \right\|_{\widetilde L^2 \left( \mathbb{R}^D_x ; B^{\beta}_{2,q} \left(\mathbb{R}^D_v\right) \right)}
			\right\}_{k=0}^\infty \right\|_{\ell^q}.
		\end{aligned}
	\end{equation}

	We handle now the cases $\beta>\frac 12$ or $\beta=\frac 12$ and $q=1$, by letting $t$ tend to infinity in \eqref{classical 2}, as mentioned previously. This leads to
	\begin{equation}
		\left\|\Delta_{2^k}^x\int_{\mathbb{R}^D} f(x,v)\phi(v) dv\right\|_{L^2_x}
		\leq
		\frac{C}{2^k} \left\| \Delta_{2^k}^x g \right\|_{\widetilde L^2 \left( \mathbb{R}^D_x ; B^{\beta}_{2,q} \left(\mathbb{R}^D_v\right) \right)}.
	\end{equation}
	Hence, recalling $s=1+b$ and summing over $k$ yields
	\begin{equation}
		\begin{aligned}
			&\left\|\left\{
			2^{ks}\left\|\Delta_{2^k}^x\int_{\mathbb{R}^D} f(x,v) \phi(v) dv\right\|_{L^2_x}
			\right\}_{k=0}^\infty\right\|_{\ell^q} \\
			&\hspace{40mm} \leq
			C
			\left\|\left\{ 2^{kb}
			\left\| \Delta_{2^k}^x g \right\|_{\widetilde L^2 \left( \mathbb{R}^D_x ; B^{\beta}_{2,q} \left(\mathbb{R}^D_v\right) \right)}
			\right\}_{k=0}^\infty \right\|_{\ell^q},
		\end{aligned}
	\end{equation}
	which concludes the proof of the theorem.
\end{proof}

\subsection{The $L_x^1L_v^p$ and $L_x^2L_v^2$ cases reconciled}

We give now the proof of the general Theorem \ref{main averaging lemma}, which will follow from a simple interpolation procedure. In particular, the following lemma results from the interpolation between Lemma \ref{operators norms dispersive} and Lemma \ref{operators norms}.

	\begin{lem}\label{operators norms interpolation}
		Let $\phi(v)\in C_0^\infty\left(\mathbb{R}^D\right)$. For every $1\leq p, r\leq \infty$, $1\leq q < \infty$, $\alpha\in\mathbb{R}$, $k\in\mathbb{N}$ and $t\geq 2^{-k}$ such that
		\begin{equation}
			 r\leq p\leq r',
		\end{equation}
		 it holds that
		\begin{equation}
			\left\|A_{0}^1\left(f\phi\right)\right\|_{L^p_x}
			\leq C \left\| f \right\|_{\widetilde L^r\left( \mathbb{R}^D_x; B^\alpha_{p,q}\left(\mathbb{R}^D_v\right)\right)}
		\end{equation}
		and
		\begin{equation}
			\left\|A_{2^k}^t\left(f\phi\right)\right\|_{L^p_x}
			\leq {C\over t^{\alpha+1-\frac 1r + D\left(\frac 1r - \frac 1p\right)} 2^{k\left(\alpha+1-\frac 1r\right)}} \left\| f \right\|_{\widetilde L^r\left( \mathbb{R}^D_x; B^\alpha_{p,q}\left(\mathbb{R}^D_v\right)\right)},
		\end{equation}
		where $C>0$ is independent of $t$ and $2^k$.
		
		Furthermore, if
		\begin{equation}
			D\left(\frac 1r-\frac 1p\right)<\frac 2r -1\qquad \text{or}\qquad p=r=2,
		\end{equation}
		then, for any $\beta\in\mathbb{R}$, it holds that
		\begin{equation}
			\left\|B_{0}^1\left(g\phi\right)\right\|_{L^p_x}
			\leq C \left\| g \right\|_{\widetilde L^r\left( \mathbb{R}^D_x; B^\beta_{p,q}\left(\mathbb{R}^D_v\right)\right)},
		\end{equation}
		and, if further $\beta\neq\frac 1r-D\left(\frac 1r-\frac{1}{p}\right)$,
		\begin{equation}
			\left\|B_{2^k}^t\left(g\phi\right)\right\|_{L^p_x}
			\leq {C\over t^{\gamma+1-\frac 1r + D\left(\frac 1r - \frac 1p\right)} 2^{k\left(\gamma+1-\frac 1r\right)}} \left\| g \right\|_{\widetilde L^r\left( \mathbb{R}^D_x; B^\beta_{p,q}\left(\mathbb{R}^D_v\right)\right)},
		\end{equation}
		where $\gamma=\min\left\{\beta,\frac 1r-D\left(\frac 1r-\frac{1}{p}\right)\right\}$ and $C>0$ is independent of $t$ and $2^k$.
		
		Finally, if $\beta=\frac 1r-D\left(\frac 1r-\frac{1}{p}\right)$, then
		\begin{equation}
			\begin{aligned}
				\left\|B_{2^k}^t\left(g\phi\right)\right\|_{L^p_x}
				& \leq {C\over t 2^{k\left(\beta+1-\frac 1r\right)}}
				\log\left(1+t2^k\right)^\frac{1}{q'}
				\left\| g \right\|_{\widetilde L^r\left( \mathbb{R}^D_x; B^\beta_{p,q}\left(\mathbb{R}^D_v\right)\right)},
			\end{aligned}
		\end{equation}
		where $C>0$ is independent of $t$ and $2^k$.
	\end{lem}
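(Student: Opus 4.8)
The plan is to deduce the four estimates by complex interpolation between the two endpoint cases already at our disposal: $r=1$ (Lemma \ref{operators norms dispersive}) and $r=p=2$ (Lemma \ref{operators norms}). Since Lemma \ref{operators norms dispersive} controls $A_\delta^t f$ and $B_\delta^t g$ \emph{without} a velocity cutoff whereas the present statement carries one, the first step is to note that, for each fixed $t>0$ and $\delta\geq 0$, multiplication $M_\phi$ by the fixed $\phi\in C_0^\infty(\mathbb{R}^D)$ acts (in $v$ only) boundedly on every space $\widetilde L^r(\mathbb{R}^D_x;B^\sigma_{p,q}(\mathbb{R}^D_v))$ with a constant $C_\phi$ depending only on $\phi$ and fixed parameters; this is the paradifferential estimate recorded in Lemma \ref{paradifferential}, applied in the $v$ variable. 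Composing with $M_\phi$, the operators $A_\delta^t M_\phi$ and $B_\delta^t M_\phi$ thus obey, at $r_0=1$ and for any admissible $p_0$, the bounds of Lemma \ref{operators norms dispersive}, and at $r_1=p_1=2$ those of Lemma \ref{operators norms} (which already include $\phi$).

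I would fix $r\leq p\leq r'$ and $1\leq q<\infty$ and set $\theta=2(1-\tfrac1r)\in[0,1]$ together with $\tfrac1{p_0}=\bigl(\tfrac1p-1+\tfrac1r\bigr)\big/\bigl(\tfrac2r-1\bigr)$. One checks immediately that $r\leq p\leq r'$ is exactly the condition for $p_0\in[1,\infty]$, that $D\bigl(1-\tfrac1{p_0}\bigr)=D\bigl(\tfrac1r-\tfrac1p\bigr)\big/\bigl(\tfrac2r-1\bigr)$ — so that the hypothesis $D\bigl(\tfrac1r-\tfrac1p\bigr)<\tfrac2r-1$ is precisely the condition $D\bigl(1-\tfrac1{p_0}\bigr)<1$ used by the $B$-part of Lemma \ref{operators norms dispersive} — and, crucially, that the threshold $\tfrac1r-D\bigl(\tfrac1r-\tfrac1p\bigr)$ is exactly the $\theta$-interpolant $(1-\theta)\bigl(1-D(1-\tfrac1{p_0})\bigr)+\theta\cdot\tfrac12$ of the thresholds of the two source lemmas. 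The cases $r=1$ ($\theta=0$) and $r=p=2$ ($\theta=1$) are the source lemmas themselves (modulo $M_\phi$), so assume $0<\theta<1$. Using the standard identities $\bigl[\widetilde L^1_x B^\sigma_{p_0,q},\widetilde L^2_x B^\sigma_{2,q}\bigr]_\theta=\widetilde L^r_x B^\sigma_{p,q}$ and $\bigl[L^{p_0}_x,L^2_x\bigr]_\theta=L^p_x$ — valid since $q<\infty$, by the usual $\ell^q$-valued and mixed-norm retract arguments — and interpolating the two bounds for $A_{2^k}^t M_\phi$ with the common regularity index $\alpha$ at both ends, one gets
\[
\bigl\|A_{2^k}^t(f\phi)\bigr\|_{L^p_x}\leq C\,t^{-(\alpha+1-\frac1r+D(\frac1r-\frac1p))}\,2^{-k(\alpha+1-\frac1r)}\,\|f\|_{\widetilde L^r_x(B^\alpha_{p,q})},
\]
and likewise for $A_0^1$; the exponents come out by raising $t^{-(\alpha+D(1-1/p_0))}2^{-k\alpha}$ to the power $1-\theta$, $(t2^k)^{-(\alpha+1/2)}$ to the power $\theta$, and simplifying with $\tfrac\theta2=1-\tfrac1r$ and $(1-\theta)D(1-\tfrac1{p_0})=D(\tfrac1r-\tfrac1p)$. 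The $A$-estimate has no threshold subtlety.

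For $B$ the key observation is that one should \emph{not} keep the velocity regularity fixed equal to $\beta$ at both ends of the interpolation, but slide it along the interpolation line through $\beta$ so that both source estimates fall in the same regime. With $\gamma=\min\bigl\{\beta,\tfrac1r-D(\tfrac1r-\tfrac1p)\bigr\}$, I would distinguish three cases. If $\beta$ lies strictly below the threshold, pick $\beta_0<1-D(1-\tfrac1{p_0})$ and $\beta_1<\tfrac12$ with $(1-\theta)\beta_0+\theta\beta_1=\beta$ — possible exactly because $\beta$ is below the threshold — and interpolate the sub-threshold bounds of the two lemmas; with $\bigl[\widetilde L^1_x B^{\beta_0}_{p_0,q},\widetilde L^2_x B^{\beta_1}_{2,q}\bigr]_\theta=\widetilde L^r_x B^\beta_{p,q}$ the exponent arithmetic gives the claimed bound with $\gamma=\beta$. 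If $\beta$ lies strictly above the threshold, pick $\beta_0>1-D(1-\tfrac1{p_0})$ and $\beta_1>\tfrac12$ on the same line and interpolate the super-threshold bounds, which carry a clean $t^{-1}$ and yield $\gamma$ equal to the threshold with no logarithm. Finally, if $\beta$ equals the threshold, take $\beta_0=1-D(1-\tfrac1{p_0})$ and $\beta_1=\tfrac12$ (which interpolate to $\beta$) and interpolate the two logarithmic endpoint bounds: since each carries $\log(1+t2^k)^{1/q'}$ with the same $q'$, the product carries $\log(1+t2^k)^{(1-\theta)/q'+\theta/q'}=\log(1+t2^k)^{1/q'}$, exactly the asserted factor. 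The bound for $B_0^1$ follows by interpolating the $B_0^1$ estimates of the two lemmas directly.

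The only genuinely nontrivial point — the one I expect to require the most care — is the interaction between the logarithmic endpoint bounds and the fact that the thresholds $1-D(1-\tfrac1{p_0})$ and $\tfrac12$ of the two source lemmas do not coincide with the desired interpolated threshold $\tfrac1r-D(\tfrac1r-\tfrac1p)$ unless $r\in\{1,2\}$. This is resolved by the device above of moving the endpoint regularity indices $(\beta_0,\beta_1)$ along the interpolation line through $\beta$, together with a careful check that in each of the three regimes the resulting powers of $t$ and $2^k$ collapse, via $\theta=2(1-\tfrac1r)$ and $(1-\theta)D(1-\tfrac1{p_0})=D(\tfrac1r-\tfrac1p)$, to the single formula in the statement. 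The remaining ingredients, boundedness of $M_\phi$ and the complex-interpolation identities for Chemin--Lerner spaces with $q<\infty$, are standard.
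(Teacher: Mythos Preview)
Your proposal is correct and follows essentially the same approach as the paper: complex interpolation between the $r=1$ endpoint (Lemma \ref{operators norms dispersive}, combined with Lemma \ref{paradifferential} to absorb $\phi$) and the $r=p=2$ endpoint (Lemma \ref{operators norms}), with interpolation parameter $\theta=2(1-\tfrac1r)$, the same definition of $p_0$, and the same device of sliding the velocity regularity indices $(\beta_0,\beta_1)$ along the line through $\beta$ so that both endpoints lie in the matching regime (sub-threshold, super-threshold, or exactly at threshold). The arithmetic checks you sketch for the exponents and for the equivalence $D(1-\tfrac1{p_0})<1\Leftrightarrow D(\tfrac1r-\tfrac1p)<\tfrac2r-1$ are exactly those carried out in the paper, and the complex interpolation identity $\bigl[\widetilde L^1_x B^{\beta_0}_{p_0,q},\widetilde L^2_x B^{\beta_1}_{2,q}\bigr]_\theta=\widetilde L^r_x B^\beta_{p,q}$ you invoke is the one the paper states can be obtained by adapting the standard Besov case.
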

	
	\begin{proof}
		First, if $r=1$, then the statement of the present lemma is a straightforward consequence of Lemma \ref{operators norms dispersive} and Lemma \ref{paradifferential}, while the case $r=2$ is contained in Lemma \ref{operators norms}. If $1<r<2$, then the result will follow from the interpolation of Lemma \ref{operators norms dispersive} with Lemma \ref{operators norms}. To this end, we define the interpolation parameter $0<\lambda<1$ by $\lambda=2\left(1-\frac 1r\right)$, so that
		\begin{equation}
			\frac 1r=\frac{1-\lambda}{1}+\frac{\lambda}{2},
		\end{equation}
		and the parameter $1\leq p_0<\infty$ by
		\begin{equation}
			\frac 1p = \frac{1-\lambda}{p_0} + \frac{\lambda}{2}.
		\end{equation}
		It is then easy to check that if $D\left(\frac 1r-\frac 1p\right)<\frac 2r -1$, then
		\begin{equation}
			D\left(1-\frac 1{p_0}\right) <1.
		\end{equation}
		
		Further notice that
		\begin{equation}
			\frac 1r - D\left( \frac 1r - \frac 1p \right) =
			(1-\lambda)\left[ 1 - D\left( 1 - \frac 1{p_0} \right) \right] + {\lambda \over 2}.
		\end{equation}
		Therefore, if $\beta\left\{\substack{<\\=\\>}\right\}\frac 1r - D\left( \frac 1r - \frac 1p \right)$, it is always possible to respectively find $\beta_0\left\{\substack{<\\=\\>}\right\} 1 - D\left( 1 - \frac 1{p_0} \right)$ and $\beta_1\left\{\substack{<\\=\\>}\right\}\frac 12$ such that
		\begin{equation}
			\beta=(1-\lambda)\beta_0 + \lambda \beta_1.
		\end{equation}
		Consequently, defining $\gamma_0=\min\left\{\beta_0,1-D\left(1-\frac{1}{p_0}\right)\right\}$ and $\gamma_1=\min\left\{\beta_1,\frac 12\right\}$, it also holds that
		\begin{equation}
			\gamma=(1-\lambda)\gamma_0 + \lambda \gamma_1.
		\end{equation}

		Then, in the case $\beta\neq\frac 1r-D\left(\frac 1r-\frac{1}{p}\right)$, we deduce according to Lemma \ref{operators norms dispersive} that
		\begin{equation}\label{lambda 0}
			\begin{aligned}
				\left\|A_{0}^1\left(f\phi\right)\right\|_{L^{p_0}_x}
				& \leq C \left\| f \right\|_{\widetilde L^1\left( \mathbb{R}^D_x; B^{\alpha}_{p_0,q}\left(\mathbb{R}^D_v\right)\right)}, \\
				\left\|A_{2^k}^t\left(f\phi\right)\right\|_{L^{p_0}_x}
				& \leq {C\over t^{\alpha+D\left(1-\frac 1{p_0}\right)} 2^{k\alpha}} \left\| f \right\|_{\widetilde L^1\left( \mathbb{R}^D_x; B^{\alpha}_{p_0,q}\left(\mathbb{R}^D_v\right)\right)}, \\
				\left\|B_{0}^1\left(g\phi\right)\right\|_{L^{p_0}_x}
				& \leq C \left\| g \right\|_{\widetilde L^1\left( \mathbb{R}^D_x; B^{\beta_0}_{p_0,q}\left(\mathbb{R}^D_v\right)\right)},\\
				\left\|B_{2^k}^t\left(g\phi\right)\right\|_{L^{p_0}_x}
				& \leq {C\over t^{\gamma_0+D\left(1-\frac 1{p_0}\right)} 2^{k\gamma_0}} \left\| g \right\|_{\widetilde L^1\left( \mathbb{R}^D_x; B^{\beta_0}_{p_0,q}\left(\mathbb{R}^D_v\right)\right)},
			\end{aligned}
		\end{equation}
		where $C>0$ is independent of $t$ and $2^k$, and employing Lemma \ref{operators norms}, we infer that
		\begin{equation}\label{lambda 1}
			\begin{aligned}
				\left\|A_{0}^1\left(f\phi\right)\right\|_{L^2_x}
				&\leq
				C
				\left\| f \right\|_{\widetilde L^2 \left( \mathbb{R}^D_x ; B^{\alpha}_{2,q} \left(\mathbb{R}^D_v\right) \right)},\\
				\left\|A_{2^k}^t\left(f\phi\right)\right\|_{L^2_x}
				&\leq
				\frac{C}{\left(t2^k\right)^{\alpha+\frac 12}}
				\left\| f \right\|_{\widetilde L^2 \left( \mathbb{R}^D_x ; B^{\alpha}_{2,q} \left(\mathbb{R}^D_v\right) \right)},\\
				\left\|B_{0}^1\left(g\phi\right)\right\|_{L^2_x}
				&\leq
				C
				\left\| g \right\|_{\widetilde L^2 \left( \mathbb{R}^D_x ; B^{\beta_1}_{2,q} \left(\mathbb{R}^D_v\right) \right)},\\
				\left\|B_{2^k}^t\left(g\phi\right)\right\|_{L^2_x}
				&\leq
				\frac{C}{\left(t2^k\right)^{\gamma_1+\frac 12}}
				\left\| g \right\|_{\widetilde L^2 \left( \mathbb{R}^D_x ; B^{\beta_1}_{2,q} \left(\mathbb{R}^D_v\right) \right)},
			\end{aligned}
		\end{equation}
		where $C>0$ is independent of $t$ and $2^k$.
		
		In the case $\beta=\frac 1r-D\left(\frac 1r-\frac{1}{p}\right)$, according to the same Lemmas \ref{operators norms dispersive} and \ref{operators norms}, solely the estimates on $B_{2^k}^t$ should be replaced by
		\begin{equation}
			\begin{aligned}
				\left\|B_{2^k}^t\left(g\phi\right)\right\|_{L^{p_0}_x}
				& \leq {C\over t 2^{k\beta_0}}
				\log\left(1+t2^k\right)^\frac{1}{q'}
				\left\| g \right\|_{\widetilde L^1\left( \mathbb{R}^D_x; B^{\beta_0}_{p_0,q}\left(\mathbb{R}^D_v\right)\right)},\\
				\left\|B_{2^k}^t\left(g\phi\right)\right\|_{L^2_x}
				&\leq
				\frac{C}{t2^k}
				\log\left(1+t2^k\right)^\frac{1}{q'}
				\left\| g \right\|_{\widetilde L^2 \left( \mathbb{R}^D_x ; B^{\beta_1}_{2,q} \left(\mathbb{R}^D_v\right) \right)}.
			\end{aligned}
		\end{equation}
		
		We are now going to utilize standard results from complex interpolation theory (cf. \cite{bergh}) in order to obtain new estimates from the interpolation of estimates \eqref{lambda 0} and \eqref{lambda 1}.
		
		To this end, first recall that the real interpolation of Lebesgue spaces (cf. \cite{bergh}) yields that $\left(L^{p_0},L^{p_1}\right)_{[\theta]}=L^{p}$, for any $1\leq p,p_0,p_1< \infty$ and $0<\theta<1$ such that $\frac 1p=\frac {1-\theta}{p_0}+\frac \theta{p_1}$.

		Furthermore, the complex interpolation of standard Besov spaces (cf. \cite{bergh}) yields, in particular, that $\left(B^{\alpha_0}_{p_0,q},B^{\alpha_1}_{p_1,q}\right)_{[\theta]}=B^{\alpha}_{p,q}$, for any $\alpha,\alpha_0,\alpha_1\in\mathbb{R}$ and $1\leq p,p_0,p_1,q<\infty$ such that $\alpha=(1-\theta)\alpha_0+\theta\alpha_1$ and $\frac 1p=\frac{1-\theta}{p_0}+\frac{\theta}{p_1}$. It is then possible to smoothly adapt the proof of this standard property to obtain a corresponding result for the Besov spaces introduced in Section \ref{LP decomposition}. Namely, one can show that $\left(\widetilde L^{r_0} B^{\alpha_0}_{p_0,q},\widetilde L^{r_1} B^{\alpha_1}_{p_1,q}\right)_{[\theta]}=\widetilde L^r B^{\alpha}_{p,q}$, for any $\alpha,\alpha_0,\alpha_1\in\mathbb{R}$ and $1\leq p,p_0,p_1,q,r,r_0,r_1<\infty$ such that $\alpha=(1-\theta)\alpha_0+\theta\alpha_1$, $\frac 1p=\frac{1-\theta}{p_0}+\frac{\theta}{p_1}$ and $\frac 1r=\frac{1-\theta}{r_0}+\frac{\theta}{r_1}$.

		Therefore, we deduce from the complex interpolation of \eqref{lambda 0} and \eqref{lambda 1} that, in the case $\beta\neq\frac 1r-D\left(\frac 1r-\frac{1}{p}\right)$,
		\begin{equation}
			\begin{aligned}
				\left\|A_{0}^1\left(f\phi\right)\right\|_{L^{p}_x}
				& \leq C \left\| f \right\|_{\widetilde L^r\left( \mathbb{R}^D_x; B^{\alpha}_{p,q}\left(\mathbb{R}^D_v\right)\right)},\\
				\left\|A_{2^k}^t\left(f\phi\right)\right\|_{L^{p}_x}
				& \leq {C\over \left(t^{\alpha+D\left(1-\frac 1{p_0}\right)} 2^{k\alpha}\right)^{1-\lambda} \left(\left(t2^k\right)^{\alpha + \frac 1 2}\right)^\lambda} \left\| f \right\|_{\widetilde L^r\left( \mathbb{R}^D_x; B^{\alpha}_{p,q}\left(\mathbb{R}^D_v\right)\right)} \\
				& = {C\over t^{\alpha+1-\frac 1r +D\left(\frac 1r-\frac 1p\right)} 2^{k\left(\alpha+1-\frac 1r\right)}} \left\| f \right\|_{\widetilde L^r\left( \mathbb{R}^D_x; B^{\alpha}_{p,q}\left(\mathbb{R}^D_v\right)\right)},
			\end{aligned}
		\end{equation}
		and
		\begin{equation}
			\begin{aligned}
				\left\|B_{0}^1\left(g\phi\right)\right\|_{L^{p}_x}
				& \leq C \left\| g \right\|_{\widetilde L^r\left( \mathbb{R}^D_x; B^{\beta}_{p,q}\left(\mathbb{R}^D_v\right)\right)},\\
				\left\|B_{2^k}^t\left(g\phi\right)\right\|_{L^{p}_x}
				& \leq {C\over \left(t^{\gamma_0+D\left(1-\frac 1{p_0}\right)} 2^{k\gamma_0}\right)^{1-\lambda} \left(\left(t2^k\right)^{\gamma_1+\frac 1 2}\right)^\lambda} \left\| g \right\|_{\widetilde L^r\left( \mathbb{R}^D_x; B^{\beta}_{p,q}\left(\mathbb{R}^D_v\right)\right)} \\
				& = {C\over t^{\gamma+1-\frac 1r +D\left(\frac 1r-\frac 1p\right)} 2^{k\left(\gamma+1-\frac 1r\right)}} \left\| g \right\|_{\widetilde L^r\left( \mathbb{R}^D_x; B^{\beta}_{p,q}\left(\mathbb{R}^D_v\right)\right)},
			\end{aligned}
		\end{equation}
		where $C>0$ is independent of $t$ and $2^k$.
		
		Accordingly, in the case $\beta=\frac 1r-D\left(\frac 1r-\frac{1}{p}\right)$, only the estimate on $B_{2^k}^t$ should be replaced by
		\begin{equation}
			\begin{aligned}
				\left\|B_{2^k}^t\left(g\phi\right)\right\|_{L^{p}_x}
				& \leq {C\over \left(t2^{k\beta_0}\right)^{1-\lambda} \left(t2^k\right)^\lambda}
				\log\left(1+t2^k\right)^\frac{1}{q'}
				\left\| g \right\|_{\widetilde L^r\left( \mathbb{R}^D_x; B^{\beta}_{p,q}\left(\mathbb{R}^D_v\right)\right)} \\
				& = {C\over t 2^{k\left(\beta+1-\frac 1r\right)}}
				\log\left(1+t2^k\right)^\frac{1}{q'}
				\left\| g \right\|_{\widetilde L^r\left( \mathbb{R}^D_x; B^{\beta}_{p,q}\left(\mathbb{R}^D_v\right)\right)},
			\end{aligned}
		\end{equation}
		where $C>0$ is independent of $t$ and $2^k$, which concludes the proof of the lemma.
	\end{proof}

	\begin{proof}[Proof of Theorem \ref{main averaging lemma}]
		As usual, we begin with the decompositions
		\begin{equation}
			\Delta_{0}^x\int_{\mathbb{R}^D} f(x,v)\phi(v) dv
			= A_{0}^1 (f\phi) (x) + B_{0}^1 (g\phi) (x)
		\end{equation}
		and
		\begin{equation}
				\Delta_{2^k}^x\int_{\mathbb{R}^D} f(x,v) \phi(v) dv
				= A_{2^k}^t (f\phi) (x) + t B_{2^k}^t (g\phi) (x).
		\end{equation}
		provided by Proposition \ref{crucial}.
		
		We show in Lemma \ref{operators norms interpolation} that, in the case $\beta\neq\frac 1r-D\left(\frac 1r-\frac{1}{p}\right)$ or $\beta=\frac 1r-D\left(\frac 1r-\frac{1}{p}\right)$ and $q=1$, the operators above satisfy the bounds
		\begin{equation}
			\begin{aligned}
				\left\|A_{0}^1\left(f\phi\right)\right\|_{L^p_x}
				& \leq C \left\| f \right\|_{\widetilde L^r\left( \mathbb{R}^D_x; B^\alpha_{p,q}\left(\mathbb{R}^D_v\right)\right)}, \\
				\left\|A_{2^k}^t\left(f\phi\right)\right\|_{L^p_x}
				& \leq {C\over t^{\alpha+1-\frac 1r + D\left(\frac 1r - \frac 1p\right)} 2^{k\left(\alpha+1-\frac 1r\right)}} \left\| f \right\|_{\widetilde L^r\left( \mathbb{R}^D_x; B^\alpha_{p,q}\left(\mathbb{R}^D_v\right)\right)}, \\
				\left\|B_{0}^1\left(g\phi\right)\right\|_{L^p_x}
				& \leq C \left\| g \right\|_{\widetilde L^r\left( \mathbb{R}^D_x; B^\beta_{p,q}\left(\mathbb{R}^D_v\right)\right)},\\
				\left\|B_{2^k}^t\left(g\phi\right)\right\|_{L^p_x}
				& \leq {C\over t^{\gamma+1-\frac 1r + D\left(\frac 1r - \frac 1p\right)} 2^{k\left(\gamma+1-\frac 1r\right)}} \left\| g \right\|_{\widetilde L^r\left( \mathbb{R}^D_x; B^\beta_{p,q}\left(\mathbb{R}^D_v\right)\right)},
			\end{aligned}
		\end{equation}
		where $\gamma=\min\left\{\beta,\frac 1r-D\left(\frac 1r-\frac{1}{p}\right)\right\}$ and $C>0$ is independent of $t$ and $2^k$.
		
		It then follows that, in virtue of the identities \eqref{crucial 3} from Proposition \ref{crucial},
		\begin{equation}
			\begin{aligned}
				\left\|
				\Delta_{0}^x\int_{\mathbb{R}^D} f(x,v)\phi(v) dv
				\right\|_{L^p_x} 
				&\leq
				\left\| A_{0}^1\left( S_2^x f\phi \right) \right\|_{L^p_x}
				+
				\left\| B_{0}^1\left(  S_2^x g\phi \right) \right\|_{L^p_x} \\
				&\leq C
				\left\| S_2^x f \right\|_{\widetilde L^r\left( \mathbb{R}^D_x; B^\alpha_{p,q}\left(\mathbb{R}^D_v\right)\right)}
				+ C
				\left\| S_2^x g \right\|_{\widetilde L^r\left( \mathbb{R}^D_x; B^\beta_{p,q}\left(\mathbb{R}^D_v\right)\right)}\\
				&\leq C
				\left\| f \right\|_{B^{a,\alpha}_{r,p,q}\left(\mathbb{R}^D_x\times\mathbb{R}^D_v\right)}
				+ C
				\left\| g \right\|_{B^{b,\beta}_{r,p,q}\left(\mathbb{R}^D_x\times\mathbb{R}^D_v\right)},
			\end{aligned}
		\end{equation}
		which concludes the estimate on the low frequencies.
		
		Regarding the high frequencies, we obtain
		\begin{equation}\label{high freq interpol}
			\begin{aligned}
				&\left\|
				\Delta_{2^k}^x\int_{\mathbb{R}^D} f(x,v)\phi(v) dv
				\right\|_{L^p_x} \\
				&\hspace{30mm} \leq
				\left\| A_{2^k}^t\left( \Delta_{\left[2^{k-1},2^{k+1}\right]}^x f\phi \right) \right\|_{L^p_x}
				+ t
				\left\| B_{2^k}^t\left(  \Delta_{\left[2^{k-1},2^{k+1}\right]}^x g\phi \right) \right\|_{L^p_x} \\
				&\hspace{30mm} \leq C
				{1\over t^{\alpha+1-\frac 1r + D\left(\frac 1r - \frac 1p\right)} 2^{k\left(\alpha+1-\frac 1r\right)}}
				\left\| \Delta_{\left[2^{k-1},2^{k+1}\right]}^x f \right\|_{\widetilde L^r\left( \mathbb{R}^D_x; B^\alpha_{p,q}\left(\mathbb{R}^D_v\right)\right)}\\
				&\hspace{30mm} + 
				C{ t^{ \frac 1r -\gamma - D\left(\frac 1r - \frac 1p\right)} \over 2^{k\left(\gamma+1-\frac 1r\right)}}
				\left\| \Delta_{\left[2^{k-1},2^{k+1}\right]}^x g \right\|_{\widetilde L^r\left( \mathbb{R}^D_x; B^\beta_{p,q}\left(\mathbb{R}^D_v\right)\right)}.
			\end{aligned}
		\end{equation}
		
		Next, optimizing in $t$ for each value of $k$, we fix the interpolation parameter $t$ as $t_k=2^{-k\frac{\left(\alpha-\gamma\right) + \left(a-b\right)}{1+\alpha-\gamma}}$. Note that $t_k\geq 2^{-k}$, for $b\geq a-1$, and that this choice is independent of $1\leq p, q, r \leq \infty$.

		Furthermore, in the cases $\beta>\frac 1r-D\left(\frac 1r-\frac{1}{p}\right)$ or $\beta=\frac 1r-D\left(\frac 1r-\frac{1}{p}\right)$ and $q=1$, we can choose $t=\infty$, which is, in fact, more optimal than $t2^k=2^{k\frac{1+b-a}{1+\alpha-\gamma}}$, since it eliminates the first term in the right-hand side of the above estimates. This case is discussed later on.

		Therefore, denoting $ s = \left(1+b-a\right) \frac{\alpha + 1 -\frac 1r + D \left( \frac 1r - \frac 1 p \right)}{1+\alpha-\gamma} +a - D \left( \frac 1r - \frac 1 p \right) $ and setting $t=t_k$, we find that
		\begin{equation}
			\begin{aligned}
				& 2^{ks} \left\|
				\Delta_{2^k}^x\int_{\mathbb{R}^D} f(x,v)\phi(v) dv
				\right\|_{L^p_x} \\
				& \leq C
				2^{ka} \left\| \Delta_{\left[2^{k-1},2^{k+1}\right]}^x f \right\|_{\widetilde L^r\left( \mathbb{R}^D_x; B^\alpha_{p,q}\left(\mathbb{R}^D_v\right)\right)}
				+ 
				C 2^{kb} \left\| \Delta_{\left[2^{k-1},2^{k+1}\right]}^x g \right\|_{\widetilde L^r\left( \mathbb{R}^D_x; B^\beta_{p,q}\left(\mathbb{R}^D_v\right)\right)}.
			\end{aligned}
		\end{equation}
		Hence, summing over $k$, we obtain
		\begin{equation}
			\begin{aligned}
				& \left\| \left\{
				2^{ks} \left\|
				\Delta_{2^k}^x\int_{\mathbb{R}^D} f(x,v)\phi(v) dv
				\right\|_{L^p_x} \right\}_{k=0}^\infty
				\right\|_{\ell^q} \\
				&\hspace{30mm} \leq C
				\left\| \left\{
				2^{ka} \left\| \Delta_{2^k}^x f \right\|_{\widetilde L^r\left( \mathbb{R}^D_x; B^\alpha_{p,q}\left(\mathbb{R}^D_v\right)\right)} \right\}_{k=-1}^\infty
				\right\|_{\ell^q} \\
				&\hspace{30mm} +
				C
				\left\|\left\{
				2^{kb} \left\| \Delta_{2^k}^x g \right\|_{\widetilde L^r\left( \mathbb{R}^D_x; B^\beta_{p,q}\left(\mathbb{R}^D_v\right)\right)}
				\right\}_{k=-1}^\infty\right\|_{\ell^q}\\
				&\hspace{30mm}=
				C \left\| f \right\|_{B^{a,\alpha}_{r,p,q}\left(\mathbb{R}^D_x\times\mathbb{R}^D_v\right)}
				+
				C \left\| g \right\|_{B^{b,\beta}_{r,p,q}\left(\mathbb{R}^D_x\times\mathbb{R}^D_v\right)},
			\end{aligned}
		\end{equation}
		which concludes the proof of the theorem in the case $\beta<\frac 1r-D\left(\frac 1r-\frac{1}{p}\right)$.

		We handle now the cases $\beta>\frac 1r-D\left(\frac 1r-\frac{1}{p}\right)$ or $\beta=\frac 1r-D\left(\frac 1r-\frac{1}{p}\right)$ and $q=1$, by letting $t$ tend to infinity in \eqref{high freq interpol}, as mentioned previously. This leads to
		\begin{equation}
			\left\|
			\Delta_{2^k}^x\int_{\mathbb{R}^D} f(x,v)\phi(v) dv
			\right\|_{L^p_x} \\
			\leq
			{C \over 2^{k\left(1-D\left(\frac 1r-\frac{1}{p}\right)\right)}}
			\left\| \Delta_{\left[2^{k-1},2^{k+1}\right]}^x g \right\|_{\widetilde L^r\left( \mathbb{R}^D_x; B^\beta_{p,q}\left(\mathbb{R}^D_v\right)\right)}.
		\end{equation}
		Hence, recalling $ s = 1+b - D \left( \frac 1r - \frac 1 p \right) $ and summing over $k$ yields
		\begin{equation}
			\begin{aligned}
				&\left\|\left\{
				2^{ks}\left\|\Delta_{2^k}^x\int_{\mathbb{R}^D} f(x,v) \phi(v) dv\right\|_{L^p_x}
				\right\}_{k=0}^\infty\right\|_{\ell^q} \\
				&\hspace{40mm} \leq
				C
				\left\|\left\{ 2^{kb}
				\left\| \Delta_{2^k}^x g \right\|_{\widetilde L^r \left( \mathbb{R}^D_x ; B^{\beta}_{p,q} \left(\mathbb{R}^D_v\right) \right)}
				\right\}_{k=-1}^\infty \right\|_{\ell^q},
			\end{aligned}
		\end{equation}
		which concludes the proof of the theorem in the cases $\beta>\frac 1r-D\left(\frac 1r-\frac{1}{p}\right)$ or $\beta=\frac 1r-D\left(\frac 1r-\frac{1}{p}\right)$ and $q=1$.

		As for the case $\beta=\frac 1r-D\left(\frac 1r-\frac{1}{p}\right)$ and $q\neq 1$, employing the corresponding estimate from Lemma \ref{operators norms interpolation}, we find that, as in the proofs of Theorems \ref{pre averaging lemma p 2} and \ref{classical}, for every $\epsilon>0$,
		\begin{equation}
			\begin{aligned}
				& \left\| \left\{
				2^{k(s-\epsilon)} \left\|
				\Delta_{2^k}^x\int_{\mathbb{R}^D} f(x,v)\phi(v) dv
				\right\|_{L^p_x} \right\}_{k=0}^\infty
				\right\|_{\ell^q} \\
				&\hspace{30mm}\leq
				C \left\| f \right\|_{B^{a,\alpha}_{r,p,q}\left(\mathbb{R}^D_x\times\mathbb{R}^D_v\right)}
				+
				C \left\| g \right\|_{B^{b,\beta}_{r,p,q}\left(\mathbb{R}^D_x\times\mathbb{R}^D_v\right)},
			\end{aligned}
		\end{equation}

		which concludes the proof of the theorem.
	\end{proof}

We proceed now to the proof of the most general Theorem \ref{main averaging lemma 2}, which will follow from a quite involved interpolation procedure. In particular, the following lemma is a generalization of Lemma \ref{operators norms interpolation}.

\begin{lem}\label{operators norms interpolation lorentz}
	Let $\phi(v)\in C_0^\infty\left(\mathbb{R}^D\right)$. For every $1\leq p, q, r\leq \infty$, $0<m< \infty$, $\alpha,\lambda\in\mathbb{R}$ and $k\in\mathbb{N}$ such that
	\begin{equation}
		r\leq p\leq r'
	\end{equation}
	and
	\begin{equation}
		\frac{1}{m} =\alpha+1-\frac 1r + D\left(\frac 1r - \frac 1p\right)-\lambda,
	\end{equation}
	it holds that
	\begin{equation}
		\left\| \mathbb{1}_{\left\{t\geq 2^{-k}\right\}} t^\lambda \left\|A_{2^k}^t\left(f\phi\right)\right\|_{L^p_x} \right\|_{L_t^{m,q}}
		\leq {C\over 2^{k\left(\alpha+1-\frac 1r\right)}} \left\| f \right\|_{\widetilde L^r\left( \mathbb{R}^D_x; B^{\alpha}_{p,q}\left(\mathbb{R}^D_v\right)\right)},
	\end{equation}
	where $C>0$ is independent of $2^k$.
	
	Furthermore, if
	\begin{equation}
		D\left(\frac 1r-\frac 1p\right)<\frac 2r -1\qquad \text{or}\qquad p=r=2,
	\end{equation}
	then, for any $0<n < \infty$ and $\beta,\tau\in\mathbb{R}$ such that
	\begin{equation}
		\beta<\frac 1r-D\left(\frac 1r-\frac{1}{p}\right)
	\end{equation}
	and
	\begin{equation}
			\frac{1}{n} =\beta+1-\frac 1r + D\left(\frac 1r - \frac 1p\right)-\tau,
	\end{equation}
	it holds that
	\begin{equation}
		\left\| \mathbb{1}_{\left\{t\geq 2^{-k}\right\}} t^\tau \left\|B_{2^k}^t\left(g\phi\right)\right\|_{L^p_x} \right\|_{L_t^{n,q}}
		\leq {C\over 2^{k\left(\beta+1-\frac 1r\right)}} \left\| g \right\|_{\widetilde L^r\left( \mathbb{R}^D_x; B^{\beta}_{p,q}\left(\mathbb{R}^D_v\right)\right)},
	\end{equation}
	where $C>0$ is independent of $2^k$.
\end{lem}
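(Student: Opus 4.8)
The plan is to follow the interpolation scheme used in the proof of Lemma \ref{operators norms interpolation}, while retaining the interpolation parameter $t$ inside a Lorentz norm rather than estimating pointwise in $t$. The single new ingredient, absent from Lemma \ref{operators norms interpolation}, is that the pointwise-in-$t$ bounds of Lemmas \ref{operators norms dispersive} and \ref{operators norms} must be promoted to bounds in $L^{m,q}_t$. Observe that the relations $\frac1m=\alpha+1-\frac1r+D(\frac1r-\frac1p)-\lambda$ and $\frac1n=\beta+1-\frac1r+D(\frac1r-\frac1p)-\tau$ are precisely the ones for which the homogeneous weights $t^\lambda$ and $t^\tau$ turn the decay rates of $A^t_{2^k}$ and $B^t_{2^k}$ from those lemmas into the borderline powers $t^{-1/m}$ and $t^{-1/n}$, which belong to \emph{weak} $L^m$ and \emph{weak} $L^n$; passing from weak to genuine Lorentz integrability is exactly where the $\ell^q$-summability built into the Besov norms $\widetilde L^rB^\alpha_{p,q}$ and $\widetilde L^rB^\beta_{p,q}$ is consumed.

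First I would treat the two endpoint cases $r=1$ and $r=2$. For $r=1$, recall from Lemma \ref{operators norms dispersive}, via the transfer-of-frequencies identities \eqref{x-to-v} and property \eqref{crucial 3}, that for $t\ge 2^{-k}$ the quantity $\|A^t_{2^k}(f\phi)\|_{L^p_x}$ is dominated, up to a power of $t2^k$ and a power of $2^k$, by the norm of $\Delta^v_{[t2^k/2,\,4t2^k]}(f\phi)$, so that only the velocity-frequency blocks of $f\phi$ of size comparable to $t2^k$ enter. Decomposing $[2^{-k},\infty)$ into the dyadic intervals $[2^{j-k},2^{j+1-k}]$, $j\ge 0$, on which $t2^k\sim 2^j$, applying the elementary Littlewood--Paley principle (as in the proof of Proposition \ref{pre averaging lemma}) to replace $\Delta^v_{\sim 2^j}$ by a bounded sum of $\Delta^v_{2^{j'}}$ with $|j'-j|\le C$, and using the standard identification of the $L^{m,q}$-norm of a function that is (essentially) constant on each dyadic interval with the $\ell^q$-norm of its dyadic values (after the substitution $\delta=t2^k$, which accounts for the factor $2^{-k(\alpha+1-1/r)}$), the left-hand side is controlled by the $\ell^q$-norm of the weighted velocity blocks of $f\phi$, hence by $\|f\phi\|_{\widetilde L^1B^\alpha_{p,q}}\lesssim\|f\|_{\widetilde L^1B^\alpha_{p,q}}$ through paradifferential calculus (Lemma \ref{paradifferential}). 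The operator $B^t_{2^k}(g\phi)$ is handled the same way, the only difference being that the integration $\int_0^1\cdots ds$ in the definition of $B^t_\delta$ produces, for each fixed $t$, a sum over all lower dyadic velocity scales; thus $\|B^t_{2^k}(g\phi)\|_{L^p_x}$ appears as a discrete convolution $(b*a)_{j_k}$, with $a$ the weighted sequence of velocity blocks of $g\phi$ (so $a\in\ell^q$ with $\|a\|_{\ell^q}\lesssim\|g\|_{\widetilde L^1B^\beta_{p,q}}$) and $b_j=2^{-j(1-\beta-D(1-1/p))}\mathbb{1}_{\{j\ge 0\}}$, and the hypothesis $\beta<\frac1r-D(\frac1r-\frac1p)$ (i.e. $1-\beta-D(1-1/p)>0$ at $r=1$) makes $b\in\ell^1$; discrete Young's inequality $\|b*a\|_{\ell^q}\le\|b\|_{\ell^1}\|a\|_{\ell^q}$ then closes the estimate exactly as in the proof of Theorem \ref{pre averaging lemma p}. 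The case $r=2$ is identical, starting from Lemma \ref{operators norms} in place of Lemma \ref{operators norms dispersive}.

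It then remains to interpolate between $r=1$ and $r=2$, exactly as in Lemma \ref{operators norms interpolation}. Setting $\theta=2(1-1/r)\in(0,1)$, choosing $p_0$ by $\frac1p=\frac{1-\theta}{p_0}+\frac\theta2$ (so that $D(1-1/p_0)<1$, as checked there), and noting that the multiplier $t^\lambda$ commutes with interpolation, the linear map $f\mapsto\bigl\{t\mapsto\mathbb{1}_{\{t\ge 2^{-k}\}}t^\lambda A^t_{2^k}(f\phi)\bigr\}$ is bounded from $\widetilde L^1B^\alpha_{p_0,q}$ into $L^{m_0,q}_t(L^{p_0}_x)$ and from $\widetilde L^2B^\alpha_{2,q}$ into $L^{m_1,q}_t(L^2_x)$, with $\frac1{m_0}=\alpha+D(1-1/p_0)-\lambda$ and $\frac1{m_1}=\alpha+\frac12-\lambda$. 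Complex interpolation, using on the domain side the identity $\left(\widetilde L^1B^\alpha_{p_0,q},\widetilde L^2B^\alpha_{2,q}\right)_{[\theta]}=\widetilde L^rB^\alpha_{p,q}$ already recorded in the proof of Lemma \ref{operators norms interpolation}, and on the target side $\left(L^{m_0,q}_t(L^{p_0}_x),L^{m_1,q}_t(L^2_x)\right)_{[\theta]}=L^{m,q}_t(L^p_x)$ with $\frac1m=\frac{1-\theta}{m_0}+\frac\theta{m_1}$ (standard for Lorentz spaces, and its quasi-Banach version if $m<1$), yields the first estimate; a direct computation confirms $\frac1m=\alpha+1-\frac1r+D(\frac1r-\frac1p)-\lambda$, as wanted. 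The same argument with $B$ in place of $A$, $\beta$ for $\alpha$, $\tau$ for $\lambda$ and $n$ for $m$ gives the second estimate, the hypothesis $\beta<\frac1r-D(\frac1r-\frac1p)$ being what is needed to run the endpoint step.

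I expect the main obstacle to be the endpoint step for $B^t_{2^k}$, and specifically the bookkeeping needed to realize the continuous $L^{n,q}_t$-norm as a discrete $\ell^q$-sum: one must respect the cutoff $t\ge 2^{-k}$ and the $k$-dependent dyadic shift, treat the low-frequency block $\Delta^v_0(g\phi)$ separately (it is not absorbed by the geometric kernel $b$ and must be controlled using $1-\beta-D(1-1/p)>0$), and verify that all implied constants are genuinely independent of $k$ --- which relies on the scale invariance of the homogeneous weights $t^\lambda,t^\tau$ under $t\mapsto t2^k$. None of these is conceptually new, since each appears in one guise or another in the proofs of Theorems \ref{pre averaging lemma p} and \ref{classical}, but their interplay with the Lorentz norm demands some care.
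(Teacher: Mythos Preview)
Your approach is workable but takes a genuinely different route from the paper's. You go back to the endpoints $r=1$ and $r=2$, establish the $L^{m,q}_t$ (respectively $L^{n,q}_t$) bounds there by a direct dyadic-in-$t$ argument, and then complex-interpolate in $r$ as in Lemma~\ref{operators norms interpolation}. The paper instead starts from Lemma~\ref{operators norms interpolation} itself---which already gives the pointwise-in-$t$ decay for every $r\le p\le r'$---and interpolates in the \emph{regularity index}. Concretely, for nearby $\alpha_0<\alpha<\alpha_1$ the pointwise bound $\|A_{2^k}^t(f\phi)\|_{L^p_x}\lesssim t^{-1/m_i-\lambda}2^{-k(\alpha_i+1-1/r)}\|f\|_{\widetilde L^rB^{\alpha_i}_{p,q}}$ is trivially a weak-$L^{m_i}_t$ estimate, and real interpolation at parameter $(1/2,q)$ upgrades the pair of weak bounds to the desired $L^{m,q}_t$ bound, using the standard identities $(L^{m_0,\infty}A,L^{m_1,\infty}A)_{1/2,q}=L^{m,q}A$ on the target side and $(\widetilde L^rB^{\alpha_0}_{p,q},\widetilde L^rB^{\alpha_1}_{p,q})_{1/2,q}=\widetilde L^rB^{\alpha}_{p,q}$ on the domain side. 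The same works for $B_{2^k}^t$ with $\beta_0<\beta<\beta_1<\frac1r-D(\frac1r-\frac1p)$.

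What the paper's route buys is economy: Lemma~\ref{operators norms interpolation} is used as a black box, the $q$ in $L^{m,q}$ enters for free as the second real-interpolation parameter, and only the most classical interpolation identities are needed. Your route makes the mechanism more transparent (the $\ell^q$-summability in the Besov norm is visibly what produces the $L^{m,q}$ integrability through the dyadic decomposition of $t$), but it costs you two things that deserve care: the ``standard identification'' of the $L^{m,q}$-norm of a dyadically constant function with an $\ell^q$-norm of weighted values is correct here because the dyadic intervals have geometrically growing measure, yet this is not an identity valid on arbitrary measure spaces and warrants a short argument; and the complex interpolation $\bigl(L^{m_0,q}_t(L^{p_0}_x),\,L^{m_1,q}_t(L^2_x)\bigr)_{[\theta]}=L^{m,q}_t(L^p_x)$ in the vector-valued Lorentz scale, while true, is less readily quotable than the real-interpolation facts the paper uses.
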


\begin{proof}
	We proceed by interpolation of the estimates from Lemma \ref{operators norms interpolation} on the velocity regularity index, which in particular illustrates the importance of systematically dealing with the most general cases of function spaces as possible.
	
	To this end, consider $\alpha_0<\alpha<\alpha_1$, $\beta_0<\beta<\beta_1$, $0<m_1<m<m_0<\infty$ and $0<n_1<n<n_0<\infty$, such that
	\begin{equation}
		\alpha={\alpha_0+\alpha_1\over 2}, \qquad \beta={\beta_0+\beta_1\over 2}, \qquad
		\beta_1<\frac 1r-D\left(\frac 1r-\frac{1}{p}\right),
	\end{equation}
	and, for each $i=0,1$,
	\begin{equation}
		\begin{aligned}
			\frac{1}{m_i} & =\alpha_i+1-\frac 1r + D\left(\frac 1r - \frac 1p\right)-\lambda, \\
			\frac{1}{n_i} & =\beta_i+1-\frac 1r + D\left(\frac 1r - \frac 1p\right)-\tau.
		\end{aligned}
	\end{equation}
	
	Then, using Lemma \ref{operators norms interpolation}, it holds that, for each $i=0,1$ and every $t\geq 2^{-k}$,
	\begin{equation}
			\left\|A_{2^k}^t\left(f\phi\right)\right\|_{L^p_x}
			\leq {C\over t^{\alpha_i+1-\frac 1r + D\left(\frac 1r - \frac 1p\right)} 2^{k\left(\alpha_i+1-\frac 1r\right)}} \left\| f \right\|_{\widetilde L^r\left( \mathbb{R}^D_x; B^{\alpha_i}_{p,q}\left(\mathbb{R}^D_v\right)\right)}
	\end{equation}
	and, if further $D\left(\frac 1r-\frac 1p\right)<\frac 2r -1$ or $p=r=2$,
	\begin{equation}
			\left\|B_{2^k}^t\left(g\phi\right)\right\|_{L^p_x}
			\leq {C\over t^{\beta_i+1-\frac 1r + D\left(\frac 1r - \frac 1p\right)} 2^{k\left(\beta_i+1-\frac 1r\right)}} \left\| g \right\|_{\widetilde L^r\left( \mathbb{R}^D_x; B^{\beta_i}_{p,q}\left(\mathbb{R}^D_v\right)\right)}.
	\end{equation}
	It follows that
	\begin{equation}\label{to interpolate}
		\begin{aligned}
			\left\| \mathbb{1}_{\left\{t\geq 2^{-k}\right\}}  t^\lambda \left\|A_{2^k}^t\left(f\phi\right)\right\|_{L^p_x} \right\|_{L_t^{m_i,\infty}}
			& \leq {C\over 2^{k\left(\alpha_i+1-\frac 1r\right)}} \left\| f \right\|_{\widetilde L^r\left( \mathbb{R}^D_x; B^{\alpha_i}_{p,q}\left(\mathbb{R}^D_v\right)\right)}, \\
			\left\| \mathbb{1}_{\left\{t\geq 2^{-k}\right\}} t^\tau \left\|B_{2^k}^t\left(g\phi\right)\right\|_{L^p_x} \right\|_{L_t^{n_i,\infty}}
			& \leq {C\over 2^{k\left(\beta_i+1-\frac 1r\right)}} \left\| g \right\|_{\widetilde L^r\left( \mathbb{R}^D_x; B^{\beta_i}_{p,q}\left(\mathbb{R}^D_v\right)\right)},
		\end{aligned}
	\end{equation}
	where $L^{p,q}$ denotes the standard Lorentz spaces.
	
	Recall now that the real interpolation of Lorentz spaces (cf. \cite{bergh}) yields, in particular, that $\left(L^{m_0,\infty}A,L^{m_1,\infty}A\right)_{\frac 12,q}=L^{m,q}A$, where $A$ is any fixed Banach space, for any $0<m,m_0,m_1,q\leq \infty$ such that $\frac 1m=\frac 12\left(\frac 1{m_0}+\frac 1{m_1}\right)$ and $m_0\neq m_1$.
	
	Furthermore, the real interpolation of standard Besov spaces (cf. \cite{bergh}) yields, in particular, that $\left(B^{\alpha_0}_{p,q},B^{\alpha_1}_{p,q}\right)_{\frac 12,c}=B^{\alpha}_{p,c}$, for any $\alpha,\alpha_0,\alpha_1\in\mathbb{R}$ and $1\leq p,q,c\leq\infty$ such that $\alpha=\frac{\alpha_0+\alpha_1}{2}$ and $\alpha_0\neq\alpha_1$. It is then possible to smoothly adapt the proof of this standard property to obtain a corresponding result for the Besov spaces introduced in Section \ref{LP decomposition}. Namely, one can show that $\left(\widetilde L^r B^{\alpha_0}_{p,q},\widetilde L^r B^{\alpha_1}_{p,q}\right)_{\frac 12,c}=\widetilde L^r B^{\alpha}_{p,c}$, for any $\alpha,\alpha_0,\alpha_1\in\mathbb{R}$ and $1\leq p,q,c\leq\infty$ such that $\alpha=\frac{\alpha_0+\alpha_1}{2}$ and $\alpha_0\neq\alpha_1$.
	
	Therefore, we deduce from \eqref{to interpolate} that
	\begin{equation}
		\begin{aligned}
			\left\| \mathbb{1}_{\left\{t\geq 2^{-k}\right\}} t^\lambda \left\|A_{2^k}^t\left(f\phi\right)\right\|_{L^p_x} \right\|_{L_t^{m,q}}
			& \leq {C\over 2^{k\left(\alpha+1-\frac 1r\right)}} \left\| f \right\|_{\widetilde L^r\left( \mathbb{R}^D_x; B^{\alpha}_{p,q}\left(\mathbb{R}^D_v\right)\right)}, \\
			\left\| \mathbb{1}_{\left\{t\geq 2^{-k}\right\}} t^\tau \left\|B_{2^k}^t\left(g\phi\right)\right\|_{L^p_x} \right\|_{L_t^{n,q}}
			& \leq {C\over 2^{k\left(\beta+1-\frac 1r\right)}} \left\| g \right\|_{\widetilde L^r\left( \mathbb{R}^D_x; B^{\beta}_{p,q}\left(\mathbb{R}^D_v\right)\right)},
		\end{aligned}
	\end{equation}
	which concludes the proof of the lemma.
	
	Note that, unfortunately, in the case $\beta\geq\frac 1r-D\left(\frac 1r-\frac{1}{p}\right)$, we cannot use interpolation methods to improve the results of Lemma \ref{operators norms interpolation} because $\gamma-\frac 1r+D\left(\frac 1r-\frac{1}{p}\right)=0$, where $\gamma=\min\left\{\beta,\frac 1r-D\left(\frac 1r-\frac{1}{p}\right)\right\}$, remains constant over that range of parameters.
\end{proof}

\begin{proof}[Proof of Theorem \ref{main averaging lemma 2}]
	As usual, we begin with the decompositions
	\begin{equation}
		\Delta_{0}^x\int_{\mathbb{R}^D} f(x,v)\phi(v) dv
		= A_{0}^1 (f\phi) (x) + B_{0}^1 (g\phi) (x)
	\end{equation}
	and
	\begin{equation}
			\Delta_{2^k}^x\int_{\mathbb{R}^D} f(x,v) \phi(v) dv
			= A_{2^k}^t (f\phi) (x) + t B_{2^k}^t (g\phi) (x).
	\end{equation}
	provided by Proposition \ref{crucial}.
	
	We have shown in Lemma \ref{operators norms interpolation lorentz} above that the operators $A_{2^k}^t$ and $B_{2^k}^t$ satisfy the bounds
	\begin{equation}\label{A B norm}
		\begin{aligned}
			\left\| \mathbb{1}_{\left\{t\geq 2^{-k}\right\}} t^{\lambda} \left\|A_{2^k}^t\left(f\phi\right)\right\|_{L^{p_0}_x} \right\|_{L_t^{q_0}}
			& \leq {C\over 2^{k\left(\alpha+1-\frac 1{r_0}\right)}} \left\| f \right\|_{\widetilde L^{r_0}\left( \mathbb{R}^D_x; B^{\alpha}_{p_0,q_0}\left(\mathbb{R}^D_v\right)\right)}, \\
			\left\| \mathbb{1}_{\left\{t\geq 2^{-k}\right\}} t^{\tau} \left\|B_{2^k}^t\left(g\phi\right)\right\|_{L^{p_1}_x} \right\|_{L_t^{q_1}}
			& \leq {C\over 2^{k\left(\beta+1-\frac 1{r_1}\right)}} \left\| g \right\|_{\widetilde L^{r_1}\left( \mathbb{R}^D_x; B^{\beta}_{p_1,q_1}\left(\mathbb{R}^D_v\right)\right)},
		\end{aligned}
	\end{equation}
	where $C>0$ is independent of $2^k$ and
	\begin{equation}
		\begin{aligned}
			\lambda & =\alpha+1-\frac 1{r_0} + D\left(\frac 1{r_0} - \frac 1{p_0}\right) - \frac 1{q_0}, \\
			\tau & =\beta+1-\frac 1{r_1} + D\left(\frac 1{r_1} - \frac 1{p_1}\right) - \frac 1{q_1}.
		\end{aligned}
	\end{equation}
	
	We will now make use of the construction of interpolation spaces known as \emph{espaces de moyennes} presented in section \ref{interpolation}. Specifically, in virtue of the property $\left(L^{p_0},L^{p_1}\right)_{\theta,p}=L^p$ valid for all $1\leq p,p_0,p_1\leq\infty$ and $0<\theta<1$ such that $\frac 1p=\frac{1-\theta}{p_0}+\frac{\theta}{p_1}$, we wish to express the Lebesgue space $L^p$ using the norm \eqref{espace moyenne}. That is to say, we will employ the norm
	\begin{equation}
		\inf_{a=a_0+a_1} \left( \left\|s^{-\theta} a_0(s)\right\|^{q_0}_{L^{q_0}\left((0,\infty),\frac{ds}{s};L^{p_0}(\mathbb{R}^D,dx)\right)} + \left\|s^{1-\theta} a_1(s)\right\|^{q_1}_{L^{q_1}\left((0,\infty),\frac{ds}{s};L^{p_1}(\mathbb{R}^D,dx)\right)} \right)^\frac{1}{p},
	\end{equation}
	which is equivalent to the usual norm on $L^p\left(\mathbb{R}^D,dx\right)$, where $\frac 1p=\frac{1-\theta}{p_0}+\frac{\theta}{p_1}$, provided $\frac 1p=\frac{1-\theta}{q_0}+\frac{\theta}{q_1}$ and $q_0,q_1<\infty$.
	
	To this end, for some suitable bijective function $t(s):(0,\infty)\rightarrow(0,\infty)$ to be determined later on, we will decompose
	\begin{equation}
			\Delta_{2^k}^x\int_{\mathbb{R}^D} f(x,v) \phi(v) dv = a = a_0(s) + a_1(s)
			= \Delta_{2^k}^x\int_{\mathbb{R}^D} f(x,v) \phi(v) dv + 0,
	\end{equation}
	when $0<t(s)< 2^{-k}$, and
	\begin{equation}
			\Delta_{2^k}^x\int_{\mathbb{R}^D} f(x,v) \phi(v) dv = a = a_0(s) + a_1(s)
			= A_{2^k}^t (f\phi) (x) + t B_{2^k}^t (g\phi) (x),
	\end{equation}
	when $t(s)\geq 2^{-k}$. It follows that, using Bernstein's inequalities,
	\begin{equation}
		\begin{aligned}
			& \left\|\Delta_{2^k}^x\int_{\mathbb{R}^D} f(x,v) \phi(v)dv\right\|_{L^p_x}
			\\
			&  \leq 
			C \left\|\mathbb{1}_{\left\{0<t< 2^{-k}\right\}}s^{-\theta-\frac{1}{q_0}} \left\|\Delta_{2^k}^x\int_{\mathbb{R}^D} f(x,v) \phi(v)dv\right\|_{L^{p_0}_x}\right\|^\frac{q_0}{p}_{L^{q_0}_s}
			\\
			&  + C \left\|\mathbb{1}_{\left\{t\geq 2^{-k}\right\}}s^{-\theta-\frac{1}{q_0}} \left\|A_{2^k}^t\left(f\phi\right)\right\|_{L^{p_0}_x}\right\|^\frac{q_0}{p}_{L^{q_0}_s} + C \left\|\mathbb{1}_{\left\{t\geq 2^{-k}\right\}}s^{1-\theta-\frac{1}{q_1}} t\left\|B_{2^k}^t\left(g\phi\right)\right\|_{L^{p_1}_x}\right\|^\frac{q_1}{p}_{L^{q_1}_s}\\
			\\
			&  \leq 
			C 2^{kD\left(\frac{1}{r_0}-\frac{1}{p_0}\right)\frac{q_0}{p}} \left\|\mathbb{1}_{\left\{0<t< 2^{-k}\right\}}s(t)^{-\theta}\left({s'(t)\over s(t)}\right)^\frac{1}{q_0} \right\|_{L^{q_0}_t}^\frac{q_0}{p}
			\left\|\Delta_{2^k}^x\int_{\mathbb{R}^D} f(x,v) \phi(v)dv\right\|_{L^{r_0}_x}^\frac{q_0}{p}
			\\
			&  + C \left\|\mathbb{1}_{\left\{t\geq 2^{-k}\right\}}s(t)^{-\theta}\left({s'(t)\over s(t)}\right)^\frac{1}{q_0} \left\|A_{2^k}^t\left(f\phi\right)\right\|_{L^{p_0}_x}\right\|^\frac{q_0}{p}_{L^{q_0}_t}
			\\
			&  + C \left\|\mathbb{1}_{\left\{t\geq 2^{-k}\right\}}s(t)^{1-\theta}\left({s'(t)\over s(t)}\right)^\frac{1}{q_1} t\left\|B_{2^k}^t\left(g\phi\right)\right\|_{L^{p_1}_x}\right\|^\frac{q_1}{p}_{L^{q_1}_t}.
		\end{aligned}
	\end{equation}
	
	Next, we set the dependence of $s$ with respect to $t$ so that we may utilize the estimates \eqref{A B norm}. This amounts to optimizing the value of the interpolation parameter $s$ for each value of $t$, which also dictates the value of $0<\theta<1$. That is to say, for given coefficients $c_1,c_2>0$ independent of $k$, we wish to find an optimal value for a function of the form
	\begin{equation}
		c_1\frac{s(t)^{-\theta}}{2^{k\left(\alpha+1-\frac 1{r_0}\right)}t^\lambda}
		\left({s'(t)\over s(t)}\right)^\frac{1}{q_0}
		+ c_2 \frac{s(t)^{1-\theta}}{2^{k\left(\beta+1-\frac 1{r_1}\right)}t^\tau}
		\left({s'(t)\over s(t)}\right)^\frac{1}{q_1}t.
	\end{equation}
	Thus, we define
	\begin{equation}
		\begin{aligned}
			s(t)&=\frac{2^{k\left(\beta+b+1-\frac 1{r_1}\right)}t^{\tau+\frac{1}{q_1}-1}}{2^{k\left(\alpha+a+1-\frac 1{r_0}\right)}t^{\lambda+\frac{1}{q_0}}}\\
			& =
			2^{k\left((\beta+b)-(\alpha+a)+\frac{1}{r_0}-\frac{1}{r_1}\right)}
			t^{\beta-\alpha-1+\frac{1}{r_0}-\frac{1}{r_1}+D\left(\frac{1}{r_1}-\frac{1}{r_0}-\frac{1}{p_1}+\frac{1}{p_0}\right)},
		\end{aligned}
	\end{equation}
	which is admissible since $\beta-\alpha-1+\frac{1}{r_0}-\frac{1}{r_1}+D\left(\frac{1}{r_1}-\frac{1}{r_0}-\frac{1}{p_1}+\frac{1}{p_0}\right)< 0$. Furthermore, in order that the resulting terms $\frac{s(t)^{-\theta}}{t^{\lambda + \frac{1}{q_0}}}$ and $\frac{s(t)^{1-\theta}}{t^{\tau + \frac{1}{q_1}-1}}$ be independent of $t$, we set $0<\theta<1$ so that $(1-\theta)\left(\lambda + \frac{1}{q_0}\right)+\theta\left(\tau+\frac{1}{q_1} -1\right)=0$, which results in
	\begin{equation}
			\theta 
			= \frac{\alpha + 1 -\frac{1}{r_0} + D\left(\frac{1}{r_0}-\frac{1}{p_0}\right)}
			{\alpha + 1 -\frac{1}{r_0} + D\left(\frac{1}{r_0}-\frac{1}{p_0}\right)
			-\beta+\frac{1}{r_1}-D\left(\frac{1}{r_1}-\frac{1}{p_1}\right)}.
	\end{equation}
	To be precise, these choices of parameters yield that
	\begin{equation}
		\begin{aligned}
			s(t)^{-\theta}
			& = 2^{-ks} 2^{k\left(a+\alpha+1-\frac 1{r_0}\right)}t^{\lambda+\frac 1{q_0}},
			\\
			s(t)^{1-\theta}
			& = 2^{-ks} 2^{k\left(b+\beta+1-\frac 1{r_1}\right)}t^{\tau-1+\frac 1 {q_1}},
		\end{aligned}
	\end{equation}
	where $s = (1-\theta)\left(\alpha+a+1-\frac 1{r_0}\right)+\theta\left(\beta+b+1-\frac 1{r_1}\right)$.
	
	We conclude that, in virtue of the identities \eqref{crucial 3} from Proposition \ref{crucial},
	\begin{equation}
		\begin{aligned}
			\left(2^{ks}\left\|\Delta_{2^k}^x\int_{\mathbb{R}^D} f(x,v) \phi(v)dv\right\|_{L^p_x}\right)^p
			&\leq
			C \left(2^{ka} \left\| \Delta_{\left[2^{k-1},2^{k+1}\right]}^x f \right\|_{\widetilde L^{r_0}\left( \mathbb{R}^D_x; B^{\alpha}_{p_0,q_0}\left(\mathbb{R}^D_v\right)\right)}\right)^{q_0}
			\\
			& + C \left(2^{kb} \left\| \Delta_{\left[2^{k-1},2^{k+1}\right]}^x g \right\|_{\widetilde L^{r_1}\left( \mathbb{R}^D_x; B^{\beta}_{p_1,q_1}\left(\mathbb{R}^D_v\right)\right)}\right)^{q_1}.
		\end{aligned}
	\end{equation}
	Hence, summing over $k$, we obtain
	\begin{equation}
		\begin{aligned}
			& \left\| \left\{
			2^{ks} \left\|
			\Delta_{2^k}^x\int_{\mathbb{R}^D} f(x,v)\phi(v) dv
			\right\|_{L^p_x} \right\}_{k=0}^\infty
			\right\|_{\ell^p} \\
			&\hspace{30mm} \leq C
			\left\| \left\{
			2^{ka} \left\| \Delta_{2^k}^x f \right\|_{\widetilde L^{r_0}\left( \mathbb{R}^D_x; B^{\alpha}_{p_0,q_0}\left(\mathbb{R}^D_v\right)\right)} \right\}_{k=-1}^\infty
			\right\|_{\ell^{q_0}}^\frac{q_0}{p} \\
			&\hspace{30mm} +
			C
			\left\|\left\{
			2^{kb} \left\| \Delta_{2^k}^x g \right\|_{\widetilde L^{r_1}\left( \mathbb{R}^D_x; B^{\beta}_{p_1,q_1}\left(\mathbb{R}^D_v\right)\right)}
			\right\}_{k=-1}^\infty\right\|_{\ell^{q_1}}^\frac{q_1}{p}\\
			&\hspace{30mm}\leq
			C \left\| f \right\|_{B^{a,\alpha}_{r_0,p_0,q_0}\left(\mathbb{R}^D_x\times\mathbb{R}^D_v\right)}^\frac{q_0}{p}
			+
			C \left\| g \right\|_{B^{b,\beta}_{r_1,p_1,q_1}\left(\mathbb{R}^D_x\times\mathbb{R}^D_v\right)}^\frac{q_1}{p}.
		\end{aligned}
	\end{equation}
	Now, if the above estimate holds true for any $f$ and $g$, then it must also be valid for all $\lambda f$ and $\lambda g$, where $\lambda>0$, so that
	\begin{equation}
		\begin{aligned}
			&  \left\| \left\{
			2^{ks} \left\|
			\Delta_{2^k}^x\int_{\mathbb{R}^D} f(x,v)\phi(v) dv
			\right\|_{L^p_x} \right\}_{k=0}^\infty
			\right\|_{\ell^p} \\
			&\hspace{30mm}\leq
			C \lambda^{\frac{q_0}{p}-1} \left\| f \right\|_{B^{a,\alpha}_{r_0,p_0,q_0}\left(\mathbb{R}^D_x\times\mathbb{R}^D_v\right)}^\frac{q_0}{p}
			+
			C \lambda^{\frac{q_1}{p}-1} \left\| g \right\|_{B^{b,\beta}_{r_1,p_1,q_1}\left(\mathbb{R}^D_x\times\mathbb{R}^D_v\right)}^\frac{q_1}{p}.
		\end{aligned}
	\end{equation}
	Recalling that $\frac 1p=\frac{1-\theta}{q_0}+\frac{\theta}{q_1}$ and optimizing in $\lambda$ concludes the proof of the main estimate for the high frequencies of the velocity average.
	
	Thus, there only remains to control the low frequencies $\Delta_{0}^x\int_{\mathbb{R}^D} f(x,v)\phi(v) dv$. To this end, if $p\geq r_0$, a straightforward application of Bernstein's inequalities shows that
	\begin{equation}\label{low freq easy}
		\begin{aligned}
			& \left\|\Delta_{0}^x\int_{\mathbb{R}^D} f(x,v)\phi(v) dv\right\|_{L^p_x}\\
			& \leq C
			\left\|\int_{\mathbb{R}^D} \Delta_{0}^xf(x,v)\phi(v) dv\right\|_{L^{r_0}_x} \\
			& = C
			\left\|\int_{\mathbb{R}^D} \Delta_{0}^x \Delta_{0}^v f(x,v)S_2^v\phi(v) dv
			+ \sum_{k=0}^\infty \int_{\mathbb{R}^D} \Delta_{0}^x \Delta_{2^k}^v f(x,v) \Delta_{\left[2^{k-1},2^{k+1}\right]}^v \phi(v) dv\right\|_{L^{r_0}_x} \\
			& \leq C
			\left\|\Delta_{0}^x \Delta_{0}^v f\right\|_{L^{r_0}_xL^{p_0}_v}
			\left\|S_2^v\phi\right\|_{L^{p_0'}_v}
			+ C \sum_{k=0}^\infty
			\left\| \Delta_{0}^x \Delta_{2^k}^v f \right\|_{L^{r_0}_xL^{p_0}_v}
			\left\|\Delta_{\left[2^{k-1},2^{k+1}\right]}^v \phi\right\|_{L^{p_0'}_v} \\
			& \leq C
			\left\| f \right\|_{B^{a,\alpha}_{r_0,p_0,q_0}\left(\mathbb{R}^D_x\times\mathbb{R}^D_v\right)}
			\left\| \phi \right\|_{B^{-\alpha}_{p_0',q_0'}\left(\mathbb{R}^D_v\right)}.
		\end{aligned}
	\end{equation}
	
	Finally, in the case $p < r_0$, the direct use of Bernstein's inequalities as above is not allowed and it is therefore necessary to localize the norm in space in order to carry out the preceding argument. Thus, for any $\chi(x)\in C_0^\infty\left(\mathbb{R}^D\right)$, we employ the standard methods of paradifferential calculus, which are used in the proof of Lemma \ref{paradifferential}, to decompose, following Bony's method,
	\begin{equation}
		\Delta_0^x \left(f(x,v)\chi(x)\right)=
		\Delta_0^x\left[
		\Delta_0^x f S_4^x \chi + \Delta_1^x f S_8^x \chi + \Delta_2^x f S_{16}^x \chi
		+ \sum_{k=2}^\infty \Delta_{2^k}^x f \Delta_{\left[2^{k-2},2^{k+2}\right]}^x \chi
		\right].
	\end{equation}
	Since $\chi$ is rapidly decaying, it then follows that, repeating the estimates from \eqref{low freq easy},
	\begin{equation}
		\begin{aligned}
			& \left\|\Delta_{0}^x\int_{\mathbb{R}^D} f(x,v)\chi(x)\phi(v) dv\right\|_{L^p_x}\\
			& \leq C \left\|\int_{\mathbb{R}^D} \left[
			\Delta_0^x f S_4^x \chi + \Delta_1^x f S_8^x \chi + \Delta_2^x f S_{16}^x \chi
			+ \sum_{k=2}^\infty \Delta_{2^k}^x f \Delta_{\left[2^{k-2},2^{k+2}\right]}^x \chi
			\right]\phi(v) dv
			\right\|_{L^p_x}\\
			& \leq C
			\left\|\int_{\mathbb{R}^D} \Delta_{0}^xf(x,v)\phi(v) dv\right\|_{L^{r_0}_x}
			+ C \sum_{k=0}^\infty 2^{ka}
			\left\|\int_{\mathbb{R}^D} \Delta_{2^k}^xf(x,v)\phi(v) dv\right\|_{L^{r_0}_x} \\
			& \leq C
			\left\| f \right\|_{B^{a,\alpha}_{r_0,p_0,q_0}\left(\mathbb{R}^D_x\times\mathbb{R}^D_v\right)}
			\left\| \phi \right\|_{B^{-\alpha}_{p_0',q_0'}\left(\mathbb{R}^D_v\right)},
		\end{aligned}
	\end{equation}
	which concludes the proof of the theorem.
\end{proof}

\appendix
\section{Some paradifferential calculus}

For the sake of completeness and convenience of the reader, we include below a technical lemma on basic paradifferential calculus.

\begin{lem}\label{paradifferential}
	Let $1\leq p,q,r\leq \infty$, $s\in\mathbb{R}$ and $\phi(v)\in\mathcal{S}\left(\mathbb{R}^D\right)$. Then, for any $f(x,v) \in \widetilde L^r \left( \mathbb{R}^D_x ; B^{s}_{p,q} \left(\mathbb{R}^D_v\right) \right)$, it holds that
	\begin{equation}\label{paradifferential 1}
		\left\| f \phi \right\|_{\widetilde L^r \left( \mathbb{R}^D_x ; B^{s}_{p,q} \left(\mathbb{R}^D_v\right) \right)}
		\leq C
		\left\| f \right\|_{\widetilde L^r \left( \mathbb{R}^D_x ; B^{s}_{p,q} \left(\mathbb{R}^D_v\right) \right)},
	\end{equation}
	where $C>0$ only depends on $\phi$ and on fixed parameters.
\end{lem}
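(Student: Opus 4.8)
The statement is classical --- pointwise multiplication by a fixed Schwartz function is bounded on every Besov space $B^s_{p,q}$ --- and the plan is to reproduce the proof while checking that the extra $x$-variable and the outer $L^r_x$ norm are entirely inert. Accordingly, I would work in the $v$-variable only, for each fixed $x$, and use Bony's paraproduct decomposition
$$f\phi = \sum_{j}\bigl(S^v_{2^{j-3}}\phi\bigr)\bigl(\Delta^v_{2^j}f\bigr) + \sum_{j}\bigl(S^v_{2^{j-3}}f\bigr)\bigl(\Delta^v_{2^j}\phi\bigr) + \sum_{|j-j'|\le 1}\bigl(\Delta^v_{2^j}f\bigr)\bigl(\Delta^v_{2^{j'}}\phi\bigr) =: P_1+P_2+P_3,$$
where the low-frequency blocks are absorbed into the first two $S^v$-sums in the obvious way, and all $\ell^q$ and $L^r$ norms are read with sup when the exponent is $\infty$. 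The single fact about $\phi$ to be recorded first is that, since $\phi\in\mathcal{S}(\mathbb{R}^D)$, its high-frequency blocks decay super-polynomially: for every $N\in\mathbb{N}$ and every $1\le m\le\infty$ one has $\|\Delta^v_{2^j}\phi\|_{L^m_v}\le C_{N,m}\,2^{-jN}$, while $\sup_j\|S^v_{2^j}\phi\|_{L^m_v}\le C_m$. This decay is exactly what will let every sum below converge without any restriction on $s,p,q,r$.

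For $P_1$, standard frequency-support bookkeeping gives $\Delta^v_{2^k}P_1=\sum_{|j-k|\le N_0}\Delta^v_{2^k}\!\bigl[(S^v_{2^{j-3}}\phi)(\Delta^v_{2^j}f)\bigr]$ for a fixed $N_0$, so applying H\"older in $v$ pointwise in $x$ and then the $L^r_x$ norm,
$$\bigl\|\Delta^v_{2^k}P_1\bigr\|_{L^r_x L^p_v}\le C\sum_{|j-k|\le N_0}\bigl\|S^v_{2^{j-3}}\phi\bigr\|_{L^\infty_v}\bigl\|\Delta^v_{2^j}f\bigr\|_{L^r_x L^p_v}\le C\sum_{|j-k|\le N_0}\bigl\|\Delta^v_{2^j}f\bigr\|_{L^r_x L^p_v};$$
multiplying by $2^{ks}$ and taking $\ell^q_k$ exhibits the right-hand side as a discrete convolution of the $\ell^q$ sequence $\{2^{js}\|\Delta^v_{2^j}f\|_{L^r_x L^p_v}\}_j$ with a finitely supported kernel, hence it is bounded by $C\|f\|_{\widetilde L^r(dx;B^s_{p,q}(dv))}$. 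For $P_3$ one has $\Delta^v_{2^l}P_3$ supported only in terms with $j\ge l-N_0$, so $\|\Delta^v_{2^l}P_3\|_{L^r_x L^p_v}\le C\sum_{j\ge l-N_0}\|\Delta^v_{2^j}f\|_{L^r_x L^p_v}\,2^{-jN}$; writing $2^{-jN}=2^{-j(N-s)}2^{-js}$ and choosing $N>|s|+2$, the weighted quantity $2^{ls}\|\Delta^v_{2^l}P_3\|_{L^r_x L^p_v}$ is dominated by $2^{-l(N-s)}$ times a convolution of an $\ell^q$ sequence with an $\ell^1$ kernel, which is again $\ell^q$-summable in $l$. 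For $P_2$ I would bound $\|\Delta^v_{2^j}P_2\|_{L^r_x L^p_v}\le C\sum_{|k-j|\le N_0}\|S^v_{2^{k-3}}f\|_{L^r_x L^p_v}\|\Delta^v_{2^k}\phi\|_{L^\infty_v}$, crudely estimate $\|S^v_{2^{k-3}}f\|_{L^r_x L^p_v}\le C\,2^{k(|s|+1)}\|f\|_{\widetilde L^r(dx;B^s_{p,q}(dv))}$ (sum the blocks below $2^{k-3}$ and use H\"older in the frequency index), and absorb this polynomial growth into $\|\Delta^v_{2^k}\phi\|_{L^\infty_v}\le C_N 2^{-kN}$ with $N$ large; the resulting bound decays geometrically in $j$ and is $\ell^q$-summable. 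Summing the contributions of $P_1,P_2,P_3$ yields \eqref{paradifferential 1}.

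The only place any care is needed is the very first step: making sure each H\"older inequality is performed in $v$ at fixed $x$ --- so that it commutes with the subsequent $L^r_x$ integration and the mixed-norm structure of $\widetilde L^r(dx;B^s_{p,q}(dv))$ is harmless --- and that the frequency supports of $P_1$, $P_2$, $P_3$ are localized as claimed, which is what reduces each weighted $\ell^q$ sum to a discrete convolution with an $\ell^1$ or finitely supported kernel. Everything else is routine, and the super-polynomial decay of the frequency blocks of the Schwartz factor $\phi$ is precisely what makes the argument work for the full range $1\le p,q,r\le\infty$, $s\in\mathbb{R}$, with no extra hypotheses.
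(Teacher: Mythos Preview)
Your proposal is correct and follows essentially the same route as the paper's proof: Bony's decomposition into the two paraproducts and the diagonal remainder, with your $P_1,P_2,P_3$ corresponding exactly to the paper's $T(f,\phi),\,T(\phi,f),\,R(f,\phi)$, and each piece estimated by the same mechanism (finite frequency overlap for $P_1$, super-polynomial decay of $\Delta_{2^k}^v\phi$ absorbing the polynomial growth of $S_{2^{k-3}}^v f$ for $P_2$, and one-sided localization plus decay for $P_3$). One cosmetic slip: in your $P_3$ sketch the split should read $2^{-jN}=2^{-j(N+s)}\cdot 2^{js}$ so that the $\ell^q$ sequence is $2^{js}\|\Delta_{2^j}^v f\|_{L^r_xL^p_v}$ and the prefactor is $2^{-lN}$ rather than $2^{-l(N-s)}$; with this correction the convolution argument goes through exactly as you intend.
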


\begin{proof}
	We begin formally with the standard Bony's decomposition (cf. \cite{chemin})
	\begin{equation}\label{Bony}
		f\phi=\left( \Delta_0^v f+\sum_{k=0}^\infty\Delta_{2^k}^vf \right)\left( \Delta_0^v \phi+\sum_{k=0}^\infty\Delta_{2^k}^v\phi \right)
		= T\left(f,\phi\right) + T\left(\phi,f\right) + R\left(f,\phi\right),
	\end{equation}
	where we have denoted the paraproducts
	\begin{equation}
		\begin{gathered}
			T\left(f,\phi\right) = \sum_{k=2}^\infty \Delta_{2^k}^v f S_{2^{k-3}}^v \phi \quad
			\text{and}\quad T\left(\phi,f\right) = \sum_{k=2}^\infty \Delta_{2^k}^v \phi S_{2^{k-3}}^v f,
		\end{gathered}
	\end{equation}
	and the remainder
	\begin{equation}
		\begin{gathered}
			R\left(f,\phi\right)=\Delta_0^v f R_0^v \phi + \sum_{k=0}^\infty \Delta_{2^k}^v f R_{2^k}^v \phi, \\
			\text{where}\quad R_0^v \phi = \Delta_0^v \phi + \sum_{j=0}^1 \Delta_{2^{j}}^v \phi,\quad
			R_1^v \phi = \Delta_0^v \phi + \sum_{j=0}^2 \Delta_{2^{j}}^v \phi,\\
			R_2^v \phi = \Delta_0^v \phi + \sum_{j=0}^3 \Delta_{2^{j}}^v \phi
			\quad \text{and}\quad R_{2^k}^v \phi =\sum_{j=-2}^2\Delta_{2^{k+j}}^v \phi \quad \text{if}\quad k\geq 2.
		\end{gathered}
	\end{equation}
	We then estimate $T\left(f,\phi\right)$, $T\left(\phi,f\right)$ and $R\left(f,\phi\right)$ separately.

	The control of the paraproduct $T\left(f,\phi\right)$ proceeds as follows. First, notice that the support of the Fourier transform in the velocity variable of $S_{k-3}^v \phi$ is contained inside a closed ball of radius $2^{k-2}$ centered at the origin and is thus separated from the support of the Fourier transform of $\Delta_{2^k}^v f$ by a distance of at least $2^{k-2}$. Therefore, the Fourier transform of the general term in the sum of the paraproduct is supported inside an annulus centered at the origin of inner radius $2^{k-2}$ and outer radius $9\cdot 2^{k-2}$. As a consequence, we have that
	\begin{equation}
		\begin{aligned}
			\Delta_{0}^v\left[ \Delta_{2^j}^v f S_{2^{j-3}}^v \phi \right] & \equiv 0
			&& \text{for all}\quad j\geq 2\\
			\text{and}\quad
			\Delta_{2^k}^v\left[ \Delta_{2^j}^v f S_{2^{j-3}}^v \phi \right] & \equiv 0
			&& \text{if}\quad |j-k|\geq 3.
		\end{aligned}
	\end{equation}
	We may then proceed with the estimation
	\begin{equation}\label{paraproduct 1}
		\begin{aligned}
			\left\| T\left(f,\phi\right) \right\|_{\widetilde L^r\left(\mathbb{R}_x^D;B_{p,q}^s\left(\mathbb{R}^D_v\right)\right)}
			& =
			\left\|\left\{2^{ks}\left\| \Delta_{2^k}^v T\left(f,\phi\right) \right\|_{L^r_xL^p_v}\right\}_{k=0}^\infty\right\|_{\ell^q}\\
			& \leq \left\| \left\{ \sum_{\substack{j=2\\|j-k|\leq 2}}^\infty 2^{(k-j)s} 2^{js} \left\| \Delta_{2^j}^v f S_{2^{j-3}}^v \phi \right\|_{L_x^rL^p_v} \right\}_{k=0}^\infty \right\|_{\ell^q}\\
			& \leq C
			\left\| \left\{ 2^{ks} \left\| \Delta_{2^k}^v f S_{2^{k-3}}^v \phi \right\|_{L_x^rL^p_v} \right\}_{k=2}^\infty \right\|_{\ell^q} \\
			& \leq C \left\| \phi \right\|_{L^\infty_v}
			\left\| \left\{ 2^{ks} \left\| \Delta_{2^k}^v f \right\|_{L_x^rL^p_v} \right\}_{k=2}^\infty \right\|_{\ell^q}\\
			& \leq C
			\left\| f \right\|_{\widetilde L^r \left( \mathbb{R}^D_x ; B^{s}_{p,q} \left(\mathbb{R}^D_v\right) \right)}.
		\end{aligned}
	\end{equation}
	
	Regarding the paraproduct $T\left(\phi, f\right)$, we handle it through the following similar calculation, using that $\phi(v)$ is rapidly decaying,
	\begin{equation}\label{paraproduct 2}
		\begin{aligned}
			& \left\| T\left(\phi,f\right) \right\|_{\widetilde L^r\left(\mathbb{R}_x^D;B_{p,q}^s\left(\mathbb{R}^D_v\right)\right)}\\
			& \leq C
			\left\| \left\{ 2^{ks} \left\| \Delta_{2^k}^v \phi S_{2^{k-3}}^v f \right\|_{L_x^rL^p_v} \right\}_{k=2}^\infty \right\|_{\ell^q}\\
			& \leq C
			\left\| \left\{ 2^{ks} \left\| \Delta_{2^k}^v \phi \right\|_{L^\infty_v} \left\| S_{2^{k-3}}^v f \right\|_{L_x^rL^p_v} \right\}_{k=2}^\infty \right\|_{\ell^q}\\
			&\leq C
			\left\| \left\{ 2^{ks} \left\| \Delta_{2^k}^v \phi \right\|_{L^\infty_v}
			\right\}_{k=2}^\infty \right\|_{\ell^q}
			\left\| \Delta_{0}^v f \right\|_{L_x^rL^p_v}
			\\
			& +
			\left\| \left\{ 2^{ks} \left\| \Delta_{2^k}^v \phi \right\|_{L^\infty_v}
			\sum_{j=0}^{k-3} 2^{-j(s\wedge 0)} 2^{j(s\wedge 0)} \left\| \Delta_{2^j}^v f \right\|_{L_x^rL^p_v}
			\right\}_{k=3}^\infty \right\|_{\ell^q}
			\\
			&\leq C
			\left\| \left\{ 2^{ks} \left\| \Delta_{2^k}^v \phi \right\|_{L^\infty_v}
			\right\}_{k=2}^\infty \right\|_{\ell^q}
			\left\| \Delta_{0}^v f \right\|_{L_x^rL^p_v}\\
			& +
			\left\| \left\{ 2^{k(s+1)-k(s\wedge0)} \left\| \Delta_{2^k}^v \phi \right\|_{L^\infty_v}
			\right\}_{k=3}^\infty \right\|_{\ell^q}
			\left\| \left\{ 2^{k(s\wedge 0)} \left\| \Delta_{2^k}^v f \right\|_{L_x^rL^p_v}
			\right\}_{k=0}^\infty \right\|_{\ell^q}
			\\
			& \leq C
			\left\| f \right\|_{\widetilde L^r \left( \mathbb{R}^D_x ; B^{s\wedge 0}_{p,q} \left(\mathbb{R}^D_v\right) \right)}.
		\end{aligned}
	\end{equation}

	Regarding the remainder, we first notice that the Fourier transforms of $\Delta_{0}^v f R_{0}^v \phi$ and $\Delta_{2^k}^v f R_{2^k}^v \phi$, for all $k\in\mathbb{N}$, are supported inside closed balls centered at the origin of respective radii $5$ and $10\cdot2^k$. It follows that
	\begin{equation}
		\begin{aligned}
			\Delta_{2^k}^v\left[ \Delta_{0}^v f R_{0}^v \phi \right] & \equiv 0
			&& \text{if}\quad k\geq 4\\
			\text{and}\quad
			\Delta_{2^k}^v\left[ \Delta_{2^j}^v f R_{2^j}^v \phi \right] & \equiv 0 && \text{if}\quad k\geq j+5.
		\end{aligned}
	\end{equation}
	Therefore, using once again that $\phi\in\mathcal{S}\left(\mathbb{R}^D\right)$,
	\begin{equation}\label{remainder}
		\begin{aligned}
			& \left\| R\left(f,\phi\right) \right\|_{\widetilde L^r\left(\mathbb{R}_x^D;B_{p,q}^s\left(\mathbb{R}^D_v\right)\right)}\\
			& \leq
			\left\| \Delta_{0}^v R\left(f,\phi\right) \right\|_{L^r_xL^p_v}+
			\left\|\left\{2^{ks}\left\| \Delta_{2^k}^v R\left(f,\phi\right) \right\|_{L^r_xL^p_v}\right\}_{k=0}^\infty\right\|_{\ell^q}\\
			& \leq
			C\left(
			\left\| \Delta_{0}^v f R_{0}^v \phi \right\|_{L^r_xL^p_v}
			+\sum_{k=0}^\infty
			\left\| \Delta_{2^k}^v f R_{2^k}^v \phi
			\right\|_{L^r_xL^p_v}
			\right)\\
			& +C
			\left\|\left\{2^{ks}\left\| \sum_{j=k-4}^\infty
			\Delta_{2^j}^v f R_{2^j}^v \phi
			\right\|_{L^r_xL^p_v}\right\}_{k=4}^\infty\right\|_{\ell^q}
			\\
			& \leq
			C\left(
			\left\| \Delta_{0}^v f \right\|_{L^r_xL^p_v} \left\| R_{0}^v \phi \right\|_{L^\infty_v}
			+\sum_{k=0}^\infty
			\left\| \Delta_{2^k}^v f 
			\right\|_{L^r_xL^p_v} \left\| R_{2^k}^v \phi
			\right\|_{L^\infty_v}
			\right)\\
			& +C\sum_{j=0}^\infty
			\left\|
			\Delta_{2^j}^v f
			\right\|_{L^r_xL^p_v}
			\left\|
			R_{2^j}^v \phi
			\right\|_{L^\infty_v}
			\left\|\left\{2^{ks}
			\right\}_{k=4}^{j+4}\right\|_{\ell^q}
			\\
			& \leq
			C
			\left\| \Delta_{0}^v f \right\|_{L^r_xL^p_v} \left\| R_{0}^v \phi \right\|_{L^\infty_v}\\
			& +C
			\left\|\left\{2^{ks}\left\| \Delta_{2^k}^v f 
			\right\|_{L^r_xL^p_v}\right\}_{k=0}^\infty\right\|_{\ell^q}
			\left\|\left\{ 2^{k-k(s\wedge 0)}
			\left\| R_{2^k}^v \phi
			\right\|_{L^\infty_v}\right\}_{k=0}^\infty\right\|_{\ell^{q'}}
			\\
			& \leq C
			\left\| f \right\|_{\widetilde L^r \left( \mathbb{R}^D_x ; B^{s}_{p,q} \left(\mathbb{R}^D_v\right) \right)}.
		\end{aligned}
	\end{equation}
	
	Finally, incorporating \eqref{paraproduct 1}, \eqref{paraproduct 2} and \eqref{remainder} into the decomposition \eqref{Bony}, we easily deduce that the estimate \eqref{paradifferential 1} holds true, which concludes the demonstration.
\end{proof}


\bibliographystyle{plain}


\bibliography{velocity_averaging}

\begin{thebibliography}{10}

\bibitem{arsenio}
Diogo Ars{\'e}nio.
\newblock {\em On the {B}oltzmann equation: {H}ydrodynamic limit with
  long-range interactions and mild solutions}.
\newblock ProQuest LLC, Ann Arbor, MI, 2009.
\newblock Thesis (Ph.D.)--New York University.

\bibitem{arsenio3}
Diogo Ars{\'e}nio.
\newblock From {B}oltzmann's equation to the incompressible
  {N}avier-{S}tokes-{F}ourier system with long-range interactions.
\newblock {\em Arch. Ration. Mech. Anal.}, 2011.
\newblock to appear.

\bibitem{arsenio4}
Diogo Ars{\'e}nio and Nader Masmoudi.
\newblock Regularity of renormalized solutions in the {B}oltzmann equation with
  long-range interactions.
\newblock {\em Comm. Pure Appl. Math.}, 65(4):508--548, 2012.

\bibitem{arsenio2}
Diogo Ars{\'e}nio and Laure Saint-Raymond.
\newblock Compactness in kinetic transport equations and hypoellipticity.
\newblock {\em J. Funct. Anal.}, 261(10):3044--3098, 2011.

\bibitem{BD85}
C.~Bardos and P.~Degond.
\newblock Global existence for the {V}lasov-{P}oisson equation in {$3$} space
  variables with small initial data.
\newblock {\em Ann. Inst. H. Poincar\'e Anal. Non Lin\'eaire}, 2(2):101--118,
  1985.

\bibitem{bergh}
J{\"o}ran Bergh and J{\"o}rgen L{\"o}fstr{\"o}m.
\newblock {\em Interpolation spaces. {A}n introduction}.
\newblock Springer-Verlag, Berlin, 1976.
\newblock Grundlehren der Mathematischen Wissenschaften, No. 223.

\bibitem{bezard}
Max B{\'e}zard.
\newblock R\'egularit\'e {$L\sp p$} pr\'ecis\'ee des moyennes dans les
  \'equations de transport.
\newblock {\em Bull. Soc. Math. France}, 122(1):29--76, 1994.

\bibitem{CP96}
Fran{\c{c}}ois Castella and Beno{\^{\i}}t Perthame.
\newblock Estimations de {S}trichartz pour les \'equations de transport
  cin\'etique.
\newblock {\em C. R. Acad. Sci. Paris S\'er. I Math.}, 322(6):535--540, 1996.

\bibitem{CL95}
J.-Y. Chemin and N.~Lerner.
\newblock Flot de champs de vecteurs non lipschitziens et \'equations de
  {N}avier-{S}tokes.
\newblock {\em J. Differential Equations}, 121(2):314--328, 1995.

\bibitem{chemin}
Jean-Yves Chemin.
\newblock Fluides parfaits incompressibles.
\newblock {\em Ast\'erisque}, 230:177, 1995.

\bibitem{CM01}
Jean-Yves Chemin and Nader Masmoudi.
\newblock About lifespan of regular solutions of equations related to
  viscoelastic fluids.
\newblock {\em SIAM J. Math. Anal.}, 33(1):84--112 (electronic), 2001.

\bibitem{diperna2}
R.~J. DiPerna, P.-L. Lions, and Y.~Meyer.
\newblock {$L\sp p$} regularity of velocity averages.
\newblock {\em Ann. Inst. H. Poincar\'e Anal. Non Lin\'eaire}, 8(3-4):271--287,
  1991.

\bibitem{GLPS88}
Fran{\c{c}}ois Golse, Pierre-Louis Lions, Beno\^{\i{t}} Perthame, and R{\'e}mi
  Sentis.
\newblock Regularity of the moments of the solution of a transport equation.
\newblock {\em J. Funct. Anal.}, 76(1):110--125, 1988.

\bibitem{GPS85}
Fran{\c{c}}ois Golse, Beno\^{\i{t}} Perthame, and R{\'e}mi Sentis.
\newblock Un r\'esultat de compacit\'e pour les \'equations de transport et
  application au calcul de la limite de la valeur propre principale d'un op\'
  erateur de transport.
\newblock {\em C. R. Acad. Sci. Paris S\'er. I Math.}, 301(7):341--344, 1985.

\bibitem{golse3}
Fran{\c{c}}ois Golse and Laure Saint-Raymond.
\newblock Velocity averaging in {$L\sp 1$} for the transport equation.
\newblock {\em C. R. Math. Acad. Sci. Paris}, 334(7):557--562, 2002.

\bibitem{golse}
Fran{\c{c}}ois Golse and Laure Saint-Raymond.
\newblock The {N}avier-{S}tokes limit of the {B}oltzmann equation for bounded
  collision kernels.
\newblock {\em Invent. Math.}, 155(1):81--161, 2004.

\bibitem{golse5}
Fran{\c{c}}ois Golse and Laure Saint-Raymond.
\newblock The incompressible {N}avier-{S}tokes limit of the {B}oltzmann
  equation for hard cutoff potentials.
\newblock {\em J. Math. Pures Appl. (9)}, 91(5):508--552, 2009.

\bibitem{jabin}
Pierre-Emmanuel Jabin and Luis Vega.
\newblock A real space method for averaging lemmas.
\newblock {\em J. Math. Pures Appl. (9)}, 83(11):1309--1351, 2004.

\bibitem{levermore}
C.~David Levermore and Nader Masmoudi.
\newblock From the {B}oltzmann equation to an incompressible
  {N}avier-{S}tokes-{F}ourier system.
\newblock {\em Arch. Ration. Mech. Anal.}, 196(3):753--809, 2010.

\bibitem{lions6}
J.-L. Lions and J.~Peetre.
\newblock Sur une classe d'espaces d'interpolation.
\newblock {\em Inst. Hautes \'Etudes Sci. Publ. Math.}, (19):5--68, 1964.

\bibitem{ovcharov}
Evgeni~Y. Ovcharov.
\newblock Strichartz estimates for the kinetic transport equation.
\newblock {\em SIAM J. Math. Anal.}, 43(3):1282--1310, 2011.

\bibitem{runst}
Thomas Runst and Winfried Sickel.
\newblock {\em Sobolev spaces of fractional order, {N}emytskij operators, and
  nonlinear partial differential equations}, volume~3 of {\em de Gruyter Series
  in Nonlinear Analysis and Applications}.
\newblock Walter de Gruyter \& Co., Berlin, 1996.

\bibitem{saintraymond}
Laure Saint-Raymond.
\newblock {\em Hydrodynamic Limits of the Boltzmann Equation}, volume 1971 of
  {\em Lecture Notes in Mathematics}.
\newblock Springer, Berlin, 2009.

\bibitem{stein3}
E.~M. Stein.
\newblock Note on the class {$L$} {${\rm log}$} {$L$}.
\newblock {\em Studia Math.}, 32:305--310, 1969.

\bibitem{stein2}
Elias~M. Stein.
\newblock {\em Singular integrals and differentiability properties of
  functions}.
\newblock Princeton Mathematical Series, No. 30. Princeton University Press,
  Princeton, N.J., 1970.

\bibitem{stein}
Elias~M. Stein.
\newblock {\em Harmonic analysis: real-variable methods, orthogonality, and
  oscillatory integrals}, volume~43 of {\em Princeton Mathematical Series}.
\newblock Princeton University Press, Princeton, NJ, 1993.
\newblock With the assistance of Timothy S. Murphy, Monographs in Harmonic
  Analysis, III.

\bibitem{triebel}
Hans Triebel.
\newblock {\em Theory of function spaces}, volume~78 of {\em Monographs in
  Mathematics}.
\newblock Birkh\"auser Verlag, Basel, 1983.

\bibitem{westdickenberg}
Michael Westdickenberg.
\newblock Some new velocity averaging results.
\newblock {\em SIAM J. Math. Anal.}, 33(5):1007--1032 (electronic), 2002.

\end{thebibliography}



\end{document}